\documentclass[12pt,reqno]{amsart}
\usepackage{amsmath,amssymb,extarrows}
\usepackage{hyperref}
\usepackage{url}
\usepackage{tikz,enumerate}
\usepackage{diagbox}
\usepackage{appendix}
\usepackage{epic}
\usepackage{comment}
\usepackage{bm}
\usepackage{adjustbox}

\usepackage{booktabs}
\usepackage{array}

\usepackage{float} 

\usepackage{cite}

\usepackage{hyperref}
\usepackage{array}

\usepackage{booktabs}

\allowdisplaybreaks[4]

\newtheorem{theorem}{Theorem}
\newtheorem{corollary}[theorem]{Corollary}

\newtheorem{lemma}[theorem]{Lemma}
\newtheorem{prop}[theorem]{Proposition}
\theoremstyle{definition}

\theoremstyle{remark}
\newtheorem{rem}{Remark}
\numberwithin{equation}{section}
\numberwithin{theorem}{section}
\numberwithin{defn}{section}

\newcommand{\s}{\mathcal{S}}

\newcommand{\qbinom}[2]{\genfrac{[}{]}{0pt}{}{#1}{#2}}

\newcommand{\ii}{\mathrm{i}}

\newcommand{\padedvphantom}[3]{%
	\vtop{%
		\vbox{%
			\vspace*{#2}%
			\hbox{\vphantom{#1}}%
		}%
		\vspace*{#3}%
	}%
}

\begin{document}
\title[Nahm Sums Dual to Zagier's Rank-Three Examples and Related Identities]
 {Nahm Sums Dual to Zagier's Rank-Three Examples and Related Identities}

\author{Changsong Shi and Liuquan Wang}

\address[C.\ Shi]{School of Mathematics and Statistics, Wuhan University, Wuhan 430072, Hubei, People's Republic of China}
\email{changsong@whu.edu.cn}

\address[L.\ Wang]{School of Mathematics and Statistics, Wuhan University, Wuhan 430072, Hubei, People's Republic of China}
\email{wanglq@whu.edu.cn;mathlqwang@163.com}

\subjclass[2020]{11P84, 33D15, 33D45, 11F03}

\keywords{Nahm's problem;  Nahm sums; Rogers--Ramanujan type identities; Bailey pairs}

\begin{abstract}
In 2007, Zagier identified twelve families of candidates for rank-three modular Nahm sums and established the modularity of three of them. The remaining cases were subsequently confirmed by Wang. Zagier's duality observation indicates that the Nahm sums associated with the duals of these examples might still be modular. We confirm that this is indeed true. The modularity of the dual of the sixth example was previously established by Milas and Wang, while that of the dual of  the eleventh example was partially established by the present authors. We settle all remaining cases, thereby establishing the modularity of every defined dual in Zagier's rank-three list. We achieve this by proving new Rogers--Ramanujan type identities that express the relevant Nahm sums as finite sums of infinite products.  Along the way we  discover two new families of rank-three Nahm sums. As applications, we prove some Nahm sum identities conjectured by Cao--Wang, Li and the present authors. 
\end{abstract}

\maketitle

\tableofcontents

\section{Introduction}\label{sec-intro}
The study of the modularity of $q$-series originates in the 19th-century theories of theta and elliptic functions. The classical Rogers--Ramanujan identities~\cite{Rogers1894} are among the most prominent examples of this interplay:
\begin{align}\label{RR}
\sum_{n=0}^{\infty}\frac{q^{n^2+an}}{(q;q)_n}
=\frac{1}{(q^{1+a},q^{4-a};q^5)_\infty},
\quad a=0,1.
\end{align}
For $a=0$ and $a=1$, these series become modular upon multiplication by $q^{-1/60}$ and $q^{11/60}$, respectively. Here and throughout, we assume that $q=e^{2\pi \ii \tau}$ with $\mathrm{Im}~\tau>0$ so that $|q|<1$ and use the standard $q$-series notation:
\begin{align}
(a;q)_n&:=\prod_{k=0}^{n-1}(1-aq^k), \quad n\in\mathbb{Z}_{\geq0}, \\
(a;q)_\infty&:=\prod_{k=0}^{\infty}(1-aq^k), \\
(a_1,\dots,a_m;q)_n&:=\prod_{k=1}^{m}(a_k;q)_n, \quad n\in\mathbb{Z}_{\geq0}\cup \{\infty\}.
\end{align}

A particularly intriguing class of $q$-hypergeometric series, known as \emph{Nahm sums}, was introduced by Nahm in the 1990s in connection with two-dimensional conformal field theory (CFT) and the representation theory of affine Lie algebras~\cite{Nahm1994,Nahmconf,Nahm2007}. A Nahm sum of rank $r$ is a $q$-hypergeometric series of the form
\begin{equation}\label{eq-Nahm}
f_{A,B,C}(q):=
\sum_{n=(n_1,\dots,n_r)\in\mathbb{Z}_{\geq 0}^r}
\frac{q^{\frac12{n}^{\mathrm{T}}A{n}
+B^{\mathrm{T}}n+C}}
{(q;q)_{n_1}\cdots(q;q)_{n_r}},
\end{equation}
where $A$ is a symmetric positive-definite $r\times r$ matrix with rational entries, $B\in\mathbb{Q}^r$, and $C\in\mathbb{Q}$. 
Nahm posed the problem of classifying all triples $(A,B,C)$ for which $f_{A,B,C}(q)$ is modular. Any such triple is commonly referred to as a \emph{modular triple}. Nahm sums provide a natural framework encompassing many classical $q$-series identities. For example, the Rogers--Ramanujan identities \eqref{RR}  give rise to the two rank-one modular triples $(A,B,C)=(2,0,-1/60)$ and $(A,B,C)=(2,1,11/60)$, respectively.

Motivated by fermionic character formulas arising in rational conformal field theory, Nahm \cite{Nahm2007} observed that certain characters admit $q$-hypergeometric representations of the form \eqref{eq-Nahm} and related the expected modular behavior of such series to torsion phenomena in Bloch groups. A precise formulation of this connection was given by Zagier~\cite{Zagier} and became known as \emph{Nahm's conjecture}. In its original form, the conjecture asserts that a rational symmetric positive-definite matrix $A$ admits vectors $B\in\mathbb{Q}^r$ and constants $C\in\mathbb{Q}$ for which $f_{A,B,C}(q)$ is modular if and only if every solution of the algebraic system determined by $A$ gives rise to a torsion class in the appropriate Bloch group. This conjectural framework has stimulated extensive research and revealed deep connections among Nahm sums, dilogarithm identities, torsion phenomena in algebraic $K$-theory, and the character theory of vertex operator algebras. Although the proposed equivalence holds in rank one, Vlasenko and Zwegers~\cite{VZ} exhibited counterexamples in higher rank, showing that its original formulation requires modification. More recently, Calegari, Garoufalidis, and Zagier~\cite{CGZ} established an important necessary implication: if $f_{A,B,C}(q)$ is modular, then the Bloch-group class associated with the distinguished positive real solution of the algebraic system determined by $A$ vanishes.

In 2007, Zagier~\cite{Zagier} made substantial progress on Nahm's problem. He completely resolved the rank-one case, proving that there are exactly seven modular triples of rank one, namely,
\begin{equation}\label{eq-rank1}
\begin{split}
   &(1/2, 0, -1/40), ~(1/2, 1/2, 1/40), ~ (1, 0, -1/48), ~(1, 1/2, 1/24),  \\
   & (1,-1/2,1/24), ~(2,0,-1/60), ~ (2,1,11/60).
\end{split}
\end{equation}
For ranks two and three, he also produced lists consisting of eleven and twelve sets of conjectural modular triples, respectively. The modularity of all these examples has since been established through work of Zagier~\cite{Zagier}, Vlasenko and Zwegers~\cite{VZ}, Cherednik and Feigin~\cite{Feigin}, Wang~\cite{Wang-rank2,Wang-rank3}, and Cao, Rosengren, and Wang~\cite{CRW}. It is also worth emphasizing that Vlasenko and Zwegers~\cite{VZ} proved that every modular Nahm sum necessarily has weight zero.

A common strategy in~\cite{VZ,Wang-rank2,Wang-rank3,CRW} is to establish Rogers--Ramanujan type identities that express $f_{A,B,0}(q)$ as finite sums of infinite-product quotients built from
\begin{align}\label{J-defn}
    &J_m:=(q^m;q^m)_\infty, \quad J_{a,m}:=(q^a,q^{m-a},q^m;q^m)_\infty, \\
     &\overline{J}_{a,m}:=(-q^a,-q^{m-a},q^m;q^m)_\infty=\frac{J_{2a,2m}J_m^2}{J_{a,m}J_{2m}}.
\end{align}
It is well known that $q^{\frac{m}{24}}J_m$ and $q^{\frac{m}{24}+\frac{m}{2}P_2(\frac{a}{m})} J_{a,m}$ are modular forms of weight $1/2$ on suitable congruence subgroups, where $P_2(t)=t^2-t+\frac16$ for $0\leq t<1$ and is extended periodically. These product representations therefore make it possible to determine an appropriate constant $C$ and verify the modularity of
$f_{A,B,C}(q)=q^C f_{A,B,0}(q)$. The main challenge is therefore to establish the requisite Rogers--Ramanujan type identities, a task that can be highly nontrivial even for low-rank Nahm sums. A notable example is provided by the conjectural Kanade--Russell identities modulo~9~\cite{KR2015,Kursungoz2019}, which equate certain double sums with infinite products and remain open despite their deceptively simple form.

An important source of modular triples is provided by Cartan matrices attached to Dynkin diagrams. For a Dynkin diagram $X$, let $\mathcal{C}(X)$, $r(X)$, and $h(X)$ denote its Cartan matrix, rank, and Coxeter number, respectively. It has long been expected that, for every pair $(X,Y)$ of ADET-type Dynkin diagrams, the Nahm sum associated with the triple
$\bigl(M(X,Y),0,C(X,Y)\bigr)$ is modular, where
\begin{align}\label{eq-folklore}
M(X,Y)=\mathcal{C}(X)\otimes\mathcal{C}(Y)^{-1},
\quad 
C(X,Y)=-\frac{1}{24}
\frac{r(X)r(Y)h(X)}{h(X)+h(Y)}.
\end{align}
This folklore conjecture has been verified for several infinite families. For example, the $(A_1,T_r)$ case follows from the classical Andrews--Gordon identities~\cite{Gordon1961,Andrews1974}, while the $(T_1,T_r)$ case follows from an identity of Stembridge~\cite[Corollary~1.5(b)]{Stembridge}. The $(A_1,D_r)$ case is a consequence of identities conjectured by Kedem, Klassen, McCoy, and Melzer~\cite[(2.9)]{Kedem} and subsequently proved by Warnaar~\cite{Warnaar2007}. These identities were recently generalized, with new proofs, by Wang and Wang~\cite{Wang-Wang2025}.

The $(T_r,T_1)$ case was investigated by Calinescu, Milas, and Penn~\cite{CMP} in their study of twisted modules for principal-subspace vertex algebras. They established this case for $r=2$; the cases $r=3$ and $r=4,5$ were subsequently proved by Milas and Wang~\cite{MW24} and by the present authors~\cite{Shi-Wang}, respectively. In particular, the present authors proved several rank-four tadpole Nahm sum identities in~\cite[Theorem~1.5]{Shi-Wang}, but left the following identity as a conjecture:
\begin{align}\label{id-SW}
\sum_{i,j,k,l\geq0}
\frac{
q^{\frac12\left(i^2+(i-j)^2+(j-k)^2+(k-l)^2\right)
+i-j+k-\frac12l}
}{
(q;q)_i(q;q)_j(q;q)_k(q;q)_l
}
=6\frac{J_2^2J_3^3}{J_1^5}.
\end{align}
Conjectural formulas for rank-$r$ tadpole Nahm sums associated with the zero vector for arbitrary $r$ were proposed in~\cite[Conjecture~4.5]{MW24}, and they were confirmed quite recently by Chern, Tran and Wakhare \cite{Chern} based on the work of Bartlett and Warnaar \cite{Bartlett-Warnaar}. Sun and Wang~\cite{SW} proposed a conjectural identity for the Nahm sum associated with $(T_1,D_r)$. This identity was subsequently proved by Wang and Wang~\cite{Wang-Wang2026}, thereby establishing the $(T_1,D_r)$ case. We refer the reader to~\cite{SW} for a more detailed introduction to Nahm sums associated with Dynkin diagrams.

Zagier~\cite{Zagier} also identified an important structural pattern among modular triples. For a modular triple $(A,B,C)$ of rank $r$, define its dual by
\begin{align}\label{eq-dual}
   \mathcal{D}(A,B,C)=(A^\star, B^\star, C^\star):=(A^{-1},A^{-1}B,\frac{1}{2}B^\mathrm{T} A^{-1}B-\frac{r}{24}-C).
\end{align}
Zagier~\cite[p.~50, (f)]{Zagier} suggested that ``if $f_{A,B,C}$ is modular then
$f_{A^\star,B^\star,C^\star}$ is also modular''.  This principle can be verified directly for the seven rank-one modular triples in \eqref{eq-rank1}. Moreover, the dual of every rank-two modular triple listed in~\cite[Table~2]{Zagier} also appears in the same table. The duality principle therefore holds for all of these rank-two examples as well~\cite{Wang-rank2}. More recently, Wang~\cite{Wang-2026} constructed rank-four counterexamples to this duality principle, which also disprove Mizuno's duality conjecture for generalized Nahm sums~\cite[Conjecture~4.1]{Mizuno}. Rank-three counterexamples were subsequently obtained by Cao and Wang~\cite{Cao-Wang-rank3}. A common feature of these counterexamples is that the original Nahm sums $f_{A,B,C}(q)$ are weight-zero modular functions, whereas the corresponding dual Nahm sums decompose into the sum of two modular forms, one of weight zero and the other of weight one. This phenomenon suggests that an appropriately modified version of Zagier's duality principle may nevertheless remain valid.

Motivated by Zagier's duality principle, Cao and Wang~\cite{Cao-Wang-rank3,Cao-Wang-rank4,Cao-Wang2026} introduced the \emph{lift-dual operation} and used it to construct new modular triples of ranks three and four. They also proposed several conjectural identities for rank-four tadpole Nahm sums in~\cite[Conjecture~3.2]{Cao-Wang-rank4}:
\begin{align}
    \sum_{i,j,k,l\ge 0}\frac{q^{\frac{1}{2}(i^2+(i-j)^2+(j-k)^2+(k-l)^2)+2i-2j+k-\frac{1}{2}l}}{(q;q)_i(q;q)_j(q;q)_k(q;q)_l}&=8q^{-1}\frac{J_2^6}{J_1^6}, \label{id-CW-1}\\
        \sum_{i,j,k,l\ge 0}\frac{q^{\frac{1}{2}(i^2+(i-j)^2+(j-k)^2+(k-l)^2)+\frac{3}{2}i-\frac{3}{2}j+\frac{1}{2}k}}{(q;q)_i(q;q)_j(q;q)_k(q;q)_l}&=2q^{-\frac{1}{2}}\frac{J_1^3}{J_{\frac{1}{2}}^3}, \label{id-CW-2}\\
        \sum_{i,j,k,l\ge 0}\frac{q^{\frac{1}{2}(i^2+(i-j)^2+(j-k)^2+(k-l)^2)+i-j+\frac{1}{2}l}}{(q;q)_i(q;q)_j(q;q)_k(q;q)_l}&=4\frac{J_2^6}{J_1^6}. \label{id-CW-3}
\end{align}

As announced in~\cite[p.~44]{Wang-rank3}, the main purpose of this paper is to investigate the modularity of the Nahm sums dual to Zagier's rank-three examples listed in~\cite[Table~3]{Zagier}. Following~\cite{Wang-rank3}, we label these entries Examples~1--12 according to their order of appearance in Zagier's table. The matrices associated with Examples~4 and~5 are singular; hence the dual \eqref{eq-dual} is undefined for these two examples. Consequently, only the duals of Examples~1--3 and~6--12 are worthy of consideration.

Some of these duals have already been studied. The dual of Example~6 gives rise to the tadpole Nahm sums associated with the Cartan matrix $\mathcal{C}(T_3)$, whose modularity was established by Milas and Wang~\cite[Theorem~1.2]{MW24}. Example~11 gives rise to five dual Nahm sums, four of which were proved to be modular by the present authors~\cite{Shi-Wang} through the following identities:
\begin{align}
&\sum_{i,j,k\geq 0} \frac{q^{i^2+3j^2+4k^2-2ij+4jk}}{(q^4;q^4)_i(q^4;q^4)_j(q^4;q^4)_k}
=\frac{J_{12}J_{24}^4}{J_{1,24}J_{3,24}J_{8,24}J_{9,24}J_{11,24}}, \label{dual-1} \\
&\sum_{i,j,k\geq 0} \frac{q^{i^2+3j^2+4k^2-2ij+4jk+2i-2j}}{(q^4;q^4)_i(q^4;q^4)_j(q^4;q^4)_k}=\frac{J_{24}^{9/2}J_{12,24}^{1/2}}{J_{1,24}J_{5,24}J_{7,24}J_{8,24}J_{11,24}}, \label{dual-3} \\
&\sum_{i,j,k\geq 0} \frac{q^{i^2+3j^2+4k^2-2ij+4jk+2i}}{(q^4;q^4)_i(q^4;q^4)_j(q^4;q^4)_k}=\frac{J_{24}^{9/2}J_{6,24}^3}{J_{3,24}^2J_{4,24}^2J_{8,24}J_{9,24}^2J_{12,24}^{1/2}},\label{dual-4} \\
&\sum_{i,j,k\geq 0} \frac{q^{i^2+3j^2+4k^2-2ij+4jk+2i+2j+4k}}{(q^4;q^4)_i(q^4;q^4)_j(q^4;q^4)_k}
=\frac{J_{24}^{9/2}J_{12,24}^{1/2}}{J_{3,24}J_{5,24}J_{7,24}J_{8,24}J_{9,24}}. \label{dual-5}
\end{align}
For the remaining case, the present authors proposed the following conjectural identity in~\cite[Conjecture~1.4]{Shi-Wang}:
\begin{align}
    \sum_{i,j,k\geq 0} \frac{q^{i^2+3j^2+4k^2-2ij+4jk+2j+4k}}{(q^4;q^4)_i(q^4;q^4)_j(q^4;q^4)_k}
=\frac{J_{2,24}J_{24}^{9/2}J_{6,24}J_{10,24}}{J_{1,24}J_{4,24}^2J_{5,24}J_{7,24}J_{8,24}J_{11,24}J_{12,24}^{1/2}}. \label{dual-2}
\end{align} 

To the best of our knowledge, duals of the other examples have not previously been discussed. In this paper, we treat these examples individually and confirm the modularity of their duals. As we will see, Example 1 is dual to itself, Examples 2 and 3 are dual to each other. Hence the modularity of the duals of Examples 1--3 are known. We mainly work on Nahm sums dual to Examples 7--12 and confirm all their modularity. As an illustration, we present the main results for the dual of Examples 9, 11 and 12 here.

The dual of Example 9 is particularly intricate. Let
\begin{align}
    M(u,v,w;q):=\sum_{i,j,k\geq 0}\frac{q^{i^2+2j^2+2k^2-2ij-2jk}u^iv^jw^k}{(q^4;q^4)_i(q^4;q^4)_j(q^4;q^4)_k}. \label{M-defn-general}
\end{align}
The Nahm sums dual to Example 9 with $q$ replaced by $q^4$ are given by
\begin{equation}\label{M-defn}
\begin{split}
    M^{(1)}(q)&:=M(1,1,1;q), \quad M^{(2)}(q):=M(q^2,q^{-2},1;q), \\
    M^{(3)}(q)&:=M(q^2,1,q^{-2};q), \quad M^{(4)}(q):=M(q^2,1,1;q). 
\end{split}
\end{equation}
To evaluate them we split the sums into several parts with restrictions on the parity of the summation indices:
\begin{align}
    M_{x,y,z}(u,v,w;q):=\sum_{\substack{i,j,k\geq 0\\(i,j,k)\equiv (x,y,z)\ \mathrm{mod}\ 2}}\frac{q^{i^2+2j^2+2k^2-2ij-2jk}u^iv^jz^k}{(q^4;q^4)_i(q^4;q^4)_j(q^4;q^4)_k}.\label{def-Mxyz}
\end{align}
It suffices to evaluate the following series with $(x,y,z)\in \{0,1\}^3$:
\begin{equation}
\begin{split}
    M_{x,y,z}^{(1)}(q)&:=M_{x,y,z}(1,1,1;q), \quad  M_{x,y,z}^{(2)}(q):=M_{x,y,z}(q^2,q^{-2},1;q), \\
    M_{x,y,z}^{(3)}(q)&:=M_{x,y,z}(q^2,1,q^{-2};q), \quad  M_{x,y,z}^{(4)}(q):=M_{x,y,z}(q^2,1,1;q).
\end{split}
\end{equation} 
We will present two different approaches to evaluate these parts and express them as infinite products. Then we are able to confirm the modularity of these Nahm sums.
\begin{theorem}\label{thm-dual-exam9-1st}
For $h=1,2,3,4$ we have
\begin{align}\label{eq-thm-Mh}
    M^{(h)}(q)=-\mathcal{F}_0^{(h)}(q)+\mathcal{F}_1^{(h)}(q)+2\ii^{h-2}\Big(\mathcal{F}_0(\ii q)+\mathcal{F}_1^{(h)}(\ii q)\Big)
\end{align}
where $\mathcal{F}_k^{(h)}(q)$ ($k=0,1$) admit the explicit product representations in \eqref{eq-F1}--\eqref{eq-F4}.
\end{theorem}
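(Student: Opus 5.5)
We would prove Theorem~\ref{thm-dual-exam9-1st} by first reducing the triple sum $M(u,v,w;q)$ to lower-rank objects. The quadratic form is $i^2+2j^2+2k^2-2ij-2jk=(i-j)^2+(j-k)^2+k^2$. This suggests summing over one index at a time with standard tools: Euler's identities
\[
\sum_{n\ge0}\frac{q^{2n^2-2n}z^n}{(q^4;q^4)_n}=(-z;q^4)_\infty,\qquad
\sum_{n\ge0}\frac{z^n}{(q^4;q^4)_n}=\frac{1}{(z;q^4)_\infty},
\]
together with the $q$-binomial theorem and the constant term method.

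Concretely, we would write each of $q^{(i-j)^2}$, $q^{(j-k)^2}$ and $q^{k^2}$ via the Jacobi triple product as a constant term in auxiliary variables, so that the sums over $i,j,k$ factor into Euler products. The cross terms $-2ij$ and $-2jk$, taken in base $q^4$, are only half-integral exponents relative to $q^4$. This is exactly why the parity splitting $M_{x,y,z}$ in \eqref{def-Mxyz} is natural. Writing $i=2i'+x$ (and similarly for $j,k$) makes the exponents integral, and the sum over residues then becomes the combination of $q$ and $\ii q$ evaluations seen in \eqref{eq-thm-Mh}. Indeed, replacing $q$ by $\ii q$ multiplies the summand by $\ii^{i^2+2j^2+2k^2-2ij-2jk}$ times the appropriate power from $u,v,w$. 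This sign depends only on the parities of $(i,j,k)$, so
\[
M^{(h)}(q)\ \text{and}\ M^{(h)}(\ii q)
\]
are the same linear combinations of the eight parity parts with different roots-of-unity weights. Solving this small linear system expresses $M^{(h)}$ through two "basic" series $\mathcal F_0^{(h)},\mathcal F_1^{(h)}$ evaluated at $q$ and $\ii q$. The shape $-\mathcal F_0+\mathcal F_1+2\ii^{h-2}(\cdots)$ would come out of this bookkeeping, and the factor $\ii^{h-2}$ tracks the linear term $u=q^2$, $v,w\in\{1,q^{\pm2}\}$.

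The main step is then to evaluate each parity part $M^{(h)}_{x,y,z}(q)$ as an infinite product. For this we would sum over $i$ first. For fixed $j$, the sum over $i$ of $q^{i^2-2ij}u^i/(q^4;q^4)_i$, restricted to a parity class, is a rank-one sum that, after $i=2i'+x$, becomes $\sum q^{4i'^2+\dots}/(q^4;q^4)_{2i'+x}$. We would handle it with a known Bailey pair relative to $q^4$ (or via $(q^4;q^4)_{2n}=(q^4,q^8;q^8)_n$-type splitting and Slater-type identities). The remaining double sum in $(j,k)$, with parity restrictions, should then match a rank-two Nahm-type sum from Zagier's rank-two table or one evaluated earlier, which can be summed by the Jacobi triple product into products of $J_{a,m}$ with $m$ dividing $48$ or so.

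Two fallbacks exist. One is the alternative approach the paper announces: integral (constant term) representation, then contour/residue-free manipulation using the quintuple product. The other is to verify the product forms by proving both sides satisfy the same $q$-difference system in $u,v,w$.

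The main obstacle we expect is this middle step. The parity-restricted inner sums do not individually collapse to single products. Only specific combinations do, namely $\mathcal F_0,\mathcal F_1$ and their $\ii q$-twists. Finding the right grouping, and verifying the final theta-function identities among the $J_{a,m}$ (which we would check by the standard modular-function method: match enough coefficients after confirming both sides are modular of the same level), is where the work concentrates.
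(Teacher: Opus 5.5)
Your plan has a genuine gap at exactly the step you call the ``main obstacle'', and the route you sketch for it cannot work.

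\textbf{The inner sum over $i$.} For fixed $j$, the sum over $i$ is $q^{j^2}\Phi(uq^{-2j})$, where $\Phi(z)=\sum_{r\ge0}q^{r^2}z^r/(q^4;q^4)_r$ (with or without a parity restriction on $r$). At the two base points this is given by Slater's products, yielding $\mathcal A=\mathcal X_0$ and $\mathcal B=\mathcal Y_0$ via \eqref{S79}--\eqref{S99}. Uniformly in $j$, however, it is a combination of these two products with $j$-dependent polynomial coefficients. No Bailey pair applied at fixed $j$ changes this. So there is no leftover rank-two Nahm sum in $(j,k)$ to match against Zagier's table: the outer sum carries non-product weights.

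\textbf{The missing chain.} The idea you are missing is how the paper tames this family. From $\Phi(z)-\Phi(q^4z)=qz\Phi(q^2z)$ it derives three-term recurrences (Lemma~\ref{lem-XY-rec}) and solves them as
\[
\mathcal X_n=\mathcal A\,U^{(0)}_{2n}(q^4)+\mathcal B\,U^{(1)}_{2n-1}(q^4),
\]
with finite Rogers--Ramanujan polynomials as coefficients (Lemma~\ref{lem-XY-rep}). The $k$-sum is done first by Euler. For example, when $h=1$ the signed $k$-sum at $j=2n$ equals $(-1)^nq^{2n^2}(q^2;q^4)_n(q^2;q^4)_\infty$. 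Hence $S_h=A^{(h)}+B^{(h)}-C^{(h)}-D^{(h)}$ becomes $(q^2;q^4)_\infty(\mathcal A\mathcal P_h+\mathcal B\mathcal R_h)$.

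The single sums $\mathcal P_h,\mathcal R_h$ are then evaluated in three steps. First, expand $U^{(a)}_n$ via the finite Schur identity \eqref{eq-finite-Schur} into $q$-binomial coefficients $\qbinom{2n}{n-s}_{q^4}$. Second, use the closed form $I_s=(-1)^sq^{2s^2}J_2/J_4^2$ from the limiting $q$-Gauss sum. Third, apply the shift relations of Lemma~\ref{lem-I-relation}. This produces the modulus-$180$ theta series of Lemma~\ref{lem-PR-product}. Finally, $\mathcal F^{(h)}_0$ and $\mathcal F^{(h)}_1$ are the two halves of the $2$-dissection of $S_h$. Nothing in your proposal replaces this chain.

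\textbf{The bookkeeping step.} This is also not automatic. The substitutions $q\mapsto\ii^rq$ only separate exponents modulo $4$, so they cannot reduce eight parity parts to two series. The paper first proves Lemma~\ref{lem-M}, using $(q^{-4n};q^4)_\infty=0$: the eight $M^{(h)}_{x,y,z}$ collapse to four series $A^{(h)},B^{(h)},C^{(h)},D^{(h)}$, each lying in a single class $q^r\mathbb Z[[q^4]]$. This is the source of $M^{(h)}=A^{(h)}+3B^{(h)}+3C^{(h)}+D^{(h)}$.

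With $\mathcal F_0^{(h)}=A^{(h)}-C^{(h)}$ and $\mathcal F_1^{(h)}=B^{(h)}-D^{(h)}$ one then has $A^{(h)}+C^{(h)}=\ii^{h-1}\mathcal F_0^{(h)}(\ii q)$ and $B^{(h)}+D^{(h)}=\ii^{h-2}\mathcal F_1^{(h)}(\ii q)$. Carrying this out gives
\[
M^{(h)}=-\mathcal F_0^{(h)}+\mathcal F_1^{(h)}+2\ii^{h-1}\mathcal F_0^{(h)}(\ii q)+2\ii^{h-2}\mathcal F_1^{(h)}(\ii q).
\]
So the common factor $\ii^{h-2}$ in the displayed statement is a misprint on the $\mathcal F_0$ term. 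For instance, when $h=1$ we have $\mathcal F_0^{(1)}(\ii q)=A^{(1)}+C^{(1)}$, and the printed factor $\ii^{-1}$ would make $M^{(1)}$ non-real. Your claim that the shape ``would come out of this bookkeeping'' therefore needs both Lemma~\ref{lem-M} and an actual computation.
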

It is worth noting that Theorem \ref{thm-dual-exam9-1st} confirms the $(T_3,A_1)$ case of the aforementioned folklore conjecture (see \eqref{eq-folklore}). 

We also settle the remaining case of Example~11, thereby completing the proof of its modularity and filling the gap left in~\cite{Shi-Wang}.
\begin{theorem}\label{thm-dual-2}
The conjectural identity \eqref{dual-2} holds.
\end{theorem}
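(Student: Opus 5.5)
Our plan is to reduce the triple sum in \eqref{dual-2} to single-sum or product-friendly pieces by summing over one index at a time with Euler's identities and the $q$-binomial theorem, as was done for \eqref{dual-1}--\eqref{dual-5} in~\cite{Shi-Wang}. Write the exponent as
\begin{align*}
i^2+3j^2+4k^2-2ij+4jk+2j+4k=(i-j)^2+2j^2+4k^2+4jk+2j+4k.
\end{align*}
The only coupling of $i$ is through $(i-j)^2$. We first sum over $k$: the $k$-part is $\sum_k q^{4k^2+4jk+4k}/(q^4;q^4)_k$. The quadratic $4k^2=2\cdot 2k(k-1)/2\cdot\ldots$ is not of the pure Euler shape in base $q^4$, so instead we sum over $i$ first. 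Putting $\sum_i q^{i^2-2ij}/(q^4;q^4)_i$ into Euler form requires the constant term identity: write $q^{i^2}$ via the Jacobi triple product as a coefficient of $z^0$ in $(-zq;q^2)_\infty(-q/z;q^2)_\infty$-type products. After this, the $i$- and $k$-sums become infinite products in an auxiliary variable $z$, by Euler's $\sum_n x^n/(q^4;q^4)_n=1/(x;q^4)_\infty$ and $\sum_n q^{2n^2-2n}x^n/(q^4;q^4)_n=(-x;q^4)_\infty$. The remaining $j$-sum can then be handled the same way, leaving a constant-term expression $\mathrm{CT}_z$ of a quotient of theta-type products.

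Second, we evaluate this constant term. Since $(q^4;q^4)$ products with $z$ and $z^{-1}$ appear, we expand the quotient by partial fractions (or by the standard lemma $\mathrm{CT}_z\, \theta(az)\theta(b/z)/\ldots$ used in~\cite{Wang-rank3,Shi-Wang}), obtaining a finite sum of theta quotients. Equivalently, we split by the parity of $i$ and $j$, as in \eqref{def-Mxyz}, so that each piece has an exponent which is a quadratic form in base $q^2$ or $q^4$, where the Bailey-pair machinery (the Bailey lemma applied to a known pair such as those from Slater's list) gives products directly. We then recombine the pieces.

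Third, we show that the resulting finite sum of products equals the single product on the right of \eqref{dual-2}. We convert everything into $J_{a,24}$-notation and verify the identity among theta functions, either by the Weierstrass three-term relation/quintuple product or, more mechanically, by the modular-forms method: multiply by suitable eta quotients to get modular functions on $\Gamma_1(24)$ or $\Gamma_0(576)$, then check enough coefficients against the Sturm bound.

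The main obstacle is the second step. The companion identities \eqref{dual-1}, \eqref{dual-3}--\eqref{dual-5} presumably collapse to a single product because their linear terms align with a symmetry, while here the constant term yields several theta quotients whose sum must fuse into one product. We would first try the parity split of $j$ together with the identity \eqref{dual-4}, since the sums \eqref{dual-2} and \eqref{dual-4} share the quadratic form and the linear terms differ. Specifically, we would look for a $q$-difference relation obtained from $1/(q^4;q^4)_k=(1-q^{4k})/(q^4;q^4)_k+q^{4k}/(q^4;q^4)_{k}$-type shifts in $k$ and $j$. Such a relation would express \eqref{dual-2} as a combination of the already proved sums \eqref{dual-1}, \eqref{dual-3}--\eqref{dual-5} plus one simpler double sum, and we would finish with a theta-function verification.
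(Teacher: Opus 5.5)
There is a genuine gap, and it sits exactly where you locate the main obstacle.

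\textbf{Your first step does not reach a product quotient.} With your grouping $(i-j)^2+2j^2+4k^2+4jk+2j+4k$, write $q^{(i-j)^2}=\mathrm{CT}_z\big[z^{j-i}\sum_{s}q^{s^2}z^{s}\big]$. This turns the $i$-sum into $1/(z^{-1};q^4)_\infty$ and the $j$-sum into $(-q^{4k+4}z;q^4)_\infty$. What remains is $(-q^4z;q^4)_\infty\sum_{k}q^{4k^2+4k}/\big((q^4;q^4)_k(-q^4z;q^4)_k\big)$, which is not a product.

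So you never reach a one-variable constant term of a theta quotient; a second theta expansion would only give a two-variable one. Your second step is then a list of tools (partial fractions, a parity split, unnamed Bailey pairs) without the identity that actually finishes.

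\textbf{The fallback is also unsupported.} Write $F(u,v,w)$ for the sum in \eqref{dual-2} with $q^{2j+4k}$ replaced by $u^iv^jw^k$. The shift in $j$ gives $F(q^2,q^{-4},1)-F(q^2,1,1)=q^{-1}F(1,q^2,q^4)$. Here $F(q^2,1,1)$ is \eqref{dual-4}, but $F(q^2,q^{-4},1)$ is a new unknown. Shifts in $i$ and $k$ likewise produce new sums, and nothing indicates the system closes on \eqref{dual-1}, \eqref{dual-3}--\eqref{dual-5}.

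\textbf{The missing idea is the right completion of the square.} The exponent equals $(2k-i+j)^2+2j^2+4ik+2j+4k$. The cross term $4ik$ is absorbed by summing over $i$ with Euler and then over $k$ with the $q$-binomial theorem, giving $(q^4z;q^4)_\infty/\big((z^{-1};q^4)_\infty(q^4z^2;q^4)_\infty\big)$. The $j$-sum is $(-q^4z;q^4)_\infty$. A single theta variable then suffices, and for the left side $S(q)$ of \eqref{dual-2} one gets
\[
S(q)=\mathrm{CT}_z\frac{(q^2,-qz,-q/z;q^2)_\infty}{(z^{-1};q^4)_\infty(q^4z^2;q^8)_\infty}=\sum_{m,n\geq 0}\frac{q^{(2n-m)^2+4n}}{(q^4;q^4)_m(q^8;q^8)_n}.
\]
The parts with $m$ even and $m$ odd are the paper's $Z_0$ and $Z_1$. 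These are summed over $n$ by Euler, transformed by \eqref{Heine}, and evaluated through \eqref{S29} and \eqref{Rama4211} at $q\to -q^4$.

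\textbf{How the paper organises this.} It runs the same computation on the theta-twisted sum $F_2=\sum(\cdots)\sum_s q^{(2s+i-j)^2}=aS_0+bS_1$. It pairs this with $F_1=bS_0+aS_1$, obtained from the rank-four tadpole sum in \eqref{id-SW} via the rank-reduction formulas of \cite{Shi-Wang} and the dual-of-Example-8 identity \eqref{id-dual-8-3}. Then $S=(F_1+F_2)/\varphi(q)$.

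Note that $a,S_0,Z_0\in\mathbb{Z}[[q^4]]$ and $b,S_1,Z_1\in q\mathbb{Z}[[q^4]]$. Hence the $F_2$ computation alone already forces $S_0=Z_0$ and $S_1=Z_1$, which is the direct route displayed above. So your direct constant-term strategy can be made to work, but only with this specific completion of the square, which your proposal does not contain.
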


As for the dual of Example 12, we establish the following identities to confirm their modularity. \begin{theorem}\label{thm-dual-exam12}
We have
\begin{align}
&\sum_{i,j,k\geq 0} \frac{q^{\frac{3}{2}i^2+\frac{7}{2}j^2+4k^2-4ij-3ik+4jk+\frac{1}{2}i-\frac{3}{2}j+2k}}{(q^5;q^5)_i(q^5;q^5)_j(q^5;q^5)_k} \label{dual-12-1} \\
&=2\frac{J_{50}^{11}}{J_{5,50}^4J_{10,50}J_{15,50}^3J_{20,50}^2J_{25}}+q\frac{J_{50}^{11}J_{10,50}}{J_{5,50}^5J_{15,50}^2J_{20,50}^4J_{25}}
+4q^2\frac{J_{50}^{12}}{J_{5,50}^3J_{10,50}^2J_{15,50}^3J_{20,50}J_{25}^3},  \nonumber\\
&\sum_{i,j,k\geq 0} \frac{q^{\frac{3}{2}i^2+\frac{7}{2}j^2+4k^2-4ij-3ik+4jk+\frac{5}{2}i-\frac{5}{2}j}}{(q^5;q^5)_i(q^5;q^5)_j(q^5;q^5)_k}  \label{dual-12-2} \\
&=\frac{J_{20,50}J_{50}^{11}}{J_{5,50}^2J_{10,50}^4J_{15,50}^5J_{25}}+2q\frac{J_{50}^{11}}{J_{5,50}^3J_{10,50}^2J_{15,50}^4J_{20,50}J_{25}}
+4q^4\frac{J_{50}^{12}}{J_{5,50}^3J_{10,50}J_{15,50}^3J_{20,50}^2J_{25}^3}. \nonumber
\end{align}
\end{theorem}
This result was initially discovered with the aid of Garvan's $q$-series Maple package \cite{Garvan}. Specifically, we computed the $5$-dissections of the relevant Nahm sums and observed that each component either vanishes or appears to admit a representation as a single infinite product. The proof, however, is considerably more involved: its key step is to reduce these rank-three Nahm sums to a collection of single sums with known product evaluations.

In the course of our investigation of the duals of Zagier's examples, we obtain several additional results that are of independent interest.

As a by-product of our investigation of Example 9, we discover the following new family of rank-three modular Nahm sums.
\begin{theorem}\label{thm-ex9-new}
We have
 \begin{align}
\sum_{i,j,k\geq 0}\frac{q^{4i^2+3j^2+2k^2+4ij-2jk+4i+2j}}{(q^4;q^4)_i(q^4;q^4)_j(q^4;q^4)_k}        &=\frac{J_4J_{4,72}J_{28,72}J_{32,72}J_{36,72}}{J_2J_8J_{14,72}J_{18,72}J_{22,72}}+\frac{2q^5J_8J_{8,72}}{J_4^2}, \label{eq-thm-T-1}\\
\sum_{i,j,k\geq 0}\frac{q^{4i^2+3j^2+2k^2+4ij-2jk-2j}}{(q^4;q^4)_i(q^4;q^4)_j(q^4;q^4)_k}  &=\frac{J_4J_{24}J_{12,24}}{J_2J_8J_{6,24}}+\frac{2qJ_8J_{24}}{J_4^2},  \label{eq-thm-T-2}\\
\sum_{i,j,k\geq 0}\frac{q^{4i^2+3j^2+2k^2+4ij-2jk-2k}}{(q^4;q^4)_i(q^4;q^4)_j(q^4;q^4)_k}  &=\frac{qJ_4J_{4,72}J_{16,72}J_{20,72}J_{36,72}}{J_2J_8J_{2,72}J_{18,72}J_{34,72}}+\frac{2J_8J_{32,72}}{J_4^2}, \label{eq-thm-T-3}\\
\sum_{i,j,k\geq 0}\frac{q^{4i^2+3j^2+2k^2+4ij-2jk}}{(q^4;q^4)_i(q^4;q^4)_j(q^4;q^4)_k}  &=\frac{J_4J_{8,72}J_{20,72}J_{28,72}J_{36,72}}{J_2J_8J_{10,72}J_{18,72}J_{26,72}}+\frac{2q^3J_8J_{16,72}}{J_4^2}.  \label{eq-thm-T-4}
\end{align}
\end{theorem}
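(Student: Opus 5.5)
Since $k$ enters the quadratic form of every sum in Theorem~\ref{thm-ex9-new} only through $2k^2-2jk$ (plus possibly the linear term $-2k$), I will first carry out the $k$-summation in closed form for fixed $j$. With base $q^4$ we have $2k^2 = 4\binom{k}{2}+2k$, so
\begin{align*}
\sum_{k\ge0}\frac{q^{2k^2-2jk+ck}}{(q^4;q^4)_k}=\sum_{k\ge 0}\frac{q^{4\binom{k}{2}}(q^{2-2j+c})^k}{(q^4;q^4)_k}=(-q^{2-2j+c};q^4)_\infty ,\qquad c\in\{0,-2\},
\end{align*}
by Euler's identity. Shifting this product to $(-q^{2+c};q^4)_\infty$ produces a finite factor $(-q^{2-2j+c};q^4)_{\lceil\cdot\rceil}$ that depends on $j$. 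After inverting the negative powers, it becomes a power of $q$ times $(-q^{2-c};q^4)_{m}$, where $m\approx j/2$. The parity of $j$ therefore enters naturally, and I will split $j=2m$ and $j=2m+1$.

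Next I treat the $i$-sum. The quadratic part $4i^2+4ij$ equals $(2i+j)^2-j^2$, so $4i^2+3j^2+4ij=(2i+j)^2+2j^2$. I set $n=2i+j$, and the sum over $i$ is handled as follows. For fixed $j$, the series $\sum_i q^{4i^2+4ij+4i}/(q^4;q^4)_i$ is again Euler-type with respect to $q^8$ only after rewriting $4i^2=8\binom{i}{2}+4i$, so it is not a plain product. Instead I will write the double sum over $(i,j)$ together with the finite product from $k$, and then apply the $q$-Chu--Vandermonde or $q$-binomial theorem to collapse $j$ for fixed $n$.

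Concretely, after the $k$-step each identity becomes a double sum of the shape
\begin{align*}
(-q^{2+c};q^4)_\infty\sum_{i,j\ge0}\frac{q^{4i^2+j^2+4ij+\text{linear}}\,(-q^{2-c};q^4)_{\lfloor j/2\rfloor}}{(q^4;q^4)_i(q^4;q^4)_j}.
\end{align*}
Splitting by the parity of $j$ turns $(q^4;q^4)_{2m}$ into $(q^4;q^8)_m(q^8;q^8)_m$. Combined with the factor $(-q^{2};q^4)_m$ (with $c=0$), we get $(q^4;q^8)_m$ over $(q^2;q^4)_m$-type expressions, i.e.\ a base-$q^4$ rank-two sum in $(i,m)$. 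I then sum over $i$ for fixed $N=i+m$ using the $q$-binomial theorem/Cauchy, to reach single sums in $N$. These single sums should be recognisable in Slater's list, with mod $18$ (hence the modulus $72$ after $q\to q^4$), or in Bailey-pair identities. The odd-$j$ part should produce the second term $2q^{\ast}J_8J_{\ast,72}/J_4^2$, where the factor $2$ comes from $(-1;q^4)$-type products. For \eqref{eq-thm-T-2}, which is modulo $24$, the analogous single sum should be a mod $6$ Slater identity.

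The main obstacle is the middle step: organising the $(i,m)$ double sum so that one summation collapses cleanly. If the direct approach fails, I will instead use the constant-term method. I will write $1/(q^4;q^4)_i$ sums via Jacobi triple product integrals, reduce each identity to an evaluation of the constant term of a product of theta functions, and then identify the resulting theta quotients. Finally I will verify the product-side identities, such as combining the two contributions, by standard theta function identities (quintuple product for the mod $72$ quotients), checking numerically with Garvan's package.
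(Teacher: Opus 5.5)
Your first steps agree with the paper's: you sum over $k$ by Euler, split $j$ by parity, and cancel $(-q^2;q^4)_m$ against $(q^4;q^4)_{2m}=(q^2;q^4)_m(-q^2;q^4)_m(q^8;q^8)_m$. The bookkeeping in your displayed shape is wrong, though, in three places:
\begin{itemize}
\item For $j=2m$, the factor $(-q^{2-4m};q^4)_\infty=q^{-2m^2}(-q^2;q^4)_m(-q^2;q^4)_\infty$ leaves the exponent $4i^2+8im+10m^2$. It is not $4i^2+4ij+j^2=(2i+j)^2$, which would be a degenerate form.
\item For odd $j$ and $c=0$, the prefactor is $(-1;q^4)_\infty$ and the finite factor is $(-q^4;q^4)_m$, not $(-q^2;q^4)_\infty(-q^2;q^4)_m$.
\item For $c=-2$ the two parities swap roles. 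So in \eqref{eq-thm-T-3} the term $2J_8J_{32,72}/J_4^2$ comes from even $j$, not odd $j$.
\end{itemize}

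The genuine gap is the step you flag as the main obstacle, and the tool you propose for it cannot work. With $N=i+m$ one has $4i^2+8im+10m^2=4N^2+6m^2$. For example, the even-$j$ part of \eqref{eq-thm-T-1} is
\[
(-q^2;q^4)_\infty\sum_{N\ge0}q^{4N^2+4N}\sum_{m=0}^{N}\frac{q^{6m^2}}{(q^4;q^4)_{N-m}(q^2;q^4)_m(q^8;q^8)_m}.
\]
The inner sum carries the quadratic weight $q^{6m^2}$, which none of the $q$-binomial theorem, Cauchy's identity or $q$-Chu--Vandermonde can absorb. It is also not a product. For $N=2$ it equals $(1-q^2+q^4-q^6+2q^8-q^{10}+q^{12}-q^{14})/((1-q^6)(1-q^8)(1-q^{16}))$. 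The numerator has odd degree in $q^2$ and does not vanish at $q^2=\pm1$, so it is not a product of cyclotomic factors. Hence you never reach Slater-type single sums.

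The missing idea is to read the double sum as two iterations of Bailey's lemma:
\[
\sum_{n\ge0}a^nq^{n^2}\sum_{r=0}^{n}\frac{a^rq^{r^2}\beta_r(a;q)}{(q;q)_{n-r}}=\frac{1}{(aq;q)_\infty}\sum_{n\ge0}a^{2n}q^{2n^2}\alpha_n(a;q).
\]
Here take $q\to q^4$, $a=q^4$ and $\beta_m=q^{2m^2-4m}/\bigl((q^2;q^4)_m(q^8;q^8)_m\bigr)$, which is the pair \eqref{dual-9-bp1} with $q^{1/2}\to-q^{1/2}$. Your inner finite sum is then exactly the $\beta'_N$ produced by one step of the lemma. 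The outer sum is evaluated by the second step, leaving a single theta series in $\alpha_n$. The Jacobi triple product, plus one theta-function identity, turns that series into the mod-$72$ quotient. The other seven pieces go the same way, using the other Bailey pairs listed in Section \ref{sec-Bailey}.

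Your constant-term fallback is not specified. Its natural form splits $4i^2+4ij+3j^2-2jk+2k^2=2i^2+2(i+j)^2+(j-k)^2+k^2$, and that is exactly Lemma \ref{dual-ex9-lemma1}. It turns $T$ into theta-weighted combinations of the harder dual-of-Example-9 sums, not into products. The paper feeds Theorem \ref{thm-ex9-new} into that relation, not the other way round.
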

Motivated by Zagier's duality observation, it is natural to conjecture that the duals of these Nahm sums are also modular (see Table \ref{tab-newexample}). This is indeed true and we establish the following theorem which implies its modularity.
\begin{theorem}\label{thm-dual-rank3}
We have
\begin{align}
 &\sum_{i,j,k\geq 0}\frac{q^{5i^2+8j^2+8k^2-8ij-4ik+8jk+4i-8j-4k}}{(q^{12};q^{12})_i(q^{12};q^{12})_j(q^{12};q^{12})_k}=\frac{2J_{18}^3J_{36}}{J_{9,36}^2J_{12,36}^2}+\frac{qJ_{18}J_{36}^2J_{6,36}}{J_{3,36}J_{12,36}^2J_{15,36}},  \label{eq-rank3-dual-1}\\
 &\sum_{i,j,k\geq 0}
\frac{
q^{5i^2+8j^2+8k^2-8ij-4ik+8jk+6i}
}{
(q^{12};q^{12})_i
(q^{12};q^{12})_j
(q^{12};q^{12})_k
}   \label{eq-rank3-dual-2}\\
&=\frac{J_6^{2}}{J_3J_{12}^{3}}
\left(\frac{J_9J_{12}J_{54}^{3}J_{108}^{2}J_{3,54}J_{12,54}}{J_{18}^{2}J_{27}^{2}J_{6,108}J_{24,108}^{2}}
-q^{21}\frac{J_{27}J_{15,54}J_{6,108}J_{12,108}^{2}}{J_{54}^{2}J_{30,108}}
+2q^8
\frac{J_9J_{36}J_{12,108}}{J_{18}}
\right), \nonumber
 \\
 &\sum_{i,j,k\geq 0}\frac{q^{5i^2+8j^2+8k^2-8ij-4ik+8jk+2i-4j-8k}
}{(q^{12};q^{12})_i(q^{12};q^{12})_j(q^{12};q^{12})_k}  \label{eq-rank3-dual-3} \\
&=2\frac{J_6^{2}J_9J_{36}J_{48,108}}{J_3J_{12}^{3}J_{18}}
+q^4\frac{J_6^{4}J_{18}J_{27}^{2}J_{108}^{4}J_{6,54}J_{21,54}J_{48,108}}{
J_3^{2}J_{12}^{3}J_{36}^{2}J_{54}^{6}J_{24,108}}
+q^{25}\frac{J_6J_{18}J_{108}^{2}J_{3,54}J_{6,108}^{2}}{J_{12}J_{36}^{3}J_{54}
J_{24,54}J_{24,108}},  \nonumber \\
&\sum_{i,j,k\geq 0}\frac{q^{5i^2+8j^2+8k^2-8ij-4ik+8jk}}{(q^{12};q^{12})_i
(q^{12};q^{12})_j(q^{12};q^{12})_k}  \label{eq-rank3-dual-4}
 \\
&=\frac{J_{18}^{2}J_{27}J_{54}J_{42,108}^{2}J_{48,108}}{
J_9J_{12}J_{36}^{2}J_{21,54}^{2}J_{30,108}}
+q^{24}\frac{J_6J_{18}J_{108}J_{15,54}J_{6,108}^{2}J_{24,108}
}{J_{12}^{2}J_{36}^{2}J_{54}^{2}J_{42,108}}
+2q^5\frac{J_6^{\,2}J_9J_{36}J_{24,108}}{J_3J_{12}^{3}J_{18}}. \nonumber 
 \end{align}
\end{theorem}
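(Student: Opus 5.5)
Our plan is to sum out the indices one at a time with Euler's identities, reducing each rank-three sum to single sums with known product evaluations. Write the exponent as $Q(i,j,k)+L(i,j,k)$ with
\[
Q=5i^2+8j^2+8k^2-8ij-4ik+8jk,
\]
and work in base $q^{12}$.

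\textbf{Step 1: eliminate $j$.} Complete the square in $j$:
\[
8j^2-8ij+8jk=8\bigl(j-\tfrac{i-k}{2}\bigr)^2-2(i-k)^2 .
\]
The $j$-dependence is then Gaussian with half-integer shift, so the natural move is to split by the parity of $i-k$. (The factor $q^{8j^2}$ is not of the shape $q^{6j^2}$ that Euler's identity $\sum_j q^{6j^2-6j}z^j/(q^{12};q^{12})_j=(-z;q^{12})_\infty$ handles directly.) Instead, we rewrite in terms of $k$ first.

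\textbf{Step 1 (revised): sum over $k$.} The $k$-part is $8k^2+8jk-4ik$. Writing $8k^2=12\binom{k}{2}\cdot\frac23+\dots$ is also not Euler-type. So we use the standard device from \cite{Wang-rank3,Shi-Wang}. We apply the constant-term method: introduce $\frac{1}{(q^{12};q^{12})_k}=\mathrm{CT}_z$ representations, or equivalently use
\[
\sum_{k}\frac{q^{6k^2-6k}x^k}{(q^{12};q^{12})_k}=(-x;q^{12})_\infty
\]
together with the Jacobi triple product to linearize the cross terms. Concretely, write $q^{Q}$ via
\[
q^{2(j+k)^2}\cdot q^{6j^2+6k^2}\cdots,
\]
where $q^{2(j+k)^2-\ldots}$ is expressed as the constant term of a theta series $\sum_m z^m q^{2m^2}$. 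Each of $i$, $j$, $k$ then appears with exponent $6n^2$ up to linear terms (after checking that $Q-2(j+k-i)^2$ or a similar combination has diagonal $6$). Indeed,
\[
Q-2(2j+2k-i)^2\cdot\tfrac14\ldots
\]
must be tuned, and the first task is to find integers $a,b,c,m$ with
\[
Q=6(i^2+j^2+k^2)+m(ai+bj+ck)^2,
\]
possibly after rescaling. Checking: $Q-6(i^2+j^2+k^2)=-i^2+2j^2+2k^2-8ij-4ik+8jk$, which is indefinite, so we instead use $Q=6(\cdot)-(\text{square})+(\text{square})$ and two theta constant terms. Each index sum then becomes an Euler product, giving
\[
\mathrm{CT}_{z,w}\,\Theta_1(z)\,\Theta_2(w)\,(-\ast;q^{12})_\infty^3.
\]
The products are then recombined with the triple product. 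This reduces the problem to single or double sums of Hecke/Rogers–Ramanujan type.

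\textbf{Step 2: evaluation.} After the constant-term extraction, the remaining sums are split by residue classes: mod $2$ from the half-integer shift, and mod $3$ coming from the $q^{108}$ and $q^{54}$ moduli. Each piece is matched to a known single-sum identity from Slater's list, or to quintuple-product evaluations. The three-term shape of \eqref{eq-rank3-dual-2}--\eqref{eq-rank3-dual-4} (with the coefficient $2$ term) suggests two pieces come from a Bailey-pair single sum and one from a theta quotient. The term $2q^{\ast}J_6^2J_9J_{36}J_{\ast,108}/(J_3J_{12}^3J_{18})$, common to three of the identities, should come from a uniform piece where the factor $2$ arises from $(-1;q)$-type symmetry.

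\textbf{Step 3: verification and main obstacle.} Finally, the product sides are combined; where products with different moduli appear, the identity between theta quotients is verified by the theory of modular functions (valence formula, with Sturm-bound coefficient checking). The main obstacle is Step 1: finding a decomposition of the indefinite residual form that produces tractable constant terms, rather than indefinite theta series. If that fails, we would instead use the Bailey-lattice/Bailey-pair reduction. In that approach we sum over $i$ via the $q$-binomial theorem after writing $q^{5i^2-8ij-4ik}$ with $i$ paired against $2j+k$, which yields a double sum in $(j,k)$ that we identify as a known rank-two modular Nahm sum from \cite{Wang-rank2}.
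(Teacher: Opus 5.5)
Your plan has a genuine gap at exactly the step you flag as the main obstacle, and neither of your two routes gets past it.

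First, no index can be summed directly. In base $q^{12}$, Euler's identities (and the $q$-binomial theorem) sum an index only when its diagonal coefficient is $0$, giving $1/(z;q^{12})_\infty$, or $6$, giving $(-z;q^{12})_\infty$. The diagonal of $Q$ is $(5,8,8)$. For fixed $j,k$ the $i$-sum is $\sum_i q^{5i^2+\lambda i}/(q^{12};q^{12})_i$, which is not a product. So your fallback of summing over $i$ by the $q$-binomial theorem is not available, and there is no reason the outcome would be a known rank-two Nahm sum.

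Second, your constant-term repair fails. $Q-6(i^2+j^2+k^2)$ has a negative diagonal entry and determinant $36>0$, hence signature $(1,2)$. A factor $q^{-cL^2}$ with $c>0$ cannot be written as the constant term of a convergent theta series. Worse, $Q-6i^2$ already has $-1$ on the $i$-diagonal, so no splitting $Q=6i^2+(\text{positive semidefinite})$ exists. The target shape, three factors $(-\ast;q^{12})_\infty$ times two theta series, is therefore unattainable. As a result, Step~2 (``match each piece to Slater's list'') has nothing concrete to act on.

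The missing idea is to subtract $6$ from the $k$-diagonal only. The remainder $Q-6k^2=3i^2+2(2j-i+k)^2$ is positive semidefinite of rank two; this is the paper's key identity \eqref{key-square}. After $q^{12}\to q$, the paper proceeds in four stages.
\begin{enumerate}
\item It sums over $(j,k)$ grouped by $n=2j+k$, via $H_n$ with generating function $(-q^{1/2}z;q)_\infty/(z^2;q)_\infty$. Splitting $2j+k-i$ modulo $3$ then turns the theta factor into the sequences $a_n,b_n,c_n$.
\item These sequences are not evaluated by any single-sum identity. Instead they satisfy the third-order $q$-recurrence \eqref{eq-common-rec}, whose solutions are identified with the Andrews--Baxter--Forrester $4$-state configuration sums $X_n(a,b,c)$.
\item The coefficients are fixed by Andrews' mod-$15$ identities (Lemma~\ref{lem-Andrews}). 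This yields $\Lambda_0,\Lambda_1,\Omega_3,\Omega_6$ and $b_n=c_n$.
\item The outer sum $\sum_i q^{i^2/4}(\cdots)/(q;q)_i$ has $i$-diagonal $3$, which is why it must be kept as an honest $q$-series. It is split by the parity of $i$ and evaluated with Bailey pairs read off from the bosonic formula \eqref{X-binomial-exp}. These pairs are relative to $1$, $q$ and $q^2$, with the parameter shifted using \eqref{raising}, and are fed into Bailey's lemma \eqref{bailey-s1}.
\end{enumerate}
Only after all this does a final theta-function verification like your Step~3 have explicit expressions to compare. Without the semidefinite splitting, the recurrence and its configuration-sum solution, and the Bailey pairs for the $i$-sum, your proposal is not yet a proof.
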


Furthermore, using the results we established for the dual of Zagier's rank three examples, we are able to prove some conjectural identities in the literature. 

First, we are able to prove some previously conjectured identities for rank-four tadpole Nahm sums.
\begin{theorem}\label{thm-CSW-conj}
The conjectural identities \eqref{id-SW} and \eqref{id-CW-1}--\eqref{id-CW-3} hold.
\end{theorem}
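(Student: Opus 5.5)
The plan is to reduce each rank-four tadpole sum to a rank-three sum that has already been evaluated in this paper, by summing out one variable with a classical $q$-series identity. The natural variable to remove is $l$, which only enters through the factor $q^{\frac12(k-l)^2+\beta l}/(q;q)_l$ with $\beta\in\{-\tfrac12,0,\tfrac12\}$. Expanding $\frac12(k-l)^2=\frac12k^2-kl+\frac12l^2$, the inner sum over $l$ is
\begin{align*}
\sum_{l\ge0}\frac{q^{\frac12 l^2+(\beta-k)l}}{(q;q)_l}=(-q^{\frac12+\beta-k};q)_\infty
\end{align*}
by Euler's identity. For $\beta=\pm\frac12$ this is $(-q^{1-k};q)_\infty$ or $(-q^{-k};q)_\infty$, which factor as $q^{-\binom{k+1}{2}}$ or $q^{-\binom{k}{2}}$ times $(-q;q)_k(-q;q)_\infty$ up to a factor $2$ in the second case. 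This explains the constants $2,4,6,8$ on the right-hand sides. The $\beta=0$ case, \eqref{id-CW-2}, gives a half-integer shift $(-q^{1/2-k};q)_\infty$. I would handle it after replacing $q$ by $q^2$, so that it behaves like the other cases with $(-q;q^2)$-type products.

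After this step each sum becomes a triple sum in $i,j,k$ with an extra factor $(-q;q)_k$ or $(-q^{1/2};q)_k$, and with the quadratic form in $k$ reduced (the $\frac12k^2$ cancels against $-\binom{k+1}{2}$ up to linear terms). Writing $(-q;q)_k/(q;q)_k=1/(q;q)_k\cdot(q^2;q^2)_k/(q;q)_k^2\cdots$ is not directly useful. Instead I would use $(-q;q)_k(q;q)_k=(q^2;q^2)_k$, so the $k$-denominator becomes $(q^2;q^2)_k$ times leftover $(q;q)$-factors. The cleaner alternative is to apply the substitution $q\to q^4$ used throughout for Example 9 and match the resulting sums with the parity-split series $M_{x,y,z}^{(h)}(q)$ or their relatives. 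Indeed, the $(T_3,A_1)$ sums $M^{(h)}$ have exactly the shape $i^2+2j^2+2k^2-2ij-2jk$ of a tadpole of length three, so I expect \eqref{id-SW} and \eqref{id-CW-1}--\eqref{id-CW-3} to each become a linear combination of the $M^{(h)}$ (or their parity components), evaluated at $q^{1/4}$ and $\ii q^{1/4}$. Concretely, I would first eliminate $i$ instead via the $q$-binomial/Euler trick when more convenient, then rename variables so that the remaining quadratic form equals the one in \eqref{M-defn-general}, and identify $u,v,w$.

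Once the identification is made, Theorem \ref{thm-dual-exam9-1st} supplies $M^{(h)}$ as combinations of $\mathcal F_k^{(h)}(q)$ and $\mathcal F_k^{(h)}(\ii q)$. The claimed products $6J_2^2J_3^3/J_1^5$, $8q^{-1}J_2^6/J_1^6$, etc. then follow from standard theta-function identities: combine the terms at $q$ and $\ii q$ through $2$-dissections, and verify the resulting product identities by modular-function methods (a Sturm-bound check) or by Jacobi triple product manipulations.

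The main obstacle is the middle step. The extra $(-q;q)_k$ factor does not disappear trivially, so I would have to apply a second summation, e.g. expand $(-q;q)_k$ by the $q$-binomial theorem or use a Bailey-pair/constant-term argument, to land exactly on the $M$-family. If that fails, I would instead evaluate the resulting triple sums directly by the same parity-splitting method used for $M_{x,y,z}$, reducing them to single sums with known product evaluations.
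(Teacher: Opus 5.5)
There is a genuine gap, and it sits exactly at the step you flag as the main obstacle. The target you propose for that step does not fit the sums you produce.

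\textbf{The proposed identification with $M^{(h)}$ fails.} Summing out $l$ by Euler's identity is fine, with two corrections. First, $(-q^{1-k};q)_\infty=q^{-\binom{k}{2}}(-1;q)_k(-q;q)_\infty$, so for $\beta=\frac12$ you get $(-1;q)_k$, not $(-q;q)_k$. Second, the factor $2$ coming from $(-1;q)_\infty$ occurs only for $\beta=-\frac12$. So this step does not explain the constants $6$ and $8$; they come out of the final theta-function evaluation.

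After this step you have a triple sum with quadratic form $\frac12\bigl(i^2+(i-j)^2+(j-k)^2\bigr)$, i.e.\ matrix $\mathcal{C}(T_3)$. It also carries a non-Nahm factor $(-q;q)_k$, $(-1;q)_k$ or $(-q^{1/2};q)_k$. The sums $M^{(h)}$ have matrix $\frac12\mathcal{C}(T_3)$ instead. After $q\to q^4$ your form is $2\bigl(i^2+(i-j)^2+(j-k)^2\bigr)$, whereas \eqref{M-defn-general} has $(i-j)^2+(j-k)^2+k^2$. No renaming of variables matches these. You also give no identity that would absorb the leftover finite product.

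In the paper, the $M^{(h)}$ are tied to the different rank-four family $N^{(h)}$, not to $T_4$ tadpole sums. So invoking Theorem \ref{thm-dual-exam9-1st} here is speculation, and the fallback of evaluating the triple sums directly is not a proof plan.

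\textbf{The missing idea is the constant-term rank reduction \eqref{dual-ex11-reducingrank}.} Encode the coupling $(k-l)^2$ as a constant term in an auxiliary variable $y$. Then apply the rank-three reduction \eqref{rank-reducing-3}, which is valid because $z_1z_2=1$ in all four cases. This gives
\[
\chi(z_1,z_2,z_3,z_4;q)=\frac{1}{(q;q)_\infty}\sum_{m,n,r\geq0}\frac{q^{m^2+\frac34n^2+\frac12r^2+mn-mr-nr}z_3^m(z_2z_3)^nz_4^r}{(q;q)_m(q;q)_n(q;q)_r}\sum_{s\in\mathbb{Z}}q^{(s+\frac n2)^2}z_1^{-s}.
\]
After $q\to q^4$, the triple sum is a Nahm sum for the dual of Zagier's Example 8, which Theorem \ref{thm-dual-8} already evaluates. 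You then complete the square in $s$ and, where needed, split by the parity of $n$. This expresses each tadpole sum through $\frac12\bigl(S(q)\pm S(-q)\bigr)$ multiplied by $a(q)$ or $b(q)$ from \eqref{ab-defn}. Here $S$ is \eqref{id-dual-8-3}, \eqref{id-dual-8-1}, \eqref{id-dual-8-2} or \eqref{id-dual-8-7}, for \eqref{id-SW}, \eqref{id-CW-1}, \eqref{id-CW-2} and \eqref{id-CW-3} respectively. For \eqref{id-CW-2} the theta factor becomes $\sum_s q^{(s-\frac14)^2}$, so no parity split is needed. Routine theta-function identities then finish the proof.
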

Note that the authors \cite[Theorem 1.5]{Shi-Wang} proved \eqref{id-SW} by assuming \eqref{dual-2}. Hence Theorem \ref{thm-dual-2} already implies \eqref{id-SW}. We will present another proof which proves \eqref{id-SW} first and then \eqref{dual-2}. Similar arguments allow us to prove \eqref{id-CW-1}--\eqref{id-CW-3} based on some identities for the dual of Zagier's Example 8. We noted that Li \cite{Li-arXiv} recently proved \eqref{id-CW-1}--\eqref{id-CW-3} independently using a different approach.

Remarkably, the dual of Example 9 is connected to the new rank four Nahm sums in the work of Cao--Wang \cite[Table 4]{Cao-Wang-rank4}.  Let
\begin{align}\label{eq-N-defn}
    N(u,v,w;q):=\sum_{i,j,k,l\geq 0}\frac{q^{2i^2+2j^2+k^2+l^2-2ij-2jk-kl}u^{i-j}v^kw^l}{(q^2;q^2)_i(q^2;q^2)_j(q^2;q^2)_k(q^2;q^2)_l}.
\end{align}
Cao and Wang \cite[Table 4 and Conjecture 4.4]{Cao-Wang-rank4} conjectured that the following Nahm sums are modular: 
\begin{equation}
\begin{split}
N^{(1)}(q)&:=N(1,1,1;q), \quad N^{(2)}(q):=N(q^{-2},1,q^{-1};q),   \\
N^{(3)}(q)&:=N(q^{-4},q^{-1},1;q), \quad N^{(4)}(q):=N(q^{-2},1,1;q).
\end{split}
\end{equation}
 In particular, they \cite[Conjecture 3.3]{Cao-Wang-rank4} conjectured that
    \begin{align}\label{eq-CW-conj3.3}
        \sum_{i,j,k,l\geq 0}\frac{q^{2i^2+2j^2+k^2+l^2-2ij-2jk-kl-2i+2j-l}}{(q^2;q^2)_i(q^2;q^2)_j(q^2;q^2)_k(q^2;q^2)_l}=9\frac{J_3^3J_6^2}{J_1J_2^4}.
    \end{align}
They were not able to provide product expressions for the other three Nahm sums. These Nahm sums arise as the lift-dual of the Nahm sums in Zagier’s Example 9. However, it is not a priori clear whether there is any relation between them and the dual of Example 9. To our surprise, we find that they are closely related to each other. Recall one of Ramanujan's theta functions:
\begin{align}\label{eq-varphi}
    \varphi(q):=\sum_{n=-\infty}^\infty q^{n^2}=\frac{J_2^5}{J_1^2J_4^2}.
\end{align}

 \begin{theorem}\label{thm-CW-conj3.3}
For $h=1,2,3,4$ we have
\begin{align}\label{eq-thm-Nh}
    N^{(h)}(q^2)=\frac{1}{2}\frac{q^{c_h}}{J_4}\big(\varphi(q)M^{(h)}(q)-(-1)^h\varphi(-q)M^{(h)}(-q)\big)
\end{align}
where $c_0=0$, $c_1=-1$, $c_2=-4$ and $c_4=-1$. 
As a consequence, the conjectural identity \eqref{eq-CW-conj3.3} holds.
\end{theorem}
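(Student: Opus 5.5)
The identity \eqref{eq-thm-Nh} says that, after $q\to q^2$, $N^{(h)}$ is the part of $\varphi(q)M^{(h)}(q)$ with exponents in one fixed residue class mod~$2$, divided by $J_4$. So the plan is to turn this into a statement about the rank-three series $M$, by summing out one index of $N$ and then relating two generating functions. Write $N^{(h)}(q^2)$ with base $q^4$, so the quadratic form becomes $4i^2+4j^2+2k^2+2l^2-4ij-4jk-2kl$. The variable $l$ enters only through $l^2-kl$. Completing the square as $l^2-kl=(l-k/2)^2-k^2/4$ suggests that summing over $l$ produces a theta-type factor, and $\varphi(\pm q)$ should come from exactly this.

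\textbf{Step 1: sum out $l$.} Summing over $l$ for fixed $k$ uses the standard identity
\[
\sum_{l\ge0}\frac{q^{2l^2-2kl}w^l}{(q^4;q^4)_l}
\]
(a finite analogue of Euler/Jacobi triple product). I would instead use the constant-term method. Write
\[
\frac{q^{2l^2}}{(q^4;q^4)_l}=\mathrm{CT}_z\,\frac{(-zq^2;q^4)_\infty}{\cdots}
\]
and, more usefully, expand $1/(q^4;q^4)_l$ via $\frac{1}{J_4}\sum_{m}(-1)^m q^{2m^2+2m}\cdots$. Concretely, I would insert the representation
\[
\frac{1}{(q^4;q^4)_l}=\frac{1}{(q^4;q^4)_\infty}\sum_{s\ge0}\frac{(-1)^sq^{2s^2+2s+4ls}}{(q^4;q^4)_s}
\]
(Euler), which explains the prefactor $1/J_4$. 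After this substitution the sum over $l$ (now unrestricted in sign after a shift) combines with $s$ into a theta series in $q$, either $\varphi(q)$ or $\varphi(-q)$ according to the parity of $k$. This is why a parity split arises.

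\textbf{Step 2: match the remaining sum with $M$.} The remaining triple sum over $i,j,k$, with weights $q^{4i^2+4j^2+2k^2-4ij-4jk+\cdots}$, should be recognised after the substitution $k\to$ (a linear combination) as the parity-restricted pieces $M^{(h)}_{x,y,z}$. An alternative route to the same end is to write $\varphi(q)M(q)\pm\varphi(-q)M(-q)=2\sum$ over $n\in\mathbb{Z}$ and $i,j,k$ of terms in $M$ with $n^2+(\text{exponent})$ of fixed parity. One then shows that this sum equals $2q^{-c_h}J_4N^{(h)}(q^2)$ by substituting $n$ and the indices of $M$ in terms of $(i,j,k,l)$ of $N$. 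Since $N$ has quadratic form with $2i^2+2j^2-2ij-2jk$ and $M$ has $i^2+2j^2+2k^2-2ij-2jk$, I expect the correspondence to reverse the order of $(i,j,k)$. The index $n$ together with $l$ then produces the $1/(q^2;q^2)_l$ factor, via the bilateral identity
\[
\sum_{n}q^{n^2}z^n=(q^2;q^2)_\infty\sum_{l}\cdots.
\]
This uses the known rank-one duality $\sum_n q^{n^2}z^n \leftrightarrow$ an Euler-type sum, applied with $z$ a power of $q$ depending on $k$.

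\textbf{Step 3: the linear terms and the corollary.} I would then check the linear terms $B$ for each $h$; this determines $c_h$ and the sign $(-1)^h$. Finally, \eqref{eq-CW-conj3.3} corresponds to one specific $h$, the one matching $u=q^{-2}$, $w=q^{-1}$, namely $h=2$. For that case I would plug the product formulas of Theorem~\ref{thm-dual-exam9-1st} into \eqref{eq-thm-Nh}. The right side is then a combination of products in $q$ and $\ii q$, multiplied by $\varphi(\pm q)$, and it must be shown to equal $9J_3^3J_6^2/(J_1J_2^4)$ with $q\to q^2$. I would verify this theta-function identity by the standard modular-form method: check that both sides are modular on a common group $\Gamma_1(N)$ and compare enough coefficients (Sturm bound), using Garvan's package for the computations.

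The main obstacle is Step 2: finding the exact change of variables and the theta-summation identity that converts the bilateral sum with $\varphi$ into the fourth index $l$ of $N$ with the correct parity bookkeeping.
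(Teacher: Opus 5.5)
\textbf{There is a genuine gap.} Your Step~2 is the whole content of the theorem, and you leave it open. In addition, the mechanism you propose in Step~1 does not lead there.

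The $l$-sum you display is just Euler's identity, $\sum_{l\ge0}q^{2l^2-2kl}w^l/(q^4;q^4)_l=(-q^{2-2k}w;q^4)_\infty$. Suppose you insert the Euler expansion of $1/(q^4;q^4)_l$ and extend $l$ to $\mathbb{Z}$ (this is legitimate, since the expansion vanishes for $l<0$). Then you only rewrite this product as
\[
\frac{1}{J_4}\cdot\frac{\sum_{m\in\mathbb{Z}}q^{2m^2-2km}w^m}{(-q^{2+2k}w^{-1};q^4)_\infty}.
\]
Two things go wrong here:
\begin{itemize}
\item The theta series has quadratic part $q^{2m^2}$, so it is a theta function of $q^2$. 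It is not $a(q)=\sum q^{4n^2}$ or $b(q)=\sum q^{(2n+1)^2}$, hence not $\tfrac12(\varphi(q)\pm\varphi(-q))$.
\item A $k$-dependent infinite product is still left over.
\end{itemize}
So nothing of the form $J_4^{-1}\varphi(\pm q)\times(\text{triple sum})$ comes out of Step~1.

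Your alternative, matching $\varphi(q)M(q)\pm\varphi(-q)M(-q)$ with $2q^{-c_h}J_4N^{(h)}(q^2)$ by substituting summation indices, cannot work either. One side has three $q$-Pochhammer denominators and a bilateral sum; the other has four denominators and the factor $J_4$. Writing $\sum_nq^{n^2}z^n$ as $(q^2;q^2)_\infty$ times a single Euler sum leaves an extra product $(-q/z;q^2)_\infty$ that still has to be removed. A genuine chain of transformations is needed, and the proposal does not supply it.

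\textbf{The missing idea} (this is the paper's route) is a constant-term argument on the $(i,j)$-sums rather than on $l$.
\begin{enumerate}
\item Split $N$ by the parity of $k$, sum over $l$ by Euler, and factor $(-q^{1-2k}w;q^2)_\infty=(-qw;q^2)_\infty(-qw^{-1};q^2)_k\,w^kq^{-k^2}$ (similarly for odd $k$).
\item Write the remaining form as $2i^2+2j^2+3k^2-2ij-4jk=(i-k)^2+(j-k)^2+(i-j+k)^2$. Linearize the last square with $\mathrm{CT}_z$ and $\sum_sq^{s^2}z^{-s}$, so that the $i$- and $j$-sums become $(-q^{1-2k}uz,-q^{1-2k}(uz)^{-1};q^2)_\infty$.
\item Multiply and divide by $(q^2;q^2)_\infty$; this is where $1/J_4$ really comes from. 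You get a Jacobi triple product times the finite product $(-q^{1-2k}(uz)^{-1};q^2)_{2k}$. Expanding both and taking the constant term leaves a sum over $k\ge0$, $0\le i\le 2k$ and a bilateral $t$.
\item With $u=q^c$, complete the square in $t$. This gives the theta factor, and whether it is $a$ or $b$ (after $q\to q^2$) is decided by the parity of the $q$-binomial index $i$, not of $k$.
\item Set $k=i+j$ (resp.\ $i+j+1$), recombine $(-qw;q^2)_\infty(-qw^{-1};q^2)_{i+j}$ into $(-q^{1-2(i+j)}w;q^2)_\infty$, and re-expand by Euler. This recreates a third index, and the triple sums are exactly the pieces $M_{x,y,z}(u^{-1/2}v,w,v;q^{1/2})$, grouped by the parity of the first index.
\end{enumerate}
Since the exponent in $M^{(h)}$ is $\equiv i\pmod 2$, this grouping is precisely $\tfrac12\bigl(\varphi(q)M^{(h)}(q)\mp\varphi(-q)M^{(h)}(-q)\bigr)$. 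The paper phrases it as $a(A^{(h)}+3C^{(h)})+b(3B^{(h)}+D^{(h)})$ via Lemma~\ref{lem-M}.

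Your Step~3 coincides with the paper's; it could be shortened by using \eqref{M2-simple}. Note also that the power of $q$ produced is $q^{-c^2/8}$ before $q\to q^2$. So the constants should read $c_1=0$, $c_2=-1$, $c_3=-4$, $c_4=-1$; the list in the statement is garbled.
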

To work out a  proof of \eqref{eq-CW-conj3.3},  Li \cite[Conjecture 5.2]{Li-arXiv} conjectured two identities  which we rewrite as:
\begin{align}
    \sum_{i,j,k\geq 0} \frac{q^{2i^2+j^2+k^2-2ij-jk+2i-j}}{(q^2;q^2)_{2i}(q^2;q^2)_j(q^2;q^2)_{k}}&=3\frac{J_3^3\overline{J}_{4,12}}{J_1J_2^2J_6}, \label{eq-Li-1}\\
    \sum_{i,j,k\geq 0} \frac{q^{2i^2+j^2+k^2-2ij-jk+4i-2j+1}}{(q^2;q^2)_{2i+1}(q^2;q^2)_j(q^2;q^2)_{k}}&=3\frac{J_3^3\overline{J}_{2,12}}{J_1J_2^2J_6}. \label{eq-Li-2}
\end{align}
He proved that \eqref{eq-CW-conj3.3} follow from \eqref{eq-Li-1} and \eqref{eq-Li-2}. It turns out that the partial Nahm sums in \eqref{eq-Li-1} and \eqref{eq-Li-2} are essentially the even and odd power parts, respectively, of the Nahm sums $M^{(2)}(q)$. Hence a direct application of our computation of $M^{(2)}_{x,y,z}(q)$ yields a proof for \eqref{eq-Li-1} and \eqref{eq-Li-2} (see Corollary \ref{cor-Li-conj}).

We briefly describe the main methods and ideas used to prove the Rogers--Ramanujan type identities in this paper. For the dual of Examples 7, 8 and 10, we evaluate the corresponding Nahm sums primarily through direct $q$-series manipulations involving transformation and summation formulas.  The proof of Theorem \ref{thm-ex9-new} is based on the Bailey pair machinery. The dual of Example 11 (Theorem \ref{thm-dual-2}) and Theorem \ref{thm-CW-conj3.3} are more difficult and we need delicate applications of the constant-term method as some key steps. A key identity in the proof of Theorem \ref{thm-dual-2} and the identities for the dual of Example 8 lead to the proof of Theorem \ref{thm-CSW-conj}. The proofs of Theorems \ref{thm-dual-exam9-1st}, \ref{thm-dual-exam12} and \ref{thm-dual-rank3} are considerably even harder and require  several new techniques. For Theorems \ref{thm-dual-exam9-1st} and \ref{thm-dual-rank3} we begin by isolating the summations over two suitably chosen variables and introducing certain associated sequences. We then derive recurrence relations for these sequences and express them as linear combinations of explicit solutions involving $q$-binomial coefficients. From these expressions, the relevant Bailey pairs arise naturally. Substituting the resulting formulas back into the original Nahm sums, we obtain the desired product representations by applying the standard Bailey-pair method. For Theorem \ref{thm-dual-exam12}, we first decompose the relevant Nahm sums into linear combinations of double sums multiplied by theta products and then reduce these double sums to well-known single sums.

The rest of this paper is organized as follows. In Section \ref{sec-pre}, we collect and establish several auxiliary $q$-series identities and recall some background on Bailey pairs. In Section \ref{sec-dual}, we study the duals of Zagier's rank-three examples individually. Section \ref{sec-rank3} is devoted to proofs of Theorems \ref{thm-ex9-new} and \ref{thm-dual-rank3}. Finally, in Section \ref{sec-proof}, we provide proofs of Theorems \ref{thm-CSW-conj} and \ref{thm-CW-conj3.3} and the conjectural identities \eqref{eq-Li-1}--\eqref{eq-Li-2}. 

\section{Preliminaries}\label{sec-pre}
\subsection{Some sum-to-product identities}
We first review some $q$-series identities which will be useful in our proofs.

The $q$-binomial theorem \cite[Theorem 2.1]{Andrews-book} states that
\begin{align}\label{q-binomial}
\sum_{n=0}^\infty \frac{(a;q)_n}{(q;q)_n}z^n=\frac{(az;q)_\infty}{(z;q)_\infty}, \quad |z|<1.
\end{align}
As its consequences, Euler's $q$-exponential identities \cite[Corollary 2.2]{Andrews-book} assert that
\begin{align}\label{Euler}
\sum_{n=0}^\infty \frac{z^n}{(q;q)_n}=\frac{1}{(z;q)_\infty},  \quad
\sum_{n=0}^\infty \frac{z^nq^{\frac{n^2-n}{2}}}{(q;q)_n}=(-z;q)_\infty.
\end{align}
Here we need $|z|<1$ in the first identity for convergence. 

The $q$-binomial coefficient is defined as
\begin{align}
    \qbinom{n}{m}_{q}:=\begin{cases}
         \frac{(q;q)_n}{(q;q)_m(q;q)_{n-m}}&0\leq m\leq n,\\
         0&\text{otherwise.}
    \end{cases}
\end{align}

As a finite version of \eqref{q-binomial}, we have\cite[Theorem 3.3]{Andrews-book}
\begin{align}
    (-z;q)_n=\sum_{i=0}^n\qbinom{n}{i}_qq^{i(i-1)/2}z^i.\label{q-binomial-finite}
\end{align}

Recall the Jacobi triple product identity \cite[Theorem 2.8]{Andrews-book}: 
\begin{align}\label{Jacobi}
j(z;q):=(z,q/z,q;q)_\infty=\sum_{n=-\infty}^{{\infty}} (-z)^n q^{\frac{n^2-n}{2}}.
\end{align}
We denote $D_z:=z\frac{\partial}{\partial z}$ and define
\begin{align}
 \mathcal{L}(z;q):=D_z\log j(z;q).
\end{align}
By the product definition, we have
\begin{align}
 j(q/z;q)&=j(z;q), \quad
 \mathcal{L}(q/z;q)=-\mathcal{L}(z;q),
 \label{eq-inverse}\\
 j(z^{-1};q)&=-z^{-1}j(z;q), \quad \mathcal{L}(z^{-1};q)=1-\mathcal{L}(z;q),
 \label{eq-reciprocal}\\
 j(qz;q)&=-z^{-1}j(z;q), \quad \mathcal{L}(qz;q)=\mathcal{L}(z;q)-1.
 \label{eq-shift}
\end{align}

We need the following theta series identities which appear to be new.
\begin{lemma}\label{lem-W-product}
Let 
\begin{align}\label{Wk-defn}
    W_k(q):=\sum_{n\in\mathbb{Z}}(-1)^n(9n+k)q^{9n^2+2kn}.
\end{align}
We have
\begin{align}
    W_1(q)&=\frac{J_{18}^{3}J_{7,18}J_{6,18}^{3}}{J_{9,18}J_{3,18}^{3}}
 -3q^{3}\frac{J_{18}^{3}J_{4,18}^{3}}{J_{3,18}J_{7,18}^{2}}, \label{W1-product} \\
 W_2(q)&=-\frac{J_{18}^{3}J_{5,18}J_{6,18}^{3}}{J_{9,18}J_{3,18}^{3}}
 +3\frac{J_{18}^{3}J_{8,18}^{3}}{J_{3,18}J_{5,18}^{2}}. \label{W2-product} \\
W_3(q)&= 3\frac{J_{18}^{3}J_{6,18}^{3}}{J_{3,18}^{2}J_{9,18}}
 =3\frac{J_6^5}{J_3^2}, \label{W3-product} \\
W_4(q)&=
 \frac{J_{18}^{3}J_{1,18}J_{6,18}^{3}}
      {J_{9,18}J_{3,18}^{3}}
 +3\frac{J_{18}^{3}J_{2,18}^{3}}
         {J_{3,18}J_{1,18}^{2}} \label{W4-product}
\end{align}
and 
\begin{align}
W_{k}(q)=q^{9-2k}W_{9-k}(q), \quad k\in \{5,6,7,8\}. \label{Wk-rec}
\end{align}
\end{lemma}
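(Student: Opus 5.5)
We first record that each $W_k$ is a logarithmic derivative of a Jacobi triple product evaluated at a point. By \eqref{Jacobi} with $q\to q^{18}$ and $z\to z^9q^{9+2k}$ we have
\begin{align*}
\sum_{n\in\mathbb{Z}}(-1)^n z^{9n+k}q^{9n^2+2kn}=z^k\, j(z^9q^{9+2k};q^{18}).
\end{align*}
Applying $D_z$ and setting $z=1$ gives
\begin{align*}
W_k(q)=j(q^{9+2k};q^{18})\bigl(k+9\mathcal{L}(q^{9+2k};q^{18})\bigr).
\end{align*}

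The relation \eqref{Wk-rec} is then immediate, and in fact holds for all $k$. Substitute $n\to -n-1$ in \eqref{Wk-defn}: the factor $9n+k$ becomes $-(9n+9-k)$ and the sign becomes $(-1)^{n+1}$. The exponent becomes $9n^2+2(9-k)n+9-2k$. Hence $W_k=q^{9-2k}W_{9-k}$. Equivalently, this follows from \eqref{eq-shift} and \eqref{eq-reciprocal}.

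For \eqref{W3-product}, note that $9n+3=3(3n+1)$ and $9n^2+6n=3(3n^2+2n)$. So $W_3(q)=3S(q^3)$ with $S(q):=\sum_n(-1)^n(3n+1)q^{3n^2+2n}$. It suffices to show $S(q)=J_2^5/J_1^2$; the first few terms $1+2q-4q^5+\cdots$ agree. I would derive this as the $z$-derivative at $z=1$ of the quintuple product identity. Writing the quintuple product as $\sum_n (z^{3n}-z^{-3n-1})q^{n(3n+1)/2}=(q,-z,-q/z;q)_\infty(qz^2,q/z^2;q^2)_\infty$, differentiating at $z=-1$ and replacing $q\to q^2$ gives the claim. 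The middle expression in \eqref{W3-product} is the same product written with $J_{a,18}$'s, and the two agree by elementary product manipulation.

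For \eqref{W1-product}, \eqref{W2-product} and \eqref{W4-product}, the first term on each right side is visibly $j(q^{9+2k};q^{18})$ times a constant theta quotient. For example, $j(q^{11};q^{18})=J_{7,18}/J_{18}$ when $k=1$. So each identity is equivalent to a formula of the shape
\begin{align*}
k+9\mathcal{L}(q^{9+2k};q^{18})=\frac{J_{18}^2J_{6,18}^3}{J_{9,18}J_{3,18}^3}\pm 3\,(\text{theta quotient}).
\end{align*}
Here the sign is $-$ for $k=1$ and $+$ for $k=2,4$, and in case $k=2$ the sign of the first term is also flipped. My preferred route is to prove a single identity in a free variable $x$ that specializes at $x=q^{2k}$ for $k=1,2,4$. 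Its left side would be $9\mathcal{L}(xq^9;q^{18})$ plus an elementary correction that reproduces the constant $k$. Its right side would be a constant plus a ratio of theta functions in $x$ whose specializations give the second terms.

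Both sides would then be elliptic functions of $x$ with respect to $x\to q^{18}x$ (up to the linear term, compensated via \eqref{eq-shift}). One compares poles and residues in a fundamental annulus, and then fixes the constant by evaluating at one convenient point, for instance $x=q^6$, which corresponds to $k=3$ and where \eqref{W3-product} is already known. The specializations $x=q^{2},q^{4},q^{8}$ then yield the three identities.

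If a uniform elliptic identity proves hard to find, the fallback is a direct modularity argument. After multiplication by $q^{k^2/9}$, $W_k$ is a weight $3/2$ theta series attached to a linear character, hence a modular form on some $\Gamma_1(N)$ with $N$ dividing $324$. Each product term on the right is an eta/generalized eta quotient of weight $3/2$ on the same group, with orders at the cusps computable by Ligozat-type formulas. Checking coefficients up to the Sturm bound then finishes the proof.

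The main obstacle is step three: guessing the correct $x$-dependent theta quotient so that the residues match. Concretely, this means locating its poles at the zeros of $j(xq^9;q^{18})$ and matching the factor $3$, which comes from the $3$-dissection structure of modulus $18$ versus $6$.
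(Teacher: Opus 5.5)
\paragraph{The gap: $W_1$, $W_2$, $W_4$.} Your reduction $W_k=j(q^{9+2k};q^{18})\bigl(k+9\mathcal{L}(q^{9+2k};q^{18})\bigr)$ and your $n\mapsto -n-1$ argument for \eqref{Wk-rec} are exactly what the paper does. For $W_1,W_2,W_4$, however, which are the real content of the lemma, you give no argument, and the route you prefer cannot work as designed.

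In the identity you are looking for, the left side contains $\mathcal{L}$ at a single argument. The term $9\mathcal{L}(xq^9;q^{18})$ shifts by $-9$ under $x\mapsto q^{18}x$. The ``linear term'' that compensates this and reproduces $k$ at $x=q^{2k}$ is $\log x/(2\log q)$, which is not single-valued in $x$. Any single-valued $h$ with $h(q^{18}x)=h(x)+9$ equals $-9\mathcal{L}(cx;q^{18})$ plus an elliptic function. So the left side can be elliptic only if $\mathcal{L}$ appears at a second argument. An identity expressing $k+9\mathcal{L}(xq^9;q^{18})$ as a constant plus a theta quotient can therefore hold only at isolated values of $x$, and there are no poles or residues to compare.

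That second argument is the missing idea. The paper uses the addition formula \eqref{L-addition} in its duplication form \eqref{eq-duplicate-L}, where $2\mathcal{L}(x;q^2)-\mathcal{L}(x^2/q;q^2)$ is a genuine elliptic function equal to a theta quotient.
\begin{itemize}
\item With $q\to q^9$, $x=q^{9-k}$ and $X_k=\mathcal{L}(q^{9-k};q^{18})$, this reads $2X_k-X_{2k}=P_k$.
\item Doubling cycles $2\to4\to8\to16$, and $X_{16}=1-X_2$ by \eqref{eq-shift} and \eqref{eq-inverse}. So the cases $k=2,4,8$ form a closed linear system that determines $X_2,X_4,X_8$.
\item This gives $W_1,W_2,W_4$ as explicit three-term theta quotients, with no constant to fix at $k=3$. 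The case $k=6$, with $X_{12}=1-X_6$, gives $W_3$ the same way.
\item Only the passage to the two-term forms stated in the lemma is a Frye--Garvan verification \cite{Frye-Garvan}.
\end{itemize}
Your fallback, comparing both sides as weight-$3/2$ forms up to a Sturm bound, is a legitimate independent route. As written it is only a plan: you would still have to fix the group and multiplier (most simply by dividing by one of the eta-quotients to reach weight $0$) and control the orders at all cusps.

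\paragraph{Smaller slips.} Your quintuple-product route to $W_3$ is a valid alternative to the paper's, but two things need fixing.
\begin{itemize}
\item The identity you display is false. At $q^0$ the left side is $1-z^{-1}$ and the right side is $1+z$. For that product the left side should be $\sum_n(-1)^nq^{n(3n+1)/2}(z^{-3n}+z^{3n+1})$.
\item Differentiating at $z=-1$ gives $\sum_n(6n+1)q^{n(3n+1)/2}=J_1^5/J_2^2$, which after $q\to q^2$ is not your $S(q)$. Instead, replace $q\to q^2$ first and differentiate at the zero $z=q$ of $(q^2/z^2;q^4)_\infty$. After multiplying by $q$, this gives $2S(q)=2J_2^5/J_1^2$.
\end{itemize}
Also, in the paper's notation $j(q^{11};q^{18})=J_{7,18}$, not $J_{7,18}/J_{18}$. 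So the constant in your normalized form should be $J_{18}^3J_{6,18}^3/(J_{9,18}J_{3,18}^3)$.
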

It follows from \eqref{Jacobi} that
\begin{align*}
 (k+9D_z)j(z;q^{18})
 =\sum_{n\in\mathbb{Z}}(-1)^n(9n+k)q^{9n(n-1)}z^n.
\end{align*}
Hence we have
\begin{align}\label{Wk-L}
    W_k(q)=(k+9\mathcal{L}(q^{9+2k};q^{18}))J_{9+2k,18}.
\end{align}
To evaluate $\mathcal{L}(q^{9+2k};q^{18})$ we need the following formula which is equivalent to an addition formula \cite[p.~64, (3.5.13)]{Lawden} for Jacobi theta functions:
\begin{align}\label{L-addition}
    \mathcal{L}(x;q^2)+\mathcal{L}(y;q^2)-\mathcal{L}(xy/q;q^2)=q^{-1}xy\frac{J_2^4}{J_1^2}\frac{j(qx;q^2)j(qy;q^2)j(xy;q^2)}{j(x;q^2)j(y;q^2)j(xy/q;q^2)}.
\end{align}

In particular, when $x=y$ we have
\begin{align}\label{eq-duplicate-L}
 2\mathcal{L}(x;q^2)-\mathcal{L}(x^2/q;q^2)
 =q^{-1}x^2\frac{J_2^4}{J_1^2}
 \frac{j(q/x;q^2)^2j(x^2;q^2)}
 {j(x;q^2)^2j(x^2/q;q^2)}.
\end{align}
\begin{proof}[Proof of Lemma \ref{lem-W-product}]
Replacing $n$ by $-n-1$ in \eqref{Wk-defn}, we obtain \eqref{Wk-rec} immediately. Hence it remains to prove \eqref{W1-product}--\eqref{W4-product}.

Replacing $q$ by $q^{9}$ and setting $x=q^{9-k}$, $X_k:=\mathcal{L}(q^{9-k};q^{18})$ in \eqref{eq-duplicate-L}, we obtain 
\begin{equation}\label{eq-X-recurrence}
 2X_k-X_{2k}=q^{9-2k}
 \frac{J_{18}^4J_{k,18}^2J_{2k,18}}
 {J_9^2J_{9-k,18}^2J_{9-2k,18}}=:P_k.
\end{equation}
Taking $k=2,4$ in \eqref{eq-X-recurrence} gives
\begin{align}\label{X248}
 2X_2-X_4=P_2, \quad  2X_4-X_8=P_4.
\end{align}
Furthermore, by \eqref{eq-shift} and \eqref{eq-inverse} we have
\begin{align*}
 X_{16}=\mathcal{L}(q^{-7};q^{18})
 =\mathcal{L}(q^{11};q^{18})+1
 =1-\mathcal{L}(q^7;q^{18})=1-X_2.
\end{align*}
Thus, the $k=8$ instance of \eqref{eq-X-recurrence} is
\begin{equation}
 2X_8+X_2=1+P_8.
 \label{eq-X8X2}
\end{equation}
Solving \eqref{X248} and \eqref{eq-X8X2}, we obtain
\begin{align}
&9X_2=1+4P_2+2P_4+P_8,
 \label{eq-X2-exp} \\
& 4-9X_8=2P_2+P_4-4P_8,
 \label{eq-X8-exp}\\
& 2-9X_4=P_2-4P_4-2P_8.
 \label{eq-X4-exp}
\end{align}

For $k=6$, we have
\begin{align*}
 X_{12}=\mathcal{L}(q^{-3};q^{18})
 =\mathcal{L}(q^{15};q^{18})+1
 =1-\mathcal{L}(q^3;q^{18})=1-X_6.
\end{align*}
Therefore, \eqref{eq-X-recurrence} gives
\begin{align}\label{eq-X6-exp}
  1-3X_6=-P_6.
\end{align}

(1) Using \eqref{eq-inverse} we have
\begin{align*}
 \mathcal{L}(q^{11};q^{18})=-\mathcal{L}(q^7;q^{18})=-X_2.
\end{align*}
Hence, from \eqref{Wk-L}  and \eqref{eq-X2-exp} we have
\begin{align*}
W_1(q)=J_{7,18}(1-9X_2)=-J_{7,18}(4P_2+2P_4+P_8).
\end{align*}
By the definition of $P_C$ in \eqref{eq-X-recurrence} we obtain
\begin{align}
 W_1(q)
=
 \frac{J_{18}^{4}J_{2,18}J_{8,18}^{2}}
 {J_{9}^2J_{1,18}^{2}}
 -2q\frac{J_{18}^{4}J_{7,18}J_{4,18}^{2}J_{8,18}}
 {J_{9}^2J_{1,18}J_{5,18}^{2}} -4q^5\frac{J_{18}^{4}J_{2,18}^{2}J_{4,18}}
 {J_{9}^2J_{5,18}J_{7,18}},
\end{align}
Using the Maple approach in \cite{Frye-Garvan}, we obtain \eqref{W1-product} from the above identity.

(2) Using \eqref{eq-inverse} we have
\begin{align*}
 \mathcal{L}(q^{13};q^{18})=-\mathcal{L}(q^5;q^{18})=-X_4.
\end{align*}
Hence, from \eqref{Wk-L}  and \eqref{eq-X4-exp} we have
\begin{align*}
 W_2(q)=J_{5,18}(2-9X_4)=J_{5,18}(P_2-4P_4-2P_8).
\end{align*}
By the definition of $P_C$ in \eqref{eq-X-recurrence} we obtain
\begin{align}
W_2(q)
=2\frac{J_{18}^{4}J_{2,18}J_{5,18}J_{8,18}^{2}}
 {J_{9}^2J_{1,18}^{2}J_{7,18}}
 -4q\frac{J_{18}^{4}J_{4,18}^{2}J_{8,18}}
 {J_{9}^2J_{1,18}J_{5,18}}
 +q^5\frac{J_{18}^{4}J_{2,18}^{2}J_{4,18}}
 {J_{9}^2J_{7,18}^{2}}.
\end{align}
Using the Maple approach in \cite{Frye-Garvan}, we obtain \eqref{W2-product} from the above identity.

(3) Using \eqref{eq-inverse} we have
\begin{align*}
 \mathcal{L}(q^{15};q^{18})=-\mathcal{L}(q^3;q^{18})=-X_6.
\end{align*}
Hence, from \eqref{Wk-L} and \eqref{eq-X6-exp} we have
\begin{align*}
W_3(q)=3J_{3,18}(1-3X_6)
 =-3J_{3,18}P_6=3\frac{J_{18}^4J_{6,18}^3}
 {J_{9}^2J_{3,18}^2}.
\end{align*}
This proves \eqref{W3-product}.

(4) Using \eqref{eq-inverse} we have
\begin{align*}
 \mathcal{L}(q^{17};q^{18})=-\mathcal{L}(q;q^{18})=-X_8.
\end{align*}
It follows from \eqref{Wk-L} and \eqref{eq-X8-exp} that
\begin{align*}
 W_4(q)=J_{1,18}(4-9X_8)=J_{1,18}(2P_2+P_4-4P_8).
\end{align*}
By the definition of $P_C$ in \eqref{eq-X-recurrence} we obtain
\begin{align}
 W_4(q)
=4\frac{J_{18}^{4}J_{2,18}J_{8,18}^{2}}
 {J_{9}^2J_{1,18}J_{7,18}}
 +q\frac{J_{18}^{4}J_{4,18}^{2}J_{8,18}}
 {J_{9}^2J_{5,18}^{2}} +2q^5\frac{J_{18}^{4}J_{1,18}J_{2,18}^{2}J_{4,18}}
 {J_{9}^2J_{5,18}J_{7,18}^{2}}.
\end{align}
Using the Maple approach in \cite{Frye-Garvan}, we obtain \eqref{W4-product} from the above identity.
\end{proof}

Now we review some transformation/summation formulas on the $q$-hypergeometric series ${}_r\phi_s$  defined by
\begin{align}
{}_r\phi_s \bigg(\genfrac{}{}{0pt}{}{a_1,  \cdots,  a_r}{b_1,   \dots,  b_s}; q,z \bigg) := ~\sum_{n=0}^\infty\frac{(a_1,\cdots,a_r;q)_n}{(q,b_1,\cdots,b_s;q)_n}((-1)^nq^{n(n-1)/2})^{1+s-r}z^n.
\end{align}
We need Heine's transformation formula \cite[Corollary 2.4]{Andrews-book}:
\begin{align}\label{Heine}
    \sum_{n=0}^\infty \frac{(a,b;q)_nz^n}{(c,q;q)_n}=\frac{(b,az;q)_{\infty}}{(c,z;q)_{\infty}}\sum_{n=0}^\infty \frac{(c/b,z;q)_nb^n}{(az,q;q)_n}.
\end{align}
Setting $z=c/(ab)$ in \eqref{Heine} and using \eqref{Euler}, we have
\begin{align}\label{Heine-cor}
    \sum_{n=0}^\infty \frac{(a;q)_n(b;q)_n}{(q;q)_n(c;q)_n}\big(\frac{c}{ab}\big)^n=\frac{({c}/{a};q)_{\infty}({c}/{b};q)_{\infty}}{(c;q)_{\infty}({c}/{ab};q)_{\infty}}.
\end{align}
Let $a\rightarrow\infty$. Replacing $q$ by $q^4$ and then setting $(b,c)$ as $(-q,q^2)$ and $(-q^3,q^{6})$ in \eqref{Heine-cor}, we obtain the following identities:
\begin{align}\label{dual-8-ex2-2}
    \sum_{n=0}^\infty \frac{(-q;q^4)_nq^{2n^2-n}}{(q^2;q^2)_{2n}}&=\frac{1}{(q;q^4)_{\infty}(q^{6};q^{8})_{\infty}},\\
    \sum_{n=0}^\infty \frac{(-q^3;q^4)_nq^{2n^2+n}}{(q^2;q^2)_{2n+1}}&=\frac{1}{(q^3;q^4)_{\infty}(q^2;q^{8})_{\infty}}. \label{dual-8-ex2-3}
\end{align}

In the proof of Nahm sum identities for the dual of Example 7, we will need the following transformation formula \cite[\uppercase\expandafter{\romannumeral 3}.9]{GR-book}:
\begin{align}
{}_3\phi_2\bigg(\genfrac{}{}{0pt}{} {a,b,c}{d,e};q,\frac{de}{abc}  \bigg)    =\frac{(e/a,de/bc;q)_\infty}{(e,de/abc;q)_\infty} {}_3\phi_2\bigg(\genfrac{}{}{0pt}{} {a,d/b,d/c}{d,de/bc};q,\frac{e}{a}  \bigg). \label{GR-III.9}
\end{align}

From \cite[Eq.~(II.8)]{GR-book} we find Heine's $q$-analog of Gauss' sum:
\begin{align}
    \sum_{n=0}^\infty \frac{(-1)^nq^{n(n-1)/2}(a;q)_n}{(q,c;q)_n}\big(\frac{c}{a}\big)^n=\frac{(c/a;q)_\infty}{(c;q)_\infty}.
\end{align}
Replacing $q$ by $q^2$ and setting $(a,c)$ as $(-q,q)$, $(-q,q^3)$ and $(-1,q)$, we obtain \cite[(2.2.3)--(2.2.4)]{MSP}:
\begin{align}
    \sum_{n=0}^\infty \frac{q^{n^2-n}(-q;q^2)_n}{(q;q)_{2n}}&=2\frac{J_4}{J_1}, \label{new} \\
      \sum_{n=0}^\infty \frac{q^{n^2+n}(-q;q^2)_n}{(q;q)_{2n+1}}&=\frac{J_4}{J_1}, \label{H-cor-id-1} \\
       \sum_{n=0}^\infty \frac{q^{n^2}(-1;q^2)_n}{(q;q)_{2n}}&=\frac{J_2^3}{J_1^2J_4}. \quad \text{(S.~47)} \label{S47} 
\end{align}
Here and throughout, we use (S.~$n$) to denote the $n$-th identity in Slater's list of Rogers--Ramanujan type identities~\cite{Slater1952}.

Recall Andrews' $q$-analogs of Gauss' ${}_2F_1(\frac{1}{2})$ sum and Bailey's ${}_2F_1(\frac{1}{2})$ sum (see \cite[(1.8) and (1.9)]{Andrews1973} or \cite[(\uppercase\expandafter{\romannumeral 2}.11) and (\uppercase\expandafter{\romannumeral 2}.10)]{GR-book}):
\begin{align}
    \sum_{n=0}^\infty \frac{(a,b;q)_nq^{n(n+1)/2}}{(q;q)_n(abq;q^2)_n}&=\frac{(aq,bq;q^2)_\infty}{(q,abq;q^2)_\infty}, \label{Andrews-Gauss}\\
    \sum_{n=0}^\infty \frac{(b,q/b;q)_nc^nq^{n(n-1)/2}}{(c;q)_n(q^2;q^2)_n}&=\frac{(bc,cq/b;q^2)_\infty}{(c;q)_\infty}. \label{Andrews-Bailey}
\end{align}
Setting $a=b=-1$ in \eqref{Andrews-Gauss}, we obtain
\begin{align}
     & \sum_{n=0}^\infty \frac{q^{(n^2+n)/2}(-1;q)_n^2(-q;q)_n}{(q;q)_{2n}}=\frac{J_2^6}{J_1^4J_4^2}.  \label{new-2} 
\end{align}
Similarly, replacing $q$ by $q^2$ and setting $(b,c)=(-q,q^3)$ in \eqref{Andrews-Bailey}, we obtain
\begin{align}
    & \sum_{n=0}^\infty \frac{q^{n^2+2n}(-q;q^2)_n^2(-q;q^2)_{n+1}}{(q^2;q^2)_{2n+1}}= \frac{J_2J_8^2}{J_1J_4^2}. \label{new-3}
\end{align}

We will also use the following single sum Rogers--Ramanujan type identities:
\begin{align}
&\sum_{n=0}^{\infty} \frac{(-1)^nq^{n(n+2)}(q;q^2)_n}{(-q;q^2)_n(q^4;q^4)_n}=\frac{J_5^2J_{3,10}}{J_{10}J_{4,10}^2}, \quad \text{(\cite[(2.17)]{BMS})}\label{BMS2.17} \\
&\sum_{n=0}^{\infty} \frac{(-1)^nq^{n(n+2)}(q;q^2)_n}{(-q;q^2)_{n+1}(q^4;q^4)_n}=\frac{J_5J_{1,10}^2J_{3,10}^2}{J_1J_{10}^4}, \quad \text{(\cite[(2.7)]{MSP})}\label{MSZ2.7} \\
&\sum_{n=0}^\infty \frac{q^{n^2}(-q^2;q^2)_n}{(q;q)_{2n+1}}=\frac{J_2^3}{J_1^2J_4}, \quad \text{(\cite[Ent.~1.7.13]{Lost2})}\label{Rama1713} \\
&\sum_{n=0}^{\infty} \frac{q^{n^2+2n}(-q;q^2)_n}{(q^4;q^4)_n}=\frac{J_6J_{12}}{J_{3,12}J_{4,12}}, \quad \text{(\cite[Ent.~4.2.11]{Lost2})}\label{Rama4211} \\
&\sum_{n=0}^{\infty} \frac{q^{n(n+1)/2}(-1;q)_n}{(q;q)_n(q;q^2)_n}=\frac{J_5J_{10}^2}{J_1J_{1,10}J_{3,10}}, \quad \text{(\cite[p. 330(4), line 3, corrected]{Rogers1917})}\label{Rogers3303} \\
 & \sum_{n=0}^\infty \frac{q^{n^2}(-q;q^2)_n^2}{(q^2;q^2)_{2n}}=\frac{J_{10}^2J_{20}^5}{J_{1,20}J_{2,20}J_{5,20}^2J_{8,20}^2J_{9,20}}, \quad \text{(S.\ 21, \cite[(2.19)]{Wang-rank3})} \label{dual-10-ex2-h0} \\
 &\sum_{n=0}^\infty \frac{q^{n(n+1)}(-q^2;q^2)_n}{(q;q)_{2n+1}}=\frac{J_3J_{12}}{J_1J_6}, \quad  \text{(S.\ 28)}\label{S28} \\
 &\sum_{n=0}^\infty \frac{q^{n^2}(-q;q^2)_n}{(q;q)_{2n}}=\frac{J_6^2}{J_1J_{12}}, \quad \text{({S.\ 29})}\label{S29} \\
 &\sum_{n=0}^{\infty} \frac{q^{n(n+3)/2}(-q;q)_n}{(q;q^2)_{n+1}(q;q)_n}=\frac{J_{10}^3}{J_1J_{5}J_{3,10}}, \quad \text{(S.\ 43)}\label{S43} \\
 &\sum_{n=0}^\infty \frac{q^{n(n+1)/2}(-q;q)_n}{(q;q)_n(q;q^2)_{n+1}}=\frac{J_2J_{3,10}}{J_1^2},   \quad \text{(S.\ 45)}\label{S45} \\
    &\sum_{n=0}^\infty \frac{q^{n^2+n}(-1;q^2)_n}{(q;q)_{2n}}=\frac{J_4J_6^5}{J_2^2J_3^2J_{12}^2}, \quad \text{(S.~48-)} \label{S48} \\
     &\sum_{n=0}^\infty \frac{q^{n^2+2n}(-q;q^2)_n}{(q;q)_{2n+1}}=\frac{J_2J_{12}^2}{J_1J_4J_6}, \quad\text{({S.\ 50})}\label{S50} \\
     &\sum_{n=0}^\infty \frac{q^{4n^2}(q;q^2)_{2n}}{(q^4;q^4)_{2n}}=\frac{J_{5,12}}{J_4}, \quad \text{(S.~53)} \label{S53} \\
     &\sum_{n=0}^\infty \frac{q^{4n(n+1)}(q;q^2)_{2n+1}}{(q^4;q^4)_{2n+1}}=\frac{J_{1,12}}{J_4}, \quad \text{(S.~55)} \label{S55}  \\
    &\mathcal{A}_0(q):=\sum_{n=0}^\infty \frac{q^{n^2}}{(q;q)_{2n}}=\frac{J_2J_{20}}{J_1J_{4,20}},   \quad \text{(S.~79)} \label{S79} \\
     &\mathcal{A}_1(q):=\sum_{n=0}^\infty \frac{q^{n(n+1)}}{(q;q)_{2n+1}}=\frac{J_{3,10}J_{4,20}}{J_1J_{20}}, \quad \text{(S.~94)} \label{S94} \\
     &\mathcal{B}_0(q):=\sum_{n=0}^\infty \frac{q^{(n+1)^2}}{(q;q)_{2n+1}}=q\frac{J_2J_{20}}{J_1J_{8,20}},  \quad \text{(S.~96)} \label{S96}  \\
     &\mathcal{B}_1(q):=\sum_{n=0}^\infty \frac{q^{n(n+1)}}{(q;q)_{2n}}=\frac{J_{1,10}J_{8,20}}{J_1J_{20}}. \quad \text{(S.~99)} \label{S99}
\end{align}
The functions $\mathcal{A}_i(q)$ and $\mathcal{B}_i(q)$ ($i=0,1$) play important roles in studying Nahm sums dual to Examples 9 and 12 (see Sections \ref{sec-double}, \ref{sec-exam9} and \ref{sec-exam12}).

In our proofs of the dual of Examples 9 and 11 as well as Theorem \ref{thm-CW-conj3.3}, we employ the constant-term method, which has been used extensively to establish Nahm sum identities; see, for example, \cite{MW24,Shi-Wang,Wang-rank2,Wang-rank3,CRW}. Roughly speaking, the method is to express Nahm sums as constant term, i.e., the coefficient of $z^0$ of certain $z$-parametrized series, and then transform and expand the series to some different forms so that the constant term can be evaluated explicitly. Sometimes we need to introduce more variables rather than the single variable $z$. In this paper one variable will be sufficient, and we define the constant-term extraction operator by
\begin{align*}
    \mathrm{CT}_z \big[ \sum_{n\in \mathbb{Z}}a_nz^n\big]:=a_0.
\end{align*}

\subsection{Bailey pairs}\label{sec-Bailey}
Some of our proofs utilize the Bailey pair method. If for all $n\geq 0$, 
\begin{align}\label{defn-BP}
     \beta_n(a;q)=\sum_{k=0}^n\frac{\alpha_k(a;q)}{(q;q)_{n-k}(aq;q)_{n+k}},
 \end{align}
 then we call $(\alpha_n(a;q),\beta_n(a;q))$ a Bailey pair relative to $a$. We will always assume that $\alpha_0(a;q)=\beta_0(a;q)=1$ unless otherwise stated (see \eqref{new23-bp2} and \eqref{dual-9-bp6}).

Suppose that $(\alpha_n(a;q),\beta_n(a;q))$ is a Bailey pair relative to $a$. Bailey's lemma (see e.g.,~\cite[Lemma 1.1]{Warnaar}) implies that $(\alpha_n'(a;q),\beta_n'(a;q))$ is also a Bailey pair relative to $a$ where
\begin{equation}\label{alpha-beta}
\begin{split}
\alpha_n'(a;q)&:=\frac{(\rho_1,\rho_2;q)_n(aq/\rho_1\rho_2)^n}{(aq/\rho_1,aq/\rho_2;q)_n}\alpha_n(a;q), \\ \beta_n'(a;q)&:=\sum_{r=0}^n\frac{(\rho_1,\rho_2;q)_r(aq/\rho_1\rho_2;q)_{n-r}(aq/\rho_1\rho_2)^r}{(aq/\rho_1,aq/\rho_2;q)_n(q;q)_{n-r}}\beta_r(a;q).
\end{split}
\end{equation}
If we take $\rho_1,\rho_2\rightarrow\infty$, then we obtain a Bailey pair relative to $a$ (see e.g.,~\cite[(S1)]{Bressoud2000}):
\begin{align}\label{bailey-s1-finite}
    \begin{cases}
        \alpha'_n(a;q)=a^nq^{n^2}\alpha_n(a;q), \\
        \beta'_n(a;q)=\sum_{r=0}^n\frac{a^rq^{r^2}}{(q;q)_{n-r}}\beta_r(a;q). 
        \end{cases}
\end{align}
Substituting \eqref{bailey-s1-finite} into \eqref{defn-BP} and taking $n\rightarrow \infty$, we obtain
\begin{align}\label{bailey-s1}
    \sum_{n=0}^\infty a^nq^{n^2}\beta_{n}(a;q)&=\frac{1}{(aq;q)_{\infty}}\sum_{n=0}^\infty a^nq^{n^2}\alpha_n(a;q).
\end{align}
Substituting \eqref{alpha-beta} into \eqref{defn-BP}, and taking $n,\rho_1, \rho_2\rightarrow\infty$ and $\rho_2\in \{-a^{\frac{1}{2}}q^{\frac{1}{2}},-a^{\frac{1}{2}}\}$, we have (see e.g.~\cite[(S2) and (S6)]{Bressoud2000})
\begin{align}
    \sum_{n=0}^\infty a^{\frac{n}{2}}q^{\frac{n^2}{2}}(-a^{\frac{1}{2}}q^{\frac{1}{2}};q)_n\beta_n(a;q)&=\frac{(-a^{\frac{1}{2}}q^{\frac{1}{2}};q)_{\infty}}{(aq;q)_{\infty}}\sum_{n=0}^\infty a^{\frac{n}{2}}q^{\frac{n^2}{2}}\alpha_n(a;q), \label{bailey-s2}\\
    \sum_{n=0}^\infty (-a^{\frac{1}{2}};q)_na^{\frac{n}{2}}q^{\frac{n^2+n}{2}}\beta_n(a;q)&=\frac{(-a^{\frac{1}{2}}q;q)_{\infty}}{(aq;q)_{\infty}}\sum_{n=0}^\infty \frac{(-a^{\frac{1}{2}};q)_n}{(-a^{\frac{1}{2}}q;q)_n}a^{\frac{n}{2}}q^{\frac{n^2+n}{2}}\alpha_n(a;q).\label{bailey-s6}
\end{align}
Substituting \eqref{bailey-s1-finite} into \eqref{bailey-s1}, we obtain the following transformation formula:
\begin{align}\label{dual-9-bailey's lemma}
    \sum_{n=0}^\infty a^nq^{n^2}\sum_{r=0}^n\frac{a^rq^{r^2}}{(q;q)_{n-r}}\beta_r(a;q)=\frac{1}{(aq;q)_{\infty}}\sum_{n=0}^\infty a^{2n}q^{2n^2}\alpha_n(a;q).
\end{align}

Occasionally, we need to shift the parameter of a Bailey pair. The limiting case $b\rightarrow \infty$  of~\cite[Lemma~2.3]{Lovejoy} provides the following construction of a Bailey pair relative to $aq$ from one relative to $a$ (see also~\cite[(2.14)]{WW-2025AIM}):
\begin{equation}\label{Bailey-atoaq}
\alpha_n'(aq;q)=\frac{1-aq^{2n+1}}{1-aq}q^{-n}\sum_{r=0}^n \alpha_r(a;q), \quad \beta_n'(aq;q)=q^{-n}\beta_n(a;q).
\end{equation}

As an example to illustrate the power of the Bailey pair method, here we give another proof of \eqref{new-2} and \eqref{new-3}. Setting $x=-1$ in \cite[(4.4)--(4.5)]{Andrews2016}, we obtain the Bailey pair
\begin{equation}\label{new23-bp1}
\begin{cases}
 \alpha_n(1;q)=q^{\frac{n(n-1)}{2}}+q^{\frac{n(n+1)}{2}}, \\
 \beta_n(1;q)=\frac{(-1;q)_n(-q;q)_n}{(q;q)_{2n}}.
\end{cases}
\end{equation}
Setting $x=-q^{-1/2}$ in \cite[(4.11)--(4.12)]{Andrews2016}, we obtain the Bailey pair (here $\alpha_0(1;q)=1+q^{\frac{1}{2}}$)
\begin{equation}
\label{new23-bp2}
\begin{cases}
\alpha_n(q;q)=q^{\frac{n^2}{2}}+q^{\frac{(n+1)^2}{2}}, \\
\beta_n(q;q)=\frac{(-q^{\frac{1}{2}};q)_n(-q^{\frac{1}{2}};q)_{n+1}}{(q^2;q)_{2n}}.
\end{cases}
\end{equation}
Substituting \eqref{new23-bp1} and \eqref{new23-bp2} into \eqref{bailey-s6},   we obtain \eqref{new-2} and \eqref{new-3}, respectively.

In the proof of Theorem \ref{thm-ex9-new} we need the following Bailey pair \cite[(2.16)]{WW-2025AIM}:
\begin{align}
    \begin{cases}\label{dual-9-bp1}
 \alpha_n(q;q)=\frac{1-q^{2n+1}}{1-q}(-1)^nq^{\frac{n^2-3n}{4}}, \\
        \beta_n(q;q)=\frac{(-1)^nq^{\frac{1}{2}n^2-n}}{(-q^{\frac{1}{2}};q)_n(q^2;q^2)_n}.
    \end{cases}
    \end{align}
We also list some Bailey pairs in Slater's Groups C and G \cite[p.~469]{Slater}:
\begin{align}
    &\begin{cases}\label{dual-9-bp5}
     \alpha_{2n}(1;q)=(-1)^n(q^{n^2-n}+q^{n^2+n}),\ \alpha_{2n+1}(1;q)=0,  \\
        \beta_n(1;q)=\frac{q^{\frac{n^2-n}{2}}}{(q;q)_n(q;q^2)_n}; 
    \end{cases} \quad \text{(\cite[C(5)]{Slater})} \\
     &\begin{cases}\label{dual-9-bp4}
        \alpha_{2n}(q;q)=(-1)^nq^{n^2-n},\ \alpha_{2n+1}(q;q)=(-1)^{n+1}q^{n^2+3n+2},  \\
        \beta_n(q;q)=\frac{q^{\frac{1}{2}(n^2-n)}}{(q;q)_{n}(q^3;q^2)_n};
    \end{cases} \quad \text{(\cite[C(6)]{Slater})}\\
    &\begin{cases}\label{dual-9-bp8}
        \alpha_{2n}(1;q)=-\alpha_{2n+1}(1;q)=(-1)^nq^{n^2+n}, \\
        \beta_n(q;q)=\frac{q^{\frac{n^2+n}{2}}}{(q;q)_{n}(q^3;q^2)_n};
    \end{cases} \text{\cite[C(7)]{Slater})} \\
&\begin{cases}\label{dual-9-bp3}
         \alpha_n(1;q)=(-1)^n(q^{\frac{n^2-3n}{4}}+q^{\frac{n^2+3n}{4}}),  \\
        \beta_n(1;q)=\frac{(-1)^nq^{\frac{1}{2}n^2-n}}{(-q^{\frac{1}{2}};q)_n(q^2;q^2)_{n}}; 
    \end{cases}   \\
 &\begin{cases}\label{dual-9-bp7}
        \alpha_{n}(1;q)=(-1)^nq^{\frac{n(n-1)}{4}}(1+q^n), \\
        \beta_n(1;q)=\frac{(-1)^nq^{\frac{1}{2}n^2}}{(-q^{\frac{1}{2}};q)_n(q^2;q^2)_{n}}. 
    \end{cases}   \text{(\cite[G(4)]{Slater})}
\end{align}
Here \eqref{dual-9-bp3} is the Bailey pair without label above (G4) in \cite{Slater}. 
Replacing $q^{1/2}$ by $-q^{1/2}$ in the Bailey pair in \cite[G(5)]{Slater}, we obtain the Bailey pair: for $n\geq 0$,
\begin{align}
        &\begin{cases}\label{dual-9-bp6}
        \alpha_{2n}(q;q)=\frac{(-1)^n(q^{n^2-\frac{1}{2}n}+q^{n^2+\frac{3}{2}n+\frac{1}{2}})}{1-q},\\
        \alpha_{2n+1}(q;q)=\frac{(-1)^{n+1}(q^{n^2+\frac{1}{2}n}+q^{n^2+\frac{5}{2}n+\frac{3}{2}})}{1-q}, \\
        \beta_n(q;q)=\frac{q^{\frac{1}{2}n^2}(-q^{\frac{1}{2}};q)_{n+1}}{(q;q)_{2n+1}}.
    \end{cases}
\end{align}
Substituting the Bailey pair \eqref{dual-9-bp4} into \eqref{Bailey-atoaq}, we obtain the Bailey pair:
\begin{align}
     &\begin{cases}\label{dual-9-bp2}
        \alpha_{2n}(q^2;q)=\frac{(-1)^nq^{n^2-n}+(-1)^{n+1}q^{n^2+3n+2}}{1-q^2}, \\
        \alpha_{2n+1}(q^2;q)=\frac{(-1)^n}{1-q^2}(q^{n^2-n-1}-q^{n^2+n+1}-q^{n^2+3n+3}+q^{n^2+5n+5}), \\
        \beta_n(q^2;q)=\frac{q^{\frac{1}{2}(n^2-3n)}}{(q;q)_n(q^3;q^2)_n}.
    \end{cases}
\end{align}

\subsection{Some new double sum identities}\label{sec-double}
We are going to establish some double sum identities which will be the key to evaluate the Nahm sums dual to Example 9.

We define
\begin{align}
 \Phi(z)&:=\sum_{r\geq0}\frac{q^{r^2}z^r}{(q^4;q^4)_r},
 \label{eq-Phi} \\
 \mathcal{X}_n&:=\sum_{r\geq0}\frac{q^{(r-2n)^2}}{(q^4;q^4)_r}=q^{4n^2}\Phi(q^{-4n}), \label{eq-Xn}\\
 \mathcal{Y}_n&:=\sum_{r\geq0}
 \frac{q^{(r-2n+1)^2}}{(q^4;q^4)_r}=q^{(2n-1)^2}\Phi(q^{2-4n}).
 \label{eq-Yn}
\end{align}

\begin{lemma}\label{lem-XY-rec}
For $n\geq1$ we have
\begin{align}
 \mathcal{X}_n&=q^{8n-4}\mathcal{X}_{n-1}+q\mathcal{Y}_n,
 \label{eq-rec-X-1st}\\
 \mathcal{Y}_n&=q^{8n-8}\mathcal{Y}_{n-1}+\mathcal{X}_{n-1}.
 \label{eq-rec-Y-1st}
\end{align}
Consequently, for $n\geq2$,
\begin{align}
 \mathcal{X}_n&=(1+q^{8n-8}+q^{8n-4})\mathcal{X}_{n-1}
       -q^{16n-20}\mathcal{X}_{n-2},
 \label{eq-X-rec}\\
 \mathcal{Y}_n&=(1+q^{8n-12}+q^{8n-8})\mathcal{Y}_{n-1}
       -q^{16n-28}\mathcal{Y}_{n-2}.
 \label{eq-Y-rec}
\end{align}
\end{lemma}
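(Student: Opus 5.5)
The plan is to derive both first-order relations from a single $q$-difference equation for the generating function $\Phi(z)$ in \eqref{eq-Phi}, and then eliminate one sequence to get the second-order recurrences \eqref{eq-X-rec} and \eqref{eq-Y-rec}.

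\textbf{Step 1: a functional equation for $\Phi$.} Since $(1-q^{4r})/(q^4;q^4)_r=1/(q^4;q^4)_{r-1}$ for $r\geq 1$, we get
\begin{align*}
\Phi(z)-\Phi(q^4z)=\sum_{r\geq1}\frac{q^{r^2}z^r}{(q^4;q^4)_{r-1}}=\sum_{s\geq0}\frac{q^{s^2+2s+1}z^{s+1}}{(q^4;q^4)_s}=qz\,\Phi(q^2z).
\end{align*}
Hence $\Phi(z)=\Phi(q^4z)+qz\Phi(q^2z)$.

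\textbf{Step 2: the first-order relations.} Put $z=q^{-4n}$ and multiply by $q^{4n^2}$. The first term becomes $q^{4n^2}\Phi(q^{4-4n})=q^{4n^2-4(n-1)^2}\mathcal{X}_{n-1}=q^{8n-4}\mathcal{X}_{n-1}$. The second term becomes $q^{4n^2-4n+1}\Phi(q^{2-4n})=q^{(2n-1)^2}\Phi(q^{2-4n})$, which is exactly $\mathcal{Y}_n$ by \eqref{eq-Yn}.

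Similarly, put $z=q^{2-4n}$ and multiply by $q^{(2n-1)^2}$. Using $(2n-1)^2-(2n-3)^2=8n-8$, the first term is $q^{8n-8}\mathcal{Y}_{n-1}$. The second term is $q^{(2n-1)^2+3-4n}\Phi(q^{4-4n})=q^{4(n-1)^2}\Phi(q^{4-4n})=\mathcal{X}_{n-1}$. This gives \eqref{eq-rec-Y-1st}.

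The exponent bookkeeping in this step is the only delicate point. My computation gives the coefficient of $\mathcal{Y}_n$ in \eqref{eq-rec-X-1st} as $1$, not $q$. A low-order check supports this: $\mathcal{X}_1=1+2q+\cdots$ and $\mathcal{Y}_1=1+2q+\cdots$. The coefficient $1$ is also what makes the second-order recurrence come out as stated. So I would carry out the proof with $\mathcal{X}_n=q^{8n-4}\mathcal{X}_{n-1}+\mathcal{Y}_n$. If the intended normalization of $\mathcal{Y}_n$ differs by a factor $q$, the same argument applies verbatim after rescaling.

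\textbf{Step 3: elimination.} From \eqref{eq-rec-X-1st}, for $n\geq 1$ we have $\mathcal{Y}_n=\mathcal{X}_n-q^{8n-4}\mathcal{X}_{n-1}$. Applying this at $n$ and at $n-1$ (which needs $n\geq2$) inside \eqref{eq-rec-Y-1st} gives
\begin{align*}
\mathcal{X}_n-q^{8n-4}\mathcal{X}_{n-1}=q^{8n-8}\big(\mathcal{X}_{n-1}-q^{8n-12}\mathcal{X}_{n-2}\big)+\mathcal{X}_{n-1},
\end{align*}
which is \eqref{eq-X-rec}.

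Symmetrically, \eqref{eq-rec-Y-1st} gives $\mathcal{X}_{n-1}=\mathcal{Y}_n-q^{8n-8}\mathcal{Y}_{n-1}$. Substituting this, together with its shift $\mathcal{X}_{n-2}=\mathcal{Y}_{n-1}-q^{8n-16}\mathcal{Y}_{n-2}$, into \eqref{eq-rec-X-1st} at index $n-1$ yields
\begin{align*}
\mathcal{Y}_n-q^{8n-8}\mathcal{Y}_{n-1}=q^{8n-12}\big(\mathcal{Y}_{n-1}-q^{8n-16}\mathcal{Y}_{n-2}\big)+\mathcal{Y}_{n-1},
\end{align*}
which is \eqref{eq-Y-rec}.
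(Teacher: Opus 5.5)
Your proof is correct and is essentially the paper's argument: the same $q$-difference equation $\Phi(z)-\Phi(q^4z)=qz\,\Phi(q^2z)$, the same substitutions $z=q^{-4n}$ and $z=q^{2-4n}$, and the same elimination to get \eqref{eq-X-rec} and \eqref{eq-Y-rec}. You are also right that the coefficient of $\mathcal{Y}_n$ in \eqref{eq-rec-X-1st} should be $1$ rather than $q$, so the $q$ in the statement is a typo: the paper's own proof uses $\mathcal{X}_n-q^{8n-4}\mathcal{X}_{n-1}=\mathcal{Y}_n$ in the elimination, and later uses $\mathcal{X}_1=(1+q^4)\mathcal{X}_0+\mathcal{Y}_0$.
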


\begin{proof}
Let $c_r=q^{r^2}/(q^4;q^4)_r$.  For $r\geq1$,
\begin{align*}
 (1-q^{4r})c_r=q^{2r-1}c_{r-1}.
\end{align*}
Multiplication by $z^r$ and summation over $r\geq1$ gives
\begin{align}\label{eq-Phi-rec}
 \Phi(z)-\Phi(q^4z)
 =\sum_{r\geq1}q^{2r-1}c_{r-1}z^r=qz\sum_{s\geq0}q^{2s}c_sz^s
 =qz\Phi(q^2z).
\end{align}
Setting $z=q^{-4n}$ in \eqref{eq-Phi-rec} and multiplying by
$q^{4n^2}$, we obtain \eqref{eq-rec-X-1st}.  Setting $z=q^{2-4n}$ and multiplying by $q^{4n(n-1)}$, we obtain \eqref{eq-rec-Y-1st}.

From \eqref{eq-rec-X-1st} and \eqref{eq-rec-Y-1st} we deduce that
\begin{align*}
& \mathcal{X}_n-q^{8n-4}\mathcal{X}_{n-1}=\mathcal{Y}_n =q^{8n-8}\mathcal{Y}_{n-1}+\mathcal{X}_{n-1}\nonumber \\
&=q^{8n-8}(\mathcal{X}_{n-1}-q^{8n-12}\mathcal{X}_{n-2})+\mathcal{X}_{n-1}, \\
&\mathcal{Y}_{n-1}-q^{8n-16}\mathcal{Y}_{n-2}=\mathcal{X}_{n-2}=q^{12-8n}(\mathcal{X}_{n-1}-\mathcal{Y}_{n-1}) \nonumber \\
&=q^{12-8n}(\mathcal{Y}_n-q^{8n-8}\mathcal{Y}_{n-1})-q^{12-8n}\mathcal{Y}_{n-1}.
\end{align*}
This proves \eqref{eq-X-rec} and \eqref{eq-Y-rec}, respectively.
\end{proof}

The initial values are well-known from \eqref{S79}--\eqref{S99}:
\begin{align}
 \mathcal A&=\mathcal{X}_0=\sum_{m\geq0}\frac{q^{4m^2}}{(q^4;q^4)_{2m}}+
 q\sum_{m\geq0}\frac{q^{4m(m+1)}}{(q^4;q^4)_{2m+1}}=\mathcal{A}_0(q^4)+q\mathcal{A}_1(q^4),
 \label{eq-A-product}\\
 \mathcal B&=\mathcal{Y}_0=q\sum_{m\geq0}\frac{q^{4m(m+1)}}{(q^4;q^4)_{2m}}+ q^4\sum_{m\geq0}\frac{q^{4m^2+8m}}{(q^4;q^4)_{2m+1}}=\mathcal{B}_0(q^4)+q\mathcal{B}_1(q^4).
 \label{eq-B-product}
\end{align}

We are now going to express $\mathcal{X}_n$ (resp.~$\mathcal{Y}_n$) as a combination of two solutions to \eqref{eq-X-rec} (resp.~\eqref{eq-Y-rec}). This idea has been applied in \cite{GIS,GP} to establish generalization of some Rogers--Ramanujan type identities. To construct such solutions, we define for $a=0,1$ and $n\geq 0$ that
\begin{align}
    U_n^{(a)}=U_n^{(a)}(q):=\sum_{j\geq 0} q^{j(j+a)}\qbinom{n-j}{j}_{q}.
\end{align}
Then $U_0^{(0)}=U_1^{(0)}=1$, $U_0^{(1)}=U_1^{(1)}=1$ and for $n\geq 2$ we have (see e.g.,~\cite[(3.14) and (3.18)]{Sills-finite})
\begin{align}
  U_n^{(0)}=U_{n-1}^{(0)}+q^{n-1}U_{n-2}^{(0)}, \quad
U_n^{(1)}=U_{n-1}^{(1)}+q^nU_{n-2}^{(1)}. \label{Un-rec}
\end{align}
We then extend the definition of $U_n^{(a)}$ to negative $n$ by the recurrence relation \eqref{Un-rec}. For instance, we should have
\begin{align}
    U_{-1}^{(0)}=0, \quad U_{-2}^{(1)}=q, \quad U_{-1}^{(1)}=0, \quad U_{-2}^{(1)}=1.
\end{align}

\begin{lemma}\label{lem-XY-rep}
We have for $n\geq 0$ that
\begin{align}
 \mathcal{X}_n&=\mathcal AU_{2n}^{(0)}(q^4)+\mathcal BU_{2n-1}^{(1)}(q^4),
 \label{eq-X-split}\\
 \mathcal{Y}_n&=\mathcal A U_{2n-1}^{(0)}(q^4)+\mathcal B U_{2n-2}^{(1)}(q^4).
 \label{eq-Y-split}
\end{align}
\end{lemma}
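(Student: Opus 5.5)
The plan is to merge the two sequences $\mathcal{X}_n$ and $\mathcal{Y}_n$ into one sequence that satisfies a single second-order recurrence. That recurrence is exactly the one defining $U^{(0)}_m(q^4)$, and also the one satisfied by the shifted sequence $U^{(1)}_{m-1}(q^4)$. The claim then follows by matching two initial values. Write $Q=q^4$ and set
\begin{align*}
 \mathcal{Z}_{2n}:=\mathcal{X}_n \quad (n\ge 0), \qquad \mathcal{Z}_{2n-1}:=\mathcal{Y}_n \quad (n\geq 1).
\end{align*}
In this notation \eqref{eq-X-split} and \eqref{eq-Y-split} together say that $\mathcal{Z}_m=\mathcal{A}U^{(0)}_m(Q)+\mathcal{B}U^{(1)}_{m-1}(Q)$ for all $m\geq 0$. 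The case $n=0$ of \eqref{eq-Y-split} is $\mathcal{Y}_0=\mathcal{B}$, and it will be checked separately from the extended values of $U$.

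First I would re-derive the first-order relations directly from \eqref{eq-Phi-rec} to fix the exact powers of $q$.
\begin{itemize}
\item Take $z=q^{-4n}$ and multiply by $q^{4n^2}$. This gives $\mathcal{X}_n-q^{8n-4}\mathcal{X}_{n-1}=q^{4n^2-4n+1}\Phi(q^{2-4n})=\mathcal{Y}_n$. So the relation is $\mathcal{X}_n=Q^{2n-1}\mathcal{X}_{n-1}+\mathcal{Y}_n$. Note there is no stray factor $q$ in front of $\mathcal{Y}_n$; otherwise the case $n=1$ of \eqref{eq-X-split} would already fail. I will use this corrected form of \eqref{eq-rec-X-1st}.
\item Take $z=q^{2-4n}$ and multiply by $q^{(2n-1)^2}$. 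This gives $\mathcal{Y}_n=Q^{2n-2}\mathcal{Y}_{n-1}+\mathcal{X}_{n-1}$, which is \eqref{eq-rec-Y-1st}.
\end{itemize}
Rewritten in terms of $\mathcal{Z}$, both relations become the single recurrence
\begin{align*}
 \mathcal{Z}_m=\mathcal{Z}_{m-1}+Q^{m-1}\mathcal{Z}_{m-2}, \qquad m\geq 2.
\end{align*}
For even $m=2n$ this is the first relation, and for odd $m=2n-1$ (with $n\ge 2$) it is the second.

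By \eqref{Un-rec}, $U^{(0)}_m(Q)=U^{(0)}_{m-1}(Q)+Q^{m-1}U^{(0)}_{m-2}(Q)$. Applying \eqref{Un-rec} for $U^{(1)}$ with index $m-1$ gives $U^{(1)}_{m-1}(Q)=U^{(1)}_{m-2}(Q)+Q^{m-1}U^{(1)}_{m-3}(Q)$, and this extends to all integers by the definition of the extension. So both $U^{(0)}_m(Q)$ and $U^{(1)}_{m-1}(Q)$ solve the same recurrence as $\mathcal{Z}_m$. It therefore suffices to check $m=0$ and $m=1$.
\begin{itemize}
\item For $m=0$: $\mathcal{Z}_0=\mathcal{X}_0=\mathcal{A}=\mathcal{A}U^{(0)}_0+\mathcal{B}U^{(1)}_{-1}$, since $U^{(1)}_{-1}=0$.
\item For $m=1$: $\mathcal{Z}_1=\mathcal{Y}_1=\mathcal{Y}_0+\mathcal{X}_0=\mathcal{A}+\mathcal{B}=\mathcal{A}U^{(0)}_1+\mathcal{B}U^{(1)}_0$.
\end{itemize}
Induction on $m$ then gives \eqref{eq-X-split} for all $n\ge0$ and \eqref{eq-Y-split} for all $n\ge1$.

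The remaining case $n=0$ of \eqref{eq-Y-split} needs the values obtained by running \eqref{Un-rec} backwards. From $U^{(0)}_1=U^{(0)}_0+U^{(0)}_{-1}$ we get $U^{(0)}_{-1}=0$. From $U^{(1)}_0=U^{(1)}_{-1}+U^{(1)}_{-2}$ we get $U^{(1)}_{-2}=1$. Hence $\mathcal{A}U^{(0)}_{-1}+\mathcal{B}U^{(1)}_{-2}=\mathcal{B}=\mathcal{Y}_0$, which completes the proof.

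The main point needing care is the bookkeeping of $q$-powers in the first-order recurrences, including the misplaced factor $q$ noted above, together with the correct backward values of $U^{(a)}_n$. The displayed list of these values contains a repeated index; the correct values are $U^{(0)}_{-1}=0$, $U^{(1)}_{-1}=0$ and $U^{(1)}_{-2}=1$. Once the interleaving is set up, the induction itself is routine.
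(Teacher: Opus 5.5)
Your proof is correct and is essentially the paper's argument: both sides satisfy a common recurrence and agree at the first two indices. Interleaving $\mathcal{X}_n,\mathcal{Y}_n$ into one sequence $\mathcal{Z}_m$ makes the step the paper calls ``straightforward'' (checking that $U^{(0)}_{2n}(q^4)$, $U^{(1)}_{2n-1}(q^4)$, etc.\ satisfy \eqref{eq-X-rec}--\eqref{eq-Y-rec}) immediate, since $\mathcal{Z}_m=\mathcal{Z}_{m-1}+q^{4(m-1)}\mathcal{Z}_{m-2}$ is exactly \eqref{Un-rec} after $q\mapsto q^4$ and an index shift for $U^{(1)}$.

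You are also right on two corrections. The factor $q$ in front of $\mathcal{Y}_n$ in \eqref{eq-rec-X-1st} is a typo; the paper's own proof in fact uses the corrected relation, $\mathcal{X}_1=(1+q^4)\mathcal{X}_0+\mathcal{Y}_0$. In the list of extended values, the duplicated entry should read $U^{(0)}_{-2}=q$.
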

\begin{proof}
Using \eqref{Un-rec} it is straightforward to prove that $U_{2n}^{(0)}(q^4)$ and $U_{2n-1}^{(1)}(q^4)$ satisfy the recurrence relation \eqref{eq-X-rec}, and $U_{2n-1}^{(0)}(q^4)$ and $U_{2n-2}^{(1)}(q^4)$ satisfy the recurrence relation \eqref{eq-Y-rec}. Hence the sequences on both sides of \eqref{eq-X-split} and \eqref{eq-Y-split} satisfy the same recurrence relations. 

Taking $n=1$ in \eqref{eq-rec-X-1st}--\eqref{eq-rec-Y-1st} gives
\begin{align*}
 \mathcal{Y}_1=\mathcal{Y}_0+\mathcal{X}_0,\qquad
 \mathcal{X}_1=(1+q^4)\mathcal{X}_0+\mathcal{Y}_0.
\end{align*}
It is easy to check that the right side of \eqref{eq-X-split} and \eqref{eq-Y-split} also share the same initial values at $n=0,1$. Hence they are the same with $\mathcal{X}_n$ and $\mathcal{Y}_n$, respectively.    
\end{proof}
Lemma \ref{lem-XY-rep} is one of the key steps in our evaluation of Nahm sums dual to Example 9. A related result can be found in \cite[Theorem 2.2]{GP}.

For $a\in\{0,1\}$ and $N\geq0$, recall the following  identity (see e.g.,~\cite[(3.14) and (3.18)]{Sills-finite}):
\begin{equation}
 \sum_{j\geq0}q^{j(j+a)}\qbinom{N-j-a}{j}_{q}
 =\sum_{j\in\mathbb Z}(-1)^jq^{j(5j+2a+1)/2}
 \qbinom{N}{\left\lfloor(N-5j-a)/2\right\rfloor}_{q}.
 \label{eq-finite-Schur}
\end{equation}
 
Splitting the sum in the right side according to $j=2m$ and $j=2m+1$, we obtain
\begin{align}
 U_{2n}^{(0)}(q)={}&\sum_{m\in\mathbb Z}\Big(
 q^{10m^2+m}\qbinom{2n}{n-5m}_{q}
 -q^{10m^2+11m+3}\qbinom{2n}{n-5m-3}_{q}
 \Big),
 \label{eq-U-even-0}\\
 U_{2n-1}^{(1)}(q)={}&\sum_{m\in\mathbb Z}\Big(
 q^{10m^2+3m}\qbinom{2n}{n-5m-1}_{q}
 -q^{10m^2+13m+4}\qbinom{2n}{n-5m-3}_{q}
 \Big),
 \label{eq-U-odd-1}\\
U_{2n-2}^{(1)}(q)={}&\sum_{m\in\mathbb Z} \Big(
 q^{10m^2+3m}\qbinom{2n-1}{n-5m-1}_{q}
 -q^{10m^2+13m+4}\qbinom{2n-1}{n-5m-4}_{q}
 \Big),
 \label{eq-U-even-1}\\
 U_{2n-1}^{(0)}(q)={}&\sum_{m\in\mathbb Z} \Big(
 q^{10m^2+m}\qbinom{2n-1}{n-5m-1}_{q}
 -q^{10m^2+11m+3}\qbinom{2n-1}{n-5m-3}_{q}
 \Big).
 \label{eq-U-odd-0}
\end{align}

To evaluate the Nahm sums dual to Example 9, we need to deal with the following series:
\begin{align}
 I_s^{(a)}(q)&:=\sum_{n\geq1}
 \frac{(-1)^nq^{2n^2+4a n}(q^2;q^4)_n}{(q^4;q^4)_{2n}}
 \qbinom{2n-1}{n-1-s}_{q^4}, \quad a=0,1,
 \label{eq-I01-defn}\\
 I_s^{(2)}(q)&:=\sum_{n\geq0}
 \frac{(-1)^{n+1}q^{2n^2+4n}(q^2;q^4)_{n+1}}{(q^4;q^4)_{2n+1}}
 \qbinom{2n}{n-s}_{q^4}.
 \label{eq-I2-defn}
\end{align}
We will connect them with the following series:
\begin{align}\label{eq-I-defn}
    I_s(q):=\sum_{n\geq0}
 \frac{(-1)^nq^{2n^2}(q^2;q^4)_n}{(q^4;q^4)_{2n}}
 \qbinom{2n}{n-s}_{q^4}.
\end{align}

\begin{lemma}\label{lem-I-relation}
We have
\begin{align}
 I_s(q)=(-1)^sq^{2s^2}\frac{J_2}{J_4^2}.
 \label{eq-I-product}
\end{align}
Let $\widetilde{I}_s=I_s-\delta_{s,0}$ where $\delta_{s,0}$ is 1 when $s=0$ and 0 otherwise.  Then
\begin{align}
 I_{s-1}^{(0)}-q^{8s+4}I_{s+1}^{(0)}
 &=\widetilde{I}_s-q^{8s+4}\widetilde{I}_{s+1},
 \label{eq-I0-relation}\\
 I_{s-1}^{(1)}-q^{8s+4}I_{s+1}^{(1)}
 &=q^{4s}(\widetilde{I}_s-\widetilde{I}_{s+1}),
 \label{eq-I1-relation}\\
 I_{s-1}^{(2)}-q^{8s}I_{s+1}^{(2)}
 &=q^{-2}(1-q^{8s})I_s.
 \label{eq-I2-relation}
\end{align}
\end{lemma}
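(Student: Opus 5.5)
The plan is to prove \eqref{eq-I-product} with Bailey's lemma applied to a trivial Bailey pair. The relations \eqref{eq-I0-relation}--\eqref{eq-I2-relation} will then follow from term-by-term identities among $q$-binomial coefficients.

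\emph{Step 1: the product formula.} Since $\qbinom{2n}{n-s}_{q^4}$ is symmetric in $s\mapsto -s$, it suffices to take $s\ge 0$, and then
\[
I_s(q)=\sum_{n\ge s}(-1)^nq^{2n^2}(q^2;q^4)_n\,\frac{1}{(q^4;q^4)_{n-s}(q^4;q^4)_{n+s}}.
\]
Work in base $q^4$ with $a=1$. By the defining relation \eqref{defn-BP}, the sequences $\alpha_k=\delta_{k,s}$ and $\beta_n=1/\bigl((q^4;q^4)_{n-s}(q^4;q^4)_{n+s}\bigr)$ form a Bailey pair. This is just linearity of \eqref{defn-BP}, so the normalization $\alpha_0=\beta_0=1$ is not needed.

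Next, insert this pair into \eqref{alpha-beta} (base $q^4$) with $\rho_1=q^2$ and $\rho_2\to\infty$, and then let $n\to\infty$ in \eqref{defn-BP}. The weight becomes
\[
(\rho_1,\rho_2;q^4)_n\,(q^4/\rho_1\rho_2)^n\;\longrightarrow\;(q^2;q^4)_n(-1)^nq^{2n^2},
\]
so
\[
I_s=\frac{(q^2;q^4)_\infty}{(q^4;q^4)_\infty}\sum_{n}\frac{(q^2;q^4)_n}{(q^2;q^4)_n}(-1)^nq^{2n^2}\alpha_n=(-1)^sq^{2s^2}\frac{J_2}{J_4^2}.
\]
Alternatively, one can interchange the order of summation directly and use the $q$-Chu--Vandermonde / $q$-Gauss summation. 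That route is the same computation in disguise.

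\emph{Step 2: the relations \eqref{eq-I0-relation} and \eqref{eq-I1-relation}.} Here the summands share the factor $(-1)^nq^{2n^2}(q^2;q^4)_n/(q^4;q^4)_{2n}$, so each relation reduces to a coefficient-wise identity in $n$. For \eqref{eq-I0-relation} the identity to prove is
\[
\qbinom{2n-1}{n-s}-q^{8s+4}\qbinom{2n-1}{n-s-2}=\qbinom{2n}{n-s}-q^{8s+4}\qbinom{2n}{n-s-1}\qquad(\text{base }q^4,\ n\ge1).
\]
The $n=0$ term of $I_s$ equals $\delta_{s,0}$, and this is exactly what the tilde removes. For \eqref{eq-I1-relation} the extra factor $q^{4n}$ should combine with the other Pascal rule, giving an identity of the form
\[
q^{4n}\Bigl(\qbinom{2n-1}{n-s}-q^{8s+4}\qbinom{2n-1}{n-s-2}\Bigr)=q^{4s}\Bigl(\qbinom{2n}{n-s}-\qbinom{2n}{n-s-1}\Bigr).
\]
I will verify both identities by applying the two Pascal rules $\qbinom{N}{k}=\qbinom{N-1}{k-1}+q^{4k}\qbinom{N-1}{k}=q^{4(N-k)}\qbinom{N-1}{k-1}+\qbinom{N-1}{k}$ to the right-hand sides and collecting terms.

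\emph{Step 3: the relation \eqref{eq-I2-relation}.} I expect this to be the main obstacle, because the summands of $I^{(2)}$ have a different shape. First shift $n\mapsto n-1$ in $I^{(2)}_{s\pm1}$ to get
\[
(-1)^nq^{2n^2-2}\frac{(q^2;q^4)_n}{(q^4;q^4)_{2n-1}}\qbinom{2n-2}{n-1-s\mp1}.
\]
Then express everything as a multiple of $\qbinom{2n}{n-s}/(q^4;q^4)_{2n}$ using the ratios
\[
\frac{\qbinom{2n-2}{n-s-2}}{\qbinom{2n}{n-s}}=\frac{(1-q^{4(n-s)})(1-q^{4(n-s-1)})}{(1-q^{8n})(1-q^{8n-4})},\qquad
\frac{\qbinom{2n-2}{n+s-2}}{\qbinom{2n}{n-s}}=\frac{(1-q^{4(n+s)})(1-q^{4(n+s-1)})}{(1-q^{8n})(1-q^{8n-4})}.
\]
The required identity becomes the polynomial identity
\[
(1-q^{8n})\Bigl[(1-q^{4n+4s})(1-q^{4n+4s-4})-q^{8s}(1-q^{4n-4s})(1-q^{4n-4s-4})\Bigr]=(1-q^{8s})(1-q^{8n})(1-q^{8n-4}),
\]
possibly up to a power of $q$ that I will pin down. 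This should reduce to a direct expansion. The $n=0$ boundary term needs a separate check.
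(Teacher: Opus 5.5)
Your proposal is correct and is essentially the paper's argument. Bailey's lemma with $\rho_1=q^2$, $\rho_2\to\infty$ applied to the unit pair $\alpha_k=\delta_{k,s}$ is exactly the limiting $q$-Gauss summation the paper uses after setting $n=s+r$. Your coefficient-wise checks for \eqref{eq-I0-relation}--\eqref{eq-I1-relation} use the same two Pascal rules; the paper packages them as $I^{(0)}_{s-1}+q^{4s}I^{(1)}_s=\widetilde{I}_s$ and $I^{(0)}_s+q^{-4s}I^{(1)}_{s-1}=\widetilde{I}_s$ and then eliminates. For \eqref{eq-I2-relation} the paper uses the same difference-of-binomials factorization, shifting the index after rather than before. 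Your displayed polynomial identity holds exactly as written, with no extra power of $q$, because $2(n-1)^2+4(n-1)=2n^2-2$ already supplies the factor $q^{-2}$.
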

\begin{proof}
Since $I_s(q)=I_{-s}(q)$, we may assume that $s\geq 0$. 
We write $n=s+r$ to deduce that
\begin{align*}
I_s(q)=\frac{(-1)^sq^{2s^2}(q^2;q^4)_s}{(q^4;q^4)_{2s}}
 \sum_{r\geq0}
 \frac{(q^{4s+2};q^4)_r}{(q^4;q^4)_r(q^{8s+4};q^4)_r}
 (-1)^rq^{2r^2+4sr}.
\end{align*}
We use the limiting $q$-Gauss summation
\begin{equation}
 \sum_{r\geq0}\frac{(a;q)_r}{(q;q)_r(c;q)_r}
 (-1)^rq^{\binom r2}\left(\frac ca\right)^r
 =\frac{(c/a;q)_\infty}{(c;q)_\infty}.
 \label{eq-lim-q-Gauss}
\end{equation}
Applying \eqref{eq-lim-q-Gauss} with $q$ replaced by $q^4$ and $a=q^{4s+2}$, $c=q^{8s+4}$, we obtain \eqref{eq-I-product}.

Recall the following identities (see e.g., \cite[(1.5) and (1.6)]{Sills-finite}):
\begin{align*}
 \qbinom{2n}{n-s}_{q^4}
 &=\qbinom{2n-1}{n-s}_{q^4}
   +q^{4n+4s}\qbinom{2n-1}{n-s-1}_{q^4},\\
 \qbinom{2n}{n-s}_{q^4}
 &=q^{4n-4s}\qbinom{2n-1}{n-s}_{q^4}
   +\qbinom{2n-1}{n-s-1}_{q^4}.
\end{align*}
From them we deduce that
\begin{align*}
 I_{s-1}^{(0)}+q^{4s}I_s^{(1)}=\widetilde{I}_s,
 \qquad
 I_s^{(0)}+q^{-4s}I_{s-1}^{(1)}=\widetilde{I}_s.
\end{align*}
Eliminating $I_s^{(1)}$ and $I_s^{(0)}$, we obtain  \eqref{eq-I0-relation} and \eqref{eq-I1-relation}, respectively.

Note that
\begin{align*}
 \qbinom{2n}{n-s+1}_{q^4}-q^{8s}\qbinom{2n}{n-s-1}_{q^4}
 =\frac{(1-q^{8s})(q^4;q^4)_{2n+1}}
 {(q^4;q^4)_{n-s+1}(q^4;q^4)_{n+s+1}}.
\end{align*}
Using it with $r=n+1$ we have
\begin{align*}
I_{s-1}^{(2)}-q^{8s}I_{s+1}^{(2)}=
 q^{-2}(1-q^{8s})\sum_{r\geq0}
 \frac{(-1)^rq^{2r^2}(q^2;q^4)_r}
 {(q^4;q^4)_{r-s}(q^4;q^4)_{r+s}}
 =q^{-2}(1-q^{8s})I_s^{(2)}.  
\end{align*}
This proves \eqref{eq-I2-relation}.
\end{proof}

Now we are ready to establish the following double sum identities.
\begin{lemma}\label{lem-PR-product}
We have
\begin{align}
 \mathcal P_1(q)&:=\sum_{n\geq0}
 \frac{(-1)^nq^{2n^2}(q^2;q^4)_n}{(q^4;q^4)_{2n}}U_{2n}^{(0)}(q^4)=\frac{J_2}{J_4^2}
 (J_{86,180}-q^{16}J_{14,180}),
 \label{eq-P1-product}\\
\mathcal R_1(q)&:=\sum_{n\geq0}
 \frac{(-1)^nq^{2n^2}(q^2;q^4)_n}{(q^4;q^4)_{2n}}U_{2n-1}^{(1)}(q^4)=-q^2\frac{J_2}{J_4^2}
 (J_{58,180}+q^{10}J_{22,180}),
 \label{eq-R1-product}\\
 {\mathcal P}_{2}(q)&:=\sum_{n\geq0}
 \frac{(-1)^nq^{2n^2}(q^2;q^4)_n}{(q^4;q^4)_{2n}}
U_{2n-2}^{(1)}(q^4)=\frac{J_2}{J_4^2}
 (J_{78,180}+q^6J_{42,180}),
 \label{eq-P2-product}\\
 {\mathcal R}_{2}(q)&:=\sum_{n\geq0}
 \frac{(-1)^nq^{2n^2}(q^2;q^4)_n}{(q^4;q^4)_{2n}}U_{2n-1}^{(0)}(q^4)=-\frac{J_2}{J_4^2}
 (q^2J_{66,180}+q^{20}J_{6,180}),
 \label{eq-R2-product}\\
 \mathcal P_3(q)&:=\sum_{n\geq0}
 \frac{(-1)^{n+1}q^{2n^2+4n}(q^2;q^4)_{n+1}}{(q^4;q^4)_{2n+1}}U_{2n}^{(0)}(q^4)=\frac{J_2}{J_4^2}
 (-J_{74,180}+q^8J_{34,180}),
 \label{eq-P3-product}\\
 \mathcal R_3(q)&:=\sum_{n\geq0}
 \frac{(-1)^{n+1}q^{2n^2+4n}(q^2;q^4)_{n+1}}{(q^4;q^4)_{2n+1}}U_{2n-1}^{(1)}(q^4)=\frac{J_2}{J_4^2}
 (q^6J_{38,180}+q^{20}J_{2,180}),
 \label{eq-R3-product} \\
 {\mathcal P}_{4}(q)&:=\sum_{n\geq0}
 \frac{(-1)^nq^{2n^2+4n}(q^2;q^4)_n}{(q^4;q^4)_{2n}}
U_{2n-2}^{(1)}(q^4)=\frac{J_2}{J_4^2}
 (J_{82,180}+q^2J_{62,180}),
 \label{eq-P4-product}\\
  {\mathcal R}_{4}(q)&:=\sum_{n\geq0}
 \frac{(-1)^nq^{2n^2+4n}(q^2;q^4)_n}{(q^4;q^4)_{2n}}U_{2n-1}^{(0)}(q^4)=-\frac{J_2}{J_4^2}
 (q^6J_{46,180}+q^{12}J_{26,180}).
 \label{eq-R4-product}
\end{align}
\end{lemma}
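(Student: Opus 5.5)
The plan is to insert the finite Schur-type expansions \eqref{eq-U-even-0}--\eqref{eq-U-odd-0} (with $q\to q^4$) into each double sum. This turns each double sum into a theta-weighted combination of the single-index series $I_s$, $I_s^{(0)}$, $I_s^{(1)}$, $I_s^{(2)}$ from Lemma \ref{lem-I-relation}. The product evaluation \eqref{eq-I-product} then gives a bilateral theta series, which we recognize via the Jacobi triple product \eqref{Jacobi} as $J_{a,180}$-type products.

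\textbf{The case $\mathcal P_1$.} Substituting \eqref{eq-U-even-0} and interchanging summations gives
\begin{align*}
\mathcal P_1=\sum_{m\in\mathbb Z}\big(q^{4(10m^2+m)}I_{5m}-q^{4(10m^2+11m+3)}I_{5m+3}\big).
\end{align*}
By \eqref{eq-I-product}, $I_s=(-1)^sq^{2s^2}J_2/J_4^2$. Hence
\begin{align*}
\mathcal P_1=\frac{J_2}{J_4^2}\sum_m(-1)^m\big(q^{90m^2+4m}+q^{90m^2+108m+30}\big).
\end{align*}
Each of these two sums is a single theta series in $q^{180}$ after completing the square. Applying \eqref{Jacobi} should produce $J_{86,180}-q^{16}J_{14,180}$; checking this is only bookkeeping of exponents.

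\textbf{The cases $\mathcal R_1$ and $\mathcal P_3,\mathcal R_3$.} $\mathcal R_1$ uses \eqref{eq-U-odd-1} in the same way. For $\mathcal P_3$ and $\mathcal R_3$ the binomials are still $\qbinom{2n}{\cdot}$, but the weight is that of $I^{(2)}_s$. Here I would not evaluate $I^{(2)}_s$ individually. Instead I would pair consecutive terms so that only the combinations $I^{(2)}_{s-1}-q^{8s}I^{(2)}_{s+1}$ of \eqref{eq-I2-relation} appear. This is where the two-term structure $\qbinom{2n}{n-5m}$ versus $\qbinom{2n}{n-5m-3}$ matters: the exponents should telescope. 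Alternatively, I would rewrite the $m$-sum after a shift $m\to -m-1$ so that the terms group into such pairs and reduce to $I_s$.

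\textbf{The cases $\mathcal P_2,\mathcal R_2,\mathcal P_4,\mathcal R_4$.} These involve $\qbinom{2n-1}{\cdot}$ and correspond to $I^{(0)}_s$ (for $\mathcal P_2,\mathcal R_2$) and $I^{(1)}_s$ (for $\mathcal P_4,\mathcal R_4$). The $n=0$ term must be handled separately, because $U_{-2}^{(1)}$ and $U_{-1}^{(0)}$ are the extended values; this is the source of the $\delta_{s,0}$ in $\widetilde I_s$. As for $\mathcal P_3,\mathcal R_3$, I expect to regroup the sum over $m$ using the symmetry $s\mapsto -s$ together with $m\mapsto -m-c$, so that terms appear in the combinations $I^{(0)}_{s-1}-q^{8s+4}I^{(0)}_{s+1}$ and $I^{(1)}_{s-1}-q^{8s+4}I^{(1)}_{s+1}$. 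Relations \eqref{eq-I0-relation} and \eqref{eq-I1-relation} then convert these to $\widetilde I$'s, and the $\delta$ corrections should cancel against the $n=0$ contributions.

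\textbf{Main obstacle.} The hardest step is exactly this regrouping. The relations link $I_{s-1}$ with $I_{s+1}$, with offset 2, while the Schur expansion has index gap 5 per period. So it is not obvious that the sum organizes into those pairs. If direct pairing fails, my fallback is to solve \eqref{eq-I0-relation} and \eqref{eq-I1-relation} for $I^{(0)}_s$ and $I^{(1)}_s$ as infinite telescoping series in the $\widetilde I$'s, using that $I^{(a)}_s\to0$ as $s\to\infty$ $q$-adically. I would then substitute and swap sums; the resulting coefficient series should collapse to theta functions. Finally, I would verify the constant normalizations numerically with a few terms of the $q$-expansion.
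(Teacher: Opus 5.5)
This is the paper's proof. The regrouping you call the main obstacle goes through directly:
\begin{itemize}
\item For $\mathcal R_2,\mathcal R_3,\mathcal R_4$, the two Schur terms with the same $m$ already form the left sides of \eqref{eq-I0-relation}, \eqref{eq-I2-relation}, \eqref{eq-I1-relation} at $s=5m+1,\,5m+2,\,5m+1$ respectively.
\item For $\mathcal P_2,\mathcal P_3,\mathcal P_4$, replace $m$ by $m-1$ in the second term, so the index $5m+3$ becomes $5m-2$. The same relations then appear at $s=5m-1$.
\item The $q$-weights match exactly, e.g.\ $q^{40(m-1)^2+52(m-1)+16}\cdot q^{8(5m-1)+4}=q^{40m^2+12m}$ for $\mathcal P_2$.
\item In $\mathcal P_2$ and $\mathcal P_4$, the $\delta_{m,0}$ coming from $\widetilde I_{5m}$ cancels the $n=0$ term $U^{(1)}_{-2}(q^4)=1$.
\end{itemize}
So no symmetry argument or telescoping fallback is needed.

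Two corrections. In $\mathcal P_1$ the second exponent is $90m^2+104m+30$, not $90m^2+108m+30$. Only the former gives $-q^{16}J_{14,180}$, via $j(q^{194};q^{180})=-q^{-14}J_{14,180}$. Also, $I^{(0)}_s$ and $I^{(1)}_s$ are invariant under $s\mapsto -1-s$, not $s\mapsto -s$; only $I_s$ and $I^{(2)}_s$ have the latter symmetry.
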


\begin{proof}
(1) Substituting \eqref{eq-U-even-0} into $\mathcal P_1$, and using \eqref{eq-I-product}, we deduce that
\begin{align*}
 &\mathcal P_1(q)=\sum_{m\in \mathbb{Z}} \big(q^{40m^2+4m}I_{5m}(q)-q^{40m^2+44m+12}I_{5m+3}(q)\big) \nonumber \\
 &=\frac{J_2}{J_4^2}
 \sum_m(-1)^m
 \left(q^{90m^2+4m}+q^{90m^2+104m+30}\right).
\end{align*}
This proves \eqref{eq-P1-product}.  Similarly, using
\eqref{eq-U-odd-1} and \eqref{eq-I-product} we have
\begin{align*}
 &\mathcal R_1(q)=\sum_{m\in \mathbb{Z}}\big(q^{40m^2+12m}I_{5m+1}(q)-q^{40m^2+52m+16}I_{5m+3}(q)  \big) \nonumber \\
 &=\frac{J_2}{J_4^2}
 \sum_m(-1)^m
 \left(-q^{90m^2+32m+2}+q^{90m^2+112m+34}\right).
\end{align*}
This proves \eqref{eq-R1-product}.

(2) Using \eqref{eq-U-even-1}, \eqref{eq-I0-relation} and \eqref{eq-I-product}, we have
\begin{align*}
 &\mathcal{P}_2(q)=1+\sum_{m=-\infty}^\infty\big(q^{40m^2+12m}I_{5m}^{(0)}
 -q^{40m^2+52m+16}I_{5m+3}^{(0)}\big) \nonumber \\
 &=1-\sum_m q^{40m^2-28m+4}
 \big(I_{5m-2}^{(0)}-q^{8(5m-1)+4}I_{5m}^{(0)}\big) \nonumber \\
&= 1-\sum_m q^{40m^2-28m+4}
 (\widetilde{I}_{5m-1}-q^{40m-4}\widetilde{I}_{5m})  \nonumber \\
&=\frac{J_2}{J_4^2}\sum_m(-1)^m
 \left(q^{90m^2+12m}+q^{90m^2-48m+6}\right).
\end{align*}
Here we replaced $m$ by $m-1$ in the second sum. 
This proves \eqref{eq-P2-product}. Next, using \eqref{eq-U-odd-0}, \eqref{eq-I0-relation}  and \eqref{eq-I-product}, we have
\begin{align*}
   & \mathcal{R}_2(q)= \sum_{m=-\infty}^\infty \big(q^{40m^2+4m}I_{5m}^{(0)}
 -q^{40m^2+44m+12}I_{5m+2}^{(0)}\big)\nonumber \\
 & =\sum_m q^{40m^2+4m}
 \big(\widetilde{I}_{5m+1}-q^{8(5m+1)+4}\widetilde{I}_{5m+2}\big) \nonumber \\
 &=\frac{J_2}{J_4^2}\sum_m (-1)^{m+1}
 \left(q^{90m^2+24m+2}+q^{90m^2+84m+20}\right).
\end{align*}
This proves \eqref{eq-R2-product}.

(3) Using \eqref{eq-U-even-0}, \eqref{eq-I2-relation}  and \eqref{eq-I-product}, we have
\begin{align*}
    &\mathcal{P}_3(q)=\sum_{m=-\infty}^\infty \big(q^{40m^2+4m}I_{5m}^{(2)}
 -q^{40m^2+44m+12}I_{5m+3}^{(2)}\big) \nonumber \\
 &=-\sum_m q^{40m^2-36m+8}(I_{5m-2}^{(2)}-q^{8(5m-1)}I_{5m}^{(2)})\nonumber \\
&=\frac{J_2}{J_4^2} \sum_m(-1)^m
 \left(-q^{90m^2-16m}+q^{90m^2-56m+8}\right).
\end{align*}
Here for the second equality we replaced $m$ by $m-1$ in the second sum. This proves \eqref{eq-P3-product}. Next, using \eqref{eq-U-odd-1}, \eqref{eq-I2-relation}  and \eqref{eq-I-product}, we have
\begin{align*}
&\mathcal{R}_3(q)=\sum_{m=-\infty}^\infty\big(q^{40m^2+12m}I_{5m+1}^{(2)}
 -q^{40m^2+52m+16}I_{5m+3}^{(2)}\big) \nonumber \\
&= \sum_m q^{40m^2+12m}(I_{5m+1}^{(2)}-q^{8(5m+2)}I_{5m+3}^{(2)})\nonumber \\
&=\frac{J_2}{J_4^2} \sum_m(-1)^m
 \left(q^{90m^2+52m+6}-q^{90m^2+92m+22}\right).
\end{align*}
This proves \eqref{eq-R3-product}.

(4) Using \eqref{eq-U-even-1}, \eqref{eq-I1-relation}  and \eqref{eq-I-product}, we have
\begin{align*}
&\mathcal{P}_4(q)=1+\sum_{m=-\infty}^\infty \big(q^{40m^2+12m}I_{5m}^{(1)}
 -q^{40m^2+52m+16}I_{5m+3}^{(1)}\big) \nonumber \\
 &= 1-\sum_m q^{40m^2-8m}(\widetilde{I}_{5m-1}-\widetilde{I}_{5m}) \nonumber \\
&=\frac{J_2}{J_4^2} \sum_m(-1)^m
 \left(q^{90m^2-8m}+q^{90m^2-28m+2}\right).
\end{align*}
Here for the second equality we replaced $m$ by $m-1$ in the second sum. This proves \eqref{eq-P4-product}.
Next, using \eqref{eq-U-odd-0}, \eqref{eq-I1-relation} and \eqref{eq-I-product}, we have
\begin{align*}
&\mathcal{R}_4(q)=\sum_{m=-\infty}^\infty \big(q^{40m^2+4m}I_{5m}^{(1)}
 -q^{40m^2+44m+12}I_{5m+2}^{(1)}\big) \nonumber \\
 &=\frac{J_2}{J_4^2} \sum_m(-1)^m
 \left(-q^{90m^2+44m+6}-q^{90m^2+64m+12}\right).
\end{align*}
This proves \eqref{eq-R4-product}.
\end{proof}

\section{Nahm sums dual to Zagier's rank-three examples}\label{sec-dual}
As mentioned in Section \ref{sec-intro}, we only need to consider the dual of Zagier's Examples 1--3 and 7--12.

\subsection{Dual of Examples 1--3}
We shall discuss the dual of Example 1--3 together since they share similar feature and are closely related to each other.
\subsubsection{Dual of Example 1}
We list Zagier's Example 1 and its dual in Table \ref{tab-exam1} where $\alpha,\nu \in \mathbb{Q}$ and $h\in \mathbb{Z}$. Here we have $a=-\frac{\alpha}{(h^2-2)\alpha+1}$, $v_1=\nu-h/2$ and $v_2=\nu+h/2$.

As pointed out by Wang \cite{Wang-rank3}, to make $A$ positive definite we must have $h\in \{0,-1,1\}$.  
{\small
\begin{table}[htbp]
\centering
\caption{Modular triples for Example 1 and their duals}
    \label{tab-exam1}
\begin{tabular}{|c|ccc|c|ccc|}
\hline 
\padedvphantom{I}{3.5ex}{3.5ex}
    $A$   & \multicolumn{3}{c|}{$\left(\begin{smallmatrix}  \alpha h^2+1 & \alpha h & -\alpha h \\ \alpha h & \alpha & 1-\alpha \\ -\alpha h & 1-\alpha & \alpha \end{smallmatrix}\right)$}  & $\mathcal{D}(A)$
    & \multicolumn{3}{c|}{$\left(\begin{smallmatrix} ah^2+1 & -ah & ah  \\ -ah & a & 1-a \\ ah & 1-a & a  \end{smallmatrix}\right)$} \\
\hline 
\padedvphantom{I}{3.5ex}{3.5ex}
   $B$
     & $\left(\begin{smallmatrix}\alpha \nu h \\ \alpha \nu \\ -\alpha \nu \end{smallmatrix}\right)$  & $\left(\begin{smallmatrix} \alpha \nu h +\frac{1}{2} \\ \alpha \nu  \\ -\alpha \nu \end{smallmatrix}\right)$ & $\left(\begin{smallmatrix}
\alpha \nu h-1/2 \\ \alpha \nu \\ -\alpha \nu \end{smallmatrix}\right)$
     &$\mathcal{D}(B)$
    & $\left(\begin{smallmatrix} -a\nu h\\ a\nu \\ -a\nu  \end{smallmatrix}\right)$ & $\left(\begin{smallmatrix} -av_1h+\frac{1}{2} \\ av_1 \\ -av_1 \end{smallmatrix}\right)$ & $\left(\begin{smallmatrix} -av_2h-\frac{1}{2} \\ av_2 \\ -av_2 \end{smallmatrix}\right)$  \\
     \padedvphantom{I}{1ex}{1ex}
   $C$
    & {\tiny $\frac{1}{2}\alpha \nu^2-\frac{1}{16}$} & {\tiny $\frac{1}{2}\alpha \nu^2$} & {\tiny $\frac{1}{2}\alpha\nu^2$}  & $\mathcal{D}(C)$
    & {\tiny $ \frac{1}{2}a \nu^2-\frac{1}{16}$}  & {\tiny $\frac{1}{2}av_1^2$} & {\tiny $ \frac{1}{2}av_2^2$}   \\
    \hline
\end{tabular}
\end{table}
}

As can be observed from Table \ref{tab-exam1}, this example is dual to itself. The modularity has already been confirmed by Zagier \cite{Zagier}. Moreover, if we allow $h\in \frac{1}{2}\mathbb{Z}$, then Cao and Wang \cite[Table 2 and Theorem 1.1]{Cao-Wang2026} proved that for $h=\pm \frac{1}{2}$, the matrix $A$ and the first and the third choices of $(B,C)$ in Table \ref{tab-exam1} still form modular triples.

\subsubsection{Dual of Example 2}
We list Zagier's Example 1 and its dual in Table \ref{tab-exam2} where $\alpha,\nu\in \mathbb{Q}$ and $h\in \mathbb{Z}$. Here $a=-\frac{2\alpha}{(h^2-4)\alpha+2}$, $v=\nu-\frac{1}{2}h$, and we corrected a typo in \cite[Table 3]{Zagier} and \cite[Table 2]{Wang-rank3} which wrote $17/120$ as $17/20$. As pointed out by Wang \cite{Wang-rank3}, to make $A$ positive definite we need $h\in \{0,-1,1\}$.
{\small
\begin{table}[htbp]
\centering
\caption{Modular triples for Example 2 and their duals}
    \label{tab-exam2}
\begin{tabular}{|c|cc|c|cc|}
\hline 
\padedvphantom{I}{3.5ex}{3.5ex}
    $A$   & \multicolumn{2}{c|}{$\left(\begin{smallmatrix}  \alpha h^2+2 & \alpha h & -\alpha h \\ \alpha h & \alpha & 1-\alpha \\
-\alpha h & 1-\alpha & \alpha \end{smallmatrix}\right)$}  & $\mathcal{D}(A)$
    & \multicolumn{2}{c|}{$\left(\begin{smallmatrix} \frac{1}{4}ah^2+\frac{1}{2} & -\frac{1}{2}ah & \frac{1}{2}ah  \\ -\frac{1}{2}ah & a & 1-a \\ \frac{1}{2}ah & 1-a & a  \end{smallmatrix}\right)$} \\
\hline 
\padedvphantom{I}{3.5ex}{3.5ex}
   $B$
     & $\left(\begin{smallmatrix} \alpha \nu h \\ \alpha \nu \\ -\alpha \nu  \end{smallmatrix}\right)$ & $\left(\begin{smallmatrix} \alpha \nu h +1 \\ \alpha \nu  \\ -\alpha \nu \end{smallmatrix}\right)$
     &$\mathcal{D}(B)$
    & $\left(\begin{smallmatrix} -a\nu h/2 \\ a\nu \\ -a\nu  \end{smallmatrix}\right)$ & $\left(\begin{smallmatrix} -\frac{1}{2}ahv+\frac{1}{2} \\ av \\ -av \end{smallmatrix}\right)$  \\
     \padedvphantom{I}{1ex}{1ex}
   $C$
    &  {\tiny $\frac{1}{2}\alpha\nu^2-\frac{7}{120}$} & {\tiny $\frac{1}{2}\alpha \nu^2+\frac{17}{120}$} & $\mathcal{D}(C)$
    & {\tiny $\frac{1}{2}a\nu^2-\frac{1}{15}$}  & {\tiny $\frac{1}{2}av_1^2-\frac{1}{60}$}   \\
    \hline
\end{tabular}
\end{table}
}

The dual triples $\mathcal{D}(A,B,C)$ correspond to Zagier's Example 3 (see Table \ref{tab-exam3}). Zagier proves the modularity for the case $h=0$, and the cases $h=\pm 1$ were confirmed by Cao and Wang \cite[Theorem 1.2]{Cao-Wang2026}.

\subsubsection{Dual of Example 3}
We list Zagier's Example 3 and its dual in Table \ref{tab-exam3}. Here $a=-\frac{\alpha}{2\alpha(h^2-1)+1}$ and $v=\nu-h$ and $h\in \{0,\frac{1}{2},-\frac{1}{2}\}$. Zagier \cite{Zagier} requires that $\alpha,\nu \in \mathbb{Q}$ and $h\in \mathbb{Z}$. As pointed out by Wang \cite{Wang-rank3}, in this setting we must have $h=0$ to make $A$ positive definite. However, the condition $h\in \mathbb{Z}$ can be relaxed. In fact, if we allow $h\in \frac{1}{2}\mathbb{Z}$, then Cao and Wang \cite[Table 2 and Theorem 1.1]{Cao-Wang2026} proved that when $h=\pm \frac{1}{2}$, $(A,B,C)$ in the left side of Table \ref{tab-exam3} are still modular triples. 
{\small
\begin{table}[htbp]
\centering
\caption{Modular triples for Example 3 and their duals}
    \label{tab-exam3}
\begin{tabular}{|c|cc|c|cc|}
\hline
\padedvphantom{I}{3.5ex}{3.5ex}
    $A$   & \multicolumn{2}{c|}{$\left(\begin{smallmatrix}  \alpha h^2+1/2 & \alpha h & -\alpha h \\ \alpha h & \alpha & 1-\alpha \\ -\alpha h & 1-\alpha & \alpha \end{smallmatrix}\right)$}  & $\mathcal{D}(A)$
    & \multicolumn{2}{c|}{$\left(\begin{smallmatrix} 4ah^2+2 & -2ah & 2ah  \\ -2ah & a & 1-a \\ 2ah & 1-a & a  \end{smallmatrix}\right)$} \\
\hline 
\padedvphantom{I}{3.5ex}{3.5ex}
   $B$
     & $\left(\begin{smallmatrix} \alpha \nu h \\  \alpha \nu \\ -\alpha \nu  \end{smallmatrix}\right)$ & $\left(\begin{smallmatrix}  \alpha \nu h+1/2 \\  \alpha \nu  \\ -\alpha \nu \end{smallmatrix}\right)$
     &$\mathcal{D}(B)$
    & $\left(\begin{smallmatrix} -2a\nu h \\ a \nu \\ -a\nu  \end{smallmatrix}\right)$ & $\left(\begin{smallmatrix} -2ahv+1 \\ av \\ -av \end{smallmatrix}\right)$ \\
     \padedvphantom{I}{1ex}{1ex}
   $C$
    & {\tiny $\frac{1}{2}\alpha {\nu^2}- \frac{1}{15}$} & {\tiny $ \frac{1}{2}\alpha \nu^2-\frac{1}{60}$} & $\mathcal{D}(C)$ &
    {\tiny $\frac{1}{2}a\nu^2-\frac{7}{120}$} & {\tiny $\frac{1}{2}av^2+\frac{17}{120}$} \\
    \hline
\end{tabular}
\end{table}
}

The dual triples $\mathcal{D}(A,B,C)$ coincide with Zagier's Example 2 and so their modularity are known.

\subsection{Dual of Example 7}

We list Zagier's Example 7 and its dual in Table \ref{tab-exam7}. Here $\nu $ is an arbitrary rational number.
{\small
\begin{table}[htbp]
\centering
\caption{Modular triples for Example 7 and their duals}\label{tab-exam7}
\vspace{2mm}
\begin{tabular}{c|ccccccc}
  \hline
    \padedvphantom{I}{4ex}{4ex}
  $A$ &  \multicolumn{7}{c}{$\begin{pmatrix} 2 &1 &1 \\ 1 & 2 & 0 \\ 1 & 0 & 2 \end{pmatrix}$}  \\
  \hline
\padedvphantom{I}{4ex}{4ex}
$B$ & $\begin{pmatrix} 0 \\ \nu \\ -\nu \end{pmatrix}$ & $\begin{pmatrix} 1/2 \\ 0 \\ 1 \end{pmatrix}$ &
$\begin{pmatrix} 1/2 \\ 1 \\ 0  \end{pmatrix}$ & $\begin{pmatrix} 1 \\ 0 \\ 1 \end{pmatrix}$ & $\begin{pmatrix} 1\\ 1 \\ 0  \end{pmatrix}$ &
$\begin{pmatrix} -1/2 \\ 0 \\0   \end{pmatrix}$  & $\begin{pmatrix} -1 \\ -1/2 \\ -1/2 \end{pmatrix}$ \\
  ~~ $C$ & $\nu^2/4-1/24$ & $7/48$ & $7/48$ & $5/24$ & $5/24$ & $-1/48$ & $1/48$ \\
 \hline
    \padedvphantom{I}{4ex}{4ex}
  $\mathcal{D}(A)$ &  \multicolumn{7}{c}{$\begin{pmatrix} 1 & -1/2 & -1/2 \\
-1/2 & 3/4 & 1/4 \\
-1/2 & 1/4 & 3/4 \end{pmatrix}$}  \\
  \hline
\padedvphantom{I}{4ex}{4ex}
$\mathcal{D}(B)$ &  $\begin{pmatrix} 0 \\ \nu/2 \\ -\nu/2 \end{pmatrix}$ & $\begin{pmatrix} 0 \\ 0 \\ 1/2 \end{pmatrix}$ & $\begin{pmatrix} 0 \\ 1/2 \\ 0 \end{pmatrix}$  & $\begin{pmatrix} 1/2 \\ -1/4 \\ 1/4 \end{pmatrix}$ &
$\begin{pmatrix} 1/2 \\ 1/4 \\ -1/4 \end{pmatrix}$ & $\begin{pmatrix} -1/2 \\ 1/4 \\ 1/4 \end{pmatrix}$  & $\begin{pmatrix} -1/2 \\ 0 \\ 0 \end{pmatrix}$ \\
  ~~ $\mathcal{D}(C)$ & $\nu^2/4-1/12$ & $-1/48$ & $-1/48$ & $1/24$ & $1/24$ & $1/48$ & $5/48$ \\
\hline
\end{tabular}
\end{table}
}

The modularity of Example 7 has been confirmed by Wang \cite[Theorem 4.8]{Wang-rank3}. As for its dual, we establish the following identities to prove its modularity.
\begin{theorem}\label{thm-dual-7}
We have
\begin{align}
\sum_{i,j,k\geq 0} \frac{q^{4i^2+3j^2+3k^2-4ij-4ik+2jk+2\nu j-2\nu k}}{(q^8;q^8)_i(q^8;q^8)_j(q^8;q^8)_k} &=\frac{J_8^3\overline{J}_{4(\nu+2),16}}{J_4^2J_{16}^2}+2q^{2\nu+3}\frac{J_{16}^2\overline{J}_{-4\nu,16}}{J_8^3},  \label{dual-7-1} \\
\sum_{i,j,k\geq 0} \frac{q^{4i^2+3j^2+3k^2-4ij-4ik+2jk+4k}}{(q^8;q^8)_i(q^8;q^8)_j(q^8;q^8)_k} &=\frac{J_6^5}{J_3^2J_4J_{12}^2}, \label{dual-7-2} \\
\sum_{i,j,k\geq 0} \frac{q^{4i^2+3j^2+3k^2-4ij-4ik+2jk+4i-2j+2k}}{(q^8;q^8)_i(q^8;q^8)_j(q^8;q^8)_k} &=\frac{J_{16}^7}{J_8^5J_{32}^2}+q\frac{J_8^3J_{32}^2}{J_4^2J_{16}^3},  \label{dual-7-3} \\
\sum_{i,j,k\geq 0} \frac{q^{4i^2+3j^2+3k^2-4ij-4ik+2jk-4i+2j+2k}}{(q^8;q^8)_i(q^8;q^8)_j(q^8;q^8)_k} &=2\frac{J_2^2J_3J_{12}}{J_1J_4^2J_{6}},   \label{dual-7-4}\\
\sum_{i,j,k\geq 0} \frac{q^{4i^2+3j^2+3k^2-4ij-4ik+2jk-4i}}{(q^8;q^8)_i(q^8;q^8)_j(q^8;q^8)_k} &=2q^{-1}\frac{J_2^5}{J_1^2J_4^3}.  \label{dual-7-5}
\end{align}
\end{theorem}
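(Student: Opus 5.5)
The plan is to exploit the fact that the quadratic form decouples after completing squares. Writing $n=j+k$, we have
\[
4i^2+3j^2+3k^2-4ij-4ik+2jk=(2i-n)^2+n^2+(j-k)^2 .
\]
So the natural first step is to sum over $i$ with $j,k$ fixed. By Euler's second identity in \eqref{Euler} with $q\to q^8$,
\[
\sum_{i\ge0}\frac{q^{4i^2-4in+4\epsilon i}}{(q^8;q^8)_i}=(-q^{4-4n+4\epsilon};q^8)_\infty ,
\]
where $\epsilon\in\{0,1,-1\}$ records the linear term in $i$ in the five cases.

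Next I split according to the parity of $n+\epsilon$. If $4-4n+4\epsilon\equiv4\pmod 8$, then $(-q^{4-4n+4\epsilon};q^8)_\infty$ equals $(-q^4;q^8)_\infty$ times an explicit power $q^{-(\cdot)^2}$. This power cancels most of the $n^2$ term. If $4-4n+4\epsilon\equiv0\pmod8$, the product contains the factor $1+q^0=2$, and one gets $2(-q^8;q^8)_\infty$ times a power of $q$. This is exactly the source of the coefficients $2$ in \eqref{dual-7-1}, \eqref{dual-7-4} and \eqref{dual-7-5}. Each Nahm sum thus becomes (product)$\,\times\,$(a double sum in $j,k$ restricted to $j+k$ of fixed parity). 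The summand of that double sum has the shape $q^{(j-k)^2+\text{linear}}/\bigl((q^8;q^8)_j(q^8;q^8)_k\bigr)$, up to the leftover $q^{c(j+k)}$ factors.

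For these double sums I would use the constant-term method. Write
\[
\frac{1}{(q^8;q^8)_j(q^8;q^8)_k}\,q^{(j-k)^2}=\mathrm{CT}_z\Big[\frac{z^{j}z^{-k}}{(q^8;q^8)_j(q^8;q^8)_k}\sum_{s}z^{-s}q^{s^2}\cdots\Big],
\]
or, more directly, fix $n=j+k$. Then the inner sum over $j$ is $\frac{1}{(q^8;q^8)_n}\sum_j \qbinom{n}{j}_{q^8}q^{(2j-n)^2+\cdots}$. The parity restriction on $n$ is imposed by averaging over $\pm$. The $j$-sum can then be recast via \eqref{q-binomial-finite} or Heine-type formulas into a single sum of the form $\sum_n q^{\cdots}(-q^a;q^b)_n/(q^c;q^c)_{2n}$. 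These should match the known evaluations listed in Section~\ref{sec-pre}: \eqref{new}, \eqref{H-cor-id-1}, \eqref{S47}, \eqref{S48}, \eqref{S29}, \eqref{S50}. For the $\nu$-family \eqref{dual-7-1}, the linear term $2\nu(j-k)$ only shifts $z$. I expect the constant term there to reduce, via the Jacobi triple product \eqref{Jacobi}, to the theta functions $\overline{J}_{4(\nu+2),16}$ and $\overline{J}_{-4\nu,16}$. Where the single sums do not match a listed identity directly, I would apply the ${}_3\phi_2$ transformation \eqref{GR-III.9}. This is presumably why it was recorded specifically for this example: it converts the terminating-type sum into one summable by \eqref{Heine-cor} or \eqref{Andrews-Gauss}.

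Finally, the resulting expressions (products times theta series, possibly a sum of two products) are matched to the stated right-hand sides by routine product manipulations. When two products must be combined, as in \eqref{dual-7-2} and \eqref{dual-7-4}, I would use the Frye--Garvan verification.

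The main obstacle is the middle step: evaluating the parity-restricted double sums in $j,k$ for the cases with asymmetric linear terms, \eqref{dual-7-3}--\eqref{dual-7-5}. There, the $q$-binomial sum over $j$ does not collapse to a single known series without a suitable transformation. I would first try to rewrite it as a ${}_3\phi_2$ with the argument $de/abc$ and apply \eqref{GR-III.9}. Failing that, I would treat the $(j,k)$ sum by the constant-term method with the full Jacobi triple product in $z$.
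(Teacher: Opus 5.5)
Your skeleton matches the paper's: sum over $i$ by \eqref{Euler}, split by the parity of $n=j+k$, collapse the sum at fixed $n$, then quote single-sum evaluations. However, two steps are wrong as written.

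First, the shifted product is not $(-q^4;q^8)_\infty$ or $2(-q^8;q^8)_\infty$ times a pure power of $q$. In fact
\[
(-q^{4-8m};q^8)_\infty=q^{-4m^2}(-q^4;q^8)_m(-q^4;q^8)_\infty,\qquad
(-q^{-8m};q^8)_\infty=2q^{-4m^2-4m}(-q^8;q^8)_m(-q^8;q^8)_\infty .
\]
The finite factors $(-q^4;q^8)_m$ and $(-q^8;q^8)_m$ are essential. After $q\to q^{1/4}$ they become the $(-q;q^2)_n$ and $(-q^2;q^2)_n$ in the single sums (compare \eqref{proof-7-1-S0}). If you drop them, you will be evaluating the wrong series.

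Second, once these factors are kept, the inner sum needs no transformation. For a linear term $\lambda j+\mu k$, the sum over $j$ at fixed $n$ is, up to a factor depending only on $n$,
\[
\sum_j\qbinom{n}{j}_{q^8}q^{8\binom j2}z^j=(-z;q^8)_n,\qquad z=q^{4-4n+\lambda-\mu},
\]
by \eqref{q-binomial-finite}. The paper does exactly this via \eqref{Andrews1}--\eqref{Andrews2}. With that, \eqref{dual-7-2}--\eqref{dual-7-5} reduce immediately to the pairs \eqref{S48}/\eqref{S50}, \eqref{new-2}/\eqref{new-3}, \eqref{S29}/\eqref{S28} and \eqref{new}/\eqref{Rama1713}. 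So the obstacle you single out, cases \eqref{dual-7-3}--\eqref{dual-7-5}, is not where the difficulty lies.

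The real work is \eqref{dual-7-1}, and there you only state an expectation.

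The even part is $(-q;q^2)_\infty\sum_n q^{n^2}(-q,-q^{1+\nu},-q^{1-\nu};q^2)_n/(q^2;q^2)_{2n}$, which is not a listed identity. The paper proceeds as follows:
\begin{itemize}
\item write $q^{n^2}(-q;q^2)_n/(q^2;q^2)_{2n}$ as $\lim_{t\to0}(-q/t;q^2)_nt^n/(q,q^2,-q^2;q^2)_n$;
\item apply \eqref{GR-III.9} in base $q^2$ with $(a,b,c,d,e)=(-q^{1+\nu},-q^{1-\nu},-q/t,-q^2,q)$;
\item let $t\to0$ and sum the remaining ${}_2\phi_1$ by the $q$-binomial theorem.
\end{itemize}

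The odd part needs more:
\begin{itemize}
\item a second application of \eqref{GR-III.9};
\item rewriting the resulting ${}_2\phi_1$ as half the difference of two $q$-binomial sums with $z=\pm q^{(1-\nu)/2}$;
\item the Jacobi triple product to evaluate the difference $\Delta$ of the two resulting products, which is what produces $\overline{J}_{-4\nu,16}$.
\end{itemize}

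Your remark that ``the linear term only shifts $z$'' does not replace this. You would still have to evaluate every slice $j-k=d$ in closed form, and you have not said how.
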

\begin{proof}
We define
\begin{align}
    &F(u,v,w;q):=\sum_{i,j,k\geq 0}\frac{q^{4i^2+3j^2+3k^2-4ij-4ik+2jk}u^iv^jw^k}{(q^8;q^8)_i(q^8;q^8)_j(q^8;q^8)_k} \nonumber \\
    &=\sum_{j,k\geq 0} \frac{q^{3j^2+3k^2+2jk}v^jw^k}{(q^8;q^8)_j(q^8;q^8)_k}(-q^{4-4j-4k}u;q^8)_\infty \nonumber \\
    &=\sum_{n=0}^\infty (-q^{4-4n}u;q^8)_\infty\sum_{j+k=n} \frac{q^{3n^2-4jk}v^jw^k}{(q^8;q^8)_j(q^8;q^8)_k} \nonumber \\
    &=S_0(q)+S_1(q), \label{proof-7-start}
\end{align}
where
\begin{align}
   S_0(q)&=\sum_{n=0}^\infty (-q^{4-8n}u;q^8)_\infty\sum_{j+k=2n} \frac{q^{12n^2-4jk}v^jw^k}{(q^8;q^8)_j(q^8;q^8)_k}, \\
    S_1(q)&=\sum_{n=0}^\infty (-q^{-8n}u;q^8)_\infty\sum_{j+k=2n+1} \frac{q^{3(2n+1)^2-4jk}v^jw^k}{(q^8;q^8)_j(q^8;q^8)_k}.
\end{align}

(1) Let $(u,v,w)=(1,q^\nu,-q^{-\nu})$. We have
\begin{align}
    S_0(q)&=(-q^4;q^8)_\infty \sum_{n=0}^\infty q^{4n^2}(-q^4;q^8)_n\sum_{i=-n}^n \frac{q^{4i^2-4\nu i}}{(q^8;q^8)_{n-i}(q^8;q^8)_{n+i}}, \label{proof-7-1-S0-start}\\
    S_1(q)&=q^{2\nu+3}(-1;q^8)_\infty \sum_{n=0}^\infty q^{4n^2+4n}(-q^8;q^8)_n\sum_{i=-n-1}^n \frac{q^{q^{4i^2+4(\nu+1)i}}}{(q^8;q^8)_{n-i}(q^8;q^8)_{n+i+1}}. \label{proof-7-1-S1-start}
\end{align}
Recall the following identities from \cite[(4.3) and (4.10)]{Andrews2016}:
\begin{align}
    \sum_{i=-n}^n \frac{(-1)^iq^{i(i-1)/2}x^{-i}}{(q;q)_{n-i}(q;q)_{n+i}}&=\frac{(x^{-1};q)_n(xq;q)_n}{(q;q)_{2n}}, \label{Andrews1} \\
    \sum_{i=-n-1}^n \frac{(-1)^iq^{i(i-1)/2}x^{-i}}{(q;q)_{n-i}(q;q)_{n+i+1}}&=\frac{(x^{-1};q)_n(xq;q)_{n+1}}{(q;q)_{2n+1}}. \label{Andrews2}
\end{align}
Setting $x=-q^{(\nu-1)/2}$ (resp.~ $x=-q^{-\nu/2-1}$) in \eqref{Andrews1} (resp.~\eqref{Andrews2}) and substituting the resulting identities into \eqref{proof-7-1-S0-start} and \eqref{proof-7-1-S1-start}, we deduce that
\begin{align}
S_0(q^{\frac{1}{4}})&=(-q;q^2)_\infty \sum_{n=0}^\infty \frac{q^{n^2}(-q,-q^{1+\nu},-q^{1-\nu};q^2)_n}{(q^2;q^2)_{2n}},   \label{proof-7-1-S0}\\
S_1(q^{\frac{1}{4}})&=2q^{\frac{3}{4}+\frac{1}{2}\nu}(-q^2;q^2)_\infty \sum_{n=0}^\infty \frac{q^{n^2+n}(-q^2;q^2)_n(-q^{2+\nu};q^2)_n(-q^{-\nu};q^2)_{n+1}}{(q^2;q^2)_{2n+1}}.  \label{proof-7-1-S1}
\end{align}
We have
\begin{align}
&\sum_{n=0}^\infty \frac{q^{n^2}(-q,-q^{1+\nu},-q^{1-\nu};q^2)_n}{(q^2;q^2)_{2n}}=\lim\limits_{t\rightarrow 0} \sum_{n=0}^\infty \frac{(-q/t,-q^{1+\nu},-q^{1-\nu};q^2)_nt^n}{(q,-q^2,q^2;q^2)_n} \nonumber \\
&=\lim\limits_{t\rightarrow 0} {}_3\phi_2\bigg(\genfrac{}{}{0pt}{} {-q/t,-q^{1+\nu},-q^{1-\nu}}{q,-q^2};q^2,t  \bigg) \nonumber \\
&=\lim\limits_{t\rightarrow 0} \frac{(-q^{-\nu},-tq^{1+\nu};q^2)_\infty}{(q,t;q^2)_\infty} {}_3\phi_2\bigg(\genfrac{}{}{0pt}{} {-qt,-q^{1+\nu},q^{1+\nu}}{-q^2,-tq^{1+\nu}};q^2,-q^{-\nu}  \bigg) \nonumber \\
&=\frac{(-q^{-\nu};q^2)_\infty}{(q;q^2)_\infty} {}_2\phi_1\bigg(\genfrac{}{}{0pt}{} {-q^{1+\nu},q^{1+\nu}}{-q^2};q^2,-q^{-\nu}  \bigg) \nonumber \\
&=\frac{(-q^{-\nu};q^2)_\infty}{(q;q^2)_\infty} \sum_{n=0}^\infty \frac{(q^{2\nu+2};q^4)_n(-q^{-\nu})^n}{(q^4;q^4)_n} \nonumber \\
&=\frac{(-q^{-\nu};q^2)_\infty(-q^{\nu+2};q^4)_\infty}{(q;q^2)_\infty(-q^{-\nu};q^4)_\infty}. \label{T0-result}
\end{align}
Here for the third equality we used \eqref{GR-III.9} with $q$ replaced by $q^2$ and $(a,b,c,d,e)=(-q^{1+\nu},-q^{1-\nu},-q/t,-q^2,q)$. 
Substituting \eqref{T0-result} into \eqref{proof-7-1-S0}, we obtain
\begin{align}
    S_0(q^{\frac{1}{4}})=\frac{J_2^3\overline{J}_{2+\nu,4}}{J_1^2J_4^2}. \label{proof-7-1-S0-result}
\end{align}

Similarly, we have
\begin{align}\label{add-exam7-proof-1}
&\sum_{n=0}^\infty \frac{q^{n^2+n}(-q^2;q^2)_n(-q^{2+\nu};q^2)_n(-q^{-\nu};q^2)_{n+1}}{(q^2;q^2)_{2n+1}} \nonumber \\
&=\frac{1+q^{-\nu}}{1-q^2} \sum_{n=0}^\infty \frac{q^{n^2+n}(-q^{2+\nu},-q^{2-\nu};q^2)_n}{(q^2,q^3,-q^3;q^2)_n} \nonumber \\
&=\frac{1+q^{-\nu}}{1-q^2} \lim\limits_{t\rightarrow 0} {}_3\phi_2\bigg(\genfrac{}{}{0pt}{} {-q^{2+\nu},-q^{2-\nu},-q^2/t}{q^3,-q^3};q^2,t\bigg)  \nonumber \\
&=\frac{1+q^{-\nu}}{1-q^2} \lim\limits_{t\rightarrow 0} \frac{(q^{1-\nu},-q^{2+\nu}t;q^2)_\infty}{(-q^3,t;q^2)_\infty} {}_3\phi_2\bigg(\genfrac{}{}{0pt}{} {-q^{2+\nu},-q^{1+\nu},-qt}{q^3,-q^{2+\nu}t};q^2,q^{1-\nu}\bigg)  \nonumber \\
&=\frac{1+q^{-\nu}}{1-q^2}\frac{(q^{1-\nu};q^2)_\infty}{(-q^3;q^2)_\infty}{}_2\phi_1\bigg(\genfrac{}{}{0pt}{} {-q^{2+\nu},-q^{1+\nu}}{q^3};q^2,q^{1-\nu}\bigg) \nonumber \\
&=\frac{1+q^{-\nu}}{1-q^2}\frac{(q^{1-\nu};q^2)_\infty}{(-q^3;q^2)_\infty}\sum_{n=0}^\infty \frac{(-q^{1+\nu};q)_{2n}q^{(1-\nu)n}}{(q^2;q)_{2n}}.
\end{align}
Note that
\begin{align}
  & \sum_{n=0}^\infty \frac{(-q^{1+\nu};q)_{2n}q^{(1-\nu)n}}{(q^2;q)_{2n}}=\frac{1-q}{1+q^\nu}q^{\frac{\nu-1}{2}} \sum_{n=0}^\infty \frac{(-q^\nu;q)_{2n+1}q^{(1-\nu)(2n+1)/2}}{(q;q)_{2n+1}} \nonumber \\
   &=\frac{1}{2}\frac{1-q}{1+q^\nu}q^{\frac{\nu-1}{2}}\Big( \sum_{n=0}^\infty \frac{(-q^\nu;q)_nq^{(1-\nu)n/2}}{(q;q)_n}{} \Big)-\sum_{n=0}^\infty \frac{(-q^\nu;q)_n(-q^{(1-\nu)/2})^n}{(q;q)_n}\Big)
   \nonumber \\
   &=\frac{1}{2}\frac{1-q}{1+q^\nu}q^{\frac{\nu-1}{2}}\Big( \frac{(-q^{(1+\nu)/2};q)_\infty}{(q^{(1-\nu)/2};q)_\infty}-\frac{(q^{(1+\nu)/2};q)_\infty}{(-q^{(1-\nu)/2};q)_\infty} \Big) \nonumber \\
   &=\frac{1}{2}\frac{1-q}{1+q^\nu}q^{\frac{\nu-1}{2}}\times \frac{1}{(q^{1-\nu};q^2)_\infty(q;q)_\infty}\Delta(q), \label{proof-7-1-S1-mid}
\end{align}
where
\begin{align}
     & \Delta(q)=(-q^{(1+\nu)/2},-q^{(1-\nu)/2},q;q)_\infty-(q^{(1-\nu)/2},q^{(1+\nu)/2},q;q)_\infty  
\nonumber \\
&=\sum_{n=-\infty}^\infty (1-(-1)^n) q^{\frac{\nu}{2}n+\frac{1}{2}n^2} =2q^{(\nu+1)/2}\sum_{n=-\infty}^\infty q^{2n^2+(\nu+2)n} \quad \text{(by \eqref{Jacobi})}\nonumber \\
&=2q^{(\nu+1)/2}(-q^{\nu+4},-q^{-\nu},q^4;q^4)_\infty. \label{proof-7-1-S1-final}
\end{align}
Substituting \eqref{proof-7-1-S1-final} into \eqref{proof-7-1-S1-mid}, and then substituting the result into \eqref{add-exam7-proof-1}, from \eqref{proof-7-1-S1} we deduce that \begin{align}\label{proof-7-1-S1-result}
    S_1(q^{\frac{1}{4}})=2q^{\frac{3}{4}+\frac{1}{2}\nu}\frac{(-q^2;q^2)_\infty(-q^{4+\nu},-q^{-\nu},q^4;q^4)_\infty}{(-q;q^2)_\infty(q;q)_\infty}
    =2q^{\frac{3}{4}+\frac{1}{2}\nu}\frac{J_4^2\overline{J}_{-\nu,4}}{J_2^3}.
\end{align}
Substituting \eqref{proof-7-1-S0-result} and \eqref{proof-7-1-S1-result} into \eqref{proof-7-start}, we obtain \eqref{dual-7-1}.

(2) Let $(u,v,w)=(1,1,q^4)$. We have
\begin{align} 
    S_0(q)&=(-q^4;q^8)_\infty \sum_{n=0}^\infty q^{4n^2+4n}(-q^4;q^8)_n\sum_{i=-n}^n \frac{q^{4i^2+4i}}{(q^8;q^8)_{n+i}(q^8;q^8)_{n-i}}, \label{proof-7-2-S0}\\
    S_1(q)&=2q^3(-q^8;q^8)_\infty \sum_{n=0}^\infty q^{4n^2+8n} (-q^8;q^8)_n\sum_{i=-n-1}^n \frac{q^{4i^2}}{(q^8;q^8)_{n-i}(q^8;q^8)_{n+i+1}}. \label{proof-7-2-S1}
\end{align}
Setting $x=-q^{-1}$ (resp.~$x=-q^{-1/2}$) in \eqref{Andrews1} and then substituting the resulting identity into \eqref{proof-7-2-S0} (resp.~\eqref{proof-7-2-S1}), we deduce that
\begin{align}
    S_0(q^{\frac{1}{4}})&=(-q;q^2)_\infty \sum_{n=0}^\infty \frac{q^{n^2+n}(-1;q^2)_n}{(q;q)_{2n}}=\frac{J_6^5}{J_1J_3^2J_{12}^2}, \quad \text{(by \eqref{S48})} \label{proof-7-2-S0-result} \\
    S_1(q^{\frac{1}{4}})&=2q^{\frac{3}{4}}(-q^2;q^2)_\infty \sum_{n=0}^\infty \frac{q^{n^2+2n}(-q;q^2)_n}{(q;q)_{2n+1}}=2q^{\frac{3}{4}}\frac{J_4J_{2,12}}{J_1J_2}. \quad \text{(by \eqref{S50})} \label{proof-7-2-S1-result} 
\end{align}
Substituting \eqref{proof-7-2-S0-result} and \eqref{proof-7-2-S1-result} into \eqref{proof-7-start}, and then using the method in \cite{Frye-Garvan} to prove theta function identities, we obtain \eqref{dual-7-2}.

(3) Let $(u,v,w)=(q^4,q^{-2},q^2)$. We have
\begin{align*}
    S_0(q)&=(-q^8;q^8)_\infty \sum_{n=0}^\infty q^{4n^2+4n}(-1;q^8)_n\sum_{i=-n}^n \frac{q^{4i^2+4i}}{(q^8;q^8)_{n-i}(q^8;q^8)_{n+i}}, \\
    S_1(q)&=q^5(-q^4;q^8)_\infty \sum_{n=0}^\infty q^{4n^2+8n}(-q^4;q^8)_n \sum_{i=-n-1}^n \frac{q^{4i^2+8i}}{(q^8;q^8)_{n-i}(q^8;q^8)_{n+i+1}}.
\end{align*}
Using \eqref{Andrews1} with $x=-q^{-1}$ and \eqref{Andrews2} with $x=-q^{-3/2}$, we have
\begin{align}
S_0(q^{\frac{1}{4}})&=(-q^2;q^2)_\infty \sum_{n=0}^\infty q^{n^2+n}\frac{(-1;q^2)_n^2(-q^2;q^2)_n}{(q^2;q^2)_{2n}}=\frac{J_4^7}{J_2^5J_8^2},  \quad \text{(by \eqref{new-2})}  \label{proof-7-3-S0-result} \\
S_1(q^{\frac{1}{4}})&=q^{\frac{1}{4}}(-q;q^2)_\infty \sum_{n=0}^\infty \frac{q^{n^2+2n}(-q;q^2)_n^2(-q;q^2)_{n+1}}{(q^2;q^2)_{2n+1}} \nonumber \\
&=q^{\frac{1}{4}}\frac{J_2^3J_8^2}{J_1^2J_4^3}. \quad \text{(by \eqref{new-3})} \label{proof-7-3-S1-result}
\end{align}
Substituting \eqref{proof-7-3-S0-result} and \eqref{proof-7-3-S1-result} into \eqref{proof-7-start}, we obtain \eqref{dual-7-3}.

(4) Let $(u,v,w)=(q^{-4},q^2,q^2)$. We have
\begin{align*}
    S_0(q)&=(-1;q^8)_\infty \sum_{n=0}^\infty q^{4n^2}(-q^8;q^8)_n \sum_{i=-n}^n \frac{q^{4i^2}}{(q^8;q^8)_{n-i}(q^8;q^8)_{n+i}}, \\
    S_1(q)&=q(-q^4;q^8)_\infty \sum_{n=0}^\infty q^{4n^2+4n}(-q^4;q^8)_{n+1}\sum_{i=-n-1}^n \frac{q^{4i^2+4i}}{(q^8;q^8)_{n-i}(q^8;q^8)_{n+i+1}}.
\end{align*}
Using \eqref{Andrews1} with $x=-q^{-1/2}$ and \eqref{Andrews2} with $x=-q^{-1}$, we have
\begin{align}
    S_0(q^{\frac{1}{4}})&=2(-q^2;q^2)_\infty \sum_{n=0}^\infty \frac{q^{n^2}(-q;q^2)_n}{(q;q)_{2n}}=2\frac{J_4J_6^2}{J_1J_2J_{12}},  \quad \text{(by \eqref{S29})}\label{proof-7-4-S0-result} \\
    S_1(q^{\frac{1}{4}})&=2q^{\frac{1}{4}}(-q;q^2)_\infty \sum_{n=0}^\infty \frac{q^{n^2+n}(-q^2;q^2)_n}{(q;q)_{2n+1}} = 2q^{\frac{1}{4}}\frac{J_2^2J_3J_{12}}{J_1^2J_4J_6}. \quad \text{(by \eqref{S28})} \label{proof-7-4-S1-result}
\end{align}
Substituting \eqref{proof-7-4-S0-result} and \eqref{proof-7-4-S1-result} into \eqref{proof-7-start}, we obtain \eqref{dual-7-4}.

(5) Let $(u,v,w)=(q^{-4},1,1)$. We have
\begin{align*}
S_0(q)&=(-1;q^8)_\infty \sum_{n=0}^\infty q^{4n^2-4n}(-q^8;q^8)_n\sum_{i=-n}^n \frac{q^{4i^2}}{(q^8;q^8)_{n-i}(q^8;q^8)_{n+i}},  \\
S_1(q)&=q^{-1}(-q^4;q^8)_\infty \sum_{n=0}^\infty q^{4n^2}(-q^4;q^8)_{n+1}\sum_{i=-n-1}^n \frac{q^{4i^2+4i}}{(q^8;q^8)_{n-i}(q^8;q^8)_{n+i+1}}. 
\end{align*}
Using \eqref{Andrews1} with $x=-q^{-1/2}$ and \eqref{Andrews2} with $x=-q^{-1}$, we have
\begin{align}
    S_0(q^{\frac{1}{4}})&=2(-q^2;q^2)_\infty \sum_{n=0}^\infty \frac{q^{n^2-n}(-q;q^2)_n}{(q;q)_{2n}}=4\frac{J_4^2}{J_1J_2},  \quad \text{(by \eqref{new})}  \label{proof-7-S0-result}\\
    S_1(q^{\frac{1}{4}})&=2q^{-\frac{1}{4}}(-q;q^2)_\infty\sum_{n=0}^\infty \frac{q^{n^2}(-q^2;q^2)_n}{(q;q)_{2n+1}}=2q^{-\frac{1}{4}}\frac{J_2^5}{J_1^3J_4^2}. \quad \text{(by \eqref{Rama1713})} \label{proof-7-S1-result}
\end{align}
Substituting \eqref{proof-7-S0-result} and \eqref{proof-7-S1-result} into \eqref{proof-7-start} and using the method in \cite{Frye-Garvan} to verify theta function identities, we obtain \eqref{dual-7-5}.
\end{proof}

\begin{rem}
The matrix part for this example corresponds to the case $m=1/4$ in the matrix part of the following modular triples discovered by Cao and Wang \cite{Cao-Wang-rank3}:
\begin{align}\label{eq-vector-B-dual-exam1}
   & \widetilde{A}=\begin{pmatrix} 1 & -\frac{1}{2} & -\frac{1}{2} \\ -\frac{1}{2} & \frac{1}{2}+m & \frac{1}{2}-m \\   -\frac{1}{2} & \frac{1}{2}-m & \frac{1}{2}+m \end{pmatrix}, ~~\widetilde{B}_1=\begin{pmatrix} 0  \\ \nu \\ -\nu  \end{pmatrix},\widetilde{B}_2=\begin{pmatrix} -1/2  \\ 0 \\ 0 \end{pmatrix},  \widetilde{B}_3=\begin{pmatrix}  1/2 \\ -1/4 \\ 1/4  \end{pmatrix}, \nonumber \\
    &\widetilde{B}_4=\begin{pmatrix} 1/2  \\ 1/4 \\ -1/4  \end{pmatrix},~~ \widetilde{C}_1=\frac{2\nu^2}{(4m+1)}-\frac{1}{12}, ~~ \widetilde{C}_2=\frac{m+1}{6(4m+1)}, ~~ \widetilde{C}_3=\frac{1}{24},  ~~ \widetilde{C}_4=\frac{1}{24}.
\end{align}
Cao and Wang \cite[Theorem 3.1]{Cao-Wang-rank3} proved the following identities to justify their modularity:
\begin{align}
&\sum_{i,j,k\geq 0} \frac{q^{2i^2+2m(j-k)^2+(j+k)^2-2i(j+k)+\nu(j-k)}}{(q^4;q^4)_i(q^4;q^4)_j(q^4;q^4)_k} \nonumber \\
&=\frac{J_4^3\overline{J}_{2(4m+\nu+1),4(4m+1)}}{J_2^2J_8^2} +2q^{2m+\nu+1} \frac{J_8^2\overline{J}_{-2\nu,4(4m+1)}}{J_4^3}, \label{CW-id-1}\\
&\sum_{i,j,k\geq 0} \frac{q^{2i^2+2m(j-k)^2+(j+k)^2-2i(j+k)-2i}}{(q^4;q^4)_i(q^4;q^4)_j(q^4;q^4)_k} =4\frac{J_{8}^{2}\overline{J}_{8m,16m+4}}{J_{4}^{3}}+2q^{2m-1}\frac{J_{4}^{3}\overline{J}_{2,16m+4}}{J_{2}^{2}J_{8}^{2}}, \label{CW-id-2}\\
&\sum_{i,j,k\geq 0} \frac{q^{2i^2+2m(j-k)^2+(j+k)^2-2i(j+k)+2i+j-k}}{(q^4;q^4)_i(q^4;q^4)_j(q^4;q^4)_k}=\frac{J_8^2\overline{J}_{8m+2,16m+4}}{J_4^3}+q^{2m}\frac{J_4^3J_{32m+8}^2}{J_2^2J_8^2J_{16m+4}}. \label{CW-id-3}
\end{align}
The identities \eqref{dual-7-1}, \eqref{dual-7-3} and \eqref{dual-7-5} follow from \eqref{CW-id-1}, \eqref{CW-id-3} and \eqref{CW-id-2} with $m=1/4$, respectively. 
\end{rem}

\subsection{Dual of Example 8}
We list Zagier's Example 8 and its dual in Table \ref{tab-exam8}. Here the modular triple corresponding to $B=(1,0,1/2)^\mathrm{T}$ and $C=1/12$ was missing from Zagier's list \cite[Table 3]{Zagier} and was found by Wang \cite[Table 8]{Wang-rank3}.

{\small
\begin{table}[htbp]
\centering
\caption{Modular triples for Example 8 and their duals}\label{tab-exam8}
\begin{tabular}{c|ccccccc}
  \hline
    \padedvphantom{I}{4ex}{4ex}
  $A$ &  \multicolumn{7}{c}{$\begin{pmatrix}
4 & 2 & 1 \\ 2 & 2 & 0 \\ 1 & 0 & 1
\end{pmatrix}$}  \\
  \hline
\padedvphantom{I}{4ex}{4ex}
$B$ & $\begin{pmatrix} -1 \\ -1 \\ 1/2 \end{pmatrix}$ & $\begin{pmatrix} 0 \\ -1/2 \\ 1/2 \end{pmatrix}$ & $\begin{pmatrix} 0 \\ 0 \\ 1/2 \end{pmatrix}$ & $\begin{pmatrix} 2 \\ 1 \\ 1/2 \end{pmatrix}$ & $\begin{pmatrix} 0 \\ 0 \\ 0 \end{pmatrix}$ & $\begin{pmatrix} 2 \\ 1 \\ 1 \end{pmatrix}$ & $\begin{pmatrix} 1 \\ 0 \\ 1/2  \end{pmatrix}$ \\
  ~~ $C$ & $1/12$ & $1/48$ & $0$ & $1/3$ & $-1/24$ & $11/24$ & $1/12$ \\
  \hline
    \padedvphantom{I}{4ex}{4ex}
  $\mathcal{D}(A)$ &  \multicolumn{7}{c}{$
\begin{pmatrix}
1 & -1 & -1 \\
-1 & 3/2 & 1 \\
-1 & 1 & 2
\end{pmatrix}$}  \\
  \hline
\padedvphantom{I}{4ex}{4ex}
$\mathcal{D}(B)$ & $\begin{pmatrix} -1/2 \\ 0 \\ 1 \end{pmatrix}$ &
$\begin{pmatrix} 0 \\ -1/4 \\ 1/2 \end{pmatrix}$ & $\begin{pmatrix} -1/2 \\ 1/2 \\ 1 \end{pmatrix}$ &
$\begin{pmatrix} 1/2 \\ 0 \\ 0 \end{pmatrix}$ & $\begin{pmatrix} 0 \\ 0 \\ 0 \end{pmatrix}$ & $\begin{pmatrix} 0 \\ 1/2 \\ 1 \end{pmatrix}$ & $\begin{pmatrix} 1/2 \\ -1/2 \\ 0 \end{pmatrix}$ \\
  ~~ $\mathcal{D}(C)$ & $7/24$ & $1/24$ & $1/8$ & $1/24$ & $-1/12$ & $1/6$ & $1/24$ \\
  \hline
\end{tabular}
\end{table}
}

The modularity of Example 8 has been confirmed by Wang \cite[Theorem 4.10]{Wang-rank3}.
As for its dual, we establish the following identities to prove its modularity. 
\begin{theorem}\label{thm-dual-8}
We have
\begin{align}
\sum_{i,j,k\geq 0} \frac{q^{2i^2+3j^2+4k^2-4ij-4ik+4jk-2i+4k}}{(q^4;q^4)_i(q^4;q^4)_j(q^4;q^4)_k}&=2q^{-1}\frac{J_2^5J_8^2}{J_1^2J_4^5}, \label{id-dual-8-1} \\
\sum_{i,j,k\geq 0} \frac{q^{2i^2+3j^2+4k^2-4ij-4ik+4jk-j+2k}}{(q^4;q^4)_i(q^4;q^4)_j(q^4;q^4)_k}&=2\frac{J_4^2}{J_2^2}, \label{id-dual-8-2} \\
\sum_{i,j,k\geq 0} \frac{q^{2i^2+3j^2+4k^2-4ij-4ik+4jk-2i+2j+4k}}{(q^4;q^4)_i(q^4;q^4)_j(q^4;q^4)_k}&
=2\frac{J_2^2J_3J_{24}^4}{J_1J_4^2J_6J_{12}J_{4,24}^2}, \label{id-dual-8-3} \\
\sum_{i,j,k\geq 0} \frac{q^{2i^2+3j^2+4k^2-4ij-4ik+4jk+2i}}{(q^4;q^4)_i(q^4;q^4)_j(q^4;q^4)_k}&=\frac{J_6^2J_{6,24}^3J_{24}}{J_3^2J_4J_{12}J_{4,24}^2},  \label{id-dual-8-4}\\
\sum_{i,j,k\geq 0} \frac{q^{2i^2+3j^2+4k^2-4ij-4ik+4jk}}{(q^4;q^4)_i(q^4;q^4)_j(q^4;q^4)_k}&=\frac{J_{12}^3J_{4,24}^2J_{5,24}J_{7,24}}{J_1J_{24}^6}, \label{id-dual-8-5} \\
\sum_{i,j,k\geq 0} \frac{q^{2i^2+3j^2+4k^2-4ij-4ik+4jk+2j+4k}}{(q^4;q^4)_i(q^4;q^4)_j(q^4;q^4)_k}&=\frac{J_{12}^3J_{1,24}J_{11,24}J_{4,24}^2}{J_1J_{24}^6},  \label{id-dual-8-6}\\
\sum_{i,j,k\geq 0} \frac{q^{2i^2+3j^2+4k^2-4ij-4ik+4jk+2i-2j}}{(q^4;q^4)_i(q^4;q^4)_j(q^4;q^4)_k}&=\frac{J_2^5J_8^2}{J_1^2J_4^5}. \label{id-dual-8-7}
\end{align}
\end{theorem}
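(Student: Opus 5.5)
The plan follows the strategy of Theorem \ref{thm-dual-7}: sum out the variable that enters most simply, then reduce the remaining double sum to single sums from Section \ref{sec-pre}. The quadratic form is $2i^2+3j^2+4k^2-4ij-4ik+4jk=2(i-j-k)^2+j^2+2k^2$. We set
\begin{align*}
G(u,v,w;q):=\sum_{i,j,k\geq 0}\frac{q^{2i^2+3j^2+4k^2-4ij-4ik+4jk}u^iv^jw^k}{(q^4;q^4)_i(q^4;q^4)_j(q^4;q^4)_k}.
\end{align*}
First sum over $i$ with the second identity of \eqref{Euler} (base $q^4$). This gives $\sum_i q^{2i^2-4i(j+k)}u^i/(q^4;q^4)_i=(-uq^{2-4(j+k)};q^4)_\infty$. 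Writing $n=j+k$, we then have
\begin{align*}
G=\sum_{n\geq 0}(-uq^{2-4n};q^4)_\infty\, q^{2n^2}\sum_{j+k=n}\frac{q^{j^2+2k^2-2jk+\cdots}v^jw^k}{(q^4;q^4)_j(q^4;q^4)_k},
\end{align*}
since $3j^2+4k^2+4jk=3n^2-2jk+k^2$ up to rearrangement.

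Next, pull out the finite part of the infinite product: $(-uq^{2-4n};q^4)_\infty=(-uq^{2-4n};q^4)_n(-uq^2;q^4)_\infty$. The first factor is $q^{\text{quadratic}}(-q^2/u;q^4)_n$ times a power of $u$. When $u=q^{\pm2}$, one of $\pm q^{2}/u$ equals $-1$ or $-q^{\ast}$, which produces the factors $(-1;q^4)_n$ or $(-q^4;q^4)_n$ that appear in \eqref{S47}, \eqref{S48}, \eqref{new-2}.

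For the inner sum over $j+k=n$, write $j=n-k$. The exponent becomes quadratic in $k$ with leading coefficient $5$ in the original normalization, so it does not immediately fit \eqref{Andrews1}. I would therefore change the order of elimination instead: sum over $k$ first. The $k$-dependence is $4k^2-4ik+4jk=4k^2+4k(j-i)$, giving $\sum_k q^{4k^2+4k(j-i)}w^k/(q^4;q^4)_k$, which is not a product. The alternative is to sum over $i$ first and then split $n=j+k$ by parity, exactly as in \eqref{proof-7-start}. The inner sum is then $\sum_k q^{2k^2+\ldots}/((q^4;q^4)_{n-k}(q^4;q^4)_k)$, which after centering $k=n/2+t$ has the form $\sum_t q^{2t^2+\ldots}/((q^4;q^4)_{n/2-t}(q^4;q^4)_{n/2+t})$. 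This matches \eqref{Andrews1}/\eqref{Andrews2} with base $q^4$ and $q^{2t^2}=q^{4\binom{t}{2}}q^{2t}$, so the parameter is $x=-q^{\ast}$.

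Applying these formulas turns $S_0$ and $S_1$ into single sums of the shape
\begin{align*}
\sum_n \frac{q^{n^2+\ast n}(-q^{a},-q^{b},-q^c;q^2)_n}{(q^2;q^2)_{2n}}
\end{align*}
and the analogous odd versions. Each of the seven choices of $(u,v,w)$ from $\mathcal{D}(B)$ in Table \ref{tab-exam8} should then land on a known evaluation:
\begin{itemize}
\item \eqref{dual-8-ex2-2}--\eqref{dual-8-ex2-3}, which were prepared for this purpose;
\item \eqref{new}, \eqref{H-cor-id-1}, \eqref{S47}, \eqref{Rama1713}, \eqref{S28}, \eqref{S29}, \eqref{S48}, \eqref{S50}, \eqref{new-2}, \eqref{new-3};
\item failing those, the $_3\phi_2$ transformation \eqref{GR-III.9} as in \eqref{T0-result}.
\end{itemize}
Finally, add the even and odd parts and combine the products using the Frye--Garvan method to reach the single or two-term product forms stated.

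The main obstacle is the inner sum for the cases with $J_{24}$-type answers, \eqref{id-dual-8-3}--\eqref{id-dual-8-6}. There the linear terms in $j,k$ may not produce a parameter for which \eqref{Andrews1} closes, and the resulting single sums are of mod-$12$ or mod-$24$ type. For these I would try Bailey pairs from Slater's list via \eqref{bailey-s1}--\eqref{bailey-s6}. Failing that, I would use the constant-term method: write $1/((q^4;q^4)_j(q^4;q^4)_k)$ as a constant term in $z$ and apply the Jacobi triple product \eqref{Jacobi}.
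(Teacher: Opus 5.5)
There is a genuine gap at the step that is supposed to produce single sums. Summing over $i$ by \eqref{Euler} is fine; it is also the paper's first step. However, the weight of the inner sum over $j+k=n$ is $q^{3j^2+4k^2+4jk}=q^{2n^2}q^{j^2+2k^2}$; your displayed $-2jk$ is spurious. Centering $j=\tfrac{n}{2}-t$, $k=\tfrac{n}{2}+t$ gives $q^{\frac{11}{4}n^2+nt+3t^2}$. So the inner sum is $\sum_t q^{3t^2+\cdots}/\bigl((q^4;q^4)_{n/2-t}(q^4;q^4)_{n/2+t}\bigr)$, not $\sum_t q^{2t^2+\cdots}/(\cdots)$ as you claim.

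Identities \eqref{Andrews1}--\eqref{Andrews2} in base $q^4$ require exactly $q^{4\binom{t}{2}}=q^{2t^2-2t}$, and the parameter $x$ only changes the linear term. So they do not apply in any of the seven cases. In Example 7 the centered coefficient was $3+3-2=4$ against base $q^8$, exactly half the base; here it is $3+4-4=3$ against $2$.

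For instance, for \eqref{id-dual-8-7} with $n=2$ the inner sum is $q^8(1+q+q^5+q^8)/(q^4;q^4)_2$. By contrast, \eqref{Andrews1} with $x=-q^a$ only produces numerators $q^b(1+q^a)(1+q^{a+4})$; the closest, $a=1$, gives $1+q+q^5+q^6$. Hence the reduction to single sums never happens. Your fallbacks (Slater's Bailey pairs, a constant-term argument) are not specified enough to close the gap.

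The missing idea is to keep $j$ and $k$ separate and transform the $k$-sum. Write $(-uq^{2-4(j+k)};q^4)_\infty=(-uq^2;q^4)_\infty u^{j+k}q^{-2(j+k)^2}(-q^2/u;q^4)_{j+k}$. This leaves $\sum_{j,k}q^{j^2+2k^2}u^{j+k}v^jw^k(-q^2/u;q^4)_{j+k}/\bigl((q^4;q^4)_j(q^4;q^4)_k\bigr)$. The $c=0$, $a\to\infty$ case of \eqref{Heine} turns $\sum_k(-q^{2+4j}/u;q^4)_kq^{2k^2}(uw)^k/(q^4;q^4)_k$ into $(-q^2uw,-q^{2+4j}/u;q^4)_\infty\sum_k(-1)^kq^{(4j+2)k}u^{-k}/(q^4,-q^2uw;q^4)_k$. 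This trades $q^{2k^2}$ for a linear weight.

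When $uw=q^2$, which covers \eqref{id-dual-8-1}--\eqref{id-dual-8-4} and \eqref{id-dual-8-7}, the denominator becomes $(q^8;q^8)_k$. The $k$-sum is then the Euler sum $1/(-q^{4j+2}/u;q^8)_\infty$. Splitting $j$ by parity gives exactly \eqref{new}, \eqref{Rama1713}, \eqref{dual-8-ex2-2}--\eqref{dual-8-ex2-3}, \eqref{S29}, \eqref{S28}, \eqref{S48}, \eqref{S50}, \eqref{S47} and \eqref{H-cor-id-1}. So your list of target identities was right, though \eqref{new-2}, \eqref{new-3} and \eqref{GR-III.9} are not needed.

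The genuinely harder cases are \eqref{id-dual-8-5} and \eqref{id-dual-8-6} ($uw=1$ and $uw=q^4$), not \eqref{id-dual-8-3}--\eqref{id-dual-8-4}. There the paper splits $j$ by parity. It recognizes the resulting double sums as bisections $\frac{1}{2}\bigl(L(q^{1/2})+L(-q^{1/2})\bigr)$ of double sums $L$ whose inner sum is again an Euler sum. These are evaluated by \eqref{S53} and \eqref{S55}, plus \eqref{S79} and \eqref{S94} for \eqref{id-dual-8-6}, followed by a Frye--Garvan verification.
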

\begin{proof}
We define \begin{align}\label{dual-8-def}
    &F(u,v,w;q):=\sum_{i,j,k\geq 0}\frac{q^{2i^2+3j^2+4k^2-4ij-4ik+4jk}u^iv^jw^k}{(q^4;q^4)_{i}(q^4;q^4)_j(q^4;q^4)_k}\nonumber\\
    &=\sum_{j,k\geq 0}\frac{q^{3j^2+4k^2+4jk}v^jw^k}{(q^4;q^4)_j(q^4;q^4)_k}(-q^{2-4(j+k)}u;q^4)_{\infty}\nonumber\\
    &=(-q^2u;q^4)_{\infty}\sum_{j,k\geq 0}\frac{q^{j^2+2k^2}u^{j+k}v^jw^k}{(q^4;q^4)_{j}(q^4;q^4)_k}\cdot(-{q^2}/{u};q^4)_{j+k}.
\end{align}

Setting $(b,c,z)=(-q^{\frac{1}{2}+j}u^{-1},0,-q^{\frac{1}{2}}uwa^{-1})$ and let $a\rightarrow\infty$ in \eqref{Heine}, we have
\begin{align}\label{dual-8-heine}
    \sum_{k\geq 0}\frac{(-{q^{2+4j}}/{u};q^4)_kq^{2k^2}u^kw^k}{(q^4;q^4)_k}=(-q^2uw,-{q^{2+4j}}/{u};q^4)_{\infty}\sum_{k\geq 0}\frac{(-1)^kq^{4jk+2k}u^{-k}}{(q^4,-q^2uw;q^4)_k}.
\end{align}
Substituting \eqref{dual-8-heine} into \eqref{dual-8-def}, we have
\begin{align}
  & F(u,v,w;q) \nonumber \\
  &=(-q^2u;q^4)_\infty\sum_{j\geq 0}\frac{q^{j^2}u^jv^j(-q^2u^{-1};q^4)_j}{(q^4;q^4)_j}\sum_{k\geq 0}\frac{(-q^{2+4j}u^{-1};q^4)_kq^{2k^2}u^kw^k}{(q^4;q^4)_k}\nonumber\\
    &=(-q^2u,-q^2u^{-1},-q^2uw;q^4)_{\infty}\sum_{j,k\geq 0}\frac{(-1)^kq^{j^2+4jk+2k}u^{j-k}v^j}{(q^4;q^4)_j(q^4;q^4)_k(-q^2uw;q^4)_k}.
\end{align}

Now we discuss the seven cases of $(u,v,w)$ corresponding to \eqref{id-dual-8-1}--\eqref{id-dual-8-7} separately. Note that the relation $uw=q^2$ holds except for the Nahm sums in \eqref{id-dual-8-1}--\eqref{id-dual-8-4} and \eqref{id-dual-8-7}. Summing over $k$ by \eqref{Euler}, we have
\begin{align}\label{add-dual-8-proof}
F(u,v,q^2/u;q)=(-q^2u,-q^2u^{-1},-q^4;q^4)_\infty\sum_{j\geq 0} \frac{q^{j^2}u^jv^j}{(q^4;q^4)_j(-q^{4j+2}/u;q^8)_\infty}.
\end{align}

(1) Let $(u,v,w)=(q^{-2},1,q^4)$. From \eqref{add-dual-8-proof} we have
\begin{align}
    &F(q^{-2},1,q^4;q) =2(-q^4;q^4)_{\infty}^3\sum_{j\geq 0}\frac{q^{j^2-2j}}{(q^4;q^4)_j(-q^{4j+4};q^8)_{\infty}}\nonumber\\
    &=2(-q^4;q^4)_{\infty}^3\Big(\frac{1}{(-q^4;q^8)_{\infty}}\sum_{j\geq 0}\frac{(-q^{4};q^8)_jq^{4j^2-4j}}{(q^4;q^4)_{2j}}+\frac{1}{(-q^8;q^8)_{\infty}}\sum_{j\geq 0}\frac{(-q^8;q^8)_jq^{4j^2-1}}{(q^4;q^4)_{2j+1}}\Big). \nonumber
\end{align}
Substituting \eqref{new} and \eqref{Rama1713} into it, we obtain \eqref{id-dual-8-1}.

(2) Let $(u,v,w)=(1,q^{-1},q^2)$. From \eqref{add-dual-8-proof} we have
\begin{align}\label{dual-8-ex2-1}
    &F(1,q^{-1},q^2;q)=(-q^2;q^4)_{\infty}^2(-q^4;q^4)_{\infty}\sum_{j\geq 0}\frac{q^{j^2-j}}{(q^4;q^4)_j(-q^{4j+2};q^8)_{\infty}}\nonumber\\
    &=(-q^2;q^4)_{\infty}^2(-q^4;q^4)_{\infty}\Big(\frac{1}{(-q^2;q^8)_{\infty}}\sum_{j\geq 0}\frac{(-q^2;q^8)_jq^{4j^2-2j}}{(q^4;q^4)_{2j}}\nonumber\\
    &\quad +\frac{1}{(-q^6;q^8)_{\infty}}\sum_{j\geq 0}\frac{(-q^6;q^8)_jq^{4j^2+2j}}{(q^4;q^4)_{2j+1}} \Big).
\end{align}
Substituting \eqref{dual-8-ex2-2} and \eqref{dual-8-ex2-3} into \eqref{dual-8-ex2-1}, we obtain \eqref{id-dual-8-2}.

(3) Let $(u,v,w)=(q^{-2},q^2,q^4)$.  From \eqref{add-dual-8-proof} we have
\begin{align}
    &F(q^{-2},q^2,q^4;q)=2(-q^4;q^4)_{\infty}^3\sum_{j\geq 0}\frac{q^{j^2}}{(q^4;q^4)_j(-q^{4j+4};q^8)_{\infty}}\nonumber\\
    &=2(-q^4;q^4)_{\infty}^3\Big( \frac{1}{(-q^4;q^8)_{\infty}}\sum_{j\geq 0}\frac{(-q^4;q^8)_jq^{4j^2}}{(q^4;q^4)_{2j}}+\frac{1}{(-q^8;q^8)_{\infty}}\sum_{j\geq 0}\frac{(-q^8;q^8)_jq^{4j^2+4j+1}}{(q^4;q^4)_{2j+1}} \Big). \nonumber
\end{align}
Substituting \eqref{S29} and \eqref{S28} into it, we obtain \eqref{id-dual-8-3}.

(4) Let $(u,v,w)=(q^2,1,1)$.  From \eqref{add-dual-8-proof} we have
\begin{align}
    &F(q^2,1,1;q)=2(-q^4;q^4)_{\infty}^3\sum_{j\geq 0}\frac{q^{j^2+2j}}{(q^4;q^4)_j(-q^{4j};q^8)_{\infty}}\nonumber\\
    &=2(-q^4;q^4)_{\infty}^3\Big( \frac{1}{(-1;q^8)_{\infty}}\sum_{j\geq 0}\frac{(-1;q^8)_jq^{4j^2+4j}}{(q^4;q^4)_{2j}} \nonumber \\
    &\quad \quad +\frac{1}{(-q^4;q^8)_{\infty}}\sum_{j\geq 0}\frac{(-q^4;q^8)_jq^{4j^2+8j+3}}{(q^4;q^4)_{2j+1}} \Big). \nonumber
\end{align}
Substituting \eqref{S48} and \eqref{S50} into it, we obtain \eqref{id-dual-8-4}.

 (5) Let $(u,v,w)=(1,1,1)$. We have
 \begin{align}
     &F(1,1,1;q)=(-q^2;q^4)_{\infty}^3\sum_{j,k\geq 0}\frac{(-1)^kq^{j^2+4jk+2k}}{(q^4;q^4)_j(-q^2;-q^2)_{2k}} \nonumber \\
     &=(-q^2;q^4)_\infty^3 \big(K_{0}(-q^2)+qK_{1}(-q^2)  \big) \label{G111-split}
 \end{align}
where
\begin{align}
        K_{0}(q):=\sum_{j,k\geq 0}\frac{q^{2j^2+4jk+k}}{(q^2;q^2)_{2j}(q;q)_{2k}}, \quad 
        K_{1}(q):=\sum_{j,k\geq 0}\frac{q^{2j^2+4jk+2j+3k}}{(q^2;q^2)_{2j+1}(q;q)_{2k}}.
\end{align}
 We further define some related series:
    \begin{align}
        L_{0}(q):=\sum_{j,k\geq 0}\frac{q^{4j^2+4jk+k}}{(q^4;q^4)_{2j}(q^2;q^2)_{k}}, \quad
        L_{1}(q):=\sum_{j,k\geq 0}\frac{q^{4j^2+4jk+4j+3k}}{(q^4;q^4)_{2j+1}(q^2;q^2)_{k}}. 
    \end{align}
By \eqref{Euler} and \eqref{S53} we have
    \begin{align}\label{dual-8-ex5-L10}
        L_{0}(q)=\frac{1}{(q;q^2)_{\infty}}\sum_{j\geq 0}\frac{q^{4j^2}(q;q^2)_{2j}}{(q^4;q^4)_{2j}}=\frac{J_{5,12}}{(q;q^2)_{\infty}J_{4}}.
    \end{align}
    Replacing $q$ by $-q$ in \eqref{dual-8-ex5-L10},  we have
    \begin{align}\label{dual-8-ex5-L'10}
        L_{0}(-q)=\frac{\overline{J}_{5,12}}{(-q;q^2)_{\infty}J_{4}}.
    \end{align}
By \eqref{dual-8-ex5-L10} and \eqref{dual-8-ex5-L'10} and the method in \cite{Frye-Garvan}, we have
    \begin{align}\label{K10-result}
        K_{0}(q)=\frac{1}{2}(L_{0}(q^{\frac{1}{2}})+L_0(-q^{\frac{1}{2}}))=\frac{J_{24}^{6}}{(q;q^2)_{\infty}J_{2,24}^2J_{6,24}^2J_{8,24}J_{10,24}}.
    \end{align}

    Similarly, by \eqref{Euler} and \eqref{S55} we have
    \begin{align}\label{dual-8-ex5-L11}
        L_{1}(q)=\frac{1}{(q;q^2)_{\infty}}\sum_{j\geq 0}\frac{q^{4j^2+4j}(q;q^2)_{2j+1}}{(q^4;q^4)_{2j+1}}=\frac{J_{1,12}}{(q;q^2)_{\infty}J_{4}}.
    \end{align}
Replacing $q$ by $-q$ in \eqref{dual-8-ex5-L11}, we have
    \begin{align}\label{dual-8-ex5-L'11}
        L_{1}(-q)=\frac{\overline{J}_{1,12}}{(-q;q^2)_{\infty}J_4}.
    \end{align}
It follows that
    \begin{align}\label{K11-result}
        K_{1}(q)=\frac{1}{2}(L_{1}(q^{\frac{1}{2}})+L_{1}(-q^{\frac{1}{2}}))=\frac{J_{24}^{12}}{J_{2,24}J_{3,24}J_{4,24}^2J_{5,24}^2J_{6,24}^2J_{7,24}^2J_{8,24}J_{9,24}}.
    \end{align}
    Substituting \eqref{K10-result} and \eqref{K11-result} into \eqref{G111-split}, 
   we obtain \eqref{id-dual-8-5}.

(6) Let $(u,v,w)=(1,q^2,q^4)$. We have
\begin{align}
    F(1,q^2,q^4;q)&=\frac{(-q^2;q^4)_{\infty}^3}{1+q^2}\sum_{j,k\geq 0}\frac{(-1)^kq^{j^2+4jk+2j+2k}}{(q^4;q^4)_j(q^4;-q^2)_{2k}} \nonumber \\
    &=\frac{(-q^2;q^4)_{\infty}^3}{1+q^2}\big(\widetilde{K}_{0}(-q^2)+q^3\widetilde{K}_{1}(-q^2)  \big) \label{G124-split}
\end{align}
where
\begin{align}
    \widetilde{K}_{0}(q):=\sum_{j,k\geq 0}\frac{q^{2j^2+4jk+2j+k}}{(q^2;q^2)_{2j}(q^2;q)_{2k}}, \quad 
    \widetilde{K}_{1}(q):=\sum_{j,k\geq 0}\frac{q^{2j^2+4jk+4j+3k}}{(q^2;q^2)_{2j+1}(q^2;q)_{2k}}.
\end{align}
We further define some related series:
\begin{align}
    \widetilde{L}_{0}(q):=\sum_{j,k\geq 0}\frac{q^{4j^2+4jk+4j+k}}{(q^4;q^4)_{2j}(q^4;q^2)_k}, \quad
    \widetilde{L}_{1}(q):=\sum_{j,k\geq 0}\frac{q^{4j^2+4jk+8j+3k}}{(q^4;q^4)_{2j+1}(q^4;q^2)_k}.
\end{align}
Then we have
\begin{align}\label{K2-L2}
    \widetilde{K}_{0}(q)=\frac{1}{2}(\widetilde{L}_{0}(q^{\frac{1}{2}})+\widetilde{L}_{0}(-q^{\frac{1}{2}})), \quad 
    \widetilde{K}_{1}(q)=\frac{1}{2}(\widetilde{L}_{1}(q^{\frac{1}{2}})+\widetilde{L}_{1}(-q^{\frac{1}{2}})).
\end{align}

We have
\begin{align}
    &\widetilde{L}_{0}(q)=(1-q^2)\sum_{j,k\geq 0}\frac{q^{4j^2+4jk+4j+k}}{(q^4;q^4)_{2j}(q^2;q^2)_{k+1}}\nonumber\\
    &=(1-q^2)\Big(-\sum_{j\geq 0}\frac{q^{4j^2-1}}{(q^4;q^4)_{2j}}+\sum_{j,k\geq 0}\frac{q^{4j^2+4jk+k-1}}{(q^4;q^4)_{2j}(q^2;q^2)_k}\Big)\nonumber\\
    &=(1-q^2)\Big(-\sum_{j\geq 0}\frac{q^{4j^2-1}}{(q^4;q^4)_{2j}}+q^{-1}L_{0}(q)\Big).
\end{align}
Substituting \eqref{S79} and \eqref{dual-8-ex5-L10} into it, we obtain
\begin{align}\label{dual-8-ex6-L20}
    \widetilde{L}_{0}(q)=(1-q^2)q^{-1}\Big(\frac{J_{5,12}}{(q;q^2)_{\infty}J_{4}}-\frac{J_8J_{80}}{J_4J_{16,80}}\Big).
\end{align}
Replacing $q$ by $-q$ in \eqref{dual-8-ex6-L20}, we obtain
\begin{align}\label{dual-8-ex6-L'20}
    \widetilde{L}_{0}(-q)=(1-q^2)q^{-1}\Big(-\frac{\overline{J}_{5,12}}{(-q;q^2)_{\infty}J_{4}}+\frac{J_8J_{80}}{J_4J_{16,80}}\Big).
\end{align}

Similarly we have
\begin{align}
    &\widetilde{L}_{1}(q)=(1-q^2)\sum_{j,k\geq 0}\frac{q^{4j^2+4jk+8j+3k}}{(q^4;q^4)_{2j+1}(q^2;q^2)_{k+1}}\nonumber\\
    &=(1-q^2)\Big(-\sum_{j\geq 0}\frac{q^{4j^2+4j-3}}{(q^4;q^4)_{2j+1}} +\sum_{j,k\geq 0}\frac{q^{4j^2+4jk+4j+3k-3}}{(q^4;q^4)_{2j+1}(q^2;q^2)_k} \Big)\nonumber\\
    &=(1-q^2)\Big(-\sum_{j\geq 0}\frac{q^{4j^2+4j-3}}{(q^4;q^4)_{2j+1}} +q^{-3}L_{1}(q) \Big).
\end{align}
Substituting \eqref{S94} and \eqref{dual-8-ex5-L11} into it, we obtain
\begin{align}\label{dual-8-ex6-L21}
   \widetilde{L}_{1}(q)=(1-q^2)q^{-3}\Big(\frac{J_{1,12}}{(q;q^2)_{\infty}J_{4}}-\frac{J_{12,40}J_{16,80}}{J_4J_{80}}\Big).
\end{align}
Replacing $q$ by $-q$ in \eqref{dual-8-ex6-L21}, we obtain
\begin{align}\label{dual-8-ex6-L'21}
    \widetilde{L}_{1}(q)=(1-q^2)q^{-3}\Big(-\frac{\overline{J}_{1,12}}{(-q;q^2)_{\infty}J_{4}}+\frac{J_{12,40}J_{16,80}}{J_4J_{80}}\Big).
\end{align}
Substituting \eqref{dual-8-ex6-L20}, \eqref{dual-8-ex6-L'20}, \eqref{dual-8-ex6-L21} and \eqref{dual-8-ex6-L'21} into \eqref{K2-L2}, and then substituting the result into \eqref{G124-split},  we obtain \eqref{id-dual-8-6}.

(7) Let $(u,v,w)=(q^2,q^{-2},1)$. We have
\begin{align}
    &F(q^2,q^{-2},1;q)=2(-q^4;q^4)_{\infty}^3\sum_{j,k\geq 0}\frac{(-1)^kq^{j^2+4jk}}{(q^4;q^4)_j(q^8;q^8)_k}  \\
    &=2(-q^4;q^4)_{\infty}^3\sum_{j\geq 0}\frac{q^{j^2}}{(q^4;q^4)_j(-q^{4j};q^8)_{\infty}}\nonumber\\
    &=2(-q^4;q^4)_{\infty}^3\Big( \frac{1}{(-1;q^8)_{\infty}}\sum_{j\geq 0}\frac{(-1;q^8)_jq^{4j^2}}{(q^4;q^4)_{2j}} +\frac{1}{(-q^4;q^8)_{\infty}}\sum_{j\geq 0}\frac{(-q^4;q^8)_jq^{4j^2+4j+1}}{(q^4;q^4)_{2j+1}} \Big).  \nonumber
\end{align}
Substituting \eqref{S47} and \eqref{H-cor-id-1} into it, we obtain \eqref{id-dual-8-7}.
\end{proof}

\subsection{Dual of Example 9}\label{sec-exam9}
We present Zagier's Example~9 and its dual in Table~\ref{tab-exam9}. The final entry, which is absent from Zagier's original list~\cite[Table~3]{Zagier}, was subsequently identified by Wang~\cite[Table~8]{Wang-rank3}. 

{\small 
\begin{table}[htbp]
\centering
\caption{Modular triples for Example 9 and their duals}\label{tab-exam9}
\begin{tabular}{c|cccc}
  \hline
    \padedvphantom{I}{4ex}{4ex}
  $A$ &  \multicolumn{4}{c}{$\begin{pmatrix} 6 &4 & 2 \\ 4 & 4 & 2 \\ 2 &2 & 2 \end{pmatrix}$}  \\
  \hline
\padedvphantom{I}{4ex}{4ex}
$B$ & $\begin{pmatrix} 0 \\ 0 \\ 0 \end{pmatrix}$ & $\begin{pmatrix} 1  \\ 0  \\ 0 \end{pmatrix}$ & $\begin{pmatrix} 2 \\ 1 \\ 0\end{pmatrix}$  & $\begin{pmatrix} 3 \\ 2 \\ 1 \end{pmatrix}$ \\
  ~~ $C$ & $-1/36$ & $1/12$ & $11/36$ & $23/36$ \\
  \hline
    \padedvphantom{I}{4ex}{4ex}
  $\mathcal{D}(A)$ &  \multicolumn{4}{c}{$\begin{pmatrix} 1/2 & -1/2 & 0 \\ -1/2 & 1 & -1/2 \\ 0 & -1/2 & 1 \end{pmatrix}$}  \\
  \hline
\padedvphantom{I}{4ex}{4ex}
$\mathcal{D}(B)$ & $\begin{pmatrix} 0 \\ 0 \\ 0 \end{pmatrix}$ & $\begin{pmatrix} 1/2  \\ -1/2  \\ 0 \end{pmatrix}$ & $\begin{pmatrix} 1/2 \\ 0 \\ -1/2 \end{pmatrix}$  & $\begin{pmatrix} 1/2 \\ 0 \\ 0 \end{pmatrix}$ \\
  ~~ $\mathcal{D}(C)$ & $-7/72$ & $1/24$ & $5/72$ & $-1/72$ \\
  \hline
\end{tabular}
\end{table}
}

The modularity of Zagier's Example 9 has been confirmed by Wang \cite[Theorem 4.11]{Wang-rank3}.  
Now we study the modularity of its dual. 

Recall the functions $M^{(h)}(q)$ and $M^{(h)}_{x,y,z}(q)$ defined in \eqref{M-defn} and \eqref{def-Mxyz}. Our first observation is that for each $h=1,2,3,4$, there are essentially only four different $M_{x,y,z}^{(h)}(q)$.
\begin{lemma}\label{lem-M}
We have
    \begin{align}\label{dual-9-firstlemma}
        M_{1,0,0}^{(1)}(q)&=M_{1,1,0}^{(1)}(q)=M_{1,1,1}^{(1)}(q),\\
        M_{0,0,1}^{(1)}(q)&=M_{0,1,0}^{(1)}(q)=M_{0,1,1}^{(1)}(q),\label{dual-9-firstlemma1}\\
        M_{1,0,1}^{(2)}(q)&=M_{1,1,0}^{(2)}(q)=M_{1,1,1}^{(2)}(q),\\
        M_{0,0,0}^{(2)}(q)&=M_{0,1,0}^{(2)}(q)=M_{0,1,1}^{(2)}(q),\\
        M_{1,0,0}^{(3)}(q)&=M_{1,0,1}^{(3)}(q)=M_{1,1,0}^{(3)}(q),\\
        M_{0,0,0}^{(3)}(q)&=M_{0,0,1}^{(3)}(q)=M_{0,1,1}^{(3)}(q),\\
        M_{1,0,0}^{(4)}(q)&=M_{1,1,0}^{(4)}(q)=M_{1,1,1}^{(4)}(q),\\
        M_{0,0,1}^{(4)}(q)&=M_{0,1,0}^{(4)}(q)=M_{0,1,1}^{(4)}(q). \label{dual-9-firstlemma2}
    \end{align}
\end{lemma}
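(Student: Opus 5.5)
The plan is to obtain each equality by summing out a single index with Euler's second identity in \eqref{Euler}, and then observing that the resulting infinite product vanishes identically when its argument is sign-flipped. This happens exactly in the parity classes that appear in the lemma. The variable $i$ will play no role; only the sums over $j$ and over $k$ will be used.

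\emph{Summing over $k$.} With $i,j$ fixed, the $k$-part of the exponent in \eqref{def-Mxyz} is $2k^2-2jk$, written with base $q^4$ as $q^{4\binom{k}{2}}\bigl(q^{2-2j}w\bigr)^k$. By \eqref{Euler}, the full sum over $k$ equals $(-q^{2-2j}w;q^4)_\infty$. Its restriction to even $k$ (resp.\ odd $k$) is
\[
\tfrac12\bigl((-x;q^4)_\infty\pm(x;q^4)_\infty\bigr),\qquad x=q^{2-2j}w .
\]
If $x=q^{-4m}$ for some integer $m\ge 0$, then $(x;q^4)_\infty=0$, because it contains the factor $1-q^{4m}q^{-4m}=0$. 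In that case the even-$k$ and odd-$k$ parts coincide term by term in $(i,j)$. This gives $M_{x,y,0}=M_{x,y,1}$ in the following cases:
\begin{itemize}
\item for $w=1$ (that is, $h=1,2,4$), whenever $y=1$ ($j$ odd);
\item for $w=q^{-2}$ (that is, $h=3$), whenever $y=0$ ($j$ even).
\end{itemize}

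\emph{Summing over $j$.} With $i,k$ fixed, the $j$-part of the exponent is $2j^2-2j(i+k)$. By \eqref{Euler}, the sum over $j$ equals $(-q^{2-2(i+k)}v;q^4)_\infty$. The same argument shows that the even-$j$ and odd-$j$ parts coincide whenever $q^{2-2(i+k)}v$ is a nonpositive power of $q^4$. This gives $M_{x,0,z}=M_{x,1,z}$ in the following cases:
\begin{itemize}
\item for $v=1$ ($h=1,3,4$), whenever $x+z$ is odd;
\item for $v=q^{-2}$ ($h=2$), whenever $x+z$ is even.
\end{itemize}

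\emph{Chaining the equalities.} It remains to combine these two rules case by case.
\begin{itemize}
\item For $h=1$ and $h=4$, the $j$-rule gives $M_{1,0,0}=M_{1,1,0}$ and $M_{0,0,1}=M_{0,1,1}$. The $k$-rule gives $M_{1,1,0}=M_{1,1,1}$ and $M_{0,1,0}=M_{0,1,1}$. Together these are \eqref{dual-9-firstlemma}, \eqref{dual-9-firstlemma1} and \eqref{dual-9-firstlemma2}.
\item For $h=2$, the $j$-rule gives $M_{1,0,1}=M_{1,1,1}$ and $M_{0,0,0}=M_{0,1,0}$. The $k$-rule gives $M_{1,1,1}=M_{1,1,0}$ and $M_{0,1,0}=M_{0,1,1}$.
\item For $h=3$, the $k$-rule gives $M_{1,0,0}=M_{1,0,1}$ and $M_{0,0,0}=M_{0,0,1}$. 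The $j$-rule gives $M_{1,0,0}=M_{1,1,0}$ and $M_{0,0,1}=M_{0,1,1}$.
\end{itemize}

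I expect no real obstacle here; the argument is essentially bookkeeping. The only care needed is to check convergence before splitting the sums into parity classes, and to check the exact exponent of $x$ in each of the four specializations, so that the vanishing factor $1-q^0$ genuinely occurs.
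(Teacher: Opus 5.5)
Your proposal is correct and follows the same route as the paper. The paper likewise writes a difference such as $M_{1,0,0}^{(1)}-M_{1,1,0}^{(1)}$ as an alternating sum over one index, evaluates it with Euler's identity \eqref{Euler} to get a factor like $(q^{-4i-4k};q^4)_\infty=0$, and then dismisses the remaining cases as ``proved in the same way''. Your even/odd-part rules and the explicit chaining for all four values of $h$ simply carry out those omitted cases.
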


\begin{proof}
By \eqref{Euler} we have
    \begin{align*}
        &M_{1,0,0}^{(1)}(q)-M_{1,1,0}^{(1)}(q)=\sum_{i,j,k\geq 0}\frac{(-1)^jq^{(2i+1-j)^2+(j-2k)^2+4k^2}}{(q^4;q^4)_{2i+1}(q^4;q^4)_j(q^4;q^4)_{2k}}\nonumber\\
        &=\sum_{i,k\geq 0}\frac{q^{4i^2+8k^2+4i+1}}{(q^4;q^4)_{2i+1}(q^4;q^4)_{2k}}\cdot (q^{-4i-4k};q^4)_{\infty}=0, \\
        &M_{1,1,0}^{(1)}(q)-M_{1,1,1}^{(1)}(q)=\sum_{i,j,k\geq 0}\frac{(-1)^kq^{(2i-2j)^2+(2j+1-k)^2+k^2}}{(q^4;q^4)_{2i+1}(q^4;q^4)_{2j+1}(q^4;q^4)_k}\nonumber\\
        &=\sum_{i,j\geq0}\frac{q^{(2i-2j)^2+(2j+1)^2}}{(q^4;q^4)_{2i+1}(q^4;q^4)_{2j+1}}\cdot(q^{-4j};q^4)_{\infty}=0.
    \end{align*}
Here for the last equality we used the fact that $(q^{-4n};q^4)_{\infty}=0$ for any $n\geq 0$. This proves \eqref{dual-9-firstlemma}. The identities \eqref{dual-9-firstlemma1}--\eqref{dual-9-firstlemma2} can be proved in the same way.
\end{proof}
Now we pick a representative for each of the four different $M_{x,y,z}^{(h)}(q)$:
\begin{equation}
\begin{split}
    &A^{(1)}(q):=M_{0,0,0}^{(1)}(q), \quad B^{(1)}(q):=M_{1,0,0}^{(1)}(q), \\
    &C^{(1)}(q):=M_{0,0,1}^{(1)}(q), \quad D^{(1)}(q):=M_{1,0,1}^{(1)}(q), \\
    & A^{(2)}(q):=M_{1,0,0}^{(2)}(q), \quad  B^{(2)}(q):=M_{0,0,0}^{(2)}(q), \\
    & C^{(2)}(q):=M_{1,1,1}^{(2)}(q), \quad D^{(2)}(q):=M_{0,0,1}^{(2)}(q), \\
    &A^{(3)}(q):=M_{0,1,0}^{(3)}(q), \quad B^{(3)}(q):=M_{1,0,0}^{(3)}(q),  \\
    & C^{(3)}(q):=M_{0,0,0}^{(3)}(q),  \quad D^{(3)}(q):=M_{1,1,1}^{(3)}(q), \\
    &A^{(4)}(q):=M_{1,0,1}^{(4)}(q),  \quad B^{(4)}(q):=M_{0,0,1}^{(4)}(q), \\
    &C^{(4)}(q):=M_{1,0,0}^{(4)}(q), \quad D^{(4)}(q):=M_{0,0,0}^{(4)}(q).
\end{split}
\end{equation}
Considering the power of $q$ in the series expansions of these series, we see that these series can be classified as shown in Table \ref{tab-ABCD}.
\begin{table}[htbp]
    \centering
    \caption{Classifications of series by the residue of the exponent modulo $4$} \label{tab-ABCD}
    \renewcommand{\arraystretch}{1.15}  
    \begin{tabular}{*{4}{>{\(}c<{\)}}}
        \toprule
        \mathbb{Z}[[q^4]] & q\mathbb{Z}[[q^4]] & q^{2}\mathbb{Z}[[q^4]] & q^{3}\mathbb{Z}[[q^4]] \\
        \midrule
        R_4^{(1)}(q) & R_1^{(1)}(q) & R_2^{(1)}(q) & R_3^{(1)}(q) \\
        R_1^{(2)}(q) & R_2^{(2)}(q)  & R_3^{(2)}(q)  & R_4^{(2)}(q) \\ 
       R_2^{(3)}(q) & R_3^{(3)}(q) & R_4^{(3)}(q) & R_1^{(3)}(q) \\
       R_3^{(4)}(q)   &  R_4^{(4)}(q) &  R_1^{(4)}(q)    & R_2^{(4)}(q) \\
        A^{(1)}(q) & B^{(1)}(q) & C^{(1)}(q) & D^{(1)}(q) \\
        B^{(2)}(q) & C^{(2)}(q) & D^{(2)}(q) & A^{(2)}(q) \\
        C^{(3)}(q) & D^{(3)}(q) & A^{(3)}(q) & B^{(3)}(q) \\
        D^{(4)}(q) & A^{(4)}(q) & B^{(4)}(q) & C^{(4)}(q) \\
        \bottomrule
    \end{tabular}
\end{table}

By definition and Lemma \ref{lem-M} we have
\begin{align}\label{M-ABCD}
    M^{(h)}(q)=A^{(h)}(q)+3B^{(h)}(q)+3C^{(h)}(q)+D^{(h)}(q).
\end{align}
Therefore, to evaluate the Nahm sums $M^{(h)}(q)$, it suffices to evaluate $A^{(h)}(q)$, $B^{(h)}(q)$, $C^{(h)}(q)$ and $D^{(h)}(q)$. We will present two different approaches. The first approach is more straightforward and relies on the results in Section \ref{sec-double}. The second approach gives equivalent uniform representations for the dual Nahm sums, and leads us to the discovery of the new modular rank three Nahm sums stated in Theorem \ref{thm-ex9-new}.

\subsubsection{First approach}\label{sec-1st}
\begin{theorem}\label{thm-S-prod}
Let 
\begin{align}
    S_h(q):=A^{(h)}(q)+B^{(h)}(q)-C^{(h)}(q)-D^{(h)}(q).
\end{align}
We have
\begin{align}
 S_1(q)&=\frac{J_2^2}{J_4^3}\Big(
 \mathcal A(J_{86,180}-q^{16}J_{14,180})
 -q^2\mathcal B(J_{58,180}+q^{10}J_{22,180})
 \Big),
 \label{eq-S1-product}\\
 S_2(q)&=q^{-1}\frac{J_2^2}{J_4^3}\Big(
 \mathcal B(J_{78,180}+q^6J_{42,180})
 -\mathcal A(q^2J_{66,180}+q^{20}J_{6,180})
 \Big),
 \label{eq-S2-product}\\
 S_3(q)&=\frac{J_2^2}{J_4^3}\Big(
 \mathcal A(-J_{74,180}+q^8J_{34,180})
 +\mathcal B(q^6J_{38,180}+q^{20}J_{2,180})
 \Big),
 \label{eq-S3-product}\\
 S_4(q)&=q^{-1}\frac{J_2^2}{J_4^3}\Big(
 \mathcal A(q^6J_{46,180}+q^{12}J_{26,180})
 -\mathcal B(J_{82,180}+q^2J_{62,180})
 \Big).
 \label{eq-S4-product}
\end{align}
Here $\mathcal A$ and $\mathcal B$ are the infinite products given in
\eqref{eq-A-product}--\eqref{eq-B-product}.
\end{theorem}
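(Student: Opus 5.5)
The plan is to show that each $S_h(q)$ equals $\frac{J_2}{J_4}$ times a sum over $n$ in which the inner double sum collapses to one of $\mathcal{X}_n$ or $\mathcal{Y}_n$. After that, Lemma \ref{lem-XY-rep} converts these into $\mathcal{A}U^{(\cdot)}(q^4)+\mathcal{B}U^{(\cdot)}(q^4)$. Lemma \ref{lem-PR-product} then evaluates the resulting series $\mathcal{P}_h,\mathcal{R}_h$, and their prefactor $J_2/J_4^2$ combines with $J_2/J_4$ to give the factor $J_2^2/J_4^3$ appearing in \eqref{eq-S1-product}--\eqref{eq-S4-product}.

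For the signed combination, I would write the exponent of $M$ as $i^2+2j^2+2k^2-2ij-2jk=(i-j)^2+(j-k)^2+k^2$. The signs in $S_h=A+B-C-D$ should be realized by inserting a character in the parities. For $h=1$, the representatives are $A=M_{000}$, $B=M_{100}$, $C=M_{001}$ and $D=M_{101}$, and the sign is $(-1)^k$. By Lemma \ref{lem-M}, the classes with $j$ odd coincide with classes having $j$ even, so I expect to rewrite $S_h$ as a full sum over $i,j,k$ with a weight such as $(-1)^k$ restricted to even $j$. Equivalently, I would use Lemma \ref{lem-M} in the reverse direction so that the $j$-odd contributions cancel in pairs.

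Next I would sum over $k$ with $j=2n$ fixed. The sum $\sum_k (-1)^k q^{(2n-k)^2+k^2}/(q^4;q^4)_k$ is a finite evaluation of Euler type, obtained from \eqref{Euler} together with \eqref{q-binomial-finite}. Alternatively I would sum over $i$ first: $\sum_i q^{(i-2n)^2}u^i/(q^4;q^4)_i$ is exactly $\mathcal{X}_n$ (or $\mathcal{Y}_n$ after a shift by the twist $u=q^2$). The remaining sum over $k$ with the sign should give the factor $(-1)^nq^{2n^2}(q^2;q^4)_n/(q^4;q^4)_{2n}$ up to $J_2/J_4$. I would obtain this from $\sum_k(-1)^kq^{2k^2-4nk}/(q^4;q^4)_k$ restricted appropriately, which is $(q^{2-4n};q^4)_\infty$-type. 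It becomes $(q^2;q^4)_\infty$ times $(q^{2-4n};q^4)_n$, and the latter equals $(-1)^nq^{-2n^2}(q^2;q^4)_n$. The case $h=3$ yields the $(q^2;q^4)_{n+1}/(q^4;q^4)_{2n+1}$ kernel of $\mathcal{P}_3,\mathcal{R}_3$, coming from odd $j$. The cases $h=2,4$ give the extra factor $q^{4n}$ together with $\mathcal{Y}_n$, which accounts for the $q^{-1}$ prefactor.

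Substituting Lemma \ref{lem-XY-rep} then gives $S_h$ as $\frac{J_2}{J_4}(\mathcal{A}\mathcal{P}+\mathcal{B}\mathcal{R})$ with the appropriate pair from Lemma \ref{lem-PR-product}. The main obstacle is the bookkeeping of the sign character and the parity split. Specifically, I need to choose the correct order of summation so that each $h$ lands exactly on the kernels defining $\mathcal{P}_h,\mathcal{R}_h$, with the index shifts that produce $U_{2n-1},U_{2n-2}$ rather than $U_{2n}$. I would settle this first for $h=1$ by matching low-order $q$-expansions, and then adapt the argument to the other three twists.
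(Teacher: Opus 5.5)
Your proposal is correct and is essentially the paper's argument. You rewrite $S_h$ as a sum over $j$ of a fixed parity weighted by $\pm(-1)^k$, using Lemma~\ref{lem-M} where the chosen representatives require it, sum over $k$ by Euler's identity to get $(q^2;q^4)_\infty$ times the kernel of $\mathcal{P}_h,\mathcal{R}_h$, identify the $i$-sum with $\mathcal{X}_n$ or $\mathcal{Y}_n$ up to a power of $q$, and finish with Lemmas~\ref{lem-XY-rep} and~\ref{lem-PR-product}. One small bookkeeping correction: for $h=2$ the factor $q^{4n}$ produced by the $i$-sum is cancelled by $v^{j}=q^{-4n}$, so the kernel is $(-1)^nq^{2n^2}(q^2;q^4)_n/(q^4;q^4)_{2n}$ as in $\mathcal{P}_2,\mathcal{R}_2$; only $h=4$ carries the extra $q^{4n}$, together with an overall minus sign.
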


\begin{proof}
Note that for $n\geq0$,
\begin{align}
 \sum_{k\geq0}\frac{(-1)^kq^{(2n-k)^2+k^2}}{(q^4;q^4)_k}
 &=(-1)^nq^{2n^2}(q^2;q^4)_n(q^2;q^4)_\infty,
 \label{eq-simplify-even}\\
 \sum_{k\geq0}\frac{(-1)^kq^{(2n+1-k)^2+k^2-2k}}{(q^4;q^4)_k}
 &=(-1)^{n+1}q^{2n^2-1}(q^2;q^4)_{n+1}(q^2;q^4)_\infty.
 \label{eq-simplify-odd}
\end{align}
Recall the sequences $\mathcal{X}_n$ and $\mathcal{Y}_n$ defined in \eqref{eq-Xn} and \eqref{eq-Yn}. We have
\begin{align}
    S_1(q)&=M_{0,0,0}^{(1)}+M_{1,0,0}^{(1)}-M_{0,0,1}^{(1)}-M_{1,0,1}^{(1)}\nonumber \\
    &=\sum_{\substack{i,j,k\geq 0 \\ j \equiv 0 \!\!\! \pmod{2}  }} \frac{(-1)^kq^{(i-j)^2+(j-k)^2+k^2}}{(q^4;q^4)_i(q^4;q^4)_j(q^4;q^4)_k} =\sum_{i,n,k\geq 0} \frac{(-1)^kq^{(i-2n)^2+(2n-k)^2+k^2}}{(q^4;q^4)_i(q^4;q^4)_{2n}(q^4;q^4)_k} \nonumber \\
    &=(q^2;q^4)_\infty \sum_{n\geq 0} \frac{(-1)^nq^{2n^2} (q^2;q^4)_n}{(q^4;q^4)_{2n}}  \sum_{i\geq 0} \frac{q^{(i-2n)^2}}{(q^4;q^4)_i} \nonumber \\
    &=(q^2;q^4)_\infty\sum_{n\geq0}\frac{(-1)^nq^{2n^2}(q^2;q^4)_n}{(q^4;q^4)_{2n}}\mathcal{X}_n.
 \label{eq-S1-start}
\end{align}
Similarly, we can prove that
\begin{align}
    S_2(q)&=M_{1,0,0}^{(2)}+M_{0,0,0}^{(2)}-M_{1,0,1}^{(2)}-M_{0,0,1}^{(2)}\nonumber \\
    &=q^{-1}(q^2;q^4)_\infty\sum_{n\geq0}\frac{(-1)^nq^{2n^2}(q^2;q^4)_n}{(q^4;q^4)_{2n}}\mathcal{Y}_n,
 \label{eq-S2-start}\\
 S_3(q)&=M_{0,1,0}^{(3)}+M_{1,1,0}^{(3)}-M_{0,1,1}^{(3)}-M_{1,1,1}^{(3)}\nonumber \\
 &=(q^2;q^4)_\infty\sum_{n\geq0} \frac{(-1)^{n+1}q^{2n^2+4n}(q^2;q^4)_{n+1}}{(q^4;q^4)_{2n+1}} \mathcal{X}_n,
 \label{eq-S3-start}\\
 S_4(q)&=M_{1,0,1}^{(4)}+M_{0,0,1}^{(4)}-M_{1,0,0}^{(4)}-M_{0,0,0}^{(4)}  \nonumber \\
 &=-q^{-1}(q^2;q^4)_\infty\sum_{n\geq0}\frac{(-1)^nq^{2n^2+4n}(q^2;q^4)_n}{(q^4;q^4)_{2n}}\mathcal{Y}_n.
 \label{eq-S4-start}
\end{align}

Substituting \eqref{eq-X-split}--\eqref{eq-Y-split} into \eqref{eq-S1-start}--\eqref{eq-S4-start}, we obtain
\begin{align}
 S_1(q)&=(q^2;q^4)_\infty
 (\mathcal A\mathcal P_1+\mathcal B\mathcal R_1),
 \label{eq-S1-exp}\\
 S_2(q)&=q^{-1}(q^2;q^4)_\infty
 (\mathcal B \mathcal P_2
  +\mathcal A \mathcal R_2),
 \label{eq-S2-exp}\\
 S_3(q)&=(q^2;q^4)_\infty
 (\mathcal A\mathcal P_3+\mathcal B\mathcal R_3),
 \label{eq-S3-exp}\\
 S_4(q)&=-q^{-1}(q^2;q^4)_\infty
 ( \mathcal{B} {\mathcal P}_4
  +\mathcal A {\mathcal R}_4).
 \label{eq-S4-exp}
\end{align}
Finally, substituting the results in Lemma~\ref{lem-PR-product} into the above formulas, we obtain \eqref{eq-S1-product}--\eqref{eq-S4-product}.
\end{proof}

For $h=1,2,3,4$, we have
\begin{align}
 \mathcal{F}_0^{(h)}(q):=A^{(h)}-C^{(h)}
 &=\frac{S_h(q)-(-1)^{h}S_h(-q)}2,
 \label{eq-F0-dissection}\\
\mathcal{F}_1^{(h)}(q):=B^{(h)}-D^{(h)}
 &=\frac{S_h(q)+(-1)^{h}S_h(-q)}2.
 \label{eq-F1-dissection}
\end{align}
Substituting \eqref{eq-A-product} and \eqref{eq-B-product} into
\eqref{eq-S1-product}--\eqref{eq-S4-product}, and then extracting the even and odd power terms, we obtain the following formulas.
\begin{corollary}\label{cor-ABCD-difference}
We have for $k=0,1$ that
\begin{align}
& \mathcal{F}_k^{(1)}(q)=q^{k}\frac{J_2^2}{J_4^3}\Big(
 \mathcal A_k(q^4)(J_{86,180}-q^{16}J_{14,180})
 -q^2\mathcal B_k(q^4)(J_{58,180}+q^{10}J_{22,180})
 \Big),
 \label{eq-F1}\\
& \mathcal{F}_k^{(2)}(q)=q^{k-1}\frac{J_2^2}{J_4^3}\Big(
 \mathcal B_k(q^4)(J_{78,180}+q^6J_{42,180})
 -\mathcal A_k(q^4)(q^2J_{66,180}+q^{20}J_{6,180})
 \Big),
 \label{eq-F2}\\
& \mathcal{F}_k^{(3)}(q)=q^k\frac{J_2^2}{J_4^3}\Big(
 \mathcal A_k(q^4)(-J_{74,180}+q^8J_{34,180})
 +\mathcal B_k(q^4)(q^6J_{38,180}+q^{20}J_{2,180})
 \Big),
 \label{eq-F3}\\
&\mathcal{F}_k^{(4)}(q)=q^{k-1}\frac{J_2^2}{J_4^3}\Big(
 \mathcal A_k(q^4)(q^6J_{46,180}+q^{12}J_{26,180})
 -\mathcal B_k(q^4)(J_{82,180}+q^2J_{62,180})
 \Big).
 \label{eq-F4}
\end{align}
\end{corollary}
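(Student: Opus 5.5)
The corollary is a bookkeeping consequence of Theorem \ref{thm-S-prod} together with the parity splittings \eqref{eq-A-product}--\eqref{eq-B-product}, and I would prove it by extracting even and odd parts of the expansions in $q$.

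\emph{Step 1: locate the four pieces in residue classes mod $4$.} By Table \ref{tab-ABCD}, for each $h$ the four series $A^{(h)},B^{(h)},C^{(h)},D^{(h)}$ lie in distinct classes $q^{e}\mathbb{Z}[[q^4]]$. For $h=1$ the exponents are $e=0,1,2,3$, so $A^{(1)},C^{(1)}$ are even in $q$ and $B^{(1)},D^{(1)}$ are odd. For $h=2$ they are $3,0,1,2$; for $h=3$, $2,3,0,1$; for $h=4$, $1,2,3,0$. Reading these off, for odd $h$ the differences $A-C$ and $B-D$ are even and odd in $q$ respectively. For even $h$ they are odd and even respectively.

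\emph{Step 2: express $\mathcal F_0^{(h)}$ and $\mathcal F_1^{(h)}$ through $S_h$.} Since $S_h=(A-C)+(B-D)$, replacing $q$ by $-q$ gives $S_h(-q)=(-1)^{h+1}(A-C)+(-1)^h(B-D)$. Solving these two relations yields \eqref{eq-F0-dissection}--\eqref{eq-F1-dissection}. Hence $\mathcal F_k^{(h)}$ is exactly the part of $S_h$ of parity $k+h+1$ in the exponent of $q$.

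\emph{Step 3: read off the parity components.} In each formula \eqref{eq-S1-product}--\eqref{eq-S4-product}, every factor other than $\mathcal A$ and $\mathcal B$ is even in $q$. These factors are $J_2^2/J_4^3$, the theta products $J_{a,180}$, and the monomial prefactors $q^{2}, q^{6},\dots$; the only odd monomial is the overall $q^{-1}$ in $S_2$ and $S_4$. We then substitute $\mathcal A=\mathcal A_0(q^4)+q\mathcal A_1(q^4)$ and $\mathcal B=\mathcal B_0(q^4)+q\mathcal B_1(q^4)$. The even part of $S_h$ then keeps the $\mathcal A_0,\mathcal B_0$ terms when $h$ is odd, and the $q\mathcal A_1,q\mathcal B_1$ terms (times $q^{-1}$) when $h$ is even; the odd part is complementary. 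Matching this with Step 2 gives exactly the factor $q^k$ for $h$ odd and $q^{k-1}$ for $h$ even, with $\mathcal A_k(q^4),\mathcal B_k(q^4)$ in place of $\mathcal A,\mathcal B$. This is \eqref{eq-F1}--\eqref{eq-F4}.

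\emph{Main obstacle.} There is no analytic difficulty here; the only real risk is a sign or parity slip in Step 1. For that reason I would carefully recheck the residue of each representative against Table \ref{tab-ABCD}, particularly for $h=2,4$, where the prefactor $q^{-1}$ swaps the roles of $k=0,1$.
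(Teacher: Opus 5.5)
Your proposal is correct and is essentially the paper's own argument: substitute $\mathcal A=\mathcal A_0(q^4)+q\mathcal A_1(q^4)$ and $\mathcal B=\mathcal B_0(q^4)+q\mathcal B_1(q^4)$ into Theorem \ref{thm-S-prod}, then extract the even and odd parts using \eqref{eq-F0-dissection}--\eqref{eq-F1-dissection}. Your derivation of those dissection formulas from Table \ref{tab-ABCD} is accurate, including the rule that $\mathcal F_k^{(h)}$ is the part of $S_h$ of parity $k+h+1$; the paper states this step without proof.
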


\begin{proof}[Proof of Theorem \ref{thm-dual-exam9-1st}]
Replacing $q$ by $\ii q$ in \eqref{eq-F0-dissection} and \eqref{eq-F1-dissection}, we obtain
\begin{align}
    A^{(h)}+C^{(h)}=\ii^{h-1}\mathcal{F}_0^{(h)}(\ii q), \quad
    B^{(h)}+D^{(h)}=\ii^{h-2}\mathcal{F}_1^{(h)}(\ii q).
\end{align}
Therefore, we have
\begin{align}
 A^{(h)}&=\frac12\big(\mathcal{F}_0^{(h)}(q)+\ii^{h-1}\mathcal{F}_0^{(h)}(\ii q)\big), \quad
 C^{(h)}=\frac12\big(\ii^{h-1}\mathcal{F}_0^{(h)}(\ii q)-\mathcal{F}_0^{(h)}(q)\big),
 \label{eq-AC-iq}\\
 B^{(h)}&=\frac12\big(\mathcal{F}_1^{(h)}(q)+\ii^{h-2}\mathcal{F}_1^{(h)}(\ii q)\big), \quad
 D^{(h)}=\frac12\big(\ii^{h-2}\mathcal{F}_1^{(h)}(\ii q)-\mathcal{F}_1^{(h)}(q)\big).  \label{eq-BD-iq}
\end{align}
Substituting \eqref{eq-AC-iq} and \eqref{eq-BD-iq} into \eqref{M-ABCD}, we obtain \eqref{eq-thm-Mh}.
\end{proof}
When $q$ is replaced by $\ii q$, $J_2$ becomes
\begin{equation}
(-q^2;-q^2)_\infty=\frac{J_4^3}{J_2J_8}.
 \label{eq-J2iq}
\end{equation}
When $a\equiv 2$ (mod 4) and $m\equiv 0$ (mod 4), when $q$ is replaced by $\ii q$, $J_{a,m}$ becomes $\overline J_{a,m}$. Clearly, $\mathcal{A}_k(q^4)$ and $\mathcal{B}_k(q^4)$ ($k=0,1$) will not change upon replacing $q$ by $\ii q$. 
Substituting \eqref{eq-F1}--\eqref{eq-F4} into \eqref{eq-AC-iq} and \eqref{eq-BD-iq}, we obtain explicit formulas for $A^{(h)}$, $B^{(h)}$, $C^{(h)}$ and $D^{(h)}$. For instance, we have
\begin{align}
& A^{(1)} =\frac{1}{2} \frac{J_2^2}{J_4^3}\Big(
 \mathcal A_0(q^4)(J_{86,180}-q^{16}J_{14,180})
 -q^2\mathcal B_0(q^4)(J_{58,180}+q^{10}J_{22,180})\Big) \nonumber \\
 &+\frac{1}{2}\frac{J_4^3}{J_2^2J_8^2}\Big(
 \mathcal A_0(q^4)(\overline{J}_{86,180}-q^{16}\overline{J}_{14,180})
 +q^2\mathcal B_0(q^4)(\overline{J}_{58,180}-q^{10}\overline{J}_{22,180})
 \Big).
\end{align}

\subsubsection{Second approach}\label{sec-2nd}
We now introduce an alternative approach to studying the Nahm sums dual to Example 9. This approach has three advantages. First, it reveals additional interesting relations among the partial Nahm sums $A^{(h)}$, $B^{(h)}$, $C^{(h)}$, and $D^{(h)}$. Second, it yields another uniform representation for $M^{(h)}(q)$. Third, it leads to the discovery of the new rank-three Nahm sums appearing in Theorems \ref{thm-ex9-new} and \ref{thm-dual-rank3}.

From now on we set
\begin{align}\label{ab-defn}
    a=a(q):=\sum_{n=-\infty}^\infty q^{(2n)^2}=\frac{J_8^5}{J_4^2J_{16}^2}, \quad  b=b(q):=\sum_{n=-\infty}^\infty q^{(2n+1)^2}=2q\frac{J_{16}^2}{J_8}.
\end{align}
From \eqref{Jacobi} and \cite[Entry 25 vii]{Berndt-notebook} we have
\begin{align}\label{eq-ab-sum}
    a+b=\sum_{n=-\infty}^\infty q^{n^2}=\frac{J_2^5}{J_1^2J_4^2}, \quad   a^4-b^4=\frac{J_4^8}{J_8^4}. 
\end{align}

Recall the theta series $W_k(q)$ ($k=1,2,3,4$) in Lemma \ref{lem-W-product}. In order to express the dual Nahm sums as infinite products, we define some series $R_{i}^{(h)}(q)$ (abbreviated as $R_i^{(h)}$) for $i,h\in\{1,2,3,4\}$ as follows:
\begin{align}
    R_{1}^{(1)}&:=\frac{1}{2}q \Big(\frac{J_4^2J_{4,72}J_{28,72}J_{32,72}J_{36,72}}{J_{2}J_8J_{14,72}J_{18,72}J_{22,72}}+\frac{J_{2}J_{18}J_{36}J_{14,36}}{J_{8,36}J_{12,36}J_{16,36}}\Big), \label{R11}\\
    R_{2}^{(1)}&:=-q^6\frac{J_{8}J_{8,72}}{J_4}, \label{R21} \\
    R_{3}^{(1)}&:=\frac{1}{2}q\Big(\frac{J_4^2J_{4,72}J_{28,72}J_{32,72}J_{36,72}}{J_{2}J_8J_{14,72}J_{18,72}J_{22,72}}-\frac{J_{2}J_{18}J_{36}J_{14,36}}{J_{8,36}J_{12,36}J_{16,36}}\Big),   \label{R31}\\
R_{4}^{(1)}&:=-\frac{J_4}{J_8^3}\cdot\sum_{n\in\mathbb{Z}}(-1)^n(9n+1)q^{36n^2+8n}=-\frac{J_4}{J_8^3}W_1(q^4);  \label{R41}\\
    R_{1}^{(2)}&:=\frac{1}{2}\Big(-\frac{J_4^2J_{24}J_{12,24}}{J_2J_8J_{6,24}}-\frac{J_4J_{2,12}J_{6,12}}{J_{12}J_{4,12}}\Big),  \label{R12} \\
     R_{2}^{(2)}&:=q\frac{J_{8}J_{24}}{J_4},  \label{R22}\\
     R_{3}^{(2)}&:=\frac{1}{2}\Big(-\frac{J_4^2J_{24}J_{12,24}}{J_2J_8J_{6,24}}+\frac{J_4J_{2,12}J_{6,12}}{J_{12}J_{4,12}}\Big),  \label{R32}\\
     R_{4}^{(2)}&:=\frac{J_4}{J_8^3}\cdot\sum_{n\in\mathbb{Z}}(-1)^n(9n+3)q^{36n^2+24n+3}=q^3\frac{J_4}{J_8^3}W_3(q^4);  \label{R42} \\
    R_{1}^{(3)}&:=\frac{1}{2}q\Big(\frac{J_4^2J_{4,72}J_{16,72}J_{20,72}J_{36,72}}{J_2J_8J_{2,72}J_{18,72}J_{34,72}}-\frac{J_2J_{18}J_{36}J_{2,36}}{J_{4,36}J_{8,36}J_{12,36}}\Big),  \label{R13} \\
    R_{2}^{(3)}&:=-\frac{J_8J_{32,72}}{J_4}, \label{R23} \\
    R_{3}^{(3)}&:=\frac{1}{2}q\Big(\frac{J_4^2J_{4,72}J_{16,72}J_{20,72}J_{36,72}}{J_2J_8J_{2,72}J_{18,72}J_{34,72}}+\frac{J_2J_{18}J_{36}J_{2,36}}{J_{4,36}J_{8,36}J_{12,36}}\Big),  \label{R33} \\
    R_{4}^{(3)}&:=-\frac{J_4}{J_8^3}\cdot\sum_{n\in\mathbb{Z}}(-1)^n(9n+4)q^{36n^2+32n+6}=-q^6\frac{J_4}{J_8^3}W_4(q^4); \label{R43}\\
    R_{1}^{(4)}&:=\frac{1}{2}\Big(-\frac{J_4^2J_{8,72}J_{20,72}J_{28,72}J_{36,72}}{J_2J_8J_{10,72}J_{18,72}J_{26,72}}+\frac{J_2J_{18}J_{36}J_{10,36}}{J_{4,36}J_{12,36}J_{16,36}}\Big); \label{R14} \\
    R_{2}^{(4)}&:=q^3\frac{J_{8}J_{16,72}}{J_4}; \label{R23R24} \\
    R_{3}^{(4)}&:=\frac{1}{2}\Big(-\frac{J_4^2J_{8,72}J_{20,72}J_{28,72}J_{36,72}}{J_2J_8J_{10,72}J_{18,72}J_{26,72}}-\frac{J_2J_{18}J_{36}J_{10,36}}{J_{4,36}J_{12,36}J_{16,36}}\Big),  \label{R34} \\
    R_{4}^{(4)}&:=\frac{J_4}{J_8^3}\cdot\sum_{n\in\mathbb{Z}}(-1)^n(9n+2)q^{36n^2+16n+1}=q\frac{J_4}{J_8^3}W_2(q^4).        \label{R44}
    \end{align}
Note that in the definitions of $R_1^{(h)}(q)$ and $R_3^{(h)}(q)$ ($h=1,2,3,4$), the second infinite product is obtained from the first by replacing $q^2$ with $-q^2$. 
The above series can be classified into four families of the form  $q^r\mathbb{Z}[[q^4]]$ ($r=0,1,2,3$), as shown in Table \ref{tab-ABCD}.

We find four relations satisfied by them. 
\begin{theorem}\label{thm-ABCD}
The following equations hold for $h=1,2,3,4$:
    \begin{align}
        b\cdot A^{(h)}(q)-a\cdot B^{(h)}(q)=R_{1}^{(h)},  \label{dual-9-eq1}\\
        b\cdot B^{(h)}(q)-a\cdot C^{(h)}(q)=R_{2}^{(h)}, \label{dual-9-eq2} \\
          b\cdot C^{(h)}(q)-a\cdot D^{(h)}(q)=R_{3}^{(h)},  \label{dual-9-eq3} \\
        b\cdot D^{(h)}(q)-a\cdot A^{(h)}(q)=R_{4}^{(h)}. \label{dual-9-eq4}
    \end{align}
\end{theorem}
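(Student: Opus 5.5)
The plan is to derive the four relations of Theorem~\ref{thm-ABCD} from the first approach, so that each becomes a finite identity among theta products that can be checked by modular-form methods. The point is that Section~\ref{sec-1st} already gives all sixteen partial sums explicitly. By \eqref{eq-AC-iq}--\eqref{eq-BD-iq}, together with Corollary~\ref{cor-ABCD-difference}, each of $A^{(h)},B^{(h)},C^{(h)},D^{(h)}$ is an explicit combination of:
\begin{itemize}
\item the four Rogers--Ramanujan type products $\mathcal A_0(q^4),\mathcal A_1(q^4),\mathcal B_0(q^4),\mathcal B_1(q^4)$ from \eqref{S79}--\eqref{S99};
\item the theta products $J_{c,180}$ and $\overline J_{c,180}$;
\item the prefactors $J_2^2/J_4^3$ and $J_4^3/(J_2^2J_8^2)$, the latter coming from \eqref{eq-J2iq}.
\end{itemize}
I will substitute these expressions, together with $a=J_8^5/(J_4^2J_{16}^2)$ and $b=2qJ_{16}^2/J_8$ from \eqref{ab-defn}, into the left-hand sides of \eqref{dual-9-eq1}--\eqref{dual-9-eq4}.

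First I will record the $q$-exponent bookkeeping. By Table~\ref{tab-ABCD}, $a\in\mathbb{Z}[[q^4]]$ and $b\in q\mathbb{Z}[[q^8]]$, so each left-hand side $bX-aY$ is a series in a single class $q^r\mathbb{Z}[[q^4]]$. This class must agree with the one listed for $R_i^{(h)}$; checking this is a consistency test on the labelling, and it also shows that only the terms with the matching residue mod $4$ survive. Next I will collect the left-hand side of each relation in the form
\begin{align*}
bX-aY=\sum_{k\in\{0,1\}}\Big(\mathcal A_k(q^4)\,\Theta^{\mathcal A}_{k}(q)+\mathcal B_k(q^4)\,\Theta^{\mathcal B}_{k}(q)\Big),
\end{align*}
where each $\Theta$ is a linear combination of theta quotients of level dividing $720$.

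The right-hand sides contain no Rogers--Ramanujan products, so the proof requires showing that they are eliminated. I would first try to do this by hand using the quintuple-product/Rogers--Ramanujan relations between $\mathcal A_0,\mathcal A_1,\mathcal B_0,\mathcal B_1$. The cleanest identity available is $\mathcal A_0(q)\mathcal B_1(q)-q^{?}\mathcal A_1(q)\mathcal B_0(q)=$ (eta quotient). Combined with the $3$-dissection type identities $J_{c,180}$ versus $J_{c',36}$, this should collapse everything to the mod-$72$ and mod-$36$ products appearing in $R_1^{(h)},R_2^{(h)},R_3^{(h)}$. For $R_4^{(h)}$ I will use Lemma~\ref{lem-W-product} to replace $W_k(q^4)$ by products mod $72$, so that every right-hand side is a finite sum of eta quotients.

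If the hand reduction becomes unwieldy, I will fall back on the standard route. I will divide each relation by a single eta quotient, such as $R_2^{(h)}$ or $bJ_2^2/J_4^3$, so that both sides become weight-zero modular functions on $\Gamma_1(720)$ up to an overall power of $q$. I will then apply the algorithm of \cite{Frye-Garvan}: compute the orders at all cusps, bound the total pole order, and check coefficients of the $q$-expansion up to the resulting Sturm-type bound. All sixteen identities can be treated uniformly in this way. The products $\mathcal A_k,\mathcal B_k$ are themselves eta quotients of level $20$, scaled to level $80$ by $q\mapsto q^4$, so nothing non-modular enters.

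The main obstacle I expect is the size of the level and the number of cusps. The terms mix moduli $180$, $80$ and $72$, so the common level is large and the pole-order bound can become huge. To keep it manageable I will first use the $\mathbb{Z}[[q^4]]$-grading to pass to $q\mapsto q^{1/4}$ where possible, which lowers the level. I will also exploit the fact, noted after \eqref{R44}, that the second product in $R_1^{(h)}$ and $R_3^{(h)}$ is the image of the first under $q^2\mapsto -q^2$. This means that adding and subtracting \eqref{dual-9-eq1} and \eqref{dual-9-eq3} reduces them to one identity and its $q\mapsto \ii q$ twin. In the same way \eqref{dual-9-eq2} and \eqref{dual-9-eq4} are linked through $A\pm C$ and $B\pm D$, which brings the verification down to essentially two identities per $h$.
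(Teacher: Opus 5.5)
Your plan is correct, and for the theorem as a whole it is exactly the justification the paper relies on. The paper remarks that once the first approach has produced $A^{(h)},\dots,D^{(h)}$ as explicit theta quotients (Theorem~\ref{thm-S-prod}, Corollary~\ref{cor-ABCD-difference}, \eqref{eq-AC-iq}--\eqref{eq-BD-iq}), all four relations follow routinely by the method of \cite{Frye-Garvan}; this is the only proof it gives of \eqref{dual-9-eq4}.

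The argument the paper actually writes out covers \eqref{dual-9-eq1}--\eqref{dual-9-eq3} only, and deliberately avoids Section~\ref{sec-1st}. Write $L_i^{(h)}$ for the left side of the $i$th relation. The paper's steps are:
\begin{itemize}
\item It observes that $L_1^{(h)}-2L_2^{(h)}+L_3^{(h)}=b\big(A^{(h)}+C^{(h)}-2B^{(h)}\big)+a\big(2C^{(h)}-B^{(h)}-D^{(h)}\big)$.
\item It reads the factors $a,b$ as a theta sum over $s$ selected by the parity of $i-j$.
\item Using $(2s-(i-j))^2+(i-j)^2=2s^2+2(i-j-s)^2$ and a constant-term extraction, it rewrites the combination as $\pm J_4$ times a power of $q$ times a specialization of the new rank-three Nahm sum $T(u,v,w;q)$ (Lemma~\ref{dual-ex9-lemma1}).
\item It evaluates $T$ by Bailey pairs (Theorem~\ref{thm-ex9-new}) and separates the three relations by residue classes modulo $4$.
\end{itemize}
That route gives genuinely new information: it is how Theorem~\ref{thm-ex9-new} was found, and it proves three of the relations without the first approach. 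It cannot reach \eqref{dual-9-eq4}, because the combination has no $L_4^{(h)}$ component and the analogous series $\mathcal{Z}(q)$ resists direct evaluation. Your route treats all four relations uniformly but adds nothing beyond the first approach. Its only cost is computational: a large level and many cusps.

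Two small corrections to your write-up.
\begin{itemize}
\item The proposed hand ``elimination'' of the Rogers--Ramanujan factors is unnecessary. By \eqref{S79}--\eqref{S99}, $\mathcal A_k,\mathcal B_k$ are already generalized eta quotients, so their absence on the right is simply part of the theta-function identity being checked. The speculative relation with an unspecified power $q^{?}$ can be dropped.
\item Your $q\mapsto\ii q$ pairings are the residue-class decomposition in disguise. For each $h$, every $L_i^{(h)}$ lies in a single class $q^r\mathbb{Z}[[q^4]]$, so it is merely multiplied by a fourth root of unity under $q\mapsto\ii q$. By Table~\ref{tab-ABCD} the four classes are distinct. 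Hence one verified identity $\sum_i c_iL_i^{(h)}=\sum_i c_iR_i^{(h)}$ with all $c_i\neq0$ already yields all four relations for that $h$. The paper's choice $(c_1,c_2,c_3,c_4)=(1,-2,1,0)$ is exactly why its independent argument stops short of \eqref{dual-9-eq4}.
\end{itemize}
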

Since we already have product representations for $A^{(h)}$, $B^{(h)}$, $C^{(h)}$ and $D^{(h)}$ using the first approach, it is routine to prove the above theorem using the Maple approach in \cite{Frye-Garvan}. Our aim is to provide a proof without using any results in Section \ref{sec-1st}. We are able to prove \eqref{dual-9-eq1}--\eqref{dual-9-eq3} in a uniform manner. However, we face difficulty in proving \eqref{dual-9-eq4} and leave its proof to the interested reader. 

We divide our proof of \eqref{dual-9-eq1}--\eqref{dual-9-eq3} into a sequence of lemmas.
\begin{lemma}\label{dual-ex9-lemma1}
Let 
\begin{align}\label{T-defn}
T(u,v,w;q):=\sum_{i,j,k\geq 0}\frac{q^{4i^2+3j^2+2k^2+4ij-2jk}u^iv^jw^k}{(q^4;q^4)_i(q^4;q^4)_j(q^4;q^4)_k}.
\end{align}
For $h=1,2,3,4$ we define
\begin{align}\label{S-ABCD-defn}
    S^{(h)}(q):=b(A^{(h)}(q)+C^{(h)}(q)-2B^{(h)}(q))+a(2C^{(h)}(q)-B^{(h)}(q)-D^{(h)}(q)).
\end{align} 
We have
 \begin{align}
        &S^{(1)}(q)=qJ_4T(q^4,q^2,1;q),  \label{eq-lem-ABCD-1}\\
    &S^{(2)}(q)=-J_4T(1,q^{-2},1;q),  \label{eq-lem-ABCD-2}\\
    &S^{(3)}(q)=J_4T(1,1,q^{-2};q), \label{eq-lem-ABCD-3} \\
    &S^{(4)}(q)=-J_4T(1,1,1;q).  \label{eq-lem-ABCD-4}
    \end{align}
\end{lemma}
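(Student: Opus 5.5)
Our plan is to reduce everything to a single identity between generating functions: multiply the theta series $a$ and $b$ into the Nahm sum $M$ and absorb them as an extra summation variable. Write $a=\sum_{m}q^{(2m)^2}$ and $b=\sum_m q^{(2m+1)^2}$, so that $b\cdot X-a\cdot Y$ becomes a sum over $(m,i,j,k)$ of terms $\pm q^{s^2+i^2+2j^2+2k^2-2ij-2jk}u^iv^jw^k/((q^4;q^4)_i(q^4;q^4)_j(q^4;q^4)_k)$, where $s$ runs over a parity class. The combination $S^{(h)}$ in \eqref{S-ABCD-defn} is, by Lemma \ref{lem-M}, a signed sum over parity classes $(x,y,z)$ of $M^{(h)}_{x,y,z}$ weighted by $a$ or $b$. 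We would rewrite it as one sum over $(s,i,j,k)\in\mathbb{Z}\times\mathbb{Z}_{\ge0}^3$ whose sign and $a$/$b$ selection are a character of $(s,i,j,k)$ modulo $2$. The natural guess is that the weight depends only on $s+i$ or $s+j$ modulo $2$, up to the equalities of Lemma \ref{lem-M}. We would first check this guess against the coefficient pattern $(1,-2,1)$ for $b$ and $(-1,2,-1)$ for $a$ on $(A,B,C,D)$.

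Next we perform a change of variables that decouples $s$. Shift $i\mapsto$ something involving $s$, or better, set $s=i-2l$ or similar so that $s^2+i^2-2ij+\dots$ becomes $(i-j)^2+(j-k)^2+k^2+s^2$ with $s$ paired against $i$. We then sum over $i$ with $s$ fixed using Euler's identity \eqref{Euler}, in the form $\sum_i q^{(i-c)^2}z^i/(q^4;q^4)_i$. The key mechanism is the same one used in the proof of Lemma \ref{lem-M}: a free alternating sum over one index yields $(q^{-4n};q^4)_\infty$ or $(q^2;q^4)_n$-type factors, which vanish or truncate. Concretely, we would replace the theta series variable by an index $l\ge0$ via the Jacobi triple product \eqref{Jacobi}, turning $\sum_s(\pm1)^s q^{s^2}x^s$ into $J$-products times $\sum_{l\ge0} q^{2l^2-\cdots}/(q^4;q^4)_l$-type terms. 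The aim is for the resulting four-fold sum to collapse, after summing one variable with \eqref{Euler}, to a triple sum with quadratic form $4i^2+3j^2+2k^2+4ij-2jk$ and prefactor $J_4$. That this form is the right target is confirmed by the matrix: $T$ should be the "lift" obtained by eliminating the $(i-j)$ coupling against the theta variable. Finally we match the linear terms case by case: $(u,v,w)=(1,1,1),(q^2,q^{-2},1),(q^2,1,q^{-2}),(q^2,1,1)$ should map to $T(q^4,q^2,1)$, $T(1,q^{-2},1)$, $T(1,1,q^{-2})$ and $T(1,1,1)$, with the prefactors $q$ or $\pm1$.

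An alternative we would keep in reserve is the constant-term method. Write $a$ and $b$ as $\mathrm{CT}_z$ of a triple product times $z^{-i}$-type factors. Then use \eqref{Euler} to sum $i,j,k$ into infinite products in $z$, apply the Jacobi triple product to re-expand, and read off the constant term as the triple sum $T$.

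The main obstacle will be the bookkeeping of parities. We must verify that the specific combination $S^{(h)}$, and not some other combination of $A,B,C,D$, is exactly what produces a single unrestricted sum after the change of variables, with all the unwanted parity classes cancelling in pairs via the vanishing $(q^{-4n};q^4)_\infty=0$. We would do this first for $h=4$, where there are no linear terms, determine the correct substitution there, and then transport it to $h=1,2,3$ by tracking the linear exponents.
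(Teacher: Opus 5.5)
Your proposal has a genuine gap: it never contains the step that separates the theta series from the Nahm sum, and the concrete step you do propose would fail. You cannot sum over $i$ ``with $s$ fixed using Euler's identity in the form $\sum_i q^{(i-c)^2}z^i/(q^4;q^4)_i$''. In base $q^4$, Euler's second identity requires $q^{2i^2}$, whereas $q^{i^2}/(q^4;q^4)_i$ gives the Rogers--Ramanujan-type series $\Phi(z)$ of \eqref{eq-Phi}, which is not a product.

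The surrounding steps do not help either:
\begin{itemize}
\item Your substitution ``$s=i-2l$ so that the exponent becomes $(i-j)^2+(j-k)^2+k^2+s^2$'' decouples nothing; that is just the original exponent with $s^2$ added.
\item The reserve version (``sum $i,j,k$ into infinite products in $z$'') cannot work, because $j$ and $k$ are tied together by $(j-k)^2$.
\item The mechanism you anticipate, parity classes cancelling via $(q^{-4n};q^4)_\infty=0$, plays no role here.
\item The parity structure is not governed by $s+i$ or $s+j$.
\item It is $h=1$, not $h=4$, whose $M$-sum has no linear terms.
\end{itemize}

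The missing idea is the following. By Lemma \ref{lem-M}, each of $A^{(h)}+C^{(h)}-2B^{(h)}$ and $2C^{(h)}-B^{(h)}-D^{(h)}$ equals $\pm$ the sum of $(-1)^i$ times the summand of $M^{(h)}$ over one of the classes $i\equiv j$ or $i\not\equiv j \pmod 2$. Hence
\[
S^{(h)}=\pm\sum_{i,j,k}(-1)^i(\cdots)\sum_{s\in\mathbb{Z}}q^{(2s-L)^2},
\]
with $L=i-j$ for $h=2,3,4$, and $L=i-j-1$ for $h=1$ (after writing $(i-j)^2=(i-j-1)^2+2i-2j-1$). This works because $\sum_s q^{(2s-L)^2}$ equals $a$ or $b$ according to the parity of $L$.

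The key identity is then
\[
(2s-L)^2+L^2=2s^2+2(L-s)^2 .
\]
It absorbs the $(i-j)^2$ of the quadratic form, so $i$ survives only in the linear term $2i$ and in the constraint $s+t=L$. Encode that constraint by $\mathrm{CT}_z$. The $i$-sum becomes $\sum_i(-zq^2)^i/(q^4;q^4)_i=1/(-zq^2;q^4)_\infty$, which cancels a factor of
\[
\sum_t q^{2t^2}z^{-t}=(-zq^2,-q^2/z,q^4;q^4)_\infty .
\]
Expand the remaining factor as $(-q^2/z;q^4)_\infty=\sum_{l\ge0}q^{2l^2}z^{-l}/(q^4;q^4)_l$ and take the constant term. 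This forces $s=-(l+j)$ (resp.\ $s=-(l+j+1)$ for $h=1$), and yields $J_4T$ with the stated linear terms and prefactors.
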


\begin{proof}
(1) We have
\begin{align*}
   &\sum_{\substack{i,j,k\geq 0\\i\equiv j\ (\mathrm{mod}\ 2 )}}\frac{(-1)^iq^{(i-j)^2+(j-k)^2+k^2}}{(q^4;q^4)_{i}(q^4;q^4)_{j}(q^4;q^4)_{k}}=M_{0,0,0}^{(1)}+M_{0,0,1}^{(1)}-M_{1,1,0}^{(1)}-M_{1,1,1}^{(1)}\nonumber \\
   &=A^{(1)}(q)+C^{(1)}(q)-2B^{(1)}(q),\\
    &\sum_{\substack{i,j,k\geq 0\\i\not\equiv j\ (\mathrm{mod}\ 2 )}}\frac{(-1)^iq^{(i-j)^2+(j-k)^2+k^2}}{(q^4;q^4)_{i}(q^4;q^4)_{j}(q^4;q^4)_{k}}=M_{0,1,0}^{(1)}+M_{0,1,1}^{(1)}-M_{1,0,0}^{(1)}-M_{1,0,1}^{(1)} \nonumber \\
&=2C^{(1)}(q)-B^{(1)}(q)-D^{(1)}(q).
\end{align*}
Substituting them into \eqref{S-ABCD-defn}, we have
\begin{align}
    S^{(1)}(q)=\sum_{i,j,k\geq 0}\frac{(-1)^iq^{(i-j-1)^2+(j-k)^2+k^2+2i-2j-1}}{(q^4;q^4)_i(q^4;q^4)_j(q^4;q^4)_k}\sum_{s\in\mathbb{Z}}q^{(2s-(i-j-1))^2}. \label{exam9-S1-proof}
\end{align}
Note that
\begin{align}
    (2s-(i-j-1))^2+(i-j-1)^2=2s^2+2(i-j-1-s)^2.
\end{align}
From \eqref{exam9-S1-proof} we have
\begin{align}
    &S^{(1)}(q)=\sum_{\substack{i,j,k\geq 0\\s,t\in\mathbb{Z}\\s+t=i-j-1}}\frac{(-1)^iq^{2s^2+2t^2+(j-k)^2+k^2+2i-2j-1}}{(q^4;q^4)_i(q^4;q^4)_j(q^4;q^4)_k}\nonumber\\
    &=\mathrm{CT}_z\Big[\sum_{\substack{i,j,k\geq 0\\s,t\in\mathbb{Z}}} \frac{(-1)^iq^{2s^2+2t^2+(j-k)^2+k^2+2i-2j-1}z^{i-j-1-s-t}}{(q^4;q^4)_i(q^4;q^4)_j(q^4;q^4)_k} \Big]\nonumber\\
    &=\mathrm{CT}_z\Big[ \sum_{\substack{j,k\geq 0\\s\in\mathbb{Z}}}\frac{q^{2s^2+(j-k)^2+k^2-2j-1}z^{-j-1-s}}{(q^4;q^4)_j(q^4;q^4)_k}\frac{(-zq^2,-z^{-1}q^2,q^4;q^4)_{\infty}}{(-zq^{2};q^4)_{\infty}} \Big]\nonumber\\
    &=(q^4;q^4)_{\infty}\mathrm{CT}_z\Big[ \sum_{\substack{j,k\geq 0\\s\in\mathbb{Z}}}\frac{q^{2s^2+(j-k)^2+k^2-2j-1}z^{-j-1-s}}{(q^4;q^4)_j(q^4;q^4)_k}\sum_{i\geq 0}\frac{q^{2i^2}z^{-i}}{(q^4;q^4)_i} \Big]\nonumber\\
    &=(q^4;q^4)_{\infty}\sum_{i,j,k\geq 0}\frac{q^{2i^2+2(i+j+1)^2+(j-k)^2+k^2-2j-1}}{(q^4;q^4)_i(q^4;q^4)_j(q^4;q^4)_k}\nonumber\\
    &=(q^4;q^4)_{\infty}\sum_{i,j,k\geq 0}\frac{q^{4i^2+4ij+3j^2-2jk+2k^2+4i+2j+1}}{(q^4;q^4)_i(q^4;q^4)_j(q^4;q^4)_k}=qJ_4T(q^4,q^2,1;q).
\end{align}

(2) We have
\begin{align*}
    &-\sum_{\substack{i,j,k\geq 0\\i\not\equiv j\ (\mathrm{mod}\ 2)}}\frac{(-1)^iq^{(i-j)^2+(j-k)^2+k^2+2i-2j}}{(q^4;q^4)_i(q^4;q^4)_j(q^4;q^4)_k}=M_{1,0,0}^{(2)}+M_{1,0,1}^{(2)}-M_{0,1,0}^{(2}-M_{0,1,1}^{(2)} \nonumber \\
    &=A^{(2)}(q)+C^{(2)}(q)-2B^{(2)}(q), \\
    &-\sum_{\substack{i,j,k\geq 0\\i\equiv j\ (\mathrm{mod}\ 2)}}\frac{(-1)^iq^{(i-j)^2+(j-k)^2+k^2+2i-2j}}{(q^4;q^4)_i(q^4;q^4)_j(q^4;q^4)_k}=M_{1,1,0}^{(2)}+M_{1,1,1}^{(2)}-M_{0,0,0}^{(2)}-M_{0,0,1}^{(2)} \nonumber \\
    &=2C^{(2)}(q)-B^{(2)}(q)-D^{(2)}(q).
\end{align*}
Substituting them into \eqref{S-ABCD-defn}, we have
\begin{align}
S^{(2)}(q)=-\sum_{i,j,k\geq 0}\frac{(-1)^iq^{(i-j)^2+(j-k)^2+k^2+2i-2j}}{(q^4;q^4)_i(q^4;q^4)_j(q^4;q^4)_k}\sum_{s\in\mathbb{Z}}q^{(2s-(i-j))^2}.  \label{exam9-S2-proof}
\end{align}

Note that
\begin{align}\label{eq-sij}
(2s-(i-j))^2+(i-j)^2=2s^2+2(i-j-s)^2.   
\end{align}
From \eqref{exam9-S2-proof} we have
\begin{align*}
    &S^{(2)}(q)=-\sum_{\substack{i,j,k\geq 0\\s,t\in\mathbb{Z}\\s+t=i-j}}\frac{(-1)^iq^{2s^2+2t^2+(j-k)^2+k^2+2i-2j}}{(q^4;q^4)_i(q^4;q^4)_j(q^4;q^4)_k}\nonumber\\
    &=-\mathrm{CT}_z\Big[\sum_{\substack{i,j,k\geq 0\\s,t\in\mathbb{Z}}} \frac{(-1)^iq^{2s^2+2t^2+(j-k)^2+k^2+2i-2j}z^{i-j-s-t}}{(q^4;q^4)_i(q^4;q^4)_j(q^4;q^4)_k} \Big]\nonumber\\
    &=-\mathrm{CT}_z\Big[ \sum_{\substack{j,k\geq 0\\s\in\mathbb{Z}}}\frac{q^{2s^2+(j-k)^2+k^2-2j}z^{-j-s}}{(q^4;q^4)_j(q^4;q^4)_k}\frac{(-zq^2,-z^{-1}q^2,q^4;q^4)_{\infty}}{(-zq^{2};q^4)_{\infty}} \Big]\nonumber\\
    &=-(q^4;q^4)_\infty \mathrm{CT}_z\Big[ \sum_{\substack{j,k\geq 0\\s\in\mathbb{Z}}}\frac{q^{2s^2+(j-k)^2+k^2-2j}z^{-j-s}}{(q^4;q^4)_j(q^4;q^4)_k}\sum_{i\geq 0}\frac{q^{2i^2}z^{-i}}{(q^4;q^4)_i} \Big]\nonumber\\
    &=-(q^4;q^4)_\infty\sum_{i,j,k\geq 0}\frac{q^{2i^2+2(i+j)^2+(j-k)^2+k^2-2j}}{(q^4;q^4)_i(q^4;q^4)_j(q^4;q^4)_k}\nonumber\\
    &=-(q^4;q^4)_\infty\sum_{i,j,k\geq 0}\frac{q^{4i^2+4ij+3j^2-2jk+2k^2-2j}}{(q^4;q^4)_i(q^4;q^4)_j(q^4;q^4)_k}=-J_4T(1,q^{-2},1;q). 
\end{align*}

(3) We have
\begin{align*}
   &\sum_{\substack{i,j,k\geq 0\\i\not\equiv j\ (\mathrm{mod}\ 2)}}\frac{(-1)^iq^{(i-j)^2+(j-k)^2+k^2+2i-2k}}{(q^4;q^4)_i(q^4;q^4)_j(q^4;q^4)_k}=M_{0,1,0}^{(3)}+M_{0,1,1}^{(3)}-M_{1,0,0}^{(3)}-M_{1,0,1}^{(3)}  \nonumber \\
   &=A^{(3)}(q)+C^{(3)}(q)-2B^{(3)}(q), \\
&\sum_{\substack{i,j,k\geq 0\\i\equiv j\ (\mathrm{mod}\ 2)}}\frac{(-1)^iq^{(i-j)^2+(j-k)^2+k^2+2i-2k}}{(q^4;q^4)_i(q^4;q^4)_j(q^4;q^4)_k}=M_{0,0,0}^{(3)}+M_{0,0,1}^{(3)}-M_{1,1,0}^{(3)}-M_{1,1,1}^{(3)} \nonumber \\
&=2C^{(3)}(q)-B^{(3)}(q)-D^{(3)}(q).
\end{align*}
Substituting them into \eqref{S-ABCD-defn}, we have
\begin{align}
    S^{(3)}(q)=\sum_{i,j,k\geq 0}\frac{(-1)^iq^{(i-j)^2+(j-k)^2+k^2+2i-2k}}{(q^4;q^4)_i(q^4;q^4)_j(q^4;q^4)_k}\sum_{s\in\mathbb{Z}}q^{(2s-(i-j))^2}.
\end{align}
Using \eqref{eq-sij} we have
\begin{align}
    &S^{(3)}(q)=\sum_{\substack{i,j,k\geq 0\\s,t\in\mathbb{Z}\\s+t=i-j}}\frac{(-1)^iq^{2s^2+2t^2+(j-k)^2+k^2+2i-2k}}{(q^4;q^4)_i(q^4;q^4)_j(q^4;q^4)_k}\nonumber\\
    &=\mathrm{CT}_z\big[\sum_{\substack{i,j,k\geq 0\\s,t\in\mathbb{Z}}} \frac{(-1)^iq^{2s^2+2t^2+(j-k)^2+k^2+2i-2k}z^{i-j-s-t}}{(q^4;q^4)_i(q^4;q^4)_j(q^4;q^4)_k} \big]\nonumber\\
    &=\mathrm{CT}_z\Big[ \sum_{\substack{j,k\geq 0\\s\in\mathbb{Z}}}\frac{q^{2s^2+(j-k)^2+k^2-2k}z^{-j-s}}{(q^4;q^4)_j(q^4;q^4)_k}\frac{(-zq^2,-z^{-1}q^2,q^4;q^4)_{\infty}}{(-zq^{2};q^4)_{\infty}} \Big]\nonumber\\
    &=(q^4;q^4)_\infty \mathrm{CT}_z\Big[ \sum_{\substack{j,k\geq 0\\s\in\mathbb{Z}}}\frac{q^{2s^2+(j-k)^2+k^2-2k}z^{-j-s}}{(q^4;q^4)_j(q^4;q^4)_k}\sum_{i\geq 0}\frac{q^{2i^2}z^{-i}}{(q^4;q^4)_i} \Big]\nonumber\\
    &=(q^4;q^4)_\infty\sum_{i,j,k\geq 0}\frac{q^{2i^2+2(i+j)^2+(j-k)^2+k^2-2k}}{(q^4;q^4)_i(q^4;q^4)_j(q^4;q^4)_k}\nonumber\\
    &=(q^4;q^4)_\infty\sum_{i,j,k\geq 0}\frac{q^{4i^2+4ij+3j^2-2jk+2k^2-2k}}{(q^4;q^4)_i(q^4;q^4)_j(q^4;q^4)_k}=J_4T(1,1,q^{-2};q).
\end{align}

(4) We have
\begin{align*}
    &-\sum_{\substack{i,j,k\geq 0\\i\not\equiv j\ (\mathrm{mod}\ 2 )}}\frac{(-1)^iq^{(i-j)^2+(j-k)^2+k^2+2i}}{(q^4;q^4)_{i}(q^4;q^4)_{j}(q^4;q^4)_{k}}=M_{1,0,0}^{(4)}+M_{1,0,1}^{(4)}-M_{0,1,0}^{(4)}-M_{0,1,1}^{(4)} \nonumber  \\
    &=A^{(4)}(q)+C^{(4)}(q)-2B^{(4)}(q), \\
    &-\sum_{\substack{i,j,k\geq 0\\i \equiv j\ (\mathrm{mod}\ 2 )}}\frac{(-1)^iq^{(i-j)^2+(j-k)^2+k^2+2i}}{(q^4;q^4)_{i}(q^4;q^4)_{j}(q^4;q^4)_{k}}=M_{1,1,0}^{(4)}+M_{1,1,1}^{(4)}-M_{0,0,0}^{(4)}-M_{0,0,1}^{(4)} \nonumber \\
    &=2C^{(4)}(q)-B^{(4)}(q)-D^{(4)}(q).
\end{align*}
Substituting them into \eqref{S-ABCD-defn}, we have
\begin{align}
    S^{(4)}(q)=-\sum_{i,j,k\geq 0}\frac{(-1)^iq^{(i-j)^2+(j-k)^2+k^2+2i}}{(q^4;q^4)_i(q^4;q^4)_j(q^4;q^4)_k}\sum_{s\in\mathbb{Z}}q^{(2s-(i-j))^2}. \label{proof-ABCD-4}
\end{align}
Using \eqref{eq-sij} we have
\begin{align*}
    &S^{(4)}(q)=-\sum_{\substack{i,j,k\geq 0\\s,t\in\mathbb{Z}\\s+t=i-j}}\frac{(-1)^iq^{2s^2+2t^2+(j-k)^2+k^2+2i}}{(q^4;q^4)_i(q^4;q^4)_j(q^4;q^4)_k}\nonumber\\
    &=-\mathrm{CT}_z\Big[\sum_{\substack{i,j,k\geq 0\\s,t\in\mathbb{Z}}} \frac{(-1)^iq^{2s^2+2t^2+(j-k)^2+k^2+2i}z^{i-j-s-t}}{(q^4;q^4)_i(q^4;q^4)_j(q^4;q^4)_k} \Big]\nonumber\\
    &=-\mathrm{CT}_z\Big[ \sum_{\substack{j,k\geq 0\\s\in\mathbb{Z}}}\frac{q^{2s^2+(j-k)^2+k^2}z^{-j-s}}{(q^4;q^4)_j(q^4;q^4)_k}\frac{(-zq^2,-z^{-1}q^2,q^4;q^4)_{\infty}}{(-zq^{2};q^4)_{\infty}} \Big]\nonumber\\
    &=-(q^4;q^4)_\infty \mathrm{CT}_z\Big[ \sum_{\substack{j,k\geq 0\\s\in\mathbb{Z}}}\frac{q^{2s^2+(j-k)^2+k^2}z^{-j-s}}{(q^4;q^4)_j(q^4;q^4)_k}\sum_{i\geq 0}\frac{q^{2i^2}z^{-i}}{(q^4;q^4)_i} \Big]\nonumber\\
    &=-(q^4;q^4)_\infty \sum_{i,j,k\geq 0}\frac{q^{2i^2+2(i+j)^2+(j-k)^2+k^2}}{(q^4;q^4)_i(q^4;q^4)_j(q^4;q^4)_k}\nonumber\\
    &=-(q^4;q^4)_\infty \sum_{i,j,k\geq 0}\frac{q^{4i^2+4ij+3j^2-2jk+2k^2}}{(q^4;q^4)_i(q^4;q^4)_j(q^4;q^4)_k}=-J_4T(1,1,1;q). \qedhere
\end{align*}
\end{proof}

We are now ready to prove the first three identitise in Theorem~\ref{thm-ABCD}. The argument relies on Theorem~\ref{thm-ex9-new}, whose proof is deferred to Section~\ref{sec-rank3}.
\begin{proof}[Proof of \eqref{dual-9-eq1}--\eqref{dual-9-eq3}]
By Lemma \ref{dual-ex9-lemma1} and Theorem \ref{thm-ex9-new}, we have
\begin{align}
    &b(A^{(1)}(q)+C^{(1)}(q)-2B^{(1)}(q))+a(2C^{(1)}(q)-B^{(1)}(q)-D^{(1)}(q)) \nonumber\\
    &=\frac{qJ_4^2J_{4,72}J_{28,72}J_{32,72}J_{36,72}}{J_2J_8J_{14,72}J_{18,72}J_{22,72}}+\frac{2q^6J_8J_{8,72}}{J_4}  \nonumber \\
    &=R_{1}^{(1)}(q)+R_3^{(1)}(q)-2R_2^{(1)}(q), \label{dual-9-ex1-eq0}\\
    &b(A^{(2)}(q)+C^{(2)}(q)-2B^{(2)}(q))+a (2C^{(2)}(q)-B^{(2)}(q)-D^{(2)}(q))\nonumber\\
    &=-\frac{J_4^2J_{24}J_{12,24}}{J_2J_8J_{6,24}}-\frac{2qJ_8J_{24}}{J_4}  \nonumber \\
    &=R_1^{(2)}(q)+R_3^{(2)}(q)-2R_2^{(2)}(q),\label{dual-9-ex2-eq0}\\
    &b(A^{(3)}(q)+C^{(3)}(q)-2B^{(3)}(q))+a(2C^{(3)}(q)-B^{(3)}(q)-D^{(3)}(q)) \nonumber\\
    &=\frac{qJ_4^2J_{4,72}J_{16,72}J_{20,72}J_{36,72}}{J_2J_8J_{2,72}J_{18,72}J_{34,72}}+\frac{2J_8J_{32,72}}{J_4} \nonumber \\
     &=R_1^{(3)}(q)+R_3^{(3)}(q)-2R_2^{(3)}(q),\label{dual-9-ex3-eq0}\\
    &b(A^{(4)}(q)+C^{(4)}(q)-2B^{(4)}(q))+a(2C^{(4)}(q)-B^{(4)}(q)-D^{(4)}(q))\nonumber\\
    &=-\frac{J_4^2J_{8,72}J_{20,72}J_{28,72}J_{36,72}}{J_2J_8J_{10,72}J_{18,72}J_{26,72}}-\frac{2q^3J_8J_{16,72}}{J_4} \nonumber \\ 
     &=R_1^{(4)}(q)+R_3^{(4)}(q)-2R_2^{(4)}(q). \label{dual-9-ex4-eq0}
\end{align}
Note that the series $a(q)\in \mathbb{Z}[[q^4]]$ and $b(q)\in q\mathbb{Z}[[q^4]]$ and we have the classification of $R_i^{(h)}(q)$, $A^{(h)}(q)$, $B^{(h)}(q)$, $C^{(h)}(q)$ and $D^{(h)}(q)$ in Table \ref{tab-ABCD}.
Extracting the terms of the form $q^{4n+r}$ ($r=0,1,2,3$) in both sides of  \eqref{dual-9-ex1-eq0}--\eqref{dual-9-ex4-eq0}, we obtain \eqref{dual-9-eq1}--\eqref{dual-9-eq3} for $h=1,2,3,4$, respectively.
\end{proof}

We have attempted to prove \eqref{dual-9-eq4} by adapting the above arguments, but have so far been unsuccessful. To illustrate the difficulty, we briefly consider the case $h=1$. A natural way of obtaining the combination $b\cdot D^{(1)}(q)-a\cdot A^{(1)}(q)$ is 
    \begin{align}\label{Zq-1}
    &\mathcal{Z}(q):=\sum_{i,j,k\geq 0}\frac{(-1)^{j+k}q^{(i-j)^2+(j-k)^2+k^2}}{(q^4;q^4)_{i}(q^4;q^4)_{j}(q^4;q^4)_{k}}\sum_{s\in\mathbb{Z}}q^{(2s-k)^2}\nonumber\\
    &=a\big(A^{(1)}(q)-C^{(1)}(q)\big)+b\big(B^{(1)}(q)-D^{(1)}(q)\big)\nonumber\\
    &=aA^{(1)}(q)-bD^{(1)}(q)+R_2^{(1)}.
  \end{align}
Therefore, an infinite-product evaluation of $\mathcal{Z}(q)$ would immediately imply \eqref{dual-9-eq4}. Note that Corollary \ref{cor-ABCD-difference} give evaluations for $A^{(h)}(q)-C^{(h)}(q)$ and $B^{(h)}(q)-D^{(h)}(q)$. Substituting them into \eqref{Zq-1} will provide product representations for $\mathcal{Z}(q)$. However,  we are unable to evaluate $\mathcal{Z}(q)$ directly without relying on the first approach.

Based on Theorem \ref{thm-ABCD}, we are able to express the Nahm sums dual to Zagier's Example 9 in the following way.
\begin{theorem}\label{thm-dual-9}
Let $M^{(h)}(q)$ be defined in \eqref{M-defn}. We have for $h=1,2,3,4$ that
 \begin{align}\label{eq-thm-exam9}
     &M^{(h)}(q)=-\frac{J_2^5J_8^4}{J_1^2J_4^{10}}\Big((3a^2+b^2)R_{1}^{(h)}+(3a^2-2ab+3b^2)R_{2}^{(h)}\nonumber\\
     &\quad +(a^2+3b^2)R_{3}^{(h)}+(a+b)^2R_{4}^{(h)}\Big)
 \end{align}
where $R_{i}^{(h)}$ ($1\leq i,h\leq 4$) are defined in \eqref{R11}--\eqref{R44}.
\end{theorem}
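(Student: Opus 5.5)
Theorem \ref{thm-ABCD} gives a $4\times 4$ linear system for $(A^{(h)},B^{(h)},C^{(h)},D^{(h)})$ with the cyclic coefficient matrix
\begin{align*}
\mathcal{M}=\begin{pmatrix} b & -a & 0 & 0\\ 0 & b & -a & 0\\ 0&0&b&-a\\ -a&0&0&b\end{pmatrix}.
\end{align*}
My plan is to solve this system and substitute the solution into \eqref{M-ABCD}, which reads $M^{(h)}=A^{(h)}+3B^{(h)}+3C^{(h)}+D^{(h)}$. The result is a linear combination of $R_1^{(h)},\dots,R_4^{(h)}$ with coefficients that are rational functions of $a$ and $b$. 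The only work beyond linear algebra is showing that the denominator is the stated product.

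First I will compute the determinant. Since $\mathcal{M}=bI-aP$ with $P$ a cyclic shift satisfying $P^4=I$, we get $\det\mathcal{M}=b^4-a^4$. Moreover $(bI-aP)^{-1}=(b^4-a^4)^{-1}(b^3I+ab^2P+a^2bP^2+a^3P^3)$, so the inverse is immediate. Next I take the row vector $(1,3,3,1)$ and multiply it by this adjugate. The coefficient of $R_i^{(h)}$ is then a cubic form in $a,b$, obtained by cyclically pairing the entries $(1,3,3,1)$ with $(b^3,ab^2,a^2b,a^3)$.

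I then factor out $(a+b)$. The vector $(1,3,3,1)$ is the coefficient vector of $(1+x)^3$, and the relevant sums are evaluated at $x$ related to $a/b$, so each cubic should factor as $(a+b)$ times a quadratic. Checking one case confirms this: the coefficient of $R_4$ is $b^3+3ab^2+3a^2b+a^3=(a+b)^3=(a+b)\cdot(a+b)^2$. The coefficient of $R_1$ should similarly come out as $(a+b)(3a^2+b^2)$, up to sign conventions from which unknown the $R_i$ multiplies. I will verify all four quadratics against \eqref{eq-thm-exam9}, tracking signs carefully. In particular $\det\mathcal{M}=-(a^4-b^4)$, which accounts for the leading minus sign.

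After cancelling the factor $(a+b)$, the prefactor becomes $-(a+b)/(a^4-b^4)$. By \eqref{eq-ab-sum}, $a+b=J_2^5/(J_1^2J_4^2)$ and $a^4-b^4=J_4^8/J_8^4$, so the prefactor equals $-J_2^5J_8^4/(J_1^2J_4^{10})$, exactly as stated.

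The main obstacle is that this derivation uses \eqref{dual-9-eq4}, which the paper proves only through the first approach, via Corollary \ref{cor-ABCD-difference} and a Frye--Garvan verification. I will accept Theorem \ref{thm-ABCD} as established, so the proof is purely algebraic. Beyond that, the only risk is bookkeeping errors in the cyclic adjugate, so I will double-check each coefficient.
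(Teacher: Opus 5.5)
Your proposal is correct and is essentially the paper's proof: solve the cyclic system of Theorem~\ref{thm-ABCD} with determinant $b^4-a^4$, substitute into \eqref{M-ABCD}, cancel the common factor $a+b$ from the four cubic coefficients, and evaluate the prefactor with \eqref{eq-ab-sum}. Like the paper, you rely on \eqref{dual-9-eq4} as established through the first approach, and the divisibility by $a+b$ you anticipate does hold, since $(1,3,3,1)\cdot(1,-1,1,-1)=0$ and $(1,-1,1,-1)$ spans the null space of $\mathcal{M}$ at $a=-b$.
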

\begin{proof}
By \eqref{dual-9-eq1}, \eqref{dual-9-eq2}, \eqref{dual-9-eq3} and \eqref{dual-9-eq4}, we have
\begin{align*}
    A^{(h)}(q)&=\frac{1}{b^4-a^4}\big(b^3R_{1}^{(h)}+ab^2R_{2}^{(h)}+a^2bR_{3}^{(h)}+a^3R_{4}^{(h)}\big),\\
    B^{(h)}(q)&=\frac{1}{b^4-a^4}\big(a^3R_{1}^{(h)}+b^3R_{2}^{(h)}+ab^2R_{3}^{(h)}+a^2bR_{4}^{(h)}\big),\\
    C^{(h)}(q)&=\frac{1}{b^4-a^4}\big(a^2bR_{1}^{(h)}+a^3R_{2}^{(h)}+b^3R_{3}^{(h)}+ab^2R_{4}^{(h)}\big),\\
    D^{(h)}(q)&=\frac{1}{b^4-a^4}\big(ab^2R_{1}^{(h)}+a^2bR_{2}^{(h)}+a^3R_{3}^{(h)}+b^3R_{4}^{(h)}\big).
\end{align*}
Substituting them into \eqref{M-ABCD}, we deduce that
\begin{align}\label{proof-eq-thm-exam9}
     &M^{(h)}(q)=\frac{1}{b^4-a^4}\Big((3a^3+3a^2b+ab^2+b^3)R_{1}^{(h)}+(3a^3+a^2b+ab^2+3b^3)R_{2}^{(h)}\nonumber\\
     &\quad +(a^3+a^2b+3ab^2+3b^3)R_{3}^{(h)}+(a^3+3a^2b+3ab^2+b^3)R_{4}^{(h)}\Big).
 \end{align}
The common factor $a+b$ can be  canceled  from the numerator and denominator. 
Substituting \eqref{eq-ab-sum} into \eqref{proof-eq-thm-exam9}, we obtain \eqref{eq-thm-exam9}.
\end{proof}
Using the Maple approach in \cite{Frye-Garvan}, it is easy to show that Theorems \ref{thm-dual-exam9-1st} and  \ref{thm-dual-9} are equivalent. While both of them are sufficient to justify the modularity of the dual of Example 9, the representations in \eqref{eq-thm-Mh} and \eqref{eq-thm-exam9} are a bit complicated compared with other examples.  It is meaningful if one can find simpler formulas. Using Maple we find the following elegant formula for $M^{(2)}(q)$:
\begin{align}\label{M2-simple}
M^{(2)}(q)=3\frac{J_2J_3J_6^2}{J_1J_4^3}.
\end{align}
This can be proved from \eqref{eq-thm-Mh} (or equivalently \eqref{eq-thm-exam9}) using the approach in \cite{Frye-Garvan}. It remains interesting to find the minimal number of infinite products needed to express $M^{(1)}(q)$, $M^{(3)}(q)$ and $M^{(4)}(q)$.

\subsection{Dual of Example 10}
We list the modular triples for this example and their duals in Table \ref{tab-exam10}. Due to the symmetry of $n_2$ and $n_3$ in the quadratic form $n^{\mathrm{T}}An$, there are essentially only five different Nahm sums.
{\small
\begin{table}[htbp]
\centering
\caption{Modular triples for Example 10 and their duals}\label{tab-exam10}
{\tiny
\begin{tabular}{c|cccccccc}
  \hline
    \padedvphantom{I}{4ex}{4ex}
  $A$ &  \multicolumn{8}{c}{$\begin{pmatrix} 4 & 2 & 2 \\ 2 & 2 & 1 \\ 2 & 1 & 2 \end{pmatrix}$}  \\
  \hline
\padedvphantom{I}{4ex}{4ex}
$B$ & $\begin{pmatrix} 0 \\ -1/2 \\ 0 \end{pmatrix}$ & $\begin{pmatrix} 0 \\ 0 \\ -1/2 \end{pmatrix}$
& $\begin{pmatrix}  0 \\ 0 \\ 0 \end{pmatrix}$ & $\begin{pmatrix} 1 \\ 0 \\ 1/2 \end{pmatrix}$ & $\begin{pmatrix} 1 \\ 0 \\ 1 \end{pmatrix}$  & $\begin{pmatrix} 1 \\ 1/2 \\ 0 \end{pmatrix}$ & $\begin{pmatrix} 1 \\ 1 \\ 0 \end{pmatrix}$ & $\begin{pmatrix} 2 \\ 1 \\ 1 \end{pmatrix}$ \\
  ~~ $C$ & $-1/120$ & $-1/120$ & $-1/30$ & $11/120$ & $1/6$ & $11/120$ & $1/6$ & $11/30$ \\
  \hline
\padedvphantom{I}{4ex}{4ex}
  $\mathcal{D}(A)$ &  \multicolumn{8}{c}{$\begin{pmatrix} 3/4 & -1/2 & -1/2  \\ -1/2 & 1 & 0 \\ -1/2 & 0 & 1 \end{pmatrix}$}  \\
  \hline
\padedvphantom{I}{4ex}{4ex}
$\mathcal{D}(B)$ & $\begin{pmatrix}1/4 \\ -1/2 \\ 0 \end{pmatrix}$ & $\begin{pmatrix} 1/4 \\ 0 \\ -1/2 \end{pmatrix}$ & $\begin{pmatrix} 0 \\ 0 \\ 0 \end{pmatrix}$ & $\begin{pmatrix} 1/2 \\ -1/2 \\ 0 \end{pmatrix}$ & $\begin{pmatrix} 1/4 \\ -1/2 \\ 1/2 \end{pmatrix}$ & $\begin{pmatrix} 1/2 \\ 0 \\ -1/2 \end{pmatrix}$
 & $\begin{pmatrix} 1/4 \\ 1/2 \\ -1/2 \end{pmatrix}$ & $\begin{pmatrix} 1/2 \\ 0 \\ 0 \end{pmatrix}$ \\
  ~~ $\mathcal{D}(C)$ & $1/120$ & $1/120$ & $-11/120$ & $1/30$ & $1/12$ & $1/30$ & $1/12$ & $1/120$ \\
  \hline
\end{tabular}
}
\end{table}
}

The modularity of Example 10 has been proved by Wang \cite[Theorem 4.12]{Wang-rank3}. 
As for its dual, we establish the following identities to prove its modularity.
\begin{theorem}\label{thm-dual-10}
We have
\begin{align}
&\sum_{i,j,k\geq 0} \frac{q^{3i^2+4j^2+4k^2-4ij-4ik+2i-4j}}{(q^8;q^8)_i(q^8;q^8)_j(q^8;q^8)_k}
=2\frac{J_{2,8}J_{10}^2J_{2,20}}{J_2J_{20}J_{1,10}J_{4,20}}, \label{id-dual-10-1} \\
&\sum_{i,j,k\geq 0} \frac{q^{3i^2+4j^2+4k^2-4ij-4ik}}{(q^8;q^8)_i(q^8;q^8)_j(q^8;q^8)_k}  \nonumber \\ &=\frac{J_{80}^{11}J_{8,80}J_{24,80}^2J_{40,80}^2}{J_{4,80}^3J_{12,80}^2J_{20,80}^4J_{28,80}^2J_{32,80}^2J_{36,80}^3} +4q^3\frac{J_{80}^{11}}{J_{8,80}^4J_{16,80}J_{24,80}^3J_{32,80}J_{40,80}^2},  \label{id-dual-10-2}\\
&\sum_{i,j,k\geq 0} \frac{q^{3i^2+4j^2+4k^2-4ij-4ik+4i-4j}}{(q^8;q^8)_i(q^8;q^8)_j(q^8;q^8)_k}
=2\frac{J_{40}^{11/2}J_{6,40}J_{10,40}J_{14,40}}{J_{3,40}J_{4,40}J_{7,40}J_{8,40}J_{12,40}^2J_{13,40}J_{17,40}J_{20,40}^{1/2}},  \label{id-dual-10-3} \\
&\sum_{i,j,k\geq 0} \frac{q^{3i^2+4j^2+4k^2-4ij-4ik+2i-4j+4k}}{(q^8;q^8)_i(q^8;q^8)_j(q^8;q^8)_k} \nonumber \\
&=2\frac{J_{80}^{11}}{J_{8,80}^4J_{16,80}J_{24,80}^4J_{32,80}J_{40,80}}  +q\frac{J_{80}^{11}J_{8,80}^2J_{24,80}^2J_{40,80}}{J_{4,80}^3J_{12,80}^3J_{16,80}J_{20,80}^2J_{28,80}^3J_{32,80}J_{36,80}^3},  \label{id-dual-10-4} \\
&\sum_{i,j,k\geq 0} \frac{q^{3i^2+4j^2+4k^2-4ij-4ik+4i}}{(q^8;q^8)_i(q^8;q^8)_j(q^8;q^8)_k} \nonumber \\
&=\frac{J_{8,80}^2J_{80}^{11}J_{24,80}J_{40,80}^2}{J_{4,80}^2J_{12,80}^3J_{16,80}^2J_{20,80}^4J_{28,80}^3J_{36,80}^2}  +4q^7\frac{J_{80}^{11}}{J_{8,80}^3J_{16,80}J_{24,80}^4J_{32,80}J_{40,80}^2}. \label{id-dual-10-5}
\end{align}
\end{theorem}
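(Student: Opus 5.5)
The plan is to mimic the treatment of the duals of Examples 7 and 8. Define
\begin{align*}
F(u,v,w;q):=\sum_{i,j,k\geq 0}\frac{q^{3i^2+4j^2+4k^2-4ij-4ik}u^iv^jw^k}{(q^8;q^8)_i(q^8;q^8)_j(q^8;q^8)_k}.
\end{align*}
The quadratic form $3i^2+4j^2+4k^2-4ij-4ik=-i^2+(i-2j)^2+(i-2k)^2+\dots$ is linear in $j$ and $k$ separately after fixing $i$. So I will first sum over $j$ and $k$ with Euler's identity \eqref{Euler}:
\begin{align*}
\sum_{j\ge0}\frac{q^{4j^2-4ij}v^j}{(q^8;q^8)_j}=(-q^{4-4i}v;q^8)_\infty,
\end{align*}
and similarly for $k$. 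This gives
\begin{align*}
F(u,v,w;q)=\sum_{i\ge0}\frac{q^{3i^2}u^i}{(q^8;q^8)_i}(-q^{4-4i}v,-q^{4-4i}w;q^8)_\infty.
\end{align*}
Writing $(-q^{4-4i}v;q^8)_\infty=(-q^{4-4i}v;q^8)_{\lceil\cdot\rceil}(\text{tail})$ requires splitting $i$ by parity, as in the $S_0,S_1$ decomposition of Theorem \ref{thm-dual-7}. For $i=2n$ the factors become $(-q^{4}v;q^8)_\infty(-q^{4}/v;q^8)_n q^{\ldots}v^n$. For $i=2n+1$ they become $(-v;q^8)_\infty(-q^8/v;q^8)_n q^{\ldots}v^{n+1}$, and likewise for $w$. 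In each of the five cases, with $(u,v,w)$ read off from the exponents, one of the two parity parts will usually vanish or collapse (e.g.\ when $v=q^{-4}$ the factor $(-q^4 v;q^8)_\infty$ contains $1+1=2$), leaving single sums of the shape
\begin{align*}
\sum_{n\ge0}\frac{q^{4n^2+\ell n}(-q^{a};q^8)_n(-q^{b};q^8)_n}{(q^8;q^8)_{2n}}
\end{align*}
after $q\to q^{1/4}$, i.e.\ $\sum q^{n^2}(-q^{\cdot};q^2)_n(-q^{\cdot};q^2)_n/(q^2;q^2)_{2n}$-type sums.

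Next I will identify these single sums with the preliminary list. For \eqref{id-dual-10-2} and \eqref{id-dual-10-4}, \eqref{id-dual-10-5}, the moduli $80$ (equivalently $20$ after $q^{1/4}$) point to \eqref{dual-10-ex2-h0} (S.\ 21) and its companions. For the even part I expect exactly $\sum q^{n^2}(-q;q^2)_n^2/(q^2;q^2)_{2n}$. For the odd parts I expect analogous sums with $(q^2;q^2)_{2n+1}$, to be evaluated via Bailey pairs (Slater's list, or the pairs \eqref{new23-bp1}--\eqref{new23-bp2} fed into \eqref{bailey-s6}), or by Andrews' $q$-Bailey/Gauss sums \eqref{Andrews-Gauss}--\eqref{Andrews-Bailey}. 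For \eqref{id-dual-10-1} and \eqref{id-dual-10-3}, which involve moduli $10,20,40$, I expect the sums reducible to \eqref{BMS2.17}, \eqref{MSZ2.7}, \eqref{Rogers3303}, \eqref{S43}, \eqref{S45}. If a sum is not directly in the list, I will apply the $_3\phi_2$ transformation \eqref{GR-III.9} as in \eqref{T0-result} to reduce it to a known one.

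Finally, I will combine the even and odd parity contributions. I will convert prefactors such as $(-q^2;q^2)_\infty$ into $J$-quotients and verify the resulting identity between sums of theta quotients by the Frye--Garvan procedure \cite{Frye-Garvan}. In the single-product cases \eqref{id-dual-10-1} and \eqref{id-dual-10-3}, the two parity pieces should combine into one product, which that procedure also certifies.

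The main obstacle is the odd-parity single sums in \eqref{id-dual-10-2}, \eqref{id-dual-10-4} and \eqref{id-dual-10-5}. These are mod-$20$ sums with $(q^2;q^2)_{2n+1}$ denominators and two quadratic Pochhammer factors in the numerator, which are not obviously in Slater's list. My first attempt will be to find the matching Bailey pair relative to $q$ (a mod-$5$ Rogers-type $\alpha_n$) and insert it into \eqref{bailey-s2} or \eqref{bailey-s6}. Failing that, I will use the parameter shift \eqref{Bailey-atoaq} from the pair underlying S.\ 21.
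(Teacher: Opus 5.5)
Your plan is the paper's proof. You sum over $j,k$ by \eqref{Euler}, split $i$ by parity to get $F(u,v,w;q)=(-q^4v,-q^4w;q^8)_\infty S_0+(-v,-w;q^8)_\infty S_1$, evaluate the resulting single sums, and finish with the Frye--Garvan method. Note that the odd part carries $v^nw^n$, not $v^{n+1}$, and that neither part vanishes in any of the five cases.

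The obstacle you foresee does not arise, but your matching of sums to identities is partly reversed:
\begin{itemize}
\item In \eqref{id-dual-10-1} and \eqref{id-dual-10-3} the Pochhammer factors collapse via $(-q^4,-q^8;q^8)_n/(q^8;q^8)_{2n}=1/(q^4;q^4)_{2n}$ and $(-q^8,-q^{12};q^8)_n/(q^8;q^8)_{2n+1}=1/((1+q^4)(q^4;q^4)_{2n+1})$. This gives \eqref{S79}, \eqref{S94}, \eqref{S99}, \eqref{S96} at $q^4$.
\item The pairs of sums for \eqref{id-dual-10-2}, \eqref{id-dual-10-4}, \eqref{id-dual-10-5} are, respectively, \eqref{dual-10-ex2-h0} with \eqref{S45}, \eqref{Rogers3303} with \eqref{MSZ2.7}, and \eqref{BMS2.17} with \eqref{S43}, after $q\mapsto q^4$, $q^8$ or $-q^4$.
\end{itemize}
So every sum is already in the preliminary list, and no new Bailey pair is needed.
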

\begin{proof}
We define
\begin{align}\label{dual-10-h}
    &F(u,v,w;q):=\sum_{i,j,k\geq 0}\frac{q^{3i^2+4j^2+4k^2-4ij-4ik}u^iv^jw^k}{(q^8;q^8)_i(q^8;q^8)_j(q^8;q^8)_k}\nonumber\\
    &=\sum_{i\geq 0}\frac{q^{3i^2}u^i(-q^{4-4i}v;q^8)_{\infty}(-q^{4-4i}w;q^8)_{\infty}}{(q^8;q^8)_i}\nonumber\\
    &=(-q^4v,-q^4w;q^8)_{\infty}S_0(u,v,w;q)+(-v,-w;q^8)_{\infty}S_1(u,v,w;q),
\end{align}
where
\begin{align}
    S_0(u,v,w;q)&:=\sum_{i\geq 0}\frac{q^{4i^2}u^{2i}v^iw^i(-q^4v^{-1},-q^4w^{-1};q^8)_i}{(q^8;q^8)_{2i}},\\
    S_1(u,v,w;q)&:=\sum_{i\geq 0}\frac{q^{4i^2+4i+3}u^{2i+1}v^iw^i(-q^8v^{-1},-q^8w^{-1};q^8)_i}{(q^8;q^8)_{2i+1}}.
\end{align}

(1) Let $(u,v,w)=(q^2,q^{-4},1)$. By \eqref{S79} we have
\begin{align}\label{dual-10-ex1-h0}
    &S_0(q^2,q^{-4},1;q)=\sum_{i\geq 0}\frac{q^{4i^2}(-q^4,-q^8;q^8)_{i}}{(q^8;q^8)_{2i}}=\sum_{i\geq 0}\frac{q^{4i^2}}{(q^4;q^4)_{2i}}=\frac{J_8J_{80}}{J_4J_{16,80}}.
\end{align}
By \eqref{S94} we have
\begin{align}\label{dual-10-ex1-h1}
    &S_1(q^2,q^{-4},1;q)=\sum_{i\geq 0}\frac{q^{4i^2+4i+5}(-q^8,-q^{12};q^8)_i}{(q^8;q^8)_{2i+1}}=\frac{q^5}{1+q^4}\sum_{i\geq 0}\frac{q^{4i^2+4i}}{(q^4;q^4)_{2i+1}}\nonumber\\
    &=\frac{q^5}{1+q^4}\frac{J_{12,40}J_{16,80}}{J_4J_{80}}.
\end{align}
Substituting \eqref{dual-10-ex1-h0} and \eqref{dual-10-ex1-h1} into \eqref{dual-10-h}, we obtain \eqref{id-dual-10-1}.

(2) Let $(u,v,w)=(1,1,1)$. By \eqref{dual-10-ex2-h0} and \eqref{S45} we have
\begin{align}
    S_0(1,1,1;q)&=\sum_{i\geq 0}\frac{q^{4i^2}(-q^4;q^8)_i^2}{(q^8;q^8)_{2i}}=\frac{J_8J_{40}^2J_{12,80}J_{16,80}J_{28,80}}{J_4J_{16}J_{80}J_{8,80}J_{20,80}J_{32,80}}, \label{proof-exam10-2-H0} \\
    S_1(1,1,1;q)&=\sum_{i\geq 0}\frac{q^{4i^2+4i+3}(-q^8;q^8)_i^2}{(q^8;q^8)_{2i+1}}=q^3\frac{J_{16}J_{24,80}}{J_8^2}. \label{proof-exam10-2-H1}
\end{align}
Substituting \eqref{proof-exam10-2-H0} and \eqref{proof-exam10-2-H1} into \eqref{dual-10-h}, we obtain \eqref{id-dual-10-2}.

(3) Let $(u,v,w)=(q^4,q^{-4},1)$. By \eqref{S99} and \eqref{S96}  we have
\begin{align}
    S_0(q^4,q^{-4},1;q)&=\sum_{i\geq 0}\frac{q^{4i^2+4i}(-q^4,-q^8;q^8)_i}{(q^8;q^8)_{2i}}=\sum_{i\geq 0}\frac{q^{4i^2+4i}}{(q^4;q^4)_{2i}} \nonumber \\
    &=\frac{J_{4,80}J_{32,80}J_{36,80}J_{40}}{J_4J_{80}^3}, \label{dual-10-ex3-h0} \\
    S_1(q^4,q^{-4},1;q)&=\sum_{i\geq 0}\frac{q^{4i^2+8i+7}(-q^8,-q^{12};q^8)_i}{(q^8;q^8)_{2i+1}}=\frac{q^7}{1+q^4}\sum_{i\geq 0}\frac{q^{4i^2+8i}}{(q^4;q^4)_{2i+1}}\nonumber\\
    &=\frac{q^7J_8J_{80}}{(1+q^4)J_4J_{32,80}}. \label{dual-10-ex3-h1}
\end{align}

Substituting \eqref{dual-10-ex3-h0} and \eqref{dual-10-ex3-h1} into \eqref{dual-10-h}, we obtain \eqref{id-dual-10-3}.

(4) Let $(u,v,w)=(q^2,q^{-4},q^{4})$. By \eqref{Rogers3303} and \eqref{MSZ2.7} we have
\begin{align}
    S_0(q^2,q^{-4},q^4;q)&=\sum_{i\geq 0}\frac{q^{4i^2+4i}(-1,-q^8;q^8)_i}{(q^8;q^8)_{2i}}=\frac{J_{40}J_{80}^2}{J_8J_{8,80}J_{24,80}}, \label{dual-10-ex4-h0} \\
    S_1(q^2,q^{-4},q^4;q)&=\sum_{i\geq 0}\frac{q^{4i^2+8i+5}(-q^4,-q^{12};q^8)_i}{(q^8;q^8)_{2i+1}} \nonumber \\
     &=\frac{q^5J_{80}^6}{(1+q^4)J_{4,80}J_{12,80}J_{16,80}J_{28,80}J_{32,80}J_{36,80}}.\label{dual-10-ex4-h1}
\end{align}
Substituting \eqref{dual-10-ex4-h0} and \eqref{dual-10-ex4-h1} into \eqref{dual-10-h}, we  obtain \eqref{id-dual-10-4}.

(5) Let $(u,v,w)=(q^4,1,1)$. By \eqref{BMS2.17} and \eqref{S43} we have
\begin{align}
    S_0(q^4,1,1;q)&=\sum_{i\geq 0}\frac{q^{4i^2+8i}(-q^4;q^8)_i^2}{(q^8;q^8)_{2i}}=\frac{J_{40}^2J_{80}^5}{J_{12,80}J_{16,80}^2J_{20,80}^2J_{24,80}J_{28,80}},\label{dual-10-ex5-h0}\\
    S_1(q^4,1,1;q)&=\sum_{i\geq 0}\frac{q^{4i^2+12i+7}(-q^8;q^8)_i^2}{(q^8;q^8)_{2i+1}}=\frac{q^7J_{80}^3}{J_8J_{40}J_{24,80}}.\label{dual-10-ex5-h1}
\end{align}
Substituting \eqref{dual-10-ex5-h0} and \eqref{dual-10-ex5-h1} into \eqref{dual-10-h}, we obtain \eqref{id-dual-10-5}.
\end{proof}

\subsection{Dual of Example 11}
We list Zagier's Example 11 and its dual in Table \ref{tab-exam11}. 
{\small
\begin{table}[htbp]
\centering
\caption{Modular triples for Example 11 and their duals}\label{tab-exam11}
\begin{tabular}{c|ccccc}
  \hline
    \padedvphantom{I}{4ex}{4ex}
  $A$ &  \multicolumn{5}{c}{$\begin{pmatrix} 4 & 2 & -1 \\ 2 & 2 & -1 \\ -1 & -1 & 1 \end{pmatrix}$}  \\
  \hline
\padedvphantom{I}{4ex}{4ex}
$B$ & $\begin{pmatrix} 0 \\ 0 \\ 0 \end{pmatrix}$ & $\begin{pmatrix} 0 \\ 0 \\ 1/2 \end{pmatrix}$ &
$\begin{pmatrix} 1 \\ 0 \\ 0 \end{pmatrix}$ & $\begin{pmatrix} 2 \\ 1 \\ -1/2 \end{pmatrix}$ &
$\begin{pmatrix} 2 \\ 1 \\ 0 \end{pmatrix}$ \\
  ~~ $C$ & $-1/16$ & $1/24$ & $5/48$ & $3/8$ & $7/16$ \\
  \hline
\padedvphantom{I}{4ex}{4ex}
  $\mathcal{D}(A)$ &  \multicolumn{5}{c}{$\begin{pmatrix} 1/2 & -1/2 & 0 \\ -1/2 & 3/2 & 1 \\ 0 & 1 & 2 \end{pmatrix}$}  \\
  \hline
\padedvphantom{I}{4ex}{4ex}
$\mathcal{D}(B)$ & $\begin{pmatrix} 0 \\ 0 \\ 0 \end{pmatrix}$ &
$\begin{pmatrix} 0 \\ 1/2 \\ 1 \end{pmatrix}$ &
$\begin{pmatrix} 1/2 \\ -1/2 \\ 0 \end{pmatrix}$ &
$\begin{pmatrix} 1/2 \\ 0 \\ 0 \end{pmatrix}$ & 
$\begin{pmatrix} 1/2 \\ 1/2 \\ 1 \end{pmatrix}$ \\
  ~~ $\mathcal{D}(C)$ & $-1/16$ & $1/12$ & $1/48$ & $0$ & $3/16$ \\
  \hline
\end{tabular}
\end{table}
}

The modularity of Zagier's Example 11 has been confirmed by Wang \cite[Theorem 4.14]{Wang-rank3}.
As for its dual, the authors \cite[Theorem 1.3]{Shi-Wang} discovered the identities \eqref{dual-1}--\eqref{dual-2} to prove its modularity. Note that \eqref{dual-2} was proposed as a conjecture \cite[Conjecture 1.4]{Shi-Wang}. Here we prove it. 
\begin{proof}[Proof of Theorem \ref{thm-dual-2}]
Firstly, we prove that
\begin{align}\label{dual-ex11-eq1}
    F_1(q):=\sum_{m,n,r\geq 0}\frac{q^{4m^2+3n^2+r^2+4mn-2nr+4m+2n}}{(q^4;q^4)_m(q^4;q^4)_n(q^4;q^4)_r}\sum_{s\in\mathbb{Z}}q^{(2s+r+1-n)^2}=3q\frac{J_8J_{12}^3}{J_4^3}.
\end{align}

Using the rank reduction formula of rank 4 \cite[Theorem 1.2]{Shi-Wang}, we have
\begin{align}\label{proof-exam11-rank-4}
    &\sum_{i,j,k,l\ge 0}\frac{q^{\frac{i^2+(i-j)^2+(j-k)^2+(k-l)^2}{2}+i-j+k-\frac{l}{2}}}{(q;q)_i(q;q)_j(q;q)_k(q;q)_l}\nonumber\\&=\frac{(-1;q)_{\infty}}{(q;q)_{\infty}}\sum_{m,n,r\geq 0}\frac{q^{m^2+\frac{3}{4}n^2+\frac{1}{4}r^2+mn-\frac{1}{2}nr+m+\frac{1}{2}r}}{(q;q)_m(q;q)_n(q;q)_r}\sum_{s\in\mathbb{Z}}q^{(s-\frac{n-r}{2})^2+s}\nonumber\\
    &=\frac{q^{-\frac{1}{4}}(-1;q)_{\infty}}{(q;q)_{\infty}}\sum_{m,n,r\geq 0}\frac{q^{m^2+\frac{3}{4}n^2+\frac{1}{4}r^2+mn-\frac{1}{2}nr+m+\frac{1}{2}n}}{(q;q)_m(q;q)_n(q;q)_r}\sum_{s\in\mathbb{Z}}q^{(s+\frac{r+1-n}{2})^2} \nonumber \\
    &=\frac{q^{-\frac{1}{4}}(-1;q)_{\infty}}{(q;q)_{\infty}} F_1(q^{\frac{1}{4}}).
\end{align}
Suppose that $z_1z_2=1$. Recall the rank reduction formula of rank 3 \cite[Theorem 1.2]{Shi-Wang}: 
\begin{align}\label{rank-reducing-3}
    \sum_{i,j,k\geq 0}\frac{q^{\frac{i^2+(i-j)^2+(j-k)^2}{2}}z_1^iz_2^jz_3^k}{(q;q)_i(q;q)_j(q;q)_k}=\frac{(-q^{\frac{1}{2}}z_3;q)_{\infty}}{(q;q)_{\infty}}\sum_{n\geq 0}\frac{q^{\frac{n^2}{4}}(z_2z_3)^n}{(q;q)_n}\sum_{s\in\mathbb{Z}}q^{(s+\frac{n}{2})^2}z_1^{-s}.
\end{align}
We have
\begin{align}
    &\chi(z_1,z_2,z_3,z_4;q):=\sum_{i,j,k,l\ge 0}\frac{q^{\frac{i^2+(i-j)^2+(j-k)^2+(k-l)^2}{2}}z_1^iz_2^jz_3^kz_4^l}{(q;q)_i(q;q)_j(q;q)_k(q;q)_l}\nonumber\\
    &=\mathrm{CT}_y\Big[\sum_{i,j,k\geq 0}\frac{q^{\frac{i^2+(i-j)^2+(j-k)^2}{2}}z_1^iz_2^jz_3^ky^k}{(q;q)_i(q;q)_j(q;q)_k} \cdot\sum_{t\in\mathbb{Z}}q^{\frac{t^2}{2}}y^{-t}\sum_{l\geq 0}\frac{z_4^ly^{-l}}{(q;q)_l}\Big]\nonumber\\
    &=\mathrm{CT}_y\Big[\frac{(-q^{\frac{1}{2}}z_3y;q)_{\infty}}{(q;q)_{\infty}}\sum_{n\geq 0}\frac{q^{\frac{n^2}{4}}(z_2z_3y)^n}{(q;q)_n}\sum_{s\in\mathbb{Z}}q^{(s+\frac{n}{2})^2}z_1^{-s}\sum_{t\in\mathbb{Z}}q^{\frac{t^2}{2}}y^{-t}\sum_{l\geq 0}\frac{z_4^ly^{-l}}{(q;q)_l}\Big]\nonumber\\
    &=\frac{1}{(q;q)_{\infty}}\mathrm{CT}_y\Big[\sum_{m\geq 0}\frac{q^{\frac{m^2}{2}}z_3^my^m}{(q;q)_m}\sum_{n\geq 0}\frac{q^{\frac{n^2}{4}}(z_2z_3y)^n}{(q;q)_n}\sum_{s\in\mathbb{Z}}q^{(s+\frac{n}{2})^2}z_1^{-s}\sum_{t\in\mathbb{Z}}q^{\frac{t^2}{2}}y^{-t}\sum_{l\geq 0}\frac{z_4^ly^{-l}}{(q;q)_l}\Big]\nonumber\\
    &=\frac{1}{(q;q)_{\infty}}\sum_{m,n,r\geq 0}\frac{q^{m^2+\frac{3}{4}n^2+\frac{1}{2}r^2+mn-mr-nr}z_3^m(z_2z_3)^nz_4^r}{(q;q)_m(q;q)_n(q;q)_r}\sum_{s\in\mathbb{Z}}q^{(s+\frac{n}{2})^2}z_1^{-s}.\label{dual-ex11-reducingrank}
\end{align}
Setting $(z_1,z_2,z_3,z_4)=(q,q^{-1},q,q^{-\frac{1}{2}})$, we have
\begin{align}
    &\sum_{i,j,k,l\ge 0}\frac{q^{\frac{i^2+(i-j)^2+(j-k)^2+(k-l)^2}{2}+i-j+k-\frac{l}{2}}}{(q;q)_i(q;q)_j(q;q)_k(q;q)_l}\nonumber\\
    &=\frac{1}{(q;q)_{\infty}}\sum_{m,n,r\geq 0}\frac{q^{m^2+\frac{3}{4}n^2+\frac{1}{2}r^2+mn-mr-nr+m-\frac{r}{2}}}{(q;q)_m(q;q)_n(q;q)_r}\sum_{s\in\mathbb{Z}}q^{(s+\frac{n}{2})^2-s}\nonumber\\
    &=\frac{q^{-\frac{1}{4}}}{(q;q)_{\infty}}\sum_{m,n,r\geq 0}\frac{q^{m^2+\frac{3}{4}n^2+\frac{1}{2}r^2+mn-mr-nr+m+\frac{n}{2}-\frac{r}{2}}}{(q;q)_m(q;q)_n(q;q)_r}\sum_{s\in\mathbb{Z}}q^{(s+\frac{n-1}{2})^2}.\label{dual-11-part1-0}
\end{align}
Recall the identity \eqref{id-dual-8-3}:
\begin{align}\label{dual-11-part1-1}
   W(q)&:=\sum_{m,n,r\geq 0}\frac{q^{4m^2+3n^2+2r^2+4mn-4mr-4nr+4m+2n-2r}}{(q^4;q^4)_m(q^4;q^4)_n(q^4;q^4)_r} \nonumber \\
   &=\frac{2J_{24}^6J_{2,24}J_{6,24}J_{10,24}}{J_{1,24}J_{4,24}^3J_{5,24}J_{7,24}J_{8,24}J_{11,24}J_{12,24}}.
\end{align}
Replacing $q$ by $-q$, we obtain
\begin{align}\label{dual-11-part1-2}
   W(-q)&=\sum_{m,n,r\geq 0}\frac{(-1)^nq^{4m^2+3n^2+2r^2+4mn-4mr-4nr+4m+2n-2r}}{(q^4;q^4)_m(q^4;q^4)_n(q^4;q^4)_r} \nonumber \\
   &=\frac{2J_{1,24}J_{5,24}J_{6,24}J_{7,24}J_{11,24}}{J_{4,24}^3J_{8,24}J_{12,24}}.
\end{align}
Substituting \eqref{dual-11-part1-1} and \eqref{dual-11-part1-2} into \eqref{dual-11-part1-0}, we have
\begin{align}
    F_1(q)&=\frac{q(q^4;q^4)_{\infty}}{(-1;q^4)_{\infty}}\sum_{i,j,k,l\geq 0}\frac{q^{2i^2+2(i-j)^2+2(j-k)^2+2(k-l)^2+4i-4j+4k-2l}}{(q^4;q^4)_i(q^4;q^4)_j(q^4;q^4)_k(q^4;q^4)_l}  \quad \text{(by \eqref{proof-exam11-rank-4})}\nonumber\\
    &=\frac{1}{(-1;q^4)_{\infty}}\sum_{m,n,r\geq 0}\frac{q^{4m^2+3n^2+2r^2+4mn-4mr-4nr+4m+2n-2r}}{(q^4;q^4)_m(q^4;q^4)_n(q^4;q^4)_r}\sum_{s\in\mathbb{Z}}q^{(2s+n-1)^2}\nonumber\\
    &=\frac{1}{(-1;q^4)_{\infty}}\Big(a(q)\frac{W(q)-W(-q)}{2}+b(q) \frac{W(q)+W(-q)}{2}  \Big) \nonumber \\
    &=3q\frac{J_8J_{12}^3}{J_{4}^3}.\label{dual-11-3to4to3}
\end{align}
Here the for the last equality we used the Maple approach in \cite{Frye-Garvan} to verify theta function identities.

Secondly, we prove that
\begin{align}\label{dual-ex11-eq2}
    F_2(q):=\sum_{m,n,r\geq 0}\frac{q^{4m^2+3n^2+r^2+4mn-2nr+4m+2n}}{(q^4;q^4)_m(q^4;q^4)_n(q^4;q^4)_r}\sum_{s\in\mathbb{Z}}q^{(2s+r-n)^2}=\frac{J_{24}^6}{J_{2,24}^2J_{4,24}J_{10,24}^2}.
\end{align}
We first write the left hand side as
\begin{align}\label{add-F2-start}
    F_2(q)&=\sum_{m,n,r\geq 0}\frac{q^{4m^2+3n^2+r^2+4mn-2nr+4m+2n}}{(q^4;q^4)_m(q^4;q^4)_n(q^4;q^4)_r}\sum_{s\in\mathbb{Z}}q^{(2s-(2m-r+n))^2}\nonumber\\
    &=\sum_{\substack{m,n,r\geq 0 \\ s,t\in\mathbb{Z}\\ s+t=2m-r+n}}\frac{q^{4mr+2n^2+4m+2n+2s^2+2t^2}}{(q^4;q^4)_m(q^4;q^4)_n(q^4;q^4)_r}\nonumber\\
    &=\mathrm{CT}_{z}\Big[ \sum_{m,n,r\geq 0}\frac{q^{4mr+2n^2+4m+2n}z^{2m-r+n}}{(q^4;q^4)_m(q^4;q^4)_n(q^4;q^4)_r}\sum_{s,t\in\mathbb{Z}}q^{2s^2+2t^2}z^{-s-t} \Big].
\end{align}

Using  \eqref{Euler} and \eqref{q-binomial}, we have
\begin{align}\label{add-F2-mid}
    &\sum_{m,r\geq 0}\frac{q^{4mr+4m}z^{2m-r}}{(q^4;q^4)_m(q^4;q^4)_r}=\sum_{m\geq 0}\frac{q^{4m}z^{2m}}{(q^4;q^4)_m(q^{4m}z^{-1};q^4)_{\infty}}\nonumber\\
    &=\frac{1}{(z^{-1};q^4)_{\infty}}\sum_{m\geq 0}\frac{(z^{-1};q^4)_m}{(q^4;q^4)_m}(q^4z^2)^m=\frac{(q^4z;q^4)_{\infty}}{(z^{-1};q^4)_{\infty}(q^4z^2;q^4)_{\infty}}.
\end{align}
Substituting \eqref{add-F2-mid} into \eqref{add-F2-start}, we have
\begin{align}
    F_2(q)&=\mathrm{CT}_{z} \frac{(q^4z;q^4)_{\infty}(-q^4z;q^4)_{\infty}(q^4,-q^2z,-q^2z^{-1};q^4)_{\infty}^2}{(z^{-1};q^4)_{\infty}(q^4z^2;q^4)_{\infty}} \nonumber\\
    &=\mathrm{CT}_{z}\frac{(q^4,-q^2z,-q^2z^{-1};q^4)_{\infty}^2}{(z^{-1};q^4)_{\infty}(q^4z^2;q^8)_{\infty}} \nonumber\\
    &=\mathrm{CT}_{z} \sum_{m,n\geq 0}\frac{q^{4n}z^{2n-m}}{(q^4;q^4)_m(q^8;q^8)_n}\sum_{s,t\in\mathbb{Z}}q^{2s^2+2t^2}z^{-s-t}\nonumber\\
    &=\sum_{m,n\geq 0}\frac{q^{(2n-m)^2+4n}}{(q^4;q^4)_m(q^8;q^8)_n}\sum_{s\in\mathbb{Z}}q^{(2s+m)^2}\label{dual-11-ct}.
\end{align}

Now we define
\begin{align}
    Z_0(q):=\sum_{m,n\geq 0}\frac{q^{(2n-2m)^2+4n}}{(q^4;q^4)_{2m}(q^8;q^8)_n}, \quad   Z_1(q):=\sum_{m,n\geq 0}\frac{q^{(2n-2m-1)^2+4n}}{(q^4;q^4)_{2m+1}(q^8;q^8)_n}.
\end{align}
Using \eqref{Euler} and \eqref{Heine}, we have
\begin{align}
    Z_0(q)&=\sum_{m\geq 0}\frac{q^{4m^2}(-q^{8-8m};q^8)_{\infty}}{(q^4;q^4)_{2m}}=(-q^8;q^8)_{\infty}\sum_{m\geq 0}\frac{(-1;q^8)_{m}q^{4m}}{(q^4;q^4)_{2m}} \nonumber \\
&=\frac{(-q^4;q^4)_{\infty}}{(q^4;q^8)_{\infty}^2}\sum_{n\geq 0}\frac{(q^4;q^8)_n(-q^4)^{n^2}}{(-q^4;-q^4)_{2n}}. \label{dual-11-z0}
\end{align}
Substituting \eqref{S29} with $q$ replaced by $-q^4$ into \eqref{dual-11-z0}, we deduce that
\begin{align}\label{dual-11-z0-1}
    Z_0(q)=\frac{J_{48}^9}{J_{4,48}^2J_{8,48}^2J_{12,48}^2J_{16,48}J_{20,48}^2}.
\end{align}

Similarly, using \eqref{Euler} and \eqref{Heine}, we have
\begin{align}
    Z_1(q)&=\sum_{m\geq 0}\frac{q^{4m^2+4m+1}(-q^{4-8m};q^8)_{\infty}}{(q^4;q^4)_{2m+1}}=(-q^4;q^8)_{\infty}\sum_{m\geq 0}\frac{q^{4m+1}(-q^4;q^8)_m}{(q^4;q^4)_{2m+1}} \nonumber \\
    &=\frac{q(-q^4;q^4)_{\infty}}{(q^4;q^8)_{\infty}^2}\sum_{n\geq 0}\frac{(q^4;q^8)_n}{(q^{16};q^{16})_n}(-1)^nq^{4n^2+8n}.   \label{dual-11-z1}  
\end{align}

Substituting \eqref{Rama4211} with $q$ replaced by $-q^4$ into \eqref{dual-11-z1}, we deduce that
\begin{align}\label{dual-11-z1-1}
Z_1(q)=\frac{qJ_{48}^9}{J_{4,48}^3J_{12,48}^2J_{16,48}J_{20,48}^3}.    
\end{align}

Substituting \eqref{dual-11-z0-1} and \eqref{dual-11-z1-1} into \eqref{dual-11-ct}, we have
\begin{align}
F_2(q)=Z_0(q)\sum_{s\in\mathbb{Z}}q^{4s^2}+Z_1(q)\sum_{s\in\mathbb{Z}}q^{(2s+1)^2}=\frac{J_{24}^6}{J_{2,24}^2J_{4,24}J_{10,24}^2}.
\end{align}

Finally, we write the Nahm sum in \eqref{dual-2} as
\begin{align*}
  S(q):=\sum_{m,n,r\geq 0}\frac{q^{4m^2+3n^2+r^2+4mn-2nr+4m+2n}}{(q^4;q^4)_m(q^4;q^4)_n(q^4;q^4)_r}=\sum_{n\geq 0}c_{n}q^n
\end{align*}
and define
\begin{align*}
    &S_0(q):=\sum_{n\geq 0}c_{2n}q^{2n}, \quad     S_1(q):=\sum_{n\geq 0}c_{2n+1}q^{2n+1}.
\end{align*}
By \eqref{dual-ex11-eq1} and \eqref{dual-ex11-eq2}, we have
\begin{align}
    aS_0(q)+bS_1(q)&=\frac{J_{24}^6}{J_{2,24}^2J_{4,24}J_{10,24}^2}, \\
    bS_0(q)+aS_1(q)&=3q\frac{J_8J_{12}^3}{J_4^3}
\end{align}
where the functions $a,b$ are defined in \eqref{ab-defn}. 
It follows that
\begin{align*}
    &S(q)=S_0(q)+S_1(q)=\frac{1}{a+b}\Big(\frac{J_{24}^6}{J_{2,24}^2J_{4,24}J_{10,24}^2}+3q\frac{J_8J_{12}^3}{J_4^3}\Big) \nonumber\\
    &=\frac{J_{2,24}J_{24}^{9/2}J_{6,24}J_{10,24}}{J_{1,24}J_{4,24}^2J_{5,24}J_{7,24}J_{8,24}J_{11,24}J_{12,24}^{1/2}}. 
\end{align*}
Here the last equality can be proved easily using the Maple approach in \cite{Frye-Garvan}.
\end{proof}

\subsection{Dual of Example 12}\label{sec-exam12}
We list Zagier's Example 12 and its dual in Table \ref{tab-exam12}. 

{\small
\begin{table}[htbp]
\centering
\caption{Modular triples Example 12 and their duals}
    \label{tab-exam12}
\begin{tabular}{|c|cc|c|cc|}
\hline
\padedvphantom{I}{3.5ex}{3.5ex}
    $A$   & \multicolumn{2}{c|}{$\begin{pmatrix}  8 & 4 & 1 \\ 4 & 3 & 0 \\ 1 & 0 &1 \end{pmatrix}$}  & $\mathcal{D}(A)$
    & \multicolumn{2}{c|}{$\begin{pmatrix}
3/5 & -4/5 & -3/5 \\ -4/5 & 7/5 & 4/5 \\ -3/5  & 4/5 & 8/5
\end{pmatrix}$} \\
\hline 
\padedvphantom{I}{3.5ex}{3.5ex}
   $B$
     & $\begin{pmatrix} 0 \\ -1/2 \\ 1/2 \end{pmatrix}$ & $\begin{pmatrix} 2 \\ 1/2 \\ 1/2 \end{pmatrix}$
     &$\mathcal{D}(B)$
    & $\begin{pmatrix} 1/10 \\ -3/10 \\ 2/5 \end{pmatrix}$ & $\begin{pmatrix}
        1/2 \\ -1/2 \\ 0
    \end{pmatrix}$ \\
     \padedvphantom{I}{1ex}{1ex}
   $C$
    & $1/40$ & $9/40$ & $\mathcal{D}(C)$
    & $1/40$  & $1/40$ \\
    \hline
\end{tabular}
\end{table}
}

The modularity of Zagier's Example 12 has been confirmed by Wang \cite[Theorem 4.15]{Wang-rank3}. 
As for the dual, we are going to establish Theorem \ref{thm-dual-exam12} to justify its modularity. We first make some preparations.
\begin{lemma}\label{lem-SN}
For any $n\in\mathbb Z$ we have
\begin{align}\label{SN-evaluation}
 K_n(q):=\sum_{\substack{i,k\geq0\\i-k=n}}
 \frac{q^{k(k+1)/2}}{(q;q)_i(q;q)_k}=\frac{1}{(q;q)_\infty}
 \sum_{r\geq0}\frac{q^{\binom{n-2r}{2}}}{(q^2;q^2)_r}.
\end{align}
\end{lemma}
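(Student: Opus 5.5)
We prove \eqref{SN-evaluation} with a generating function in an auxiliary variable $z$, comparing coefficients of $z^n$ on both sides. Define
\begin{align*}
G(z):=\sum_{n\in\mathbb Z}K_n(q)z^n=\sum_{i,k\geq0}\frac{q^{k(k+1)/2}z^{i-k}}{(q;q)_i(q;q)_k}.
\end{align*}
The sums over $i$ and $k$ separate. By the two Euler identities \eqref{Euler}, for $|z|<1$ we get
\begin{align*}
G(z)=\frac{(-q/z;q)_\infty}{(z;q)_\infty}.
\end{align*}
Since $n=i-k$ can be negative, $G(z)$ is a genuine Laurent series in $z$. For each fixed power of $q$ only finitely many pairs $(i,k)$ contribute, so all manipulations are legitimate as formal series in $q$ with coefficients Laurent polynomials in $z$. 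Equivalently, we work in an annulus $|q|<|z|<1$.

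Next we rewrite the product so that the right-hand side of \eqref{SN-evaluation} appears. Multiply numerator and denominator by $(-z;q)_\infty$ and use $(z;q)_\infty(-z;q)_\infty=(z^2;q^2)_\infty$. Together with the Jacobi triple product \eqref{Jacobi} in the form $(-z,-q/z,q;q)_\infty=\sum_{m\in\mathbb Z}q^{\binom{m}{2}}z^m$, this gives
\begin{align*}
G(z)=\frac{(-z,-q/z;q)_\infty}{(z^2;q^2)_\infty}=\frac{1}{(q;q)_\infty}\cdot\frac{1}{(z^2;q^2)_\infty}\sum_{m\in\mathbb Z}q^{\binom m2}z^m.
\end{align*}
Now expand $1/(z^2;q^2)_\infty=\sum_{r\geq0}z^{2r}/(q^2;q^2)_r$ by \eqref{Euler} with $q\to q^2$. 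The coefficient of $z^n$ comes from $m=n-2r$, so it equals
\begin{align*}
\frac{1}{(q;q)_\infty}\sum_{r\geq0}\frac{q^{\binom{n-2r}{2}}}{(q^2;q^2)_r},
\end{align*}
which is exactly the right-hand side of \eqref{SN-evaluation}.

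The only delicate point is justifying coefficient extraction from the product of the one-sided series $\sum_r z^{2r}/(q^2;q^2)_r$ and the two-sided theta series. This product is absolutely convergent in the annulus, and for each fixed $n$ the resulting inner sum over $r$ converges, since $\binom{n-2r}{2}\to\infty$. So the identity of Laurent expansions on the annulus yields \eqref{SN-evaluation} for every $n\in\mathbb Z$, including negative $n$.
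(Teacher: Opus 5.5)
Your proof is correct and follows essentially the same route as the paper: the generating function $(-q/z;q)_\infty/(z;q)_\infty$, multiplication by $(-z;q)_\infty$ to get $(z^2;q^2)_\infty$ in the denominator, the Jacobi triple product, and expansion of $1/(z^2;q^2)_\infty$ before comparing coefficients of $z^n$. One small slip: the formal-series justification is wrong, because for a fixed power of $q$ infinitely many $i$ contribute ($1/(q;q)_i$ has constant term $1$), so the coefficients are not Laurent polynomials in $z$; your analytic justification on the annulus is what covers this step.
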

\begin{proof}
Using \eqref{Euler} and \eqref{Jacobi}, we have
\begin{align*}
&\sum_{N\in\mathbb Z}K_n(q)z^n=\sum_{i,k\geq0}
 \frac{q^{k(k+1)/2}z^{i-k}}{(q;q)_i(q;q)_k}=\frac{(-q/z;q)_\infty}{(z;q)_\infty} \nonumber \\
&=\frac{(-z;q)_\infty(-q/z;q)_\infty}{(z^2;q^2)_\infty} =\frac{1}{(q;q)_\infty}
 \Big(\sum_{r\geq0}\frac{z^{2r}}{(q^2;q^2)_r}\Big)
 \Big(\sum_{m\in\mathbb Z}z^mq^{\binom m2}\Big)
 \nonumber \\
 &=\frac{1}{(q;q)_\infty}\sum_{n\in\mathbb Z}z^n
 \sum_{r\geq0}\frac{q^{\binom{n-2r}{2}}}{(q^2;q^2)_r}.
\end{align*}
Comparing the coefficients of $z^n$ on both sides, we obtain \eqref{SN-evaluation}.
\end{proof}

For $0\leq s\leq 3$ we define
\begin{align}\label{Theta-defn}
\Theta_s=\Theta_s(q):=\sum_{n\in\mathbb{Z}}q^{4n^2-2sn}=(-q^{4-2s},-q^{4+2s},q^8;q^8)_\infty.
\end{align}
For convenience, we denote the Nahm sums in the left side of \eqref{dual-12-1} and \eqref{dual-12-2} as $\mathcal{T}_1(q)$ and $\mathcal{T}_2(q)$, respectively. 

We now apply Lemma \ref{lem-SN} to give reductions for the Nahm sums $\mathcal{T}_1$ and $\mathcal{T}_2$.
\begin{prop}\label{prop:kernel}
For $s=0,1,2,3$ we define
\begin{align*}
\alpha_s(q)&:=
\sum_{\substack{r,j\ge0\\5r+2j+1\equiv s \!\!\!\pmod4}}
\frac{q^{(15r^2-20rj+10j^2+10r-10j+s^2-1)/4}}
{(q^{10};q^{10})_r(q^5;q^5)_j},\\
\beta_s(q)&:=
\sum_{\substack{r,j\ge0\\5r+2j\equiv s \!\!\!\pmod4}}
\frac{q^{(15r^2-20rj+10j^2+20r-10j+s^2)/4}}
{(q^{10};q^{10})_r(q^5;q^5)_j}.
\end{align*}
Then
\begin{equation}
\mathcal{T}_1(q)=\frac{1}{(q^5;q^5)_\infty}\sum_{s=0}^3\Theta_s(q)\alpha_s(q),
 \quad \mathcal{T}_2(q)=\frac{1}{(q^5;q^5)_\infty}\sum_{s=0}^3\Theta_s(q)\beta_s(q).
\label{T1T2-reduction}
\end{equation}
\end{prop}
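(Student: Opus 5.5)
The plan is to exploit the fact that, in both $\mathcal{T}_1$ and $\mathcal{T}_2$, the quadratic form decouples once we pass to the difference $n=i-k$. Indeed,
\[
\tfrac32 i^2-3ik+4k^2=\tfrac32(i-k)^2+\tfrac52k^2,
\]
so with $n=i-k$ the quadratic part becomes
\[
\tfrac32n^2+\tfrac52k^2+\tfrac72j^2-4j(n+k)+4jk=\tfrac32n^2+\tfrac52k^2+\tfrac72j^2-4jn .
\]
For $\mathcal{T}_1$ the linear part is $\tfrac12i-\tfrac32j+2k=\tfrac12n+\tfrac52k-\tfrac32j$. Hence the only $k$-dependence left is $q^{5k(k+1)/2}$. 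For $\mathcal{T}_2$ the linear part is $\tfrac52i-\tfrac52j=\tfrac52n+\tfrac52k-\tfrac52j$, and again the $k$-dependence is $q^{5k(k+1)/2}$. Grouping the sum over $(i,k)$ according to $n\in\mathbb{Z}$, the inner sum over $i-k=n$ is exactly $K_n(q^5)$ from Lemma \ref{lem-SN}. This gives
\[
\mathcal{T}_1(q)=\sum_{j\ge0}\sum_{n\in\mathbb Z}\frac{q^{\frac32n^2+\frac12n+\frac72j^2-4jn-\frac32j}}{(q^5;q^5)_j}K_n(q^5),
\]
and similarly for $\mathcal{T}_2$ with linear terms $\tfrac52n-\tfrac52j$.

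Next I would apply \eqref{SN-evaluation} with $q\to q^5$. This produces the factor $1/(q^5;q^5)_\infty$ and the sum $\sum_{r\ge0}q^{5\binom{n-2r}{2}}/(q^{10};q^{10})_r$. I would then change variables to $m=n-2r\in\mathbb{Z}$, so that $n=m+2r$ and $m$ ranges freely over $\mathbb{Z}$ for each fixed $r,j$. The resulting exponent is a quadratic in $(m,r,j)$ whose $m^2$-coefficient is $\tfrac32+\tfrac52=4$. Its $m$-linear part has the form $m(6r-4j+\text{const})$.

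I would then complete the square in $m$. Writing $4m^2+m\,L$ with $L=6r-4j+c$, I set $8m+L=2(4N-s)$ or a similar normalization. More precisely, I shift $m\mapsto N-\lfloor\cdot\rfloor$ so that the $m$-sum becomes $\sum_N q^{4N^2-2sN}=\Theta_s(q)$, where $s\in\{0,1,2,3\}$ is the residue forced by the parity data. For $\mathcal{T}_1$ this residue is $5r+2j+1 \bmod 4$, and for $\mathcal{T}_2$ it is $5r+2j \bmod 4$. In the latter the extra $\tfrac52$ in the linear term shifts the constant by one. The leftover $r,j$-dependent exponent should then be exactly $(15r^2-20rj+10j^2+10r-10j+s^2-1)/4$ for $\alpha_s$, and the analogous expression for $\beta_s$, with the $s^2/4$ coming from the completed square. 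Collecting the terms according to $s$ gives \eqref{T1T2-reduction}.

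The main obstacle is this last bookkeeping step: choosing the shift so that the residue class lands in $\{0,1,2,3\}$ and verifying the constant terms. The residues $s$ and $-s$ (or $s$ and $4-s$ up to a shift of $N$) must be identified using $\Theta_s=\Theta_{-s}$ and $\sum_N q^{4N^2-2(s+4)N}=q^{-2s-4}\cdots$. This ensures that only $\Theta_0,\dots,\Theta_3$ appear with the stated exponents and no stray powers of $q$. I would check this carefully for each parity of $r$ and $j$ separately, confirming integrality of the exponents on the stated congruence classes.
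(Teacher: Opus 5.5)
Your proposal is correct and is essentially the paper's proof: set $n=i-k$, recognize the inner sum as $K_n(q^5)$, apply Lemma~\ref{lem-SN}, and shift the resulting theta sum to $\Theta_s$, with $s$ the residue of $5r+2j+1$ (resp.\ $5r+2j$) modulo $4$. The one difference is cosmetic. The paper keeps $n$ as the summation variable, so the exponent is directly $4n^2-2(5r+2j+1)n+10r^2+5r$ (resp.\ with $5r+2j$). The substitution $m=n-2r$ and the appeal to $\Theta_s=\Theta_{-s}$ are therefore unnecessary; your $m$-sum $\sum_m q^{4m^2-2(2j-3r+1)m}$ has $2j-3r+1\equiv 5r+2j+1 \pmod 4$, so it lands on the same $\Theta_s$.
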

\begin{proof}
We first discuss $\mathcal{T}_1$. We denote
\begin{align*}
    Q(i,j,k):=\tfrac{3}{2}i^2+\tfrac{7}{2}j^2+4k^2-4ij-3ik+4jk.
\end{align*}
Note that
\begin{align*}
 Q(i,j,k)+\tfrac{1}{2}i-\tfrac{3}{2}j+2k=\tfrac{3}{2}(i-k)^2+\big(\tfrac{1}{2}-4j\big)(i-k)+\tfrac{7}{2}j^2-\tfrac{3}{2}j+\tfrac{5}{2}k(k+1).
\end{align*}
Let $n=i-k$.  We have
\begin{align}
\mathcal{T}_1(q)&=\sum_{j\geq0}\frac{q^{(7j^2-3j)/2}}{(q^5;q^5)_j}
 \sum_{n\in\mathbb Z}q^{\frac{3}{2}n^2+(\frac{1}{2}-4j)n}K_n(q^5) \nonumber \\
 &=\frac{1}{(q^5;q^5)_\infty}
\sum_{r,j\geq0}\frac{q^{(7j^2-3j)/2+10r^2+5r}}
{(q^{10};q^{10})_r(q^5;q^5)_j}\sum_{n\in\mathbb Z}
q^{4n^2-2(5r+2j+1)n}. \quad \text{(by \eqref{SN-evaluation})}
\label{T1-reduction-start}
\end{align}
Let $c=5r+2j+1$, and write  $c=4h+s$ with $h\in \mathbb{Z}$ and 
$s\in\{0,1,2,3\}$.  Replacing $n$ by $n+h$, we obtain
\begin{equation}
\sum_{n\in\mathbb Z}q^{4n^2-2cn}
=q^{-(c^2-s^2)/4}\Theta_s(q).
\label{theta-translate}
\end{equation}
Substituting \eqref{theta-translate} into \eqref{T1-reduction-start}, we obtain the first identity in \eqref{T1T2-reduction}.

Next, we discuss $\mathcal{T}_2$. Note that
\begin{align*}
Q(i,j,k)+\tfrac{5}{2}i-\tfrac{5}{2}j=\tfrac{3}{2}(i-k)^2+\big(\tfrac{5}{2}-4j\big)(i-k)+\tfrac{7}{2}j^2-\tfrac{5}{2}j+\tfrac{5}{2}k(k+1).
 \end{align*}
Again let $n=i-k$. By \eqref{SN-evaluation} we have
\begin{align}
\mathcal{T}_2(q)&=\frac{1}{(q^5;q^5)_\infty}
\sum_{r,j\geq0}\frac{q^{(7j^2-5j)/2+10r^2+5r}}
{(q^{10};q^{10})_r(q^5;q^5)_j}\sum_{n\in\mathbb Z}
q^{4n^2-2(5r+2j)n}.
\end{align}
Writing $5r+2j=4h+s$ with $h\in \mathbb{Z}$ and 
$s\in\{0,1,2,3\}$, and then substituting \eqref{theta-translate} into it, we obtain the second identity in \eqref{T1T2-reduction}.
\end{proof}
We now have all ingredients to prove Theorem \ref{thm-dual-exam12}.
\begin{proof}[Proof of Theorem \ref{thm-dual-exam12}]
In $\alpha_s$, the congruence condition is equivalent to $r+2j+1\equiv s\pmod4$.  It has solutions only when
$r\equiv s-1\pmod2$, and this condition then determines uniquely a number $a_{r,s}\in \{0,1\}$ such that $j\equiv a_{r,s}$ (mod 2). We have
\begin{align}
\alpha_s(q)&=
\sum_{\substack{r \geq 0 \\ r\equiv s-1 \!\!\! \pmod{2}}} \frac{q^{\frac{1}{4}(5r^2+s^2-1)}}{(q^{10};q^{10})_r}\sum_{\substack{j\geq 0 \\ j\equiv a_{r,s}\!\!\!\pmod{2}}}
\frac{q^{5\binom{j-r}{2}}}
{(q^5;q^5)_j}  \nonumber \\
&=q^{(s^2-1)/4}(-q^5;q^5)_\infty
 \sum_{\substack{r\ge0\\r\equiv s-1 \!\!\!\pmod2}}
 \frac{q^{5r^2/4}}{(q^5;q^5)_r}.
\label{alpha-reduce}
\end{align}
Here for the second equality we used the following fact: for $a\in \{0,1\}$,
\begin{align}
&\sum_{\substack{j\ge0\\j\equiv a \!\!\!\pmod2}}
 \frac{q^{\binom{j-r}{2}}}{(q;q)_j}
=\frac{1}{2} q^{r(r+1)/2}
 \left((-q^{-r};q)_\infty+(-1)^a(q^{-r};q)_\infty\right)\nonumber \\
 &=(-q;q)_\infty(-q;q)_r.
\label{eq-partial}
\end{align}
Hence we have
\begin{equation}\label{alpha-product}
\begin{split}
    &\alpha_0(q)=q(-q^5;q^5)_\infty \mathcal{A}_1(q^5), \quad \alpha_1(q)=(-q^5;q^5)_\infty \mathcal{A}_0(q^5), \\
    &\alpha_2(q)=q^2(-q^5;q^5)_\infty \mathcal{A}_1(q^5), \quad \alpha_3(q)= q^2(-q^5;q^5)_\infty \mathcal{A}_0(q^5).
\end{split}
\end{equation}
Here the functions $\mathcal{A}_i(q)$ ($i=0,1$) are given in \eqref{S79}--\eqref{S94}.

Similarly, in $\beta_s$, the congruence condition $5r+2j\equiv s$ (mod 4) becomes
$r+2j\equiv s\pmod4$. Hence $r\equiv s\pmod2$ and this determines a unique $b_{r,s}\in \{0,1\}$ such that $j\equiv b_{r,s}$ (mod 2). Using \eqref{eq-partial} we have
\begin{align}
\beta_s(q)&=
\sum_{\substack{r \geq 0\\ r\equiv s \!\!\! \pmod{2}}} \frac{q^{(5r^2+10r+s^2)/4}}{(q^{10};q^{10})_r} \sum_{\substack{j\geq 0 \\ j\equiv b_{r,s}\!\!\! \pmod{2}}}
\frac{q^{5\binom{j-r}{2}}}
{(q^5;q^5)_j} \nonumber \\
&=q^{s^2/4}(-q^5;q^5)_\infty
 \sum_{\substack{r\ge0\\r\equiv s \!\!\! \pmod2}}
 \frac{q^{5r(r+2)/4}}{(q^5;q^5)_r}.
\label{beta-reduce}
\end{align}
Hence we have
\begin{equation}\label{beta-product}
\begin{split}
\beta_0(q)=(-q^5;q^5)_\infty \mathcal{B}_1(q^5), \quad \beta_1(q)=q^{-1}(-q^5;q^5)_\infty \mathcal{B}_0(q^5), \\
\beta_2(q)=q(-q^5;q^5)_\infty \mathcal{B}_1(q^5), \quad \beta_3(q)=q(-q^5;q^5)_\infty \mathcal{B}_0(q^5).
\end{split}
\end{equation}
Here the functions $\mathcal{B}_i(q)$ ($i=0,1$) are given in \eqref{S96}--\eqref{S99}.

Substituting \eqref{alpha-product} and \eqref{beta-product}) into \eqref{T1T2-reduction}, and using the fact $\Theta_3=q^{-2}\Theta_1$ which is easily deduced from \eqref{Theta-defn}, we obtain 
\begin{align}
\mathcal{T}_1(q)&=\frac{(-q^5;q^5)_\infty}{(q^5;q^5)_\infty}
 \Big(2\Theta_1 \mathcal{A}_0(q^5)+(q\Theta_0+q^2\Theta_2)\mathcal{A}_1(q^5)\Big),
\label{S1-reduction}\\
\mathcal{T}_2(q)&=\frac{(-q^5;q^5)_\infty}{(q^5;q^5)_\infty}
 \Big((\Theta_0+q\Theta_2)\mathcal{B}_1(q^5)+2q^{-1}\Theta_1 \mathcal{B}_0(q^5)\Big).
\label{S2-reduction}
\end{align}
It is now straightforward to prove \eqref{dual-12-1} and \eqref{dual-12-2} from the above formulas using the Maple approach in \cite{Frye-Garvan}.
\end{proof}

\begin{rem}
Theorem \ref{thm-dual-exam12} was proposed as a conjecture in the first version of this paper (arXiv:2607.23257v1). After finishing the current version, we noted that Goswami \cite{Goswami} has independently provided a different proof for it.
\end{rem}

\section{Some new modular rank three Nahm sums}\label{sec-rank3}
This section is devoted to presenting proofs for Theorems \ref{thm-ex9-new} and \ref{thm-dual-rank3} which provide some new modular Nahm sums. We record the modular triples in these theorems in Table \ref{tab-newexample}. 

{\small 
\begin{table}[htbp]
\centering
\caption{Modular triples $(A,B,C)$ in Theorem \ref{thm-ex9-new} and their duals}\label{tab-newexample}
\begin{tabular}{c|cccc}
  \hline
    \padedvphantom{I}{4ex}{4ex}
  $A$ &  \multicolumn{4}{c}{$\begin{pmatrix} 2 &1 & 0 \\ 1 & 3/2 & -1/2 \\ 0 &-1/2 & 1 \end{pmatrix}$}  \\
  \hline
\padedvphantom{I}{4ex}{4ex}
$B$ & $\begin{pmatrix} 1 \\ 1/2 \\ 0 \end{pmatrix}$ & $\begin{pmatrix} 0  \\ -1/2  \\ 0 \end{pmatrix}$ & $\begin{pmatrix} 0 \\ 0 \\ -1/2\end{pmatrix}$  & $\begin{pmatrix} 0 \\ 0 \\ 0 \end{pmatrix}$ \\
  ~~ $C$ & $ 1/9$ & $0$ & $1/36$ & $-1/18$ \\
  \hline
    \padedvphantom{I}{4ex}{4ex}
  $\mathcal{D}(A)$ &  \multicolumn{4}{c}{$\begin{pmatrix} 5/6 & -2/3 & -1/3 \\ -2/3 & 4/3 & 2/3 \\ -1/3 & 2/3 & 4/3 \end{pmatrix}$}  \\
  \hline
\padedvphantom{I}{4ex}{4ex}
$\mathcal{D}(B)$ & $\begin{pmatrix} 1/2 \\ 0 \\ 0 \end{pmatrix}$ & $\begin{pmatrix} 1/3  \\ -2/3  \\ -1/3 \end{pmatrix}$ & $\begin{pmatrix} 1/6 \\ -1/3 \\ -2/3 \end{pmatrix}$  & $\begin{pmatrix} 0 \\ 0 \\ 0 \end{pmatrix}$ \\
  ~~ $\mathcal{D}(C)$ & $1/72$ & $1/24$ & $1/72$ & $-5/72$ \\
  \hline
\end{tabular}
\end{table}
}

The proof of Theorem \ref{thm-ex9-new} is relatively straightforward based on the Bailey pair method. However, the proof of Theorem \ref{thm-dual-rank3} is quite involved and requires more ingredients. We first present several preliminary results that will streamline the final proof of Theorem \ref{thm-dual-rank3}.
   
\subsection{Auxiliary results}\label{sec-aux}
For $n\in\mathbb{Z}$, let $H_n:=\widetilde{H}_n:=0$ for $n<0$, and for $n\geq 0$ we define
\begin{equation}
 H_n:=\sum_{\substack{j,k\geq 0\\2j+k=n}}
 \frac{q^{\frac{1}{2}k^2}}{(q;q)_j(q;q)_k}, \quad \widetilde{H}_n:=\sum_{\substack{j,k\geq 0\\2j+k=n}}
 \frac{q^{\frac{1}{2}(k^2-k)}}{(q;q)_j(q;q)_k},        \label{H-def}
\end{equation}
\begin{lemma}\label{lem-finite}
For $n\geq 0$ we have
\begin{equation}
 H_n=q^{\frac{1}{2}n}\sum_{r\geq 0}
 \frac{q^{-r}}{(q^2;q^2)_r(q;q)_{n-2r}}.   \label{H-finite}
\end{equation}
For any $n\in \mathbb{Z}$ we have the recurrence relation:
\begin{equation}
 q^nH_n=H_n-q^{\frac{1}{2}}H_{n-1}-H_{n-2}+q^{\frac{1}{2}}H_{n-3}. \label{H-rec}
\end{equation}
Furthermore, we have
\begin{align}\label{wH-H-relation}
    \widetilde{H}_{n}=q^{-\frac{1}{2}n}(H_n-H_{n-2}).
\end{align}
\end{lemma}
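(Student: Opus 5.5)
The plan is to prove all three statements of Lemma \ref{lem-finite} through generating functions in an auxiliary variable $z$. Each claim then becomes an identity between infinite products, and we finish by comparing coefficients of $z^n$.

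\emph{Generating function of $H_n$.} By the definition \eqref{H-def} and Euler's identities \eqref{Euler} (the second one, with $z$ replaced by $q^{1/2}z$), we have
\begin{align*}
G(z):=\sum_{n\ge0}H_nz^n=\sum_{j\ge0}\frac{z^{2j}}{(q;q)_j}\sum_{k\ge0}\frac{q^{k^2/2}z^k}{(q;q)_k}=\frac{(-q^{1/2}z;q)_\infty}{(z^2;q)_\infty}.
\end{align*}
Next we split $(z^2;q)_\infty=(z^2;q^2)_\infty(z^2q;q^2)_\infty$. Then we factor $(z^2q;q^2)_\infty=(q^{1/2}z;q)_\infty(-q^{1/2}z;q)_\infty$. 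Cancelling the common factor gives
\begin{align*}
G(z)=\frac{1}{(z^2;q^2)_\infty(q^{1/2}z;q)_\infty}.
\end{align*}

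\emph{Proof of \eqref{H-finite}.} Expand both factors of the last product by the first identity in \eqref{Euler}:
\begin{align*}
\frac{1}{(z^2;q^2)_\infty}=\sum_{r\ge0}\frac{z^{2r}}{(q^2;q^2)_r},\qquad \frac{1}{(q^{1/2}z;q)_\infty}=\sum_{m\ge0}\frac{q^{m/2}z^m}{(q;q)_m}.
\end{align*}
The coefficient of $z^n$ comes from $m=n-2r$, so it equals $\sum_{r}q^{(n-2r)/2}/((q^2;q^2)_r(q;q)_{n-2r})$. Factoring out $q^{n/2}$ gives exactly \eqref{H-finite}.

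\emph{Proof of \eqref{H-rec}.} From the product form of $G$, replacing $z$ by $qz$ gives the functional equation
\begin{align*}
G(qz)=(1-z^2)(1-q^{1/2}z)\,G(z)=(1-q^{1/2}z-z^2+q^{1/2}z^3)\,G(z).
\end{align*}
We compare coefficients of $z^n$ on both sides. On the left, the coefficient is $q^nH_n$. On the right, it is $H_n-q^{1/2}H_{n-1}-H_{n-2}+q^{1/2}H_{n-3}$. We use the convention $H_m=0$ for $m<0$, so this holds for every $n\in\mathbb Z$. For $n<0$ both sides are trivially $0$.

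\emph{Proof of \eqref{wH-H-relation}.} In the same way, \eqref{Euler} gives
\begin{align*}
\sum_{n\ge0}\widetilde H_nz^n=\frac{(-z;q)_\infty}{(z^2;q)_\infty}.
\end{align*}
Substitute $z=q^{1/2}w$. The left side becomes $\sum_n\widetilde H_nq^{n/2}w^n$. The right side becomes $(-q^{1/2}w;q)_\infty/(qw^2;q)_\infty$. Since $(w^2;q)_\infty=(1-w^2)(qw^2;q)_\infty$, this equals $(1-w^2)G(w)$. Reading off the coefficient of $w^n$ yields $q^{n/2}\widetilde H_n=H_n-H_{n-2}$, which is \eqref{wH-H-relation}.

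There is no genuine obstacle in this argument. The only step requiring care is the factorization $(z^2q;q^2)_\infty=(\pm q^{1/2}z;q)_\infty$ together with the cancellation it produces. One must also check that the conventions $H_n=\widetilde H_n=0$ for $n<0$ are consistent with the coefficient comparison; they are, because both generating functions are power series in $z$ with no negative powers.
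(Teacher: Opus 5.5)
Your proof is correct and follows the paper's argument essentially verbatim. It uses the same generating function $(-q^{1/2}z;q)_\infty/(z^2;q)_\infty=1/\big((z^2;q^2)_\infty(q^{1/2}z;q)_\infty\big)$, the same functional equation $G(qz)=(1-z^2)(1-q^{1/2}z)G(z)$, and the same relation $\sum_n\widetilde{H}_nz^n=(1-q^{-1}z^2)\,G(q^{-1/2}z)$ (your substitution $z=q^{1/2}w$ is just this identity rewritten).
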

\begin{proof}
We have 
\begin{align}
&H(z):=\sum_{n\geq 0} H_nz^n=\sum_{j,k\geq 0} \frac{q^{\frac{1}{2}k^2}z^{2j+k}}{(q;q)_j(q;q)_k}=\frac{(-q^{\frac{1}{2}}z;q)_\infty}{(z^2;q)_\infty} \nonumber \\
&=\frac{1}{(z^2;q^2)_\infty(q^{\frac{1}{2}}z;q)_\infty}=\sum_{r,k\geq 0} \frac{q^{\frac{1}{2}k}z^{2r+k}}{(q^2;q^2)_r(q;q)_k}. \label{H-gen}
\end{align}
Comparing the coefficients of $z^n$ on both sides, we obtain \eqref{H-finite}.

From \eqref{H-gen} we have
\begin{align*}
H(qz)=(1-q^{\frac{1}{2}}z)(1-z^2)H(z).
\end{align*}
Comparing the coefficients on both sides, we obtain \eqref{H-rec}.

Similarly, we have
\begin{align}
\widetilde{H}(z):=\sum_{n\geq 0}\widetilde{H}_nz^n=\frac{(-z;q)_\infty}{(z^2;q)_\infty}=(1-q^{-1}z^2)H(q^{-\frac{1}{2}}z).
\end{align}
Comparing the coefficients of $z^n$ on both sides, we obtain \eqref{wH-H-relation}.
\end{proof}

Throughout this section we denote 
\begin{equation}
 \Omega_a=\frac{J_{a,15}}{J_1},\quad \Lambda_0=\frac{1}{J_1}(J_{7,15}-qJ_{2,15}), \quad 
 \Lambda_1=\frac{1}{J_1}(J_{4,15}+qJ_{1,15}).  \label{Omega-Lambda}
\end{equation}

\begin{lemma}\label{lem-Andrews}
(Cf.\ \cite[Theorems 1--6]{Andrews1981}.) We have
\begin{align}
 \sum_{n,r\geq 0}
 \frac{q^{3n(n+1)/2-r}}
 {(q^2;q^2)_r(q;q)_{3n+1-2r}}&=\Lambda_1,   \label{And-1}\\
 \sum_{n,r\geq 0}
 \frac{q^{3n(n+1)/2-r}}
 {(q^2;q^2)_r(q;q)_{3n-2r}}&=\Lambda_0,        \label{And-2}\\
 \sum_{n\geq 1,r\geq 0}
 \frac{q^{n(3n-1)/2-r}}
 {(q^2;q^2)_r(q;q)_{3n-2r-1}}&=\Omega_6,                  \label{And-3}\\
 \sum_{n,r\geq 0}
 \frac{q^{n(3n+5)/2+1-r}}
 {(q^2;q^2)_r(q;q)_{3n-2r+1}}&=q\Omega_3,    \label{And-4}\\
 \sum_{n,r\geq 0}
 \frac{q^{n(3n+1)/2-r}}
 {(q^2;q^2)_r(q;q)_{3n-2r}}&=\Omega_6,     \label{And-5}\\
 \sum_{n\geq 1,r\geq 0}
 \frac{q^{n(3n+1)/2-r}}
 {(q^2;q^2)_r(q;q)_{3n-2r-1}}&=q\Omega_3.      \label{And-6}
\end{align}
\end{lemma}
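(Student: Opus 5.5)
The plan is to reduce each of \eqref{And-1}--\eqref{And-6} to Andrews' mod-$15$ identities in \cite{Andrews1981}, using only the finite formula \eqref{H-finite} of Lemma~\ref{lem-finite} and the recurrence \eqref{H-rec}.

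\textbf{Step 1: collapse the inner sums into $H_m$.} By \eqref{H-finite}, for every $m\ge 0$ the inner sum over $r$ is a shifted $H_m$:
\begin{align*}
\sum_{r\ge 0}\frac{q^{-r}}{(q^2;q^2)_r(q;q)_{m-2r}}=q^{-m/2}H_m .
\end{align*}
Taking $m=3n+1,\,3n,\,3n-1$ turns the six left-hand sides into single sums of $q^{Q(n)}H_{3n+\epsilon}$, $\epsilon\in\{-1,0,1\}$. For example,
\begin{align*}
\text{LHS of \eqref{And-2}}=\sum_{n\ge0}q^{3n^2/2}H_{3n},\qquad
\text{LHS of \eqref{And-1}}=q^{-1/2}\sum_{n\ge0}q^{3n^2/2}H_{3n+1},
\end{align*}
and similarly for the other four.

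\textbf{Step 2: match with Andrews.} Inserting the definition \eqref{H-def} of $H_m$ gives double sums
\begin{align*}
\sum_{\substack{j,k\ge0\\ 2j+k\equiv \epsilon' \ (\mathrm{mod}\ 3)}}\frac{q^{(2j+k)^2/6+k^2/2+\text{linear}}}{(q;q)_j(q;q)_k}.
\end{align*}
These are exactly the multiple series in Andrews' Theorems~1--6, whose right-hand sides are $J_{a,15}/J_1$ or the two-term combinations $\Lambda_0,\Lambda_1$ of \eqref{Omega-Lambda}. The bulk of the proof is this bookkeeping: checking the linear terms and the conventions $H_n=0$ for $n<0$. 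The latter explains why the sums in \eqref{And-3} and \eqref{And-6} start at $n\ge1$, and it shows that \eqref{And-3}, \eqref{And-5} and \eqref{And-4}, \eqref{And-6} pair up to the same products.

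\textbf{Step 3: a self-contained route if the citation does not match.} The generating function \eqref{H-gen}, $H(z)=1/\bigl((z^2;q^2)_\infty(q^{1/2}z;q)_\infty\bigr)$, gives via \eqref{H-rec} a three-term structure in steps of $3$. I would proceed as follows.
\begin{itemize}
\item Extract $\sum_n q^{3n^2/2}H_{3n+\epsilon}$ as a constant term against a theta series $\sum_n q^{3n^2/2}z^{-3n}$.
\item Expand $1/(z^2;q^2)_\infty$ and $1/(q^{1/2}z;q)_\infty$ with \eqref{Euler}.
\item Reduce the result to known single-sum identities of modulus $15$ (Rogers' mod-$15$ identities, Slater's list), either directly or through a Bailey pair relative to $q^3$.
\end{itemize}

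\textbf{Main obstacle.} The hardest part is obtaining the two-term right-hand sides $\Lambda_0,\Lambda_1$, which are not single products. There I would first try proving the corresponding Andrews identities for $H_{3n}$ and $H_{3n+1}$ jointly, as a linear system, using \eqref{H-rec} to pass between residues modulo $3$.
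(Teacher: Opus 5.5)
Your proposal rests on the same thing as the paper: the paper quotes the lemma from Andrews' Theorems~1--6 and gives no further argument. Your Step~1 rewriting via \eqref{H-finite} (for instance, the left side of \eqref{And-1} becomes $q^{-1/2}\sum_{n\ge0}q^{3n^2/2}H_{3n+1}$) is exactly how the paper then uses the lemma to read off $a_0,a_1,b_0,b_1,c_0,c_1$ in Lemma~\ref{lem-rec}, so the speculative fallback in Step~3 is not needed.

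One caution: the shared product sides of \eqref{And-3}/\eqref{And-5} and of \eqref{And-4}/\eqref{And-6} do not follow from the convention $H_n=0$ for $n<0$. In $H$-form these are genuinely different series: $\sum_{n\ge1}q^{(3n-1)(n-1)/2}H_{3n-1}$ versus $\sum_{n\ge0}q^{n(3n-2)/2}H_{3n}$, and $\sum_{n\ge0}q^{(3n^2+2n+1)/2}H_{3n+1}$ versus $\sum_{n\ge1}q^{(3n^2-2n+1)/2}H_{3n-1}$. These coincidences are real content of Andrews' theorems, and they are precisely what later yields $b_n=c_n$.
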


For $n\in\mathbb{Z}$, we define three sequences that are fundamental to our proof:
\begin{align}\label{abc-def}
 a_n:=\sum_{m\in\mathbb{Z}}q^{\frac{3}{2}m^2}H_{n+3m},  ~~        
 b_n:=\sum_{m\in\mathbb{Z}}q^{\frac{3}{2}m^2+r}H_{n+3m+1}, ~~         
 c_n:=\sum_{m\in\mathbb{Z}}q^{\frac{3}{2}m^2-m}H_{n+3m-1}.    
\end{align}

\begin{lemma}\label{lem-rec}
Each of $a_n,b_n,c_n$ satisfies
\begin{equation}
 y_{n+3}=q^{\frac{1}{2}}y_{n+2}+y_{n+1}+(q^{n+\frac{3}{2}}-q^{\frac{1}{2}})y_n.     \label{eq-common-rec}
\end{equation}
Moreover,
\begin{align}
& a_0=\Lambda_0, ~~a_1=q^{\frac{1}{2}}\Lambda_1,~~a_2=\Lambda_0+q\Lambda_1, \label{a-initial}\\
&b_0=c_0=q^{\frac{1}{2}}\Omega_3, ~~b_1=c_1=\Omega_6,~~
 b_2=c_2=q^{\frac{1}{2}}(\Omega_3+\Omega_6).        \label{bc-initial}
\end{align}
In particular $b_n=c_n$ for every $n\geq 0$.
\end{lemma}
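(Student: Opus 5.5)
The plan is to derive \eqref{eq-common-rec} directly from the recurrence \eqref{H-rec} by summing against the theta weights. I will then obtain the values at $n=0,1$ from \eqref{H-finite} together with Lemma~\ref{lem-Andrews}, and get the value at $n=2$ from the recurrence itself at $n=-1$. Throughout I read the weight in $b_n$ as $q^{\frac32 m^2+m}$ (the ``$+r$'' in \eqref{abc-def} looks like a typo for $+m$).

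\textbf{The recurrence.} Since \eqref{H-rec} holds for every integer index, I apply it at $N=n+3+3m$ for $a_n$, at $N=n+4+3m$ for $b_n$, and at $N=n+2+3m$ for $c_n$. I multiply by the respective weight $q^{\frac32m^2}$, $q^{\frac32m^2+m}$, $q^{\frac32m^2-m}$ and sum over $m\in\mathbb{Z}$. In each case the sum of the right-hand sides of \eqref{H-rec} becomes $y_{n+3}-q^{\frac12}y_{n+2}-y_{n+1}+q^{\frac12}y_n$.

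The left-hand side is $\sum_m w(m)\,q^{N}H_N$. Replacing $m$ by $m-1$ turns it into a multiple of $y_n$. For $a_n$ the exponent becomes
\[
\tfrac32(m-1)^2+n+3m=\tfrac32m^2+n+\tfrac32 ,
\]
and for $b_n$ it becomes
\[
\tfrac32(m-1)^2+(m-1)+n+3m+1=\tfrac32m^2+m+n+\tfrac32 ,
\]
and the computation for $c_n$ is analogous. In every case the left side equals $q^{n+\frac32}y_n$. Rearranging gives \eqref{eq-common-rec}. All series involved converge absolutely because $H_N=0$ for $N<0$ and the weights are Gaussian in $m$, so the index shift is legitimate.

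\textbf{Initial values at $n=0,1$.} Only $m$ with nonnegative $H$-index contribute, and I substitute \eqref{H-finite} for each $H$.
\begin{itemize}
\item For $a_0=\sum_{m\ge0}q^{\frac32m^2}H_{3m}$, the exponent becomes $\frac32m(m+1)-r$, so \eqref{And-2} gives $a_0=\Lambda_0$.
\item For $a_1$, the index is $3m+1$ and the exponent is $\frac32m(m+1)+\frac12-r$, so \eqref{And-1} gives $a_1=q^{\frac12}\Lambda_1$.
\item For $b_0=\sum_{m\ge0}q^{\frac32m^2+m}H_{3m+1}$, the exponent is $\frac{m(3m+5)}2+\frac12-r$. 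Comparing with \eqref{And-4} gives $b_0=q^{-\frac12}\cdot q\Omega_3=q^{\frac12}\Omega_3$.
\item For $b_1$, the index is $3m+2$. Only $m\ge0$ contributes since $H_{-1}=0$. Setting $n=m+1$, the index becomes $3n-1$ and the exponent $\frac{n(3n-1)}2-r$, so \eqref{And-3} gives $\Omega_6$.
\item For $c_0=\sum_{m\ge1}q^{\frac32m^2-m}H_{3m-1}$, the exponent is $\frac{m(3m+1)}2-\frac12-r$, so \eqref{And-6} gives $q^{\frac12}\Omega_3$.
\item For $c_1=\sum_{m\ge0}q^{\frac32m^2-m}H_{3m}$, the exponent is $\frac{m(3m+1)}2-r$, so \eqref{And-5} gives $\Omega_6$.
\end{itemize}

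\textbf{Values at $n=2$ and $b_n=c_n$.} Taking $n=-1$ in \eqref{eq-common-rec} kills the coefficient $q^{\frac12}-q^{\frac12}$ of $y_{-1}$. This leaves $y_2=q^{\frac12}y_1+y_0$, which yields $a_2=\Lambda_0+q\Lambda_1$ and $b_2=c_2=q^{\frac12}(\Omega_3+\Omega_6)$.

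Since $b_n$ and $c_n$ satisfy the same third-order recurrence and agree at $n=0,1,2$, induction on $n$ gives $b_n=c_n$ for all $n\ge0$.

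\textbf{Main obstacle.} I expect the only delicate point to be the bookkeeping of exponents and summation ranges in the initial-value step. I need to check that each sum produced by \eqref{H-finite} matches one of \eqref{And-1}--\eqref{And-6} exactly, including boundary terms such as $n\ge1$ versus $n\ge0$, and that the half-integer powers of $q$ come out as claimed.
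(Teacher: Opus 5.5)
Your proposal is correct and is essentially the paper's proof. The paper bundles the three cases into $y_n(s,t)=\sum_{m\in\mathbb{Z}}q^{\frac32 m^2+sm}H_{n+3m+t}$ with $(t,s)=(0,0),(1,1),(-1,-1)$, which confirms your reading of the weight in $b_n$ as $q^{\frac32 m^2+m}$, and then proceeds exactly as you do: it derives the recurrence by the same index shift, reads off the values at $n=0,1$ from \eqref{H-finite} and Lemma~\ref{lem-Andrews}, and obtains the values at $n=2$ from the recurrence at $n=-1$.
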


\begin{proof}
We consider the sequence
\begin{align*}
 y_n(s,t):=\sum_{m\in\mathbb{Z}}q^{\frac{3}{2}m^2+sm}H_{n+3m+t}.
\end{align*}
Applying \eqref{H-rec} with $n$ replaced by $n+3m+t+3$, multiplying both sides by $q^{\frac{3}{2}m^2+sm}$ and then summing over $m\in \mathbb{Z}$, and then replacing $m$ by $m+1$ in the term containing $q^n$, we deduce that
\begin{align}
    y_{n+3}(s,t)=q^{\frac{1}{2}}y_{n+2}(s,t)+y_{n+1}(s,t)+(q^{n+t+\frac{3}{2}-s}-q^{\frac{1}{2}})y_n(s,t).
\end{align}
Setting $(t,s)=(0,0),(1,1),(-1,-1)$, we obtain \eqref{eq-common-rec}.

Substituting \eqref{H-finite} into \eqref{abc-def}, from Lemma \ref{lem-Andrews} we obtain 
$$a_0=\Lambda_0,~~a_1=q^{\frac{1}{2}}\Lambda_1, ~~ b_0=c_0=q^{\frac{1}{2}}\Omega_3, ~~b_1=c_1=\Omega_6.$$
Applying \eqref{eq-common-rec} with $n=-1$ we obtain the values for $a_2$, $b_2$ and $c_2$. Since $b_n$ and $c_n$ share the same initial values and the same recurrence, we conclude that $b_n=c_n$.
\end{proof}

We are now going to solve the recurrence relation \eqref{eq-common-rec} so that we can find useful expressions for $a_n$, $b_n$ and $c_n$. Surprisingly, its solution is implicitly connected to the configuration sum studied by Andrews--Baxter--Forrester \cite{ABF}. For $1\le a,b,c\le4$ with $|b-c|=1$, we define the 4-state one-dimensional configuration sum \cite[(2.7)]{Warnaar1996}
\begin{equation}\label{ABF-defn}
X_n(a,b,c)=X_n(a,b,c;q):=
 \sum_{\substack{\sigma_0=a,\ \sigma_n=b,\ \sigma_{n+1}=c\\
 1\le\sigma_i\le4,\ |\sigma_{i+1}-\sigma_i|=1}}
 q^{\sum_{k=1}^n k|\sigma_{k+1}-\sigma_{k-1}|/4}.                  
\end{equation}
Note that the last term in the exponent of $q$, i.e., $n|\sigma_{n+1}-\sigma_{n-1}|/4$ is zero when $\sigma_{n-1}=\sigma_{n+1}$ and is $n/2$ when $|\sigma_{n+1}-\sigma_{n-1}|=2$. It follows that (see e.g,~\cite[(2.11)]{Warnaar1996})
\begin{align}\label{ABF-Xn-rec}
    X_n(a,b,b\pm 1)=X_{n-1}(a,b\pm 1,b)+q^{n/2}X_{n-1}(a,b\mp 1,b), \quad 3\leq 2b\pm 1\leq 2r-3.
\end{align}
We denote
\begin{align*}
 A_n(a)&=X_n(a,1,2),& B_n(a)&=X_n(a,2,1),& C_n(a)&=X_n(a,2,3),\\
 D_n(a)&=X_n(a,3,2),& E_n(a)&=X_n(a,3,4),& F_n(a)&=X_n(a,4,3).
\end{align*}
When no confusion can arise, we suppress the argument $a$. 
It follows from \eqref{ABF-Xn-rec} that
\begin{align}
 A_n&=B_{n-1}, \quad F_n=E_{n-1}, \label{ABCDEF-1}\\
 B_n&=A_{n-1}+q^{n/2}D_{n-1}, \quad E_n=F_{n-1}+q^{n/2}C_{n-1},     \label{ABCDEF-2}\\ 
 C_n&=D_{n-1}+q^{n/2}A_{n-1}, \quad D_n=C_{n-1}+q^{n/2}F_{n-1}.        \label{ABCDEF-3}
\end{align}

Now we define
\[
 S_n(a)=A_n(a)+F_n(a),\qquad T_n(a)=B_n(a)+E_n(a),\qquad U_n(a)=C_n(a)+D_n(a).
\]
From \eqref{ABCDEF-1}--\eqref{ABCDEF-3} we deduce that
\begin{align}\label{STU}
 S_n=T_{n-1},\qquad
 T_n=S_{n-1}+q^{n/2}U_{n-1},\qquad
 U_n=U_{n-1}+q^{n/2}S_{n-1}.
\end{align}
Eliminating $T_n,U_n$ yields
\begin{equation}\label{ABF-rec}
 S_n=q^{\frac{1}{2}}S_{n-1}+S_{n-2}
 +(q^{n-\frac{3}{2}}-q^{\frac{1}{2}})S_{n-3}.               
\end{equation}
Replacing $n$ by $n+3$, equation \eqref{ABF-rec} is exactly \eqref{eq-common-rec}.

From direct calculations using \eqref{ABF-defn}, we have
\begin{align*}
    S_0(1)=1, \quad S_1(1)=0, \quad S_2(1)=1; \quad S_0(2)=0, \quad S_1(2)=1, \quad S_2(2)=q^{1/2}.
\end{align*}
Since $a_n,b_n,c_n$ and $S_n$ satisfy the same recurrence relation \eqref{eq-common-rec}, comparing their initial values, we deduce that
\begin{align}\label{abc-S}
    a_n=\Lambda_0S_n(1)+q^{\frac{1}{2}}\Lambda_1S_n(2), \quad b_n=c_n=q^{\frac{1}{2}}\Omega_3S_n(1)+\Omega_6S_n(2).
\end{align}
From \eqref{ABF-defn} it is easy to see that $X_n(a,b,c)=0$ whenever $n\equiv c-a$ (mod 2). Hence
\begin{align}
A_{2m+1}(1)=0, \quad F_{2m}(1)=0, \quad A_{2m}(2)=0, \quad F_{2m+1}(2)=0.
\end{align}
It follows that
\begin{equation}\label{ab-X-1st}
\begin{split}
    a_{2m}&=\Lambda_0A_{2m}(1)+q^{\frac{1}{2}}\Lambda_1 F_{2m}(2)=\Lambda_0X_{2m}(1,1,2)+q^{\frac{1}{2}}\Lambda_1 X_{2m}(2,4,3),  \\
    a_{2m+1}&=\Lambda_0F_{2m+1}(1)+q^{\frac{1}{2}}\Lambda_1A_{2m+1}(2)=\Lambda_0X_{2m+1}(1,4,3)+q^{\frac{1}{2}}\Lambda_1 X_{2m+1}(2,1,2), \\
    b_{2m}&=q^{\frac{1}{2}}\Omega_3A_{2m}(1)+\Omega_6F_{2m}(2)=q^{\frac{1}{2}} \Omega_3X_{2m}(1,1,2)+\Omega_6X_{2m}(2,4,3), \\
    b_{2m+1}&=q^{\frac{1}{2}}\Omega_3F_{2m+1}(1)+\Omega_6A_{2m+1}(2)=q^{\frac{1}{2}}\Omega_3X_{2m+1}(1,4,3)+\Omega_6 X_{2m+1}(2,1,2).
\end{split}
\end{equation}

Setting $r=5$ in \cite[Theorem 2.3.1]{ABF} or \cite[(2.14)]{Warnaar1996}, we have
\begin{align}\label{X-binomial-exp}
X_n(a,b,c;q)
&=q^{(a-b)(a-c)/4}\sum_{j\in\mathbb{Z}}\Bigg\{
q^{20j^2+(5(b+c-1)/2-4a)j}
\qbinom{n}{(n+a-b)/2-5j}\nonumber \\
&\qquad -q^{20j^2+(5(b+c-1)/2+4a)j+a(b+c-1)/2}
\qbinom{n}{(n-a-b)/2-5j}\Bigg\}. 
\end{align}

It follows that
\begin{align}
 X_{2m}(1,1,2)&=\sum_{j\in\mathbb{Z}}\left\{q^{20j^2-j}\qbinom{2m}{m-5j}
 -q^{20j^2+9j+1}\qbinom{2m}{m-5j-1}\right\},    \label{X-even-112}\\
q^{\frac{1}{2}}X_{2m}(2,4,3)&=\sum_{j\in\mathbb{Z}}\left\{
 q^{20j^2+7j+1}\qbinom{2m}{m-5j-1}
 -q^{20j^2+17j+4}\qbinom{2m}{m-5j-2}\right\}, \label{X-even-243}\\
  q^{-\frac{1}{2}}X_{2m+1}(1,4,3)&=\sum_{j\in\mathbb{Z}}\left\{
 q^{20j^2+11j+1}\qbinom{2m+1}{m-5j-1}
 -q^{20j^2+21j+5}\qbinom{2m+1}{m-5j-2}\right\}, \label{X-odd-143}\\
X_{2m+1}(2,1,2)&=\sum_{j\in\mathbb{Z}}\left\{
 q^{20j^2+3j}\qbinom{2m+1}{m-5j}
 -q^{20j^2+13j+2}\qbinom{2m+1}{m-5j-1}\right\}. \label{X-odd-212}
\end{align}
\begin{lemma}\label{lem-X-BP-result}
We have
\begin{align}
    \sum_{m\geq 0} \frac{q^{m^2}}{(q;q)_{2m}}X_{2m}(1,1,2)&=\frac{\overline{J}_{44,90}-q^2\overline{J}_{26,90}}{J_1}, \label{eq-lem-X1-id-1}\\
    q^{\frac{1}{2}} \sum_{m\geq 0} \frac{q^{m^2}}{(q;q)_{2m}}X_{2m}(2,4,3)&=\frac{q^2\overline{J}_{28,90}-q^8\overline{J}_{8,90}}{J_1},  \label{eq-lem-X1-id-2}\\
    q^{-\frac{1}{2}}\sum_{m\geq 0} \frac{q^{m^2+m}}{(q;q)_{2m+1}}X_{2m+1}(1,4,3)&=\frac{q^3\overline{J}_{19,90}-q^{10}\overline{J}_{1,90}}{J_1},  \label{eq-lem-X1-id-3}\\
     \sum_{m\geq 0} \frac{q^{m^2+m}}{(q;q)_{2m+1}}X_{2m+1}(2,1,2)&=\frac{\overline{J}_{37,90}-q^4\overline{J}_{17,90}}{J_1}. \label{eq-lem-X1-id-4}
\end{align}
\end{lemma}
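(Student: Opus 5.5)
Each of the four identities in Lemma~\ref{lem-X-BP-result} will be proved the same way as Lemma~\ref{lem-PR-product}. Insert the finite bosonic expansions \eqref{X-even-112}--\eqref{X-odd-212}, interchange the two summations, and evaluate the resulting inner sums in closed form. Those inner sums have the shape
\begin{align*}
G_s:=\sum_{m\ge0}\frac{q^{m^2}}{(q;q)_{2m}}\qbinom{2m}{m-s}_q=\sum_{m\geq |s|}\frac{q^{m^2}}{(q;q)_{m-s}(q;q)_{m+s}},
\end{align*}
together with the odd analogue $\sum_{m}q^{m^2+m}/\bigl((q;q)_{m-s}(q;q)_{m+s+1}\bigr)$, and possibly a shifted version coming from a $\qbinom{2m+1}{m-5j-2}$ term. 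In Bailey-pair language, each of these is the limiting case of the unit Bailey pair fed through \eqref{bailey-s1}. Directly, we write $m=|s|+r$ and sum over $r$ with Euler's identity \eqref{Euler}. This gives
\begin{align*}
G_s=\frac{q^{s^2}}{(q;q)_\infty},\qquad \sum_{m}\frac{q^{m^2+m}}{(q;q)_{m-s}(q;q)_{m+s+1}}=\frac{q^{s^2+s}}{(q;q)_\infty}.
\end{align*}
The second formula is valid for $s\ge 0$. For $s<0$ we use the symmetry $s\mapsto -s-1$.

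Next we substitute these evaluations termwise. Take \eqref{eq-lem-X1-id-1} as an example. The two pieces give
\begin{align*}
\frac{1}{J_1}\sum_{j\in\mathbb Z}\Bigl(q^{20j^2-j+25j^2}-q^{20j^2+9j+1+(5j+1)^2}\Bigr)=\frac{1}{J_1}\sum_{j}\bigl(q^{45j^2-j}-q^{45j^2+19j+2}\bigr).
\end{align*}
By the Jacobi triple product \eqref{Jacobi} with base $q^{90}$, and noting that there is no $(-1)^j$ sign, the two series become $\overline{J}_{44,90}$ and $q^2\overline{J}_{26,90}$. This is exactly the right-hand side. The other three identities go the same way:
\begin{itemize}
\item \eqref{X-even-243} produces exponents $45j^2+17j+2$ and $45j^2+37j+8$, which give $q^2\overline J_{28,90}$ and $q^8\overline J_{8,90}$.
\item \eqref{X-odd-143} and \eqref{X-odd-212} are handled with the odd inner sum, taking $s=5j+1,5j+2$ and $s=5j,5j+1$ respectively. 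They give quadratics $45j^2+cj+d$, and each matches the stated $\overline{J}_{a,90}$ after completing the square.
\end{itemize}

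There is one point that needs care. The formula for the inner sum holds for all integers $s$ only in the symmetric form. When $s<0$, the constraint $m\ge|s|$ replaces $m\ge s$, and the identity $q^{s^2}$ in the even case still holds because it depends only on $|s|$. In the odd case we must check that the lower bound $m\ge\max(s,-s-1)$ gives $q^{s^2+s}$, which is again symmetric under $s\mapsto -s-1$. Once these symmetries are confirmed, all $j\in\mathbb Z$ may be summed freely, and interchanging the sums is legitimate by absolute convergence.

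The main obstacle I expect is bookkeeping. The prefactors $q^{\pm1/2}$ in \eqref{X-even-243} and \eqref{X-odd-143} must be tracked so that each exponent reproduces the exact shift, namely $q^3$ versus $q^{10}$ in \eqref{eq-lem-X1-id-3}. I would verify each case by comparing the resulting quadratic with $45j^2+\tfrac{90-2a}{2}j$ up to the constant.
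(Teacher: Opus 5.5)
Your proposal is correct and is essentially the paper's proof. The paper folds the $j<0$ terms of \eqref{X-even-112}--\eqref{X-odd-212} to read off Bailey pairs relative to $1$ and $q$ and then applies \eqref{bailey-s1}, which is the same as your interchange of summation followed by the inner evaluations $q^{s^2}/(q;q)_\infty$ and $q^{s^2+s}/(q;q)_\infty$, and your exponents ($45j^2-j$, $45j^2+19j+2$, and so on) all check out.

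One small correction: after setting $m=|s|+r$, the inner sum is $\sum_{r\ge0}q^{r^2+kr}/\bigl((q;q)_r(q;q)_{r+k}\bigr)=1/(q;q)_\infty$. This is Cauchy's Durfee-rectangle identity, the $a,b\to\infty$ case of \eqref{Heine-cor} with $c=q^{k+1}$, and it does not follow from Euler's identity \eqref{Euler} alone. Cite that, or cite \eqref{bailey-s1} applied to the pair with $\alpha_n=\delta_{n,s}$.
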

\begin{proof}
(1) From \eqref{X-even-112} and \eqref{X-even-243} we have
\begin{align}
&\frac{X_{2m}(1,1,2)}{(q;q)_{2m}}=\frac{1}{(q;q)_m^2}+\sum_{j>0} \frac{q^{20j^2-j}+q^{20j^2+j}}{(q;q)_{m-5j}(q;q)_{m+5j}}-\sum_{j\geq 0} \frac{q^{20j^2+9j+1}}{(q;q)_{m-5j-1}(q;q)_{m+5j+1}} \nonumber \\
&\quad -\sum_{j\geq 0} \frac{q^{20j^2+31j+12}}{(q;q)_{m-5j-4}(q;q)_{m+5j+4}}, \\
&q^{\frac{1}{2}}\frac{X_{2m}(2,4,3)}{(q;q)_{2m}}=\sum_{j\geq 0} \frac{q^{20j^2+7j+1}}{(q;q)_{m-5j-1}(q;q)_{m+5j+1}}+\sum_{j\geq 0} \frac{q^{20j^2+33j+14}}{(q;q)_{m-5j-4}(q;q)_{m+5j+4}} \nonumber \\
&-\sum_{j\geq 0} \frac{q^{20j^2+17j+4}}{(q;q)_{m-5j-2}(q;q)_{m+5j+2}} -\sum_{j\geq 0} \frac{q^{20j^2+23j+7}}{(q;q)_{m-5j-3}(q;q)_{m+5j+3}}.
\end{align}
Here we replaced $j$ by $-j-1$ when $j<0$. 
Therefore, both $\big(\alpha_m^{(1)},X_{2m}(1,1,2)/(q;q)_{2m}\big)$ and $\big(\alpha_m^{(2)}, q^{\frac{1}{2}}X_{2m}(2,4,3)/(q;q)_{2m}\big)$ are  Bailey pairs relative to 1 with $\alpha_m^{(k)}$ ($k=1,2$) given in Table \ref{tab-AB12}. Substituting these Bailey pairs into \eqref{bailey-s1}, we obtain \eqref{eq-lem-X1-id-1} and \eqref{eq-lem-X1-id-2}.

Similarly, we have
\begin{align}
  & q^{-\frac{1}{2}} \frac{X_{2m+1}(1,4,3)}{(q;q)_{2m+1}} =\sum_{j\geq 0} \frac{q^{20j^2+11j+1}}{(q;q)_{m-5j-1}(q;q)_{m+5j+2}}+\sum_{j\geq 0} \frac{q^{20j^2+29j+10}}{(q;q)_{m-5j-3}    (q;q)_{m+5j+4}}  \nonumber \\
    &\quad -\sum_{j\geq 0} \frac{q^{20j^2+19j+4}}{(q;q)_{m-5j-2}(q;q)_{m+5j+3}}-\sum_{j\geq 0} \frac{q^{20j^2+21j+5}}{(q;q)_{m-5j-2}(q;q)_{m+5j+3}}, \\
&\frac{X_{2m+1}(2,1,2)}{(q;q)_{2m+1}}=\sum_{j\geq 0} \frac{q^{20j^2+3j}}{(q;q)_{m-5j}(q;q)_{m+5j+1}}+\sum_{j\geq 0} \frac{q^{20j^2+37j+17}}{(q;q)_{m-5j-4}(q;q)_{m+5j+5}} \nonumber \\
&-\sum_{j\geq 0} \frac{q^{20j^2+13j+2}}{(q;q)_{m-5j-1}(q;q)_{m+5j+2}}-\sum_{j\geq 0} \frac{q^{20j^2+27j+9}}{(q;q)_{m-5j-3}(q;q)_{m+5j+4}}.
\end{align}
Therefore, both
$$\Big(\frac{\gamma_m^{(1)}}{1-q},q^{-\frac{1}{2}}\frac{X_{2m+1}(1,4,3)}{(q;q)_{2m+1}}\Big) ~~\text{and} ~~ \Big(\frac{\gamma_m^{(2)}}{1-q}, \frac{X_{2m+1}(2,1,2)}{(q;q)_{2m+1}}\Big)$$
are  Bailey pairs relative to $q$ with $\gamma_m^{(k)}$ ($k=1,2$) given in Table \ref{tab-AB12}.

\begin{table}[htbp]
\centering
\begingroup
\footnotesize
\renewcommand{\arraystretch}{1.4}
\setlength{\tabcolsep}{4pt}

\begin{adjustbox}{max width=\linewidth}
\begin{tabular}{c|cccc}
\hline
$n$
& $\alpha_m^{(1)}$ & $\alpha_m^{(2)}$ & $\gamma_m^{(1)}$ & $\gamma_m^{(2)}$
\\
\hline
$0$ & $1$ & $0$ & $0$ & $1$
\\
$5j$ & $q^{20j^2-j}+q^{20j^2+j}$ & $0$ & $0$ & $q^{20j^2+3j}$ 
\\[3pt]
$5j+1$ & $-q^{20j^2+9j+1}$ & $q^{20j^2+7j+1}$ & $q^{20j^2+11j+1}$ & $-q^{20j^2+13j+2}$
\\
$5j+2$ & $0$ & $-q^{20j^2+17j+4}$ & $-q^{20j^2+19j+4}-q^{20j^2+21j+5}$ & $0$
\\
$5j+3$ & $0$ & $-q^{20j^2+23j+7}$ & $q^{20j^2+29j+10}$ & $-q^{20j^2+27j+9}$
\\
$5j+4$ & $-q^{20j^2+31j+12}$ & $q^{20j^2+33j+14}$ & $0$ & $q^{20j^2+37j+17}$
\\
\hline
\end{tabular}
\end{adjustbox}

\endgroup
\caption{Values of $\alpha_m^{(k)}$ and $\gamma_m^{(k)}$ for $k=1,2$.}
\label{tab-AB12}
\end{table}

Substituting these Bailey pairs into \eqref{bailey-s1}, we obtain \eqref{eq-lem-X1-id-3} and \eqref{eq-lem-X1-id-4}.
\end{proof}

\begin{lemma}\label{lem-X-BP-results-2}
We have
\begin{align}
      \sum_{m\geq 0} \frac{q^{m^2+m}}{(q;q)_{2m}}X_{2m}(1,1,2)&=\frac{\overline{J}_{41,90}-q\overline{J}_{31,90}}{J_1},   \label{X-BP-result-5}\\
    q^{\frac{1}{2}} \sum_{m\geq 0} \frac{q^{m^2+m}}{(q;q)_{2m}}X_{2m}(2,4,3)&=\frac{q^3\overline{J}_{23,90}-q^6\overline{J}_{13,90}}{J_1}, \label{X-BP-result-6} \\
    q^{-\frac{1}{2}}\sum_{m\geq 0} \frac{q^{m^2+2m}}{(q;q)_{2m+1}}X_{2m+1}(1,4,3)&=\frac{q^4\overline{J}_{14,90}-q^8\overline{J}_{4,90}}{J_1}, \label{X-BP-result-7} \\
     \sum_{m\geq 0} \frac{q^{m^2+2m}}{(q;q)_{2m+1}}X_{2m+1}(2,1,2)&=\frac{\overline{J}_{32,90}-q^2\overline{J}_{22,90}}{J_1}. \label{X-BP-result-8}
\end{align}
\end{lemma}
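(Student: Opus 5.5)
The plan mirrors the proof of Lemma~\ref{lem-X-BP-result}. The only change is the limiting form of Bailey's lemma. Here the weights are $q^{m^2+m}/(q;q)_{2m}$ and $q^{m^2+2m}/(q;q)_{2m+1}$. With $a=1$ this is $a^{m/2}q^{m^2/2}\cdot q^{m^2/2+m}$, and with $a=q$ it is $q^{m^2+m}\cdot q^{m}$. The natural tool is therefore Bailey's lemma with $\rho_1,\rho_2\to\infty$, applied after first shifting via \eqref{alpha-beta} with one finite parameter. Concretely, for $a=1$ we use the identity
\[
\sum_{n\ge0} q^{n^2+n}\beta_n(1;q)=\frac{1}{(q;q)_\infty}\sum_{n\ge0}q^{n^2+n}\alpha_n(1;q)\cdot(\text{correction}),
\]
but it is cleaner to reuse the pairs already found. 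I will use the Bailey pairs $\big(\alpha_m^{(k)},X_{2m}(\cdot)/(q;q)_{2m}\big)$ relative to $1$ and $\big(\gamma_m^{(k)}/(1-q),X_{2m+1}(\cdot)/(q;q)_{2m+1}\big)$ relative to $q$, both from Table~\ref{tab-AB12}.

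For the even identities \eqref{X-BP-result-5}--\eqref{X-BP-result-6}, I will first convert each pair relative to $1$ into a pair relative to $q$ using \eqref{Bailey-atoaq}. The new $\beta$ is $q^{-m}\beta_m$, so the weight $q^{m^2+m}\beta_m(1)$ becomes $q^{m^2+2m}\beta'_m(q)=a^mq^{m^2}\beta'_m$ with $a=q$. Then \eqref{bailey-s1} applies directly and gives $\frac{1}{(q^2;q)_\infty}\sum_n q^{n^2+n}\alpha'_n(q;q)$, where
\[
\alpha'_n=\frac{1-q^{2n+1}}{1-q}q^{-n}\sum_{r\le n}\alpha_r .
\]
The partial sums of $\alpha^{(k)}$ telescope by blocks of five, since the nonzero entries pair up within each residue block. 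The resulting sum then becomes a bilateral series of the form $\sum_j(\pm)q^{\,\text{quadratic in } j}$ with quadratic coefficient $45$. Combining terms by Jacobi triple product \eqref{Jacobi} on base $q^{90}$ gives the $\overline{J}_{\cdot,90}$ pairs. The factor $(1-q^{2n+1})/(1-q)$ together with $1/(q^2;q)_\infty$ produces $1/J_1$.

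For the odd identities \eqref{X-BP-result-7}--\eqref{X-BP-result-8}, the pairs are already relative to $q$. The weight is $q^{m^2+2m}=a^mq^{m^2}\cdot q^{m}$. The extra $q^m$ is not directly of the form in \eqref{bailey-s1}, so I will apply \eqref{Bailey-atoaq} once more to move to relative $q^2$. Alternatively, and preferably, I will write $q^{m^2+2m}\beta_m(q)=(q^2)^{m}q^{m^2}\cdot q^{-m}\beta_m$. This is exactly \eqref{bailey-s1} at $a=q^2$ for the pair obtained from \eqref{Bailey-atoaq}. I then evaluate $\sum_n q^{2n}q^{n^2}\alpha''_n$ with $\alpha''_n=\frac{1-q^{2n+2}}{1-q^2}q^{-n}\sum_{r\le n}\gamma_r/(1-q)$.

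The main obstacle is the bookkeeping of the partial sums $\sum_{r\le n}\alpha_r$ over the five residue classes modulo $5$. One must check that they collapse to a two-term expression per $j$, so that after multiplying by $(1-q^{2n+1})$ and reindexing $j\to -j-1$, the whole thing becomes a difference of two theta series of modulus $90$. If the telescoping is not clean, I would fall back on verifying the final product identities by computing enough coefficients and using the Frye--Garvan approach \cite{Frye-Garvan} on the bilateral series. The last step is to match exponents with the stated $\overline{J}_{a,90}$ and powers $q^1,q^3,q^6,q^4,q^8,q^2$, using \eqref{Jacobi}.
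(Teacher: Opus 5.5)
There is a genuine gap: the shift \eqref{Bailey-atoaq} cannot supply the extra factor $q^m$ that distinguishes these sums from those of Lemma~\ref{lem-X-BP-result}.

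That construction gives $\beta'_n(aq)=q^{-n}\beta_n(a)$. Passing from $a$ to $aq$ in \eqref{bailey-s1} multiplies the weight by $q^n$, and the two effects cancel: $(aq)^nq^{n^2}\beta'_n(aq)=a^nq^{n^2}\beta_n(a)$. So \eqref{Bailey-atoaq} followed by \eqref{bailey-s1} at $aq$ evaluates exactly the same series as \eqref{bailey-s1} at $a$. Your computation would therefore reproduce the right sides of Lemma~\ref{lem-X-BP-result} (e.g.\ $(\overline{J}_{44,90}-q^2\overline{J}_{26,90})/J_1$), not the stated ones.

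Concretely, your step ``$q^{m^2+2m}\beta'_m(q)=a^mq^{m^2}\beta'_m$ with $a=q$'' is false: the right side equals $q^{m^2+m}\beta'_m(q)=q^{m^2}\beta_m(1)$. Likewise $(q^2)^mq^{m^2}\cdot q^{-m}\beta_m(q)=q^{m^2+m}\beta_m(q)$, not $q^{m^2+2m}\beta_m(q)$. Iterating the shift only repeats the cancellation. The Frye--Garvan fallback verifies identities among theta quotients, so it cannot replace an evaluation of the $q$-hypergeometric left side.

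What you need is a Bailey pair relative to $q$ whose $\beta_m$ is exactly $X_{2m}/(q;q)_{2m}$, and one relative to $q^2$ with $\beta_m=X_{2m+1}/(q;q)_{2m+1}$. This is because \eqref{bailey-s1} at $a=q$ (resp.\ $a=q^2$) carries precisely the weight $q^{m^2+m}$ (resp.\ $q^{m^2+2m}$).

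The paper obtains these pairs from the elementary identity \eqref{raising}. In each of \eqref{X-even-112}--\eqref{X-odd-212}, the two terms attached to a given $j$ involve $\qbinom{2m}{m-s}$ and $\qbinom{2m}{m-s-1}$ (resp.\ $\qbinom{2m+1}{m-s}$ and $\qbinom{2m+1}{m-s-1}$). Their powers of $q$ differ by exactly $2s+1$ (resp.\ $2s+2$), which is what \eqref{raising} requires.

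Hence $(1-q^{2m+1})X_{2m}$ collapses to a single sum $\sum_j c_j\qbinom{2m+1}{m-5j-\epsilon}$ with $\epsilon\in\{0,1\}$. Equivalently, $X_{2m}/(q;q)_{2m}=\sum_j c_j/\big((q;q)_{m-5j-\epsilon}(q;q)_{m+5j+1+\epsilon}\big)$. This is visibly the $\beta$ of a relative-$q$ pair whose $\alpha$ is supported on two residue classes mod $5$. The odd case gives relative-$q^2$ pairs in the same way.

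Inserting these pairs into \eqref{bailey-s1} and using \eqref{Jacobi} gives the four identities directly. There are no partial sums of $\alpha$'s to track, so the bookkeeping you flagged as the main obstacle never arises.
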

In order to apply the Bailey pair method, we see that we need to produce Bailey pairs with  $\beta$ parts the same as the proof of Lemma \ref{lem-X-BP-result} but the parameters change from $1,q$ to $q,q^2$, respectively. We achieve this by rewriting \eqref{X-even-112}--\eqref{X-odd-212} in different forms. 
We first state a useful identity. Let $n,k\in\mathbb{Z}$, and let $u$ be arbitrary.  Then
\begin{align}
(1-q^{n+1})
\left\{
q^u\qbinom nk-q^{u+n+1-2k}\qbinom{n}{k-1}
\right\}=\left(q^u-q^{u+n+1-2k}\right)\qbinom{n+1}k.   \label{raising}
\end{align}
In fact, the left side can be written as
\begin{align}
&q^u(1-q^{n+1-k})\qbinom{n+1}k-q^{u+n+1-2k}(1-q^{n+2-k})\qbinom{n+1}{k-1} \nonumber \\
&=q^u(1-q^{n+1-k})\qbinom{n+1}k-q^{u+n+1-2k}(1-q^k)\qbinom{n+1}k.     
\end{align}
This is exactly the right side upon simplifications. 
\begin{proof}[Proof of Lemma \ref{lem-X-BP-results-2}]
For $k=m-s$ and $n=2m$ or $2m+1$, \eqref{raising} can be written as
\begin{align}
(1-q^{2m+1})
\left\{
q^u\qbinom{2m}{m-s}
-q^{u+2s+1}\qbinom{2m}{m-s-1}
\right\}=\left(q^u-q^{u+2s+1}\right)
\qbinom{2m+1}{m-s},   \label{even-raise} \\
(1-q^{2m+2})
\left\{
q^u\qbinom{2m+1}{m-s}
-q^{u+2s+2}\qbinom{2m+1}{m-s-1}
\right\}=\left(q^u-q^{u+2s+2}\right)
\qbinom{2m+2}{m-s}.   \label{odd-raise}
\end{align}
Applying \eqref{even-raise} to \eqref{X-even-112} and \eqref{X-even-243}, we obtain
\begin{align}
&(1-q^{2m+1})X_{2m}(1,1,2)=\sum_{j\in\mathbb{Z}}
\left(q^{20j^2-j}-q^{20j^2+9j+1}\right)
\qbinom{2m+1}{m-5j},   \label{X-even-112-shift} \\
&q^{1/2}(1-q^{2m+1})X_{2m}(2,4,3)
=\sum_{j\in\mathbb{Z}}
\left(q^{20j^2+7j+1}-q^{20j^2+17j+4}\right)
\qbinom{2m+1}{m-5j-1}.  \label{X-even-243-shift}
\end{align}
Applying \eqref{odd-raise} to \eqref{X-odd-143} and \eqref{X-odd-212}, we obtain
\begin{align}
&q^{-1/2}(1-q^{2m+2})X_{2m+1}(1,4,3)
=\sum_{j\in\mathbb{Z}}
\left(q^{20j^2+11j+1}-q^{20j^2+21j+5}\right)
\qbinom{2m+2}{m-5j-1}, \label{X-odd-143-shift} \\
&(1-q^{2m+2})X_{2m+1}(2,1,2)
=\sum_{j\in\mathbb{Z}}
\left(q^{20j^2+3j}-q^{20j^2+13j+2}\right)
\qbinom{2m+2}{m-5j}.    \label{X-odd-212-shift}
\end{align}
From \eqref{X-even-112-shift}--\eqref{X-even-243-shift} we obtain two Bailey pairs relative to $q$:
$$\Big(\frac{\widetilde{\alpha}_m^{(1)}}{1-q},\frac{X_{2m}(1,1,2)}{(q;q)_{2m}}\Big), ~~ \Big(\frac{\widetilde{\alpha}_m^{(2)}}{1-q}, q^{\frac{1}{2}}\frac{X_{2m}(2,4,3)}{(q;q)_{2m}}\Big).$$
From \eqref{X-odd-143-shift} and \eqref{X-odd-212-shift} we obtain two Bailey pairs relative to $q^2$: 
$$\Big(\frac{\widetilde{\gamma}_m^{(1)}}{(1-q)(1-q^2)},q^{-\frac{1}{2}}\frac{X_{2m+1}(1,4,3)}{(q;q)_{2m+1}}\Big), ~~ \Big(\frac{\widetilde{\gamma}_m^{(2)}}{(1-q)(1-q^2)}, \frac{X_{2m+1}(2,1,2)}{(q;q)_{2m+1}}\Big).$$ 
Here 
\begin{align}
    &\widetilde{\alpha}_m^{(1)}=\begin{cases}
        q^{20j^2-j}-q^{20j^2+9j+1} & m=5j, \\
        -q^{20j^2+31j+12}+q^{20j^2+41j+21} & m=5m+4, \\
        0 & \text{otherwise};
    \end{cases} \\
    &\widetilde{\alpha}_m^{(2)}=\begin{cases}
        q^{20j^2+7j+1}-q^{20j^2+17j+4} & m=5j+1,\\
        -q^{20j^2+23j+7}+q^{20j^2+33j+14} & m=5j+3, \\
        0 & \text{otherwise};
    \end{cases} \\
    &\widetilde{\gamma}_m^{(1)}=\begin{cases}
        q^{20j^2+11j+1}-q^{20j^2+21j+5} & m=5j+1,\\
        -q^{20j^2+19j+4}+q^{20j^2+29j+10} & m=5j+2, \\
        0 & \text{otherwise}; 
    \end{cases}\\
    &\widetilde{\gamma}_m^{(2)}=\begin{cases}
        q^{20j^2+3j}-q^{20j^2+13j+2} &m=5j,\\
        -q^{20j^2+27j+9}+q^{20j^2+37j+17} &m=5j+3, \\
        0 & \text{otherwise}.
    \end{cases}
\end{align}
Substituting the above Bailey paris into \eqref{bailey-s1}, we obtain \eqref{X-BP-result-5}--\eqref{X-BP-result-8}.
\end{proof}

We may rewrite \eqref{ab-X-1st} as
\begin{equation}\label{ab-X-2nd}
\begin{split}
    a_{2m+2}&=\Lambda_0 B_{2m+1}(1)+q^{\frac{1}{2}}\Lambda_1 E_{2m+1}(2)=\Lambda_0 X_{2m+1}(1,2,1)+q^{\frac{1}{2}}\Lambda_1 X_{2m+1}(2,3,4),  \\
    a_{2m+1}&=\Lambda_0 E_{2m}(1)+q^{\frac{1}{2}}\Lambda_1 B_{2m}(2)=\Lambda_0 X_{2m}(1,3,4)+q^{\frac{1}{2}}\Lambda_1 X_{2m}(2,2,1), \\
    b_{2m+2}&=q^{\frac{1}{2}}\Omega_3B_{2m+1}(1)+\Omega_6E_{2m+1}(2)=q^{\frac{1}{2}}\Omega_3X_{2m+1}(1,2,1)+\Omega_6X_{2m+1}(2,3,4), \\
    b_{2m+1}&=q^{\frac{1}{2}}\Omega_3E_{2m}(1)+\Omega_6B_{2m}(2)=q^{\frac{1}{2}}\Omega_3X_{2m}(1,3,4)+\Omega_6 X_{2m}(2,2,1).
\end{split}
\end{equation}
Now we establish some identities involving the sequences $X_n(a,b,c)$ in \eqref{ab-X-2nd}.

\begin{lemma}\label{lem-X-BP-results-3}
We have
\begin{align}
    q^{-\frac{3}{2}}\sum_{m\geq 0} \frac{q^{m^2}}{(q;q)_{2m}}X_{2m}(1,3,4)=\frac{q\overline{J}_{24,90}-q^7\overline{J}_{6,90}}{J_1},  \label{eq-lem-X-BP-3-id1}\\
    \sum_{m\geq 0} \frac{q^{m^2}}{(q;q)_{2m}}X_{2m}(2,2,1)=\frac{\overline{J}_{42,90}-q^6\overline{J}_{12,90}}{J_1},  \label{eq-lem-X-BP-3-id2} \\
    \sum_{m\geq 0} \frac{q^{m^2+m}}{(q;q)_{2m+1}}X_{2m+1}(1,2,1)=\frac{\overline{J}_{39,90}-q^3\overline{J}_{21,90}}{J_1},  \label{eq-lem-X-BP-3-id3}\\
    q^{-\frac{1}{2}}\sum_{m\geq 0}\frac{q^{m^2+m}}{(q;q)_{2m+1}}X_{2m+1}(2,3,4)=\frac{\overline{J}_{33,90}-q^9\overline{J}_{3,90}}{J_1}. \label{eq-lem-X-BP-3-id4}
\end{align}
\end{lemma}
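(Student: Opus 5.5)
The plan is to follow the template of Lemmas \ref{lem-X-BP-result} and \ref{lem-X-BP-results-2}: write each $X_n(a,b,c)$ appearing in \eqref{ab-X-2nd} as an alternating sum of $q$-binomial coefficients, read off a Bailey pair, and then apply \eqref{bailey-s1}. We first specialize \eqref{X-binomial-exp}. For $(a,b,c)=(1,3,4)$ the prefactor is $q^{(1-3)(1-4)/4}=q^{3/2}$, which explains the normalization $q^{-\frac32}$ in \eqref{eq-lem-X-BP-3-id1}. For $(2,2,1)$ the prefactor is $1$. For $(1,2,1)$ it is $1$, and for $(2,3,4)$ it is $q^{1/2}$. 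Concretely, we will obtain
\begin{align*}
q^{-\frac32}X_{2m}(1,3,4)&=\sum_{j}\Big\{q^{20j^2+11j}\qbinom{2m}{m-1-5j}-q^{20j^2+19j+3}\qbinom{2m}{m-2-5j}\Big\},
\end{align*}
and analogous expressions for the other three cases, with lower indices $m-5j$ and $m-5j-1$, or $m-5j$ and $m-5j-2$, and so on.

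The even cases $X_{2m}(1,3,4)$ and $X_{2m}(2,2,1)$ will be treated like \eqref{eq-lem-X1-id-1}--\eqref{eq-lem-X1-id-2}. We divide by $(q;q)_{2m}$, so each term becomes $q^{e}/\bigl((q;q)_{m-k}(q;q)_{m+k}\bigr)$. Negative shifts $k$ are folded back by replacing $j\mapsto -j-1$, which gives $\beta_m=X_{2m}/(q;q)_{2m}$ as part of a Bailey pair relative to $1$, with $\alpha_k$ supported on $k\equiv 0,\pm1,\pm2 \pmod 5$. The odd cases $X_{2m+1}(1,2,1)$ and $X_{2m+1}(2,3,4)$ are handled like \eqref{eq-lem-X1-id-3}--\eqref{eq-lem-X1-id-4}. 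We divide by $(q;q)_{2m+1}=(1-q)(q^2;q)_{2m}$ and pair each term with $1/\bigl((q;q)_{m-k}(q^2;q)_{m+k}\bigr)$, which yields a Bailey pair relative to $q$ with $\alpha_k=\gamma_k/(1-q)$. We will tabulate the resulting $\alpha$'s in a table analogous to Table \ref{tab-AB12}.

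Next we insert these pairs into \eqref{bailey-s1}. For $a=1$ this gives $\sum_m q^{m^2}\beta_m=\frac{1}{(q;q)_\infty}\sum_k q^{k^2}\alpha_k$. For $a=q$ it gives $\sum_m q^{m^2+m}\beta_m=\frac{1}{(q^2;q)_\infty}\sum_k q^{k^2+k}\alpha_k$, and the factor $1/(1-q)$ combines with this to produce $1/J_1$. In each case the $\alpha$-sum consists of terms $q^{k^2}q^{20j^2+\cdots}$ with $k=5j+\ell$. Each such term is a quadratic form $q^{45j^2+\lambda j+\mu}$, and the terms pair up under $j\mapsto -j-1$ into two bilateral series $\sum_{j\in\mathbb Z}q^{45j^2+\lambda j}$. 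By \eqref{Jacobi} each bilateral series equals $\overline{J}_{45-\lambda,90}$. As a check, in \eqref{eq-lem-X-BP-3-id2} the $j$-exponent should combine to $45j^2+3j$, which gives $\overline J_{42,90}$, and the other piece should give $q^6\overline J_{12,90}$.

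The main obstacle will be the bookkeeping. Each $X_n$ contributes four families of $\alpha$-terms, with residues $\ell$ and signs that must be tracked through the folding $j\mapsto-j-1$. These families must then be matched into exactly two theta series with the stated leading powers ($q$ and $q^7$, $1$ and $q^6$, $1$ and $q^3$, $1$ and $q^9$). We will verify each of the four cases separately by explicit completion of squares.
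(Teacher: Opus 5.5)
Your proposal is correct and is essentially the paper's proof. The paper likewise specializes \eqref{X-binomial-exp} to obtain \eqref{X134}--\eqref{X234}, folds the negative indices to get Bailey pairs relative to $1$ (even cases) and relative to $q$ (odd cases, with $\alpha_k=\gamma_k/(1-q)$; see Table~\ref{tab-AB34}), and then applies \eqref{bailey-s1} and the Jacobi triple product. Your exponents and the resulting $\overline{J}_{45-\lambda,90}$ bookkeeping check out in all four cases.

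One small point on the bookkeeping: some pairings occur within a single residue class rather than between two classes. In the $(2,2,1)$ case the $k=5j$ terms combine via $j\leftrightarrow -j$, and in the $(2,3,4)$ case both halves land on $k=5j+2$.
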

\begin{proof}
From \eqref{X-binomial-exp} we have
\begin{align}
X_{2m}(1,3,4)
&=q^{3/2}\sum_{j\in\mathbb{Z}}\left\{
q^{20j^2+11j}\qbinom{2m}{m-5j-1}
-q^{20j^2+19j+3}\qbinom{2m}{m-5j-2}
\right\}, \label{X134}\\
X_{2m}(2,2,1)
&=\sum_{j\in\mathbb{Z}}\left\{
q^{20j^2-3j}\qbinom{2m}{m-5j}
-q^{20j^2+13j+2}\qbinom{2m}{m-5j-2}
\right\}, \label{X221}\\
X_{2m+1}(1,2,1)
&=\sum_{j\in\mathbb{Z}}\left\{
q^{20j^2+j}\qbinom{2m+1}{m-5j}
-q^{20j^2+9j+1}\qbinom{2m+1}{m-5j-1}
\right\}, \label{X121}\\
X_{2m+1}(2,3,4)
&=q^{1/2}\sum_{j\in\mathbb{Z}}\left\{
q^{20j^2+7j}\qbinom{2m+1}{m-5j}
-q^{20j^2+23j+6}\qbinom{2m+1}{m-5j-2}
\right\}. \label{X234}
\end{align}
Therefore, we have
\begin{align}
& \frac{q^{-3/2}X_{2m}(1,3,4)}{(q;q)_{2m}}
=\sum_{j\geq 0}
\frac{q^{20j^2+11j}}
     {(q;q)_{m-5j-1}(q;q)_{m+5j+1}}+
\sum_{j\geq 0}
\frac{q^{20j^2+29j+9}}
     {(q;q)_{m-5j-4}(q;q)_{m+5j+4}}\nonumber\\
&\quad -\sum_{j\geq 0}
\frac{q^{20j^2+19j+3}}
     {(q;q)_{m-5j-2}(q;q)_{m+5j+2}}
-\sum_{j\geq 0}
\frac{q^{20j^2+21j+4}}
     {(q;q)_{m-5j-3}(q;q)_{m+5j+3}}, \label{BP-134}\\
&\frac{X_{2m}(2,2,1)}{(q;q)_{2m}}
=\frac{1}{(q;q)_m^2}+\sum_{j>0}
\frac{q^{20j^2-3j}+q^{20j^2+3j}}
     {(q;q)_{m-5j}(q;q)_{m+5j}}\nonumber\\
& \quad  -\sum_{j\geq 0}
\frac{q^{20j^2+13j+2}}
     {(q;q)_{m-5j-2}(q;q)_{m+5j+2}}
-\sum_{j\geq 0}
\frac{q^{20j^2+27j+9}}
     {(q;q)_{m-5j-3}(q;q)_{m+5j+3}}, \label{BP-221}\\
&\frac{X_{2m+1}(1,2,1)}{(q;q)_{2m+1}}
=\sum_{j\geq 0}
\frac{q^{20j^2+j}}
     {(q;q)_{m-5j}(q;q)_{m+5j+1}}+
\sum_{j\geq 0}
\frac{q^{20j^2+39j+19}}
     {(q;q)_{m-5j-4}(q;q)_{m+5j+5}}\nonumber\\
&\quad  -\sum_{j\geq 0}
\frac{q^{20j^2+9j+1}}
     {(q;q)_{m-5j-1}(q;q)_{m+5j+2}}
-\sum_{j\geq 0}
\frac{q^{20j^2+31j+12}}
     {(q;q)_{m-5j-3}(q;q)_{m+5j+4}}, \label{BP-121}\\
&\frac{q^{-1/2}X_{2m+1}(2,3,4)}{(q;q)_{2m+1}}
=\sum_{j\geq 0}
\frac{q^{20j^2+7j}}
     {(q;q)_{m-5j}(q;q)_{m+5j+1}}+
\sum_{j\geq 0}
\frac{q^{20j^2+33j+13}}
     {(q;q)_{m-5j-4}(q;q)_{m+5j+5}}\nonumber\\
&\quad  -\sum_{j\geq 0}
\frac{q^{20j^2+23j+6}}
     {(q;q)_{m-5j-2}(q;q)_{m+5j+3}}
-\sum_{j\geq 0}
\frac{q^{20j^2+17j+3}}
     {(q;q)_{m-5j-2}(q;q)_{m+5j+3}}. \label{BP-234}
\end{align}
From \eqref{BP-134} and \eqref{BP-221} we obtain two Bailey pairs relative to 1:
$$\Big(\alpha_m^{(3)},q^{-3/2}\frac{X_{2m}(1,3,4)}{(q;q)_{2m}}\Big), \quad \Big(\alpha_m^{(4)},\frac{X_{2m}(2,2,1)}{(q;q)_{2m}}\Big).$$
From \eqref{BP-121} and \eqref{BP-234} we obtain two Bailey pairs relative to $q$: 
$$\Big(\frac{\delta_m^{(3)}}{1-q},\frac{X_{2m+1}(1,2,1)}{(q;q)_{2m+1}}\Big), \quad \Big(\frac{\delta_m^{(4)}}{1-q},q^{-{1}/{2}}\frac{X_{2m+1}(2,3,4)}{(q;q)_{2m+1}}\Big).$$
Here $\alpha_m^{(k)}$ and $\gamma_m^{(k)}$ ($k=3,4$) are recorded in Table \ref{tab-AB34}.
\begin{table}[htbp]
\centering
\begingroup
\footnotesize
\renewcommand{\arraystretch}{1.4}
\setlength{\tabcolsep}{4pt}

\begin{adjustbox}{max width=\linewidth}
\begin{tabular}{c|cccc}
\hline
$m$ & $\alpha_m^{(3)}$ & $\alpha_m^{(4)}$ & $\gamma_m^{(3)}$ & $\gamma_m^{(4)}$
\\
\hline
$0$ & $0$ & $1$ & $1$ & $1$
\\
$5j$ & $0$ & $q^{20j^2-3j}+q^{20j^2+3j}$ & $q^{20j^2+j}$ & $q^{20j^2+7j}$
\\[3pt]
$5j+1$ & $q^{20j^2+11j}$& $0$ & $-q^{20j^2+9j+1}$ & $0$
\\
$5j+2$ & $-q^{20j^2+19j+3}$ & $-q^{20j^2+13j+2}$ & $0$ & $-q^{20j^2+17j+3}-q^{20j^2+23j+6}$
\\
$5j+3$ & $-q^{20j^2+21j+4}$ & $-q^{20j^2+27j+9}$ & $-q^{20j^2+31j+12}$ & $0$
\\
$5j+4$ & $q^{20j^2+29j+9}$ & $0$ & $q^{20j^2+39j+19}$ & $q^{20j^2+33j+13}$
\\
\hline
\end{tabular}
\end{adjustbox}

\endgroup
\caption{Values of $\alpha_m^{(k)}$ and $\gamma_m^{(k)}$ for $k=3,4$.}
\label{tab-AB34}
\end{table}

Substituting these Bailey pairs into \eqref{bailey-s1}, we obtain \eqref{eq-lem-X-BP-3-id1}--\eqref{eq-lem-X-BP-3-id4}.
\end{proof}

For the sequences $X_n(a,b,c)$ appearing later in \eqref{s-t-X}), we establish some identities similar to Lemmas \ref{lem-X-BP-result}--\ref{lem-X-BP-results-3}.
\begin{lemma}\label{lem-X-BP-results-4}
We have
\begin{align}
  q^{\frac{1}{2}} \sum_{m\geq 0} \frac{q^{m^2}}{(q;q)_{2m}}X_{2m}(1,3,2)&=\frac{q^2\overline{J}_{29,90}-q^7\overline{J}_{11,90}}{J_1}, \\
  \sum_{m\geq 0} \frac{q^{m^2}}{(q;q)_{2m}}X_{2m}(2,2,3)&=\frac{\overline{J}_{43,90}-q^8\overline{J}_{7,90}}{J_1}, \\
  q^{\frac{1}{2}}\sum_{m\geq 0} \frac{q^{m^2+m}}{(q;q)_{2m+1}}X_{2m+1}(1,2,3)&=\frac{q\overline{J}_{34,90} -q^5\overline{J}_{16,90}}{J_1}, \\
  \sum_{n\geq 0} \frac{q^{m^2+m}}{(q;q)_{2m+1}}X_{2m+1}(2,3,2)&=\frac{\overline{J}_{38,90}-q^{10}\overline{J}_{2,90}}{J_1}.
\end{align}
\end{lemma}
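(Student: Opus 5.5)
The plan is to follow the template of Lemmas \ref{lem-X-BP-result} and \ref{lem-X-BP-results-3}. First, we specialize the Andrews--Baxter--Forrester formula \eqref{X-binomial-exp} (with $r=5$) to the four configuration sums $X_{2m}(1,3,2)$, $X_{2m}(2,2,3)$, $X_{2m+1}(1,2,3)$ and $X_{2m+1}(2,3,2)$. This writes each as a signed sum of two $q$-binomial series. For example, for $(a,b,c)=(1,3,2)$ the prefactor is $q^{(1-3)(1-2)/4}=q^{1/2}$, and the two terms are
\begin{align*}
q^{20j^2+6j}\qbinom{2m}{m-5j-1} \quad\text{and}\quad q^{20j^2+14j+2}\qbinom{2m}{m-5j-2}.
\end{align*}
The other three cases are analogous. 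We expect $q^{\frac12}X_{2m}(1,3,2)$ to pick up an extra overall power $q$, consistent with the right-hand sides.

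Next, we divide by $(q;q)_{2m}$ (respectively $(q;q)_{2m+1}$). In the terms with $j<0$ we replace $j$ by $-j-1$. Each expression then becomes a sum of terms $\frac{q^{e}}{(q;q)_{m-t}(q;q)_{m+t}}$ (respectively $\frac{q^{e}}{(q;q)_{m-t}(q;q)_{m+t+1}}$) with $t$ ranging over the residues modulo $5$. This exhibits Bailey pairs relative to $1$ in the even cases, and relative to $q$ in the odd cases, after the normalization $\alpha_m/(1-q)$ used in Lemma \ref{lem-X-BP-result}. The $\alpha_m$ are supported on $m\equiv 0,1,2,3,4 \pmod 5$ with explicit monomial entries, which we tabulate exactly as in Tables \ref{tab-AB12} and \ref{tab-AB34}.

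Then we insert these pairs into \eqref{bailey-s1} with $a=1$ and $a=q$. In the even case the left side is $\sum_m q^{m^2}\beta_m$. In the odd case it is $\sum_m q^{m^2+m}\beta_m$, and the factor $(1-q)$ cancels against $1/(q^2;q)_\infty$ to give $1/J_1$. The right side becomes $\frac{1}{J_1}\sum_m a^m q^{m^2}\alpha_m$. Splitting $m=5j+t$ turns this into two bilateral theta series of the shape $\sum_{j\in\mathbb Z} q^{45j^2+\lambda j}$, once the contributions of residues $t$ and $4-t$ (or $3-t$ in the odd case) are paired by the reflection $j\mapsto -j-1$. The Jacobi triple product \eqref{Jacobi} then identifies each with $\overline{J}_{\mu,90}$ times a power of $q$.

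The main obstacle is bookkeeping rather than conception. We must check that the reflected pieces from different residues combine into exactly two complete bilateral sums with the claimed parameters, namely $\mu=29,11$; $43,7$; $34,16$; $38,2$, together with the stated leading powers of $q$. We must also handle $m=0$ (and the initial term in the odd case) carefully, since the diagonal term $1/(q;q)_m^2$ contributes $\alpha_0$. We would verify each exponent, for instance by confirming that $45j^2+\lambda j$ arises from $m^2+20j'^2+\dots$ with $m=5j'+t$, and cross-check the final identities numerically to low order.
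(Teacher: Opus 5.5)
Your proposal is correct and is essentially the paper's proof: expand the four configuration sums by \eqref{X-binomial-exp}, divide by $(q;q)_{2m}$ or $(q;q)_{2m+1}$ to read off Bailey pairs relative to $1$ and to $q$ (the paper records them in Table \ref{tab-AB56}), and substitute into \eqref{bailey-s1}. One small bookkeeping correction: under $j\mapsto -j-1$, the residues that merge into a single bilateral series are $t\leftrightarrow 5-t$ in the even case and $t\leftrightarrow 4-t$ in the odd case, not $4-t$ and $3-t$. For example, $1\leftrightarrow 4$ and $2\leftrightarrow 3$ for $X_{2m}(1,3,2)$, giving $q^2\overline{J}_{29,90}-q^7\overline{J}_{11,90}$, and $0\leftrightarrow 4$ and $1\leftrightarrow 3$ for $X_{2m+1}(1,2,3)$.
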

\begin{proof}
From \eqref{X-binomial-exp} we have
\begin{align}
X_{2m}(1,3,2)
&=q^{1/2}\sum_{j\in\mathbb{Z}}\left\{
 q^{20j^2+6j}\qbinom{2m}{m-5j-1}
-q^{20j^2+14j+2}\qbinom{2m}{m-5j-2}
\right\},   \label{X132}\\
X_{2m}(2,2,3)
&=\sum_{j\in\mathbb{Z}}\left\{
 q^{20j^2+2j}\qbinom{2m}{m-5j}
-q^{20j^2+18j+4}\qbinom{2m}{m-5j-2}
\right\},      \label{X223}\\
X_{2m+1}(1,2,3)
&=q^{1/2}\sum_{j\in\mathbb{Z}}\left\{
 q^{20j^2+6j}\qbinom{2m+1}{m-5j}
-q^{20j^2+14j+2}\qbinom{2m+1}{m-5j-1}
\right\},   \label{X123}\\
X_{2m+1}(2,3,2)
&=\sum_{j\in\mathbb{Z}}\left\{
 q^{20j^2+2j}\qbinom{2m+1}{m-5j}
-q^{20j^2+18j+4}\qbinom{2m+1}{m-5j-2}
\right\}.   \label{X232}
\end{align}
We can rewrite them as
\begin{align}
&\frac{q^{1/2}X_{2m}(1,3,2)}{(q;q)_{2m}}
=\sum_{j\geq 0}
 \frac{q^{20j^2+6j+1}}
      {(q;q)_{m-5j-1}(q;q)_{m+5j+1}}
-\sum_{j\geq 0}
 \frac{q^{20j^2+14j+3}}
      {(q;q)_{m-5j-2}(q;q)_{m+5j+2}}\nonumber\\
&-\sum_{j\geq 0}
 \frac{q^{20j^2+26j+9}}
      {(q;q)_{m-5j-3}(q;q)_{m+5j+3}}
+\sum_{j\geq 0}
 \frac{q^{20j^2+34j+15}}
      {(q;q)_{m-5j-4}(q;q)_{m+5j+4}}, \label{X132-BP} \\
&\frac{X_{2m}(2,2,3)}{(q;q)_{2m}}
=\frac{1}{(q;q)_m^2}
+\sum_{j\geq 1}
 \frac{q^{20j^2-2j}+q^{20j^2+2j}}
      {(q;q)_{m-5j}(q;q)_{m+5j}}\nonumber\\
&-\sum_{j\geq 0}
 \frac{q^{20j^2+18j+4}}
      {(q;q)_{m-5j-2}(q;q)_{m+5j+2}}
-\sum_{j\geq 0}
 \frac{q^{20j^2+22j+6}}
      {(q;q)_{m-5j-3}(q;q)_{m+5j+3}}, \label{X223-BP} \\
&\frac{q^{1/2}X_{2m+1}(1,2,3)}{(q;q)_{2m+1}}
=\sum_{j\geq 0}
 \frac{q^{20j^2+6j+1}}
      {(q;q)_{m-5j}(q;q)_{m+5j+1}}
-\sum_{j\geq 0}
 \frac{q^{20j^2+14j+3}}
      {(q;q)_{m-5j-1}(q;q)_{m+5j+2}}\nonumber\\
&-\sum_{j\geq 0}
 \frac{q^{20j^2+26j+9}}
      {(q;q)_{m-5j-3}(q;q)_{m+5j+4}}
+\sum_{j\geq 0}
 \frac{q^{20j^2+34j+15}}
      {(q;q)_{m-5j-4}(q;q)_{m+5j+5}},  \label{X123-BP} \\
&\frac{X_{2m+1}(2,3,2)}{(q;q)_{2m+1}}
=\sum_{j\geq 0}
 \frac{q^{20j^2+2j}}
      {(q;q)_{m-5j}(q;q)_{m+5j+1}}\nonumber\\
&-\sum_{j\geq 0}
 \frac{q^{20j^2+18j+4}+q^{20j^2+22j+6}}
      {(q;q)_{m-5j-2}(q;q)_{m+5j+3}}
+\sum_{j\geq 0}
 \frac{q^{20j^2+38j+18}}
      {(q;q)_{m-5j-4}(q;q)_{m+5j+5}}.  \label{X232-BP}
\end{align}
From \eqref{X132-BP} and \eqref{X223-BP} we obtain two Bailey pairs relative to $1$:
$$\Big(\alpha_m^{(5)},q^{1/2}\frac{X_{2m}(1,3,2)}{(q;q)_{2m}}\Big), \quad \Big(\alpha_m^{(6)}, \frac{X_{2m}(2,2,3)}{(q;q)_{2m}}\Big).$$
From \eqref{X123-BP} and \eqref{X232-BP} we obtain two Bailey pairs  relative to $q$:
$$\Big(\frac{\gamma_m^{(5)}}{1-q},q^{1/2}\frac{X_{2m+1}(1,2,3)}{(q;q)_{2m+1}}\Big), \quad 
\Big(\frac{\gamma_m^{(6)}}{1-q}, \frac{X_{2m+1}(2,3,2)}{(q;q)_{2m+1}}\Big).
$$
Here $\alpha_m^{(k)}$ and $\gamma_m^{(k)}$ ($k=5,6)$ are recorded in Table \ref{tab-AB56}.

\begin{table}[htbp]
\centering
\begingroup
\footnotesize
\renewcommand{\arraystretch}{1.4}
\setlength{\tabcolsep}{4pt}

\begin{adjustbox}{max width=\linewidth}
\begin{tabular}{c|cccc}
\hline
$m$ & $\alpha_m^{(5)}$ & $\alpha_m^{(6)}$ & $\gamma_m^{(5)}$ & $\gamma_m^{(6)}$
\\
\hline
$0$ & $0$ & $1$ & $q$ & $1$
\\
$5j$ & $0$ & $q^{20j^2-2j}+q^{20j^2+2j}$ & $q^{20j^2+6j+1}$ & $q^{20j^2+2j}$
\\[3pt]
$5j+1$ & $q^{20j^2+6j+1}$ & $0$ & $-q^{20j^2+14j+3}$ & $0$
\\

$5j+2$ & $-q^{20j^2+14j+3}$ & $-q^{20j^2+18j+4}$ & $0$ & $-q^{20j^2+18j+4}-q^{20j^2+22j+6}$
\\

$5j+3$ & $-q^{20j^2+26j+9}$ & $-q^{20j^2+22j+6}$ & $-q^{20j^2+26j+9}$ & $0$
\\

$5j+4$ & $q^{20j^2+34j+15}$ & $0$ & $q^{20j^2+34j+15}$ & $q^{20j^2+38j+18}$
\\
\hline
\end{tabular}
\end{adjustbox}

\endgroup
\caption{Values of $\alpha_m^{(k)}$ and $\gamma_m^{(k)}$ for $k=5,6$.}
\label{tab-AB56}
\end{table}

Substituting these Bailey pairs into \eqref{bailey-s1}, we obtain the desired identities.
\end{proof}

\subsection{Proofs of Theorems \ref{thm-ex9-new} and \ref{thm-dual-rank3}}
We first prove Theorem \ref{thm-ex9-new} which does not rely on any results in Section \ref{sec-aux}.
\begin{proof}[Proof of Theorem \ref{thm-ex9-new}]
Recall the function $T(u,v,w;q)$ defined in \eqref{T-defn}. Using \eqref{Euler} we have
\begin{align}
    T(q^4,q^2,1;q)&=\sum_{i,j\geq 0}\frac{q^{4i^2+4ij+3j^2+4i+2j}(-q^{2-2j};q^4)_{\infty}}{(q^4;q^4)_i(q^4;q^4)_j}=T_{1,0}(q)+T_{1,1}(q), \label{proof-T-1}\\
    T(1,q^{-2},1;q)&=\sum_{i,j\geq 0}\frac{q^{4i^2+4ij+3j^2-2j}(-q^{2-2j};q^4)_{\infty}}{(q^4;q^4)_i(q^4;q^4)_j}=T_{2,0}(q)+T_{2,1}(q), \label{proof-T-2} \\
    T(1,1,q^{-2};q)&=\sum_{i,j\geq 0}\frac{q^{4i^2+4ij+3j^2}(-q^{-2j};q^4)_{\infty}}{(q^4;q^4)_i(q^4;q^4)_j}=T_{3,0}(q)+X_{3,1}(q), \label{proof-T-3}\\
    T(1,1,1;q)&=\sum_{i,j\geq 0}\frac{q^{4i^2+4ij+3j^2}(-q^{2-2j};q^4)_{\infty}}{(q^4;q^4)_i(q^4;q^4)_j}=T_{4,0}(q)+T_{4,1}(q). \label{proof-T-4}
\end{align}
Here
\begin{align}
    T_{1,0}(q)&=(-q^2;q^4)_{\infty}\sum_{i,j\geq 0}\frac{q^{4i^2+8ij+10j^2+4i+4j}(-q^2;q^4)_j}{(q^4;q^4)_i(q^4;q^4)_{2j}}, \\
    T_{1,1}(q)&=(-1;q^4)_{\infty}\sum_{i,j\geq 0}\frac{q^{4i^2+8ij+10j^2+8i+14j+5}(-q^4;q^4)_j}{(q^4;q^4)_i(q^4;q^4)_{2j+1}}, \\
    T_{2,0}(q)&=(-q^2;q^4)_{\infty}\sum_{i,j\geq 0}\frac{q^{4i^2+8ij+10j^2-4j}(-q^2;q^4)_j}{(q^4;q^4)_i(q^4;q^4)_{2j}}, \\
    T_{2,1}(q)&=(-1;q^4)_{\infty}\sum_{i,j\geq 0}\frac{q^{4i^2+8ij+10j^2+4i+6j+1}(-q^4;q^4)_j}{(q^4;q^4)_i(q^4;q^4)_{2j+1}}, \\
    T_{3,0}(q)&=(-1;q^4)_{\infty}\sum_{i,j\geq 0}\frac{q^{4i^2+8ij+10j^2-2j}(-q^4;q^4)_j}{(q^4;q^4)_i(q^4;q^4)_{2j}}, \\
    T_{3,1}(q)&=(-q^2;q^4)_{\infty}\sum_{i,j\geq 0}\frac{q^{4i^2+8ij+10j^2+4i+8j+1}(-q^2;q^4)_{j+1}}{(q^4;q^4)_{i}(q^4;q^4)_{2j+1}}, \\
    T_{4,0}(q)&=(-q^2;q^4)_{\infty}\sum_{i,j\geq 0}\frac{q^{4i^2+8ij+10j^2}(-q^2;q^4)_j}{(q^4;q^4)_i(q^4;q^4)_{2j}},\\
    T_{4,1}(q)&=(-1;q^4)_{\infty}\sum_{i,j\geq 0}\frac{q^{4i^2+8ij+10j^2+4i+10j+3}(-q^4;q^4)_j}{(q^4;q^4)_i(q^4;q^4)_{2j+1}}.
\end{align}
Note that
\begin{align}\label{add-X10-proof}
T_{1,0}(q)=(-q^2;q^4)_\infty \sum_{n=0}^\infty q^{4n^2+4n} \sum_{j=0}^n \frac{q^{6j^2}(-q^2;q^4)_j}{(q^4;q^4)_{n-j}(q^4;q^4)_{2j}}.
\end{align}
Substituting the Bailey pair \eqref{dual-9-bp1} with $q^{1/2}$ replaced by $-q^{1/2}$ into \eqref{dual-9-bailey's lemma}, we deduce that
\begin{align}\label{add-X10-proof-new-mid}
&\sum_{n=0}^\infty q^{n^2+n}\sum_{j=0}^n \frac{q^{\frac{3}{2}j^2}}{(q;q)_{n-j}(q^{\frac{1}{2}};q)_j(q^2;q^2)_j} \nonumber \\
&=\frac{1}{(q;q)_\infty}\sum_{n=0}^\infty (-1)^{\frac{1}{2}n(n-1)}(1-q^{2n+1})q^{\frac{9}{4}n^2+\frac{5}{4}n} \nonumber \\
&=\frac{1}{(q;q)_\infty}\sum_{n=-\infty}^\infty (-1)^{\frac{1}{2}n(n-1)}q^{\frac{9}{4}n^2+\frac{5}{4}n} \nonumber \\
&=\frac{1}{(q;q)_\infty}\sum_{n=-\infty}^\infty \Big((-1)^nq^{9n^2+\frac{5}{2}n}-(-1)^nq^{9n^2-\frac{13}{2}n+1}  \Big) \nonumber \\
&=\frac{1}{J_1}\big(J_{\frac{13}{2},18}-qJ_{\frac{5}{2},18}\big)=\frac{J_{1,18}J_{7,18}J_{8,18}J_{9,18}}{J_1J_{\frac{7}{2},18}J_{\frac{9}{2},18}J_{\frac{11}{2},18}}.
\end{align}
Here the last equality can be proved directly using the Maple approach in \cite{Frye-Garvan}.

Substituting \eqref{add-X10-proof-new-mid} with $q$ replaced by $q^4$ into \eqref{add-X10-proof}, we deduce that
\begin{align}
     T_{1,0}(q)&=\frac{J_4J_{4,72}J_{28,72}J_{32,72}J_{36,72}}{J_2J_8J_{14,72}J_{18,72}J_{22,72}}. \label{X10-result}
\end{align}

Similarly, substituting the Bailey pairs \eqref{dual-9-bp5}--\eqref{dual-9-bp2} into \eqref{dual-9-bailey's lemma}, after simplifications we have
\begin{align}
    T_{1,1}(q)&=\frac{2q^5J_8J_{8,72}}{J_4^2}, \label{X1-result}\\
    T_{2,0}(q)&=\frac{J_4J_{24}J_{12,24}}{J_2J_8J_{6,24}}, \quad 
    T_{2,1}(q)=\frac{2qJ_8J_{24}}{J_4^2}, \label{X2-result}\\
    T_{3,0}(q)&=\frac{2J_8J_{32,72}}{J_4^2}, \quad 
    T_{3,1}(q)=\frac{qJ_4J_{4,72}J_{16,72}J_{20,72}J_{36,72}}{J_2J_8J_{2,72}J_{18,72}J_{34,72}}, \label{X3-result}\\
    T_{4,0}(q)&=\frac{J_4J_{8,72}J_{20,72}J_{28,72}J_{36,72}}{J_2J_8J_{10,72}J_{18,72}J_{26,72}}, \quad 
    T_{4,1}(q)=\frac{2q^3J_8J_{16,72}}{J_4^2}. \label{X4-result}
\end{align}
Substituting \eqref{X10-result}--\eqref{X4-result} into \eqref{proof-T-1}--\eqref{proof-T-4}, we obtain \eqref{eq-thm-T-1}--\eqref{eq-thm-T-4}.
\end{proof}

Next, we are going to prove Theorem \ref{thm-dual-rank3}. We first explain how we discovered it. 
Let
\begin{align}
 F(u,v,w)=F(u,v,w;q):=\sum_{i,j,k\geq 0}\frac{q^{5i^2+8j^2+8k^2-8ij-4ik+8jk}u^iv^jw^k}{(q^{12};q^{12})_i(q^{12};q^{12})_j(q^{12};q^{12})_k}.
\end{align}
The dual Nahm sums are given by
\begin{align}
&F^{(1)}(q):=F(q^6,1,1), \quad F^{(2)}(q):=F(q^4,q^{-8},q^{-4}), \label{dual-F1F2} \\
&F^{(3)}(q):=F(q^2,q^{-4},q^{-8}), \quad F^{(4)}(q):=F(1,1,1). \label{dual-F3F4}
\end{align}
In order to find product representations for them, we find that it is helpful to consider their 3-dissections: for $i=1,2,3,4$,
\begin{align}\label{3-dissection}
    F^{(i)}(q)=S_0^{(i)}(q^3)+qS_1^{(i)}(q^3)+q^2S_2^{(i)}(q^3), \quad S_0^{(i)}(q), S_1^{(i)}(q),S_2^{(i)}(q)\in \mathbb{Z}[[q]].
\end{align}
Let 
\begin{align*}
    Q(i,j,k):=5i^2+8j^2+8k^2-8ij-4ik+8jk, \quad m=i+j-k.
\end{align*}
Note that
\begin{align}
&Q(i,j,k)\equiv Q(i,j,k)+6i\equiv 2m^2 \equiv \begin{cases}
2, & m\equiv 1,2\pmod{3}, \\
0, & m\equiv 0 \pmod{3};
\end{cases} \label{exp-mod3-1} \\
&Q(i,j,k)+4i-8j-4k \equiv 2m^2+m \equiv
\begin{cases}
0, & m\equiv 0, 1 \pmod{3}, \\
1, & m\equiv 2 \pmod{3};
\end{cases}  \label{exp-mod3-3} \\
&Q(i,j,k)+2i-4j-8k \equiv 2m^2+2m \equiv
\begin{cases}
0, & m\equiv 0, 2 \pmod{3}, \\
1, & m\equiv 1 \pmod{3}.
\end{cases}  \label{exp-mod3-2} 
\end{align}
From \eqref{exp-mod3-1}--\eqref{exp-mod3-3} and the definition we deduce that
\begin{align}\label{S-zero}
    S_1^{(1)}(q)=S_2^{(2)}(q)=S_2^{(3)}(q)=S_1^{(4)}(q)=0.
\end{align}
Consequently, only two components in the 3-dissection \eqref{3-dissection} are nonzero. Maple computations suggest that each of these components can be expressed either as a single infinite product or as the sum of two infinite products. This leads us to the discovery of Theorem \ref{thm-dual-rank3}. However, its proof is much more complicated and we need the results in Section \ref{sec-aux}. The key observation is 
\begin{equation}
Q(i,j,k)=3i^2+6k^2+2(2j-i+k)^2.     \label{key-square}
\end{equation}
\begin{proof}[Proof of Theorem \ref{thm-dual-rank3}]
We shall first prove \eqref{eq-rank3-dual-2} and \eqref{eq-rank3-dual-4} together,  and then we prove \eqref{eq-rank3-dual-1} and \eqref{eq-rank3-dual-3}.

(1) For $s=0,1$ we have
\begin{align}
    F^{(4-2s)}(q)=\sum_{i\geq 0} \frac{q^{\frac{1}{4}i^2+\frac{1}{2}si}}{(q;q)_i}\sum_{j,k\geq 0} \frac{q^{\frac{1}{2}k^2+\frac{1}{6}(2j+k-i)^2}}{(q;q)_j(q;q)_k}.
\end{align}
Hence 
\begin{align}
    &F^{(4-2s)}(q)=\sum_{\sigma \in \{-1,0,1\}} \sum_{i\geq 0} \frac{q^{\frac{1}{4}i^2+\frac{1}{2}si}}{(q;q)_i}\sum_{\substack{j,k\geq 0 \\ 2j+k-i\equiv \sigma \!\!\!\pmod{3}}} \frac{q^{\frac{1}{2}k^2+\frac{1}{6}(2j+k-i)^2}}{(q;q)_j(q;q)_k} \nonumber \\
    &=\sum_{\sigma \in \{-1,0,1\}} \sum_{i\geq 0} \frac{q^{\frac{1}{4}i^2+\frac{1}{2}si}}{(q;q)_i} \sum_{r\in \mathbb{Z}} q^{\frac{1}{6}(3r+\sigma)^2} \sum_{2j+k=3r+\sigma+i} \frac{q^{\frac{1}{2}k^2}}{(q;q)_j(q;q)_k} \nonumber \\
    &=\sum_{\sigma\in \{-1,0,1\}} q^{\frac{1}{6}\sigma^2}\sum_{i\geq 0} \frac{q^{\frac{1}{4}i^2+\frac{1}{2}si}}{(q;q)_i} \sum_{r\in \mathbb{Z}} q^{\frac{3}{2}r^2+\sigma r} H_{3r+\sigma+i} \nonumber \\
    &=\sum_{i\geq 0} \frac{q^{\frac{1}{4}i^2+\frac{1}{2}si}}{(q;q)_i}\Big(a_i+q^{\frac{1}{6}}(b_i+c_i)\Big)  \nonumber \\
    &=\sum_{i\geq 0} \frac{q^{\frac{1}{4}i^2+\frac{1}{2}si}}{(q;q)_i}\Big(a_i+2q^{\frac{1}{6}}b_i\Big) \quad \text{(by Lemma \ref{lem-rec})} \nonumber \\
   &=\sum_{m\geq 0} \frac{q^{m^2+sm}}{(q;q)_{2m}}\big(a_{2m}+2q^{\frac{1}{6}}b_{2m}\big)+ q^{\frac{2s+1}{4}}\sum_{m\geq 0} \frac{q^{m^2+(s+1)m}}{(q;q)_{2m+1}}\big(a_{2m+1}+2q^{\frac{1}{6}}b_{2m+1}\big).
\end{align}
Here for the last step we split the sum according to the parity of $i$.

Using \eqref{ab-X-1st}, we deduce that
\begin{align}
  &  F^{(4-2s)}(q) =(\Lambda_0+2q^{\frac{2}{3}}\Omega_3)\sum_{m\geq 0} \frac{q^{m^2+sm}}{(q;q)_{2m}} X_{2m}(1,1,2) \nonumber \\
  &\quad +(q^{\frac{1}{2}}\Lambda_1+2q^{\frac{1}{6}}\Omega_6)\sum_{m\geq 0} \frac{q^{m^2+sm}}{(q;q)_{2m}}X_{2m}(2,4,3) \nonumber \\
    &\quad +q^{\frac{2s+1}{4}}\big(\Lambda_0+2q^{\frac{2}{3}}\Omega_3\big)\sum_{m\geq 0} \frac{q^{m^2+(s+1)m}}{(q;q)_{2m+1}}X_{2m+1}(1,4,3) \nonumber \\
 &\quad   +q^{\frac{2s+1}{4}}\big(q^{\frac{1}{2}}\Lambda_1+2q^{\frac{1}{6}}\Omega_6\big) \sum_{m\geq 0} \frac{q^{m^2+(s+1)m}}{(q;q)_{2m+1}}X_{2m+1}(2,1,2).
\end{align}
For $s=0,1$, substituting Lemmas \ref{lem-X-BP-result} and \ref{lem-X-BP-results-2} into it, we obtain expressions for $F^{(4)}(q)$ and $F^{(2)}(q)$ in terms of $J_m$ and $J_{a,m}$.

(2) We have
\begin{align}
    &F^{(1)}(q)=\sum_{i\geq 0} \frac{q^{\frac{1}{4}i^2}}{(q;q)_i}\sum_{j,k\geq 0} \frac{q^{\frac{1}{2}k^2+\frac{1}{6}(2j+k-i)^2-\frac{1}{3}(2j+k-i)}}{(q;q)_j(q;q)_k} \nonumber \\
     &=\sum_{\sigma\in \{0,1,2\}} \sum_{i\geq 0} \frac{q^{\frac{1}{4}i^2}}{(q;q)_i} \sum_{r\in \mathbb{Z}} q^{\frac{1}{6}(3r+\sigma)^2-\frac{1}{3}(3r+\sigma)} \sum_{2j+k=3r+\sigma+i} \frac{q^{\frac{1}{2}k^2}}{(q;q)_j(q;q)_k} \nonumber \\
    &=\sum_{\sigma\in \{0,1,2\}} q^{\frac{1}{6}\sigma^2-\frac{1}{3}\sigma}\sum_{i\geq 0} \frac{q^{\frac{1}{4}i^2}}{(q;q)_i} \sum_{r\in \mathbb{Z}} q^{\frac{3}{2}r^2+(\sigma-1)r} H_{3r+\sigma+i} \nonumber \\
    &=\sum_{i\geq 0}\frac{q^{\frac{1}{4}i^2}}{(q;q)_i}\big(c_{i+1}+q^{-\frac{1}{6}}a_{i+1}+b_{i+1}\big) \nonumber \\
    &=\sum_{i\geq 0}\frac{q^{\frac{1}{4}i^2}}{(q;q)_i}\big(q^{-\frac{1}{6}}a_{i+1}+2b_{i+1}\big)  \quad \text{(by Lemma \ref{lem-rec})} \nonumber \\
    &=\sum_{m\geq 0} \frac{q^{m^2}}{(q;q)_{2m}}\Big(q^{-\frac{1}{6}}a_{2m+1}+2b_{2m+1}\Big) \nonumber \\
    &\quad +q^{\frac{1}{4}}\sum_{m\geq 0} \frac{q^{m^2+m}}{(q;q)_{2m+1}}\Big(q^{-\frac{1}{6}}a_{2m+2}+2b_{2m+2}\Big). \label{F1-reduce-start}
\end{align}
Here for the last step we split the sum according to the parity of $i$.

Substituting \eqref{ab-X-2nd} into \eqref{F1-reduce-start}, we have
\begin{align}
    &F^{(1)}(q)=(q^{-\frac{1}{6}}\Lambda_0+2q^{\frac{1}{2}}\Omega_3)\sum_{m\geq 0} \frac{q^{m^2}}{(q;q)_{2m}}X_{2m}(1,3,4) \nonumber \\
    &\quad +(q^{\frac{1}{3}}\Lambda_1+2\Omega_6)\sum_{m\geq 0} \frac{q^{m^2}}{(q;q)_{2m}}X_{2m}(2,2,1) \nonumber \\
    &\quad +q^{\frac{1}{4}}(q^{-\frac{1}{6}}\Lambda_0+2q^{\frac{1}{2}}\Omega_3)\sum_{m\geq 0} \frac{q^{m^2+m}}{(q;q)_{2m+1}}X_{2m+1}(1,2,1) \nonumber \\
& \quad   +q^{\frac{1}{4}}(q^{\frac{1}{3}}\Lambda_1+2\Omega_6)\sum_{m\geq 0} \frac{q^{m^2+m}}{(q;q)_{2m+1}}X_{2m+1}(2,3,4).
\end{align}
Substituting Lemma \ref{lem-X-BP-results-3} into it, we arrived at an expression of $F^{(1)}(q)$ in terms of $J_m$ and $J_{a,m}$.  

(3) We have
\begin{align}
    &F^{(3)}(q)=\sum_{i\geq 0} \frac{q^{\frac{1}{4}i^2}}{(q;q)_i}\sum_{j,k\geq 0} \frac{q^{\frac{1}{2}k^2-\frac{1}{2}k+\frac{1}{6}(2j+k-i)^2-\frac{1}{6}(2j+k-i)}}{(q;q)_j(q;q)_k} \nonumber \\
   &=\sum_{\sigma \in \{1,2,3\}} \sum_{i\geq 0} \frac{q^{\frac{1}{4}i^2}}{(q;q)_i} \sum_{r\in \mathbb{Z}} q^{\frac{1}{6}(3r+\sigma)^2-\frac{1}{6}(3r+\sigma)} \sum_{2j+k=3r+\sigma+i} \frac{q^{\frac{1}{2}(k^2-k)}}{(q;q)_j(q;q)_k} \nonumber \\
   &=\sum_{\sigma\in \{1,2,3\}}q^{\frac{1}{6}(\sigma^2-\sigma)}\sum_{i\geq 0} \frac{q^{\frac{1}{4}i^2}}{(q;q)_i} \sum_{r\in \mathbb{Z}} q^{\frac{3}{2}r^2+(\sigma-\frac{1}{2})r} \widetilde{H}_{i+3r+\sigma} \nonumber \\
   &=\sum_{\sigma\in \{1,2,3\}}q^{\frac{1}{6}\sigma^2-\frac{2}{3}\sigma}\sum_{i\geq 0} \frac{q^{\frac{1}{4}i^2-\frac{1}{2}i}}{(q;q)_i} \sum_{r\in \mathbb{Z}} q^{\frac{3}{2}r^2+(\sigma-2)r} (H_{i+3r+\sigma}-H_{i+3r+\sigma-2})  \quad \text{(by \eqref{wH-H-relation})}\nonumber \\
   &=q^{-\frac{1}{2}}\sum_{i\geq 0} \frac{q^{\frac{1}{4}i^2-\frac{1}{2}i}}{(q;q)_i}\big(c_{i+2}-c_i)+q^{-\frac{2}{3}}\sum_{i\geq 0} \frac{q^{\frac{1}{4}i^2-\frac{1}{2}i}}{(q;q)_i}\big(a_{i+2}-a_i\big)+q^{-\frac{1}{2}}\sum_{i\geq 0}\frac{q^{\frac{1}{4}i^2-\frac{1}{2}i}}{(q;q)_i}\big(b_{i+2}-b_i\big) \nonumber \\
   &=q^{-\frac{2}{3}}\sum_{i\geq 0} \frac{q^{\frac{1}{4}i^2-\frac{1}{2}i}}{(q;q)_i}\big(a_{i+2}-a_i\big)+2q^{-\frac{1}{2}}\sum_{i\geq 0} \frac{q^{\frac{1}{4}i^2-\frac{1}{2}i}}{(q;q)_i}\big(b_{i+2}-b_i\big) \quad \text{(by Lemma \ref{lem-rec})} \nonumber \\
   &=q^{-\frac{1}{6}}\sum_{i\geq 0} \frac{q^{\frac{1}{4}i^2}}{(q;q)_i}s_i+2\sum_{i\geq 0} \frac{q^{\frac{1}{4}i^2}}{(q;q)_i}t_i \nonumber \\
    &=\sum_{m\geq 0} \frac{q^{m^2}}{(q;q)_{2m}}(q^{-\frac{1}{6}}s_{2m}+2t_{2m})+q^{\frac{1}{4}}\sum_{m\geq 0} \frac{q^{m^2+m}}{(q;q)_{2m+1}}(q^{-\frac{1}{6}}s_{2m+1}+2t_{2m+1}), \label{F3-reduce}
\end{align}
where $s_n:=q^{-\frac{n+1}{2}}(a_{n+2}-a_n)$ and $t_n:=q^{-\frac{n+1}{2}}(b_{n+2}-b_n)$.

From \eqref{STU} we deduce that
\begin{align}
    S_{n+2}-S_n=q^{\frac{n+1}{2}}U_n.
\end{align}
Therefore, by \eqref{abc-S} we obtain
\begin{align}
    s_n=\Lambda_0 U_n(1)+q^{\frac{1}{2}}\Lambda_1 U_n(2), \quad t_n=q^{\frac{1}{2}}\Omega_3 U_n(1)+\Omega_6 U_n(2).
\end{align}
It follows that
\begin{equation}\label{s-t-X}
\begin{split}
    s_{2m}&=\Lambda_0 X_{2m}(1,3,2)+q^{\frac{1}{2}}\Lambda_1 X_{2m}(2,2,3), \\
    s_{2m+1}&=\Lambda_0 X_{2m+1}(1,2,3)+q^{\frac{1}{2}}\Lambda_1 X_{2m+1}(2,3,2), \\
    t_{2m}&=q^{\frac{1}{2}}\Omega_3 X_{2m}(1,3,2)+\Omega_6 X_{2m}(2,2,3), \\
    t_{2m+1}&=q^{\frac{1}{2}}\Omega_3 X_{2m+1}(1,2,3)+\Omega_6 X_{2m+1}(2,3,2).
\end{split}
\end{equation}
Substituting them into \eqref{F3-reduce}, we obtain
\begin{align}
    &F^{(3)}(q)=\big(q^{-\frac{1}{6}}\Lambda_0+2q^{\frac{1}{2}}\Omega_3\big)\sum_{m\geq 0} \frac{q^{m^2}}{(q;q)_{2m}}X_{2m}(1,3,2) \nonumber \\
&\quad +\big(q^{\frac{1}{3}}\Lambda_1+2\Omega_6   \big)\sum_{m\geq 0} \frac{q^{m^2}}{(q;q)_{2m}}X_{2m}(2,2,3)
\nonumber \\
    &\quad +q^{\frac{1}{4}}\big(q^{-\frac{1}{6}}\Lambda_0+2q^{\frac{1}{2}}\Omega_3\big)\sum_{m\geq 0} \frac{q^{m^2+m}}{(q;q)_{2m+1}}X_{2m+1}(1,2,3) \nonumber \\
   &\quad +q^{\frac{1}{4}}\big(q^{\frac{1}{3}}\Lambda_1+2\Omega_6\big)\frac{q^{m^2+m}}{(q;q)_{2m+1}}X_{2m+1}(2,3,2). 
\end{align}
Substituting Lemma \ref{lem-X-BP-results-4} into it,   we arrived at an expression of $F^{(3)}(q)$ in terms of $J_m$ and $J_{a,m}$.

By the Maple approach in \cite{Frye-Garvan}, it is easy to check that the the above representations for $F^{(k)}(q)$ ($k=1,2,3,4$) agree with the expressions in \eqref{eq-rank3-dual-1}--\eqref{eq-rank3-dual-4}.
\end{proof}


\section{Proofs of some conjectural Nahm sum identities}\label{sec-proof}

In this section, we prove some conjectural identities  proposed by Cao--Wang \cite{Cao-Wang-rank4}, Li \cite{Li-arXiv} and the present authors \cite{Shi-Wang}. They were stated in Section \ref{sec-intro} as Theorems~\ref{thm-CSW-conj}--\ref{thm-CW-conj3.3} and \eqref{eq-Li-1}--\eqref{eq-Li-2}.

\subsection{Proof of some conjectural identities on tadpole Nahm sums}
Based on the proof in the dual of Example 11 and the identities for the dual of Example 8 (see Theorem \ref{thm-dual-8}), we now prove the conjectural identities on rank four tadpole Nahm sums in \eqref{id-SW} and \eqref{id-CW-1}--\eqref{id-CW-3}.
\begin{proof}[Proof of Theorem \ref{thm-CSW-conj}]
Recall the identity \eqref{dual-ex11-reducingrank}. The identities \eqref{id-SW} and \eqref{id-CW-1}--\eqref{id-CW-3} correspond to four assignments of $(z_1,z_2,z_3,z_4)$ which we will discuss separately.

(1) Let $(z_1,z_2,z_3,z_4)=(q,q^{-1},q,q^{-\frac{1}{2}})$. In this case, \eqref{id-SW} follows from \eqref{dual-11-3to4to3}.

(2) Let $(z_1,z_2,z_3,z_4)=(q^2,q^{-2},q,q^{-\frac{1}{2}})$.
From \eqref{dual-ex11-reducingrank} we have
\begin{align}\label{chi-1}
    \chi(q^2,q^{-2},q,q^{-\frac{1}{2}};q)=\frac{R_1(q)}{(q;q)_\infty}
\end{align}
where
\begin{align}\label{R1-start}
    R_1(q)&:=\sum_{m,n,r\geq 0}\frac{q^{m^2+\frac{3}{4}n^2+\frac{1}{2}r^2+mn-mr-nr+m-n-\frac{r}{2}}}{(q;q)_m(q;q)_n(q;q)_r}\sum_{s\in\mathbb{Z}}q^{(s+\frac{n}{2})^2-2s}\nonumber\\
    &=\sum_{m,n,r\geq 0}\frac{q^{m^2+\frac{3}{4}n^2+\frac{1}{2}r^2+mn-mr-nr+m-n-\frac{r}{2}}}{(q;q)_m(q;q)_n(q;q)_r}\sum_{s\in\mathbb{Z}}q^{(s-1+\frac{n}{2})^2-1+n}\nonumber\\
    &=q^{-1}\sum_{m,n,r\geq 0}\frac{q^{m^2+\frac{3}{4}n^2+\frac{1}{2}r^2+mn-mr-nr+m-\frac{r}{2}}}{(q;q)_m(q;q)_n(q;q)_r}\sum_{s\in\mathbb{Z}}q^{(s+\frac{n}{2})^2}.
\end{align}
By \eqref{id-dual-8-1} we have
\begin{align}
    &S_1(q):=\sum_{m,n,r\geq 0}\frac{q^{4m^2+3n^2+2r^2+4mn-4mr-4nr+4m-2r}}{(q^4;q^4)_m(q^4;q^4)_n(q^4;q^4)_r}=2q^{-1}\frac{J_2^5J_8^2}{J_1^2J_4^5}, \label{tadpole-S1-result}\\
   & S_1(-q)=\sum_{m,n,r\geq 0}\frac{(-1)^nq^{4m^2+3n^2+2r^2+4mn-4mr-4nr+4m-2r}}{(q^4;q^4)_m(q^4;q^4)_n(q^4;q^4)_r}=-2q^{-1}\frac{J_{1,8}^2J_{2,8}J_{3,8}^2}{J_4^2J_8^3}. \label{tadpole-S1-minus-result}
\end{align}
From \eqref{R1-start} we have
\begin{align}\label{R1-split}
    q^4R_1(q^4)=\Big(\frac{S_1(q)+S_1(-q)}{2}\Big)\sum_{s\in \mathbb{Z}}q^{4s^2}+\Big(\frac{S_1(q)-S_1(-q)}{2}\Big)\sum_{s\in \mathbb{Z}}q^{(2s+1)^2}.
\end{align}
Substituting \eqref{tadpole-S1-result} and \eqref{tadpole-S1-minus-result} into \eqref{R1-split}, we obtain \eqref{id-CW-1} from \eqref{chi-1}.

(3) Let $(z_1,z_2,z_3,z_4)=(q^{\frac{3}{2}}, q^{-\frac{3}{2}}, q^{\frac{1}{2}},1)$. From \eqref{dual-ex11-reducingrank} we have
\begin{align}\label{chi-2}
\chi(q^{\frac{3}{2}}, q^{-\frac{3}{2}}, q^{\frac{1}{2}},1;q)=\frac{R_2(q)}{(q;q)_\infty}
\end{align}
where
\begin{align}\label{R2-start}
    R_2(q)&:=\sum_{m,n,r\geq 0}\frac{q^{m^2+\frac{3}{4}n^2+\frac{1}{2}r^2+mn-mr-nr+\frac{m}{2}-n}}{(q;q)_m(q;q)_n(q;q)_r}\sum_{s\in\mathbb{Z}}q^{(s+\frac{n}{2})^2-\frac{3s}{2}}\nonumber\\
    &=\sum_{m,n,r\geq 0}\frac{q^{m^2+\frac{3}{4}n^2+\frac{1}{2}r^2+mn-mr-nr+\frac{m}{2}-n}}{(q;q)_m(q;q)_n(q;q)_r}\sum_{s\in\mathbb{Z}}q^{(s+\frac{n}{2}-\frac{3}{4})^2-\frac{9}{16}+\frac{3n}{4}}\nonumber\\
    &=q^{-\frac{9}{16}}\sum_{m,n,r\geq 0}\frac{q^{m^2+\frac{3}{4}n^2+\frac{1}{2}r^2+mn-mr-nr+\frac{m}{2}-\frac{n}{4}}}{(q;q)_m(q;q)_n(q;q)_r}\sum_{s\in\mathbb{Z}}q^{(s-\frac{1}{4})^2}.
\end{align}
By \eqref{id-dual-8-2} we have 
\begin{align}\label{tadpole-S2-result}
    S_2(q)&:=\sum_{m,n,r\geq 0}\frac{q^{4m^2+3n^2+2r^2+4mn-4mr-4nr+2m-n}}{(q^4;q^4)_m(q^4;q^4)_n(q^4;q^4)_r}=\frac{J_4^2}{J_2^2}.
\end{align}
From \eqref{R2-start} we have
\begin{align}\label{R2-split}
    q^{\frac{9}{16}}R_2(q)=S_2(q^{\frac{1}{4}})\sum_{s\in\mathbb{Z}}q^{(s-\frac{1}{4})^2}.
\end{align}
Substituting \eqref{tadpole-S2-result} into \eqref{R2-split}, we obtain \eqref{id-CW-2} from \eqref{chi-2}.

(4) Let $(z_1,z_2,z_3,z_4)=(q,q^{-1},1,q^{\frac{1}{2}})$. From \eqref{dual-ex11-reducingrank} we have
\begin{align}\label{chi-3}
\chi(q,q^{-1},1,q^{\frac{1}{2}};q)=\frac{R_3(q)}{(q;q)_\infty}   
\end{align}
where
\begin{align}\label{R3-start}
    R_3(q)&:=\sum_{m,n,r\geq 0}\frac{q^{m^2+\frac{3}{4}n^2+\frac{1}{2}r^2+mn-mr-nr-n+\frac{r}{2}}}{(q;q)_m(q;q)_n(q;q)_r}\sum_{s\in\mathbb{Z}}q^{(s+\frac{n}{2})^2-s}\nonumber\\
    &=\sum_{m,n,r\geq 0}\frac{q^{m^2+\frac{3}{4}n^2+\frac{1}{2}r^2+mn-mr-nr-n+\frac{r}{2}}}{(q;q)_m(q;q)_n(q;q)_r}\sum_{s\in\mathbb{Z}}q^{(s+\frac{n-1}{2})^2-\frac{1}{4}+\frac{n}{2}}\nonumber\\
    &=q^{-\frac{1}{4}}\sum_{m,n,r\geq 0}\frac{q^{m^2+\frac{3}{4}n^2+\frac{1}{2}r^2+mn-mr-nr-\frac{n}{2}+\frac{r}{2}}}{(q;q)_m(q;q)_n(q;q)_r}\sum_{s\in\mathbb{Z}}q^{(s+\frac{n-1}{2})^2}.
\end{align}
By \eqref{id-dual-8-7} we have 
\begin{align}
   & S_3(q):=\sum_{m,n,r\geq 0}\frac{q^{4m^2+3n^2+2r^2+4mn-4mr-4nr-2n+2r}}{(q^4;q^4)_m(q^4;q^4)_n(q^4;q^4)_r}=\frac{J_2^5J_8^2}{J_1^2J_4^5},  \label{tadpole-S3-result}\\
   &S_3(-q)=\sum_{m,n,r\geq 0}\frac{(-1)^nq^{4m^2+3n^2+2r^2+4mn-4mr-4nr-2n+2r}}{(q^4;q^4)_m(q^4;q^4)_n(q^4;q^4)_r}=\frac{J_{1,8}^2J_{2,8}J_{3,8}^2}{J_4^2J_8^3}. \label{tadplole-S3-minus-result}
\end{align}
From \eqref{R3-start} we have
\begin{align}\label{R3-split}
    qR_3(q^4)=\Big(\frac{S_3(q)-S_3(-q)}{2}\Big)\sum_{s\in \mathbb{Z}}q^{4s^2}+\Big(\frac{S_3(q)+S_3(-q)}{2}\Big)\sum_{s\in \mathbb{Z}}q^{(2s+1)^2}.
\end{align}
Substituting \eqref{tadpole-S3-result} and \eqref{tadplole-S3-minus-result} into \eqref{R3-split}, we obtain \eqref{id-CW-3} from \eqref{chi-3}.
\end{proof}

\subsection{Proofs of some conjectures of Cao--Wang and Li}

We are now going to prove Theorem \ref{thm-CW-conj3.3}.
Recall the function $N(u,v,w;q)$ defined in \eqref{eq-N-defn}. 
Firstly, we split the series $N(u,v,w;q)$ into two parts by the parity of $k$:
\begin{align}
    N(u,v,w;q)=N_0(u,v,w;q)+N_1(u,v,w;q).
\end{align}
Here
\begin{align}\label{eq-N-parity}
    N_a(u,v,w;q):=\sum_{\substack{i,j,k,l\geq 0 \\ k\equiv a\!\!\! \pmod{2}}}\frac{q^{2i^2+2j^2+k^2+l^2-2ij-2jk-kl}u^{i-j}v^kw^l}{(q^2;q^2)_i(q^2;q^2)_j(q^2;q^2)_k(q^2;q^2)_l}.
\end{align}

Using \eqref{Euler} to sum over $l$, we have
\begin{align}
    &N_0(u,v,w;q)=\sum_{i,j,k,l\geq 0}\frac{q^{2i^2+2j^2+4k^2+l^2-2ij-4jk-2kl}u^{i-j}v^{2k}w^l}{(q^2;q^2)_i(q^2;q^2)_j(q^2;q^2)_{2k}(q^2;q^2)_l}\nonumber\\
    &=\sum_{i,j,k\geq 0}\frac{q^{2i^2+2j^2+4k^2-2ij-4jk}u^{i-j}v^{2k}}{(q^2;q^2)_i(q^2;q^2)_j(q^2;q^2)_{2k}}\cdot (-q^{1-2k}w;q^2)_{\infty},\\
    &N_1(u,v,w;q)=\sum_{i,j,k,l\geq 0}\frac{q^{2i^2+2j^2+4k^2+l^2-2ij-4jk-2kl-2j+4k-l+1}u^{i-j}v^{2k+1}w^l}{(q^2;q^2)_i(q^2;q^2)_j(q^2;q^2)_{2k+1}(q^2;q^2)_l}\nonumber\\
    &=\sum_{i,j,k\geq 0}\frac{q^{2i^2+2j^2+4k^2-2ij-4jk-2j+4k+1}u^{i-j}v^{2k+1}}{(q^2;q^2)_i(q^2;q^2)_j(q^2;q^2)_{2k+1}}\cdot (-q^{-2k}w;q^2)_{\infty}.
\end{align}

To simplify $N_0(u,v,w;q)$ and $N_1(u,v,w;q)$, we give the following lemma:
\begin{lemma}
    We have
    \begin{align}
        N_0(u,v,w;q)&=\frac{(-qw;q^2)_{\infty}}{(q^2;q^2)_{\infty}}\sum_{\substack{k\geq 0, t\in\mathbb{Z}\\0\leq i\leq 2k}}\frac{q^{3k^2+2i^2-4ki+2t^2-2ti}(v^2w)^ku^{t-i}(-qw^{-1};q^2)_k}{(q^2;q^2)_i(q^2;q^2)_{2k-i}}, \label{add-N0-exp}\\
        N_1(u,v,w;q)&=\frac{(-w;q^2)_{\infty}}{(q^2;q^2)_{\infty}}\sum_{\substack{k\geq 0, t\in\mathbb{Z}\\ 0\leq i\leq2k+1}}\frac{q^{3k^2+2i^2-4ki+2t^2-2ti+3k-2i+1}v^{2k+1}w^k(-q^2w^{-1};q^2)_k}{(q^2;q^2)_i(q^2;q^2)_{2k+1-i}}.\label{add-N1-exp}
    \end{align}
\end{lemma}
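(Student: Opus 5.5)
The plan is to treat $N_0$ and $N_1$ in the same way. For each one, we first pull out the $l$-product that has already been produced, and then evaluate the $(i,j)$-sum by the constant-term method together with the Jacobi triple product \eqref{Jacobi} and the finite $q$-binomial theorem \eqref{q-binomial-finite}.

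\emph{Step 1: the $w$-products.} For $N_0$ we factor
\begin{align*}
(-q^{1-2k}w;q^2)_\infty=(-qw;q^2)_\infty(-q^{1-2k}w;q^2)_k=(-qw;q^2)_\infty\, w^kq^{-k^2}(-qw^{-1};q^2)_k .
\end{align*}
The second equality comes from pulling $wq^{1-2m}$ out of each factor $1+wq^{1-2m}$. This turns $q^{4k^2}v^{2k}$ into $q^{3k^2}(v^2w)^k(-qw^{-1};q^2)_k$.

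For $N_1$ we factor
\begin{align*}
(-q^{-2k}w;q^2)_\infty=(-w;q^2)_\infty w^kq^{-k^2-k}(-q^2w^{-1};q^2)_k .
\end{align*}
This turns $q^{4k^2+4k+1}$ into $q^{3k^2+3k+1}$.

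After this step, each formula reduces to the following identity for fixed $k$ and $\epsilon\in\{0,1\}$:
\begin{align*}
\sum_{i,j\ge0}\frac{q^{2i^2+2j^2-2ij-4jk-2\epsilon j}u^{i-j}}{(q^2;q^2)_i(q^2;q^2)_j}
=\frac{(q^2;q^2)_{2k+\epsilon}}{(q^2;q^2)_\infty}\sum_{\substack{t\in\mathbb{Z}\\0\le i\le 2k+\epsilon}}\frac{q^{2t^2+2i^2-2ti-4ki-2\epsilon i}u^{t-i}}{(q^2;q^2)_i(q^2;q^2)_{2k+\epsilon-i}} .
\end{align*}
Multiplying this by $1/(q^2;q^2)_{2k+\epsilon}$ and summing over $k$ gives \eqref{add-N0-exp} and \eqref{add-N1-exp}. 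In \eqref{add-N1-exp} the factor $u^{t-i}$ seems to have been dropped from the displayed formula, so I would restore it.

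\emph{Step 2: constant term.} We write
\begin{align*}
2i^2+2j^2-2ij=i^2+j^2+(i-j)^2 ,
\end{align*}
and insert the factor $\sum_n q^{n^2}u^nz^{-n}$ to enforce $n=i-j$. The left side then becomes
\begin{align*}
\mathrm{CT}_z\Big[(-qz;q^2)_\infty\,(-q^{1-4k-2\epsilon}z^{-1};q^2)_\infty\sum_{n}q^{n^2}u^nz^{-n}\Big].
\end{align*}
Here the two products come from summing over $i$ and over $j$ respectively, each by the second identity in \eqref{Euler}. Next we split
\begin{align*}
(-q^{1-4k-2\epsilon}z^{-1};q^2)_\infty=(-q^{1-4k-2\epsilon}z^{-1};q^2)_{2k+\epsilon}\,(-qz^{-1};q^2)_\infty .
\end{align*}
By \eqref{Jacobi} we then have $(-qz,-q/z;q^2)_\infty=\frac{1}{(q^2;q^2)_\infty}\sum_t q^{t^2}z^t$. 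For the finite product we apply \eqref{q-binomial-finite} with base $q^2$:
\begin{align*}
(-q^{1-4k-2\epsilon}z^{-1};q^2)_{2k+\epsilon}=\sum_{i}\qbinom{2k+\epsilon}{i}_{q^2}q^{i^2-4ki-2\epsilon i}z^{-i}.
\end{align*}

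\emph{Step 3: extract the constant term.} Taking the constant term forces $n=t-i$. The total exponent is then
\begin{align*}
t^2+i^2-4ki-2\epsilon i+(t-i)^2=2t^2+2i^2-2ti-4ki-2\epsilon i ,
\end{align*}
and the $u$-power is $u^{t-i}$. This is exactly the right side of the identity in Step 1.

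The only delicate step is Step 2. There one has to check that the constant-term manipulation is legitimate: the series involved are Laurent series converging in an annulus containing $|z|=1$, so the constant term is well defined. One also has to keep track of the shifts $\epsilon$ and $k$ correctly when splitting the infinite product. Everything else is bookkeeping of exponents.
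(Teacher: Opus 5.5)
Your proof is correct and follows essentially the paper's route: pull out the $w$-products as in \eqref{finite-product-1}--\eqref{finite-product-2}, introduce a constant term in $z$, sum over $i$ and $j$ by Euler's identity, split off a finite product of length $2k$ (resp.\ $2k+1$), and finish with the Jacobi triple product and the finite $q$-binomial theorem. Your splitting $2i^2+2j^2-2ij=i^2+j^2+(i-j)^2$, in place of the paper's $(i-k)^2+(j-k)^2+(i-j+k)^2$, is only a reparametrization of the same computation, and you are right that the factor $u^{t-i}$ is missing from the displayed \eqref{add-N1-exp} (the paper's own derivation of $N_1$ does produce it).
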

\begin{proof}
By the fact that
\begin{align}
    (-q^{1-2k}w;q^2)_{\infty}&=(-qw;q^2)_{\infty}(-qw^{-1};q^2)_kw^kq^{-k^2}, \label{finite-product-1}\\
    (-q^{-2k}w;q^2)_{\infty}&=(-w;q^2)_{\infty}(-q^2w^{-1};q^2)_kw^kq^{-k^2-k}, \label{finite-product-2}
\end{align}
we have
\begin{align}
    &N_0(u,v,w;q)=(-qw;q^2)_{\infty}\sum_{i,j,k\geq 0}\frac{q^{2i^2+2j^2+3k^2-2ij-4jk}u^{i-j}v^{2k}w^k(-qw^{-1};q^2)_k}{(q^2;q^2)_i(q^2;q^2)_j(q^2;q^2)_{2k}},\\
    &N_1(u,v,w;q)= (-w;q^2)_{\infty}\nonumber\\
    & \quad \times \sum_{i,j,k\geq 0}\frac{q^{2i^2+2j^2+3k^2-2ij-4jk-2j+3k+1}u^{i-j}v^{2k+1}w^k(-q^2w^{-1};q^2)_k}{(q^2;q^2)_i(q^2;q^2)_j(q^2;q^2)_{2k+1}}.
\end{align}
Note that the quadratic form in the power of $q$ can be written as
\begin{align}
    2i^2+2j^2+3k^2-2ij-4jk=(i-k)^2+(j-k)^2+(i-j+k)^2.
\end{align}
We have
\begin{align}\label{A0-step1}
    &N_0(u,v,w;q) \nonumber \\
    &=(-qw;q^2)_{\infty}\mathrm{CT}_z\Big[ \sum_{\substack{i,j,k\geq 0\\ s\in\mathbb{Z}}}\frac{q^{(i-k)^2+(j-k)^2+s^2}u^{i-j}v^{2k}w^k(-qw^{-1};q^2)_kz^{i-j+k-s}}{(q^2;q^2)_i(q^2;q^2)_j(q^2;q^2)_{2k}} \Big]\nonumber\\
    &=(-qw;q^2)_{\infty}\mathrm{CT}_z\big[ \sum_{s\in\mathbb{Z}}q^{s^2}z^{-s}\sum_{k\geq 0}\frac{q^{2k^2}(v^2wz)^k(-qw^{-1};q^2)_k}{(q^2;q^2)_{2k}}\nonumber\\
    &\quad \times (-q^{-2k+1}uz,-q^{-2k+1}(uz)^{-1};q^2)_{\infty} \big]\nonumber\\
    &=\frac{(-qw;q^2)_{\infty}}{(q^2;q^2)_{\infty}}\mathrm{CT}_z\big[ \sum_{s\in\mathbb{Z}}q^{s^2}z^{-s}\sum_{k\geq 0}\frac{q^{2k^2}(v^2wz)^k(-qw^{-1};q^2)_k}{(q^2;q^2)_{2k}}\cdot (-q^{-2k+1}(uz)^{-1};q^2)_{2k}\nonumber\\
    &\quad \times (q^2,-q^{-2k+1}uz,-q^{2k+1}(uz)^{-1};q^2)_{\infty} \big].
\end{align}

By \eqref{q-binomial-finite} and \eqref{Jacobi}, we have
\begin{align}
&(-q^{-2k+1}(uz)^{-1};q^2)_{2k}=\sum_{i=0}^{2k}\qbinom{2k}{i}_{q^2}q^{i^2-2ki}(uz)^{-i},\label{qbinomial-A0}\\
    &(q^2,-q^{-2k+1}uz,-q^{2k+1}(uz)^{-1};q^2)_{\infty}=\sum_{t\in\mathbb{Z}}q^{t^2-2kt}(uz)^t.\label{Jacobi-A0}
\end{align}
Substituting \eqref{qbinomial-A0} and \eqref{Jacobi-A0} into \eqref{A0-step1}, we have
\begin{align*} 
    &N_0(u,v,w;q)=\frac{(-qw;q^2)_{\infty}}{(q^2;q^2)_{\infty}}\mathrm{CT}_z\Big[ \sum_{s\in\mathbb{Z}}q^{s^2}z^{-s}\sum_{k\geq 0, t\in\mathbb{Z}}\sum_{i=0}^{2k}\frac{q^{2k^2}(v^2wz)^k(-qw^{-1};q^2)_k}{(q^2;q^2)_{2k}}\nonumber\\
    &\qquad \times q^{t^2-2kt}(uz)^t\cdot \qbinom{2k}{i}_{q^2}q^{i^2-2ki}(uz)^{-i} \Big]\nonumber\\
    &=\frac{(-qw;q^2)_{\infty}}{(q^2;q^2)_{\infty}}\sum_{\substack{k\geq 0, t\in\mathbb{Z}\\0\leq i\leq 2k}}q^{(k+t-i)^2}\cdot\frac{q^{2k^2}(v^2w)^k(-qw^{-1};q^2)_k}{(q^2;q^2)_{2k}}\cdot q^{t^2-2kt}u^t\cdot \qbinom{2k}{i}_{q^2}q^{i^2-2ki}u^{-i}\nonumber\\
    &=\frac{(-qw;q^2)_{\infty}}{(q^2;q^2)_{\infty}}\sum_{\substack{k\geq 0, t\in\mathbb{Z}\\0\leq i\leq 2k}}\frac{q^{3k^2+2i^2-4ki+2t^2-2ti}(v^2w)^ku^{t-i}(-qw^{-1};q^2)_k}{(q^2;q^2)_i(q^2;q^2)_{2k-i}}.
\end{align*}

Similarly, we have that
\begin{align}\label{A1-step1}
    &N_1(u,v,w;q)=(-w;q^2)_{\infty}\nonumber\\
    & \quad \times\mathrm{CT}_z\Big[ \sum_{\substack{i,j,k\geq0\\s\in\mathbb{Z} }}\frac{q^{(i-k)^2+(j-k)^2+s^2-2j+3k+1}u^{i-j}v^{2k+1}w^k(-q^2w^{-1};q^2)_kz^{i-j+k-s}}{(q^2;q^2)_i(q^2;q^2)_j(q^2;q^2)_{2k+1}} \Big]\nonumber\\
    &=(-w;q^2)_{\infty}\mathrm{CT}_z\Big[ \sum_{s\in\mathbb{Z}}q^{s^2}z^{-s} \sum_{k\geq 0}\frac{q^{2k^2+3k+1}v^{2k+1}w^kz^k(-q^2w^{-1};q^2)_k}{(q^2;q^2)_{2k+1}}\nonumber\\
    & \quad \times (-q^{-2k+1}uz,-q^{-2k-1}(uz)^{-1};q^2)_{\infty}\Big] \nonumber\\
    &=\frac{(-w;q^2)_{\infty}}{(q^2;q^2)_{\infty}}\mathrm{CT}_z\Big[ \sum_{s\in\mathbb{Z}}q^{s^2}z^{-s} \sum_{k\geq 0}\frac{q^{2k^2+3k+1}v^{2k+1}w^kz^k(-q^2w^{-1};q^2)_k}{(q^2;q^2)_{2k+1}}\nonumber\\
    & \quad \times (q^2,-q^{-2k+1}uz,-q^{2k+1}(uz)^{-1};q^2)_{\infty}\cdot (-q^{-2k-1}(uz)^{-1};q^2)_{2k+1}\Big].
\end{align}

By \eqref{q-binomial-finite}, we have
\begin{align}\label{qbinomial-A1}
    (-q^{-2k-1}(uz)^{-1};q^2)_{2k+1}=\sum_{i=0}^{2k+1}\qbinom{2k+1}{i}_{q^2}q^{i^2-2i-2ki}(uz)^{-i}.
\end{align}

Substituting \eqref{Jacobi-A0} and \eqref{qbinomial-A1} into \eqref{A1-step1}, we have

\begin{align*} 
    &N_1(u,v,w;q)=\frac{(-w;q^2)_{\infty}}{(q^2;q^2)_{\infty}}\mathrm{CT}_z\Big[ \sum_{s\in\mathbb{Z}}q^{s^2}z^{-s} \sum_{k\geq 0}\frac{q^{2k^2+3k+1}v^{2k+1}w^kz^k(-q^2w^{-1};q^2)_k}{(q^2;q^2)_{2k+1}}\nonumber\\
    & \quad \times \sum_{t\in\mathbb{Z}} q^{t^2-2kt}(uz)^t \sum_{i=0}^{2k+1} \qbinom{2k+1}{i}_{q^2}q^{i^2-2i-2ki}(uz)^{-i} \Big]\nonumber\\
    &=\frac{(-w;q^2)_{\infty}}{(q^2;q^2)_{\infty}}\sum_{\substack{k\geq 0,  t\in\mathbb{Z}\\ 0\leq i\leq2k+1}}q^{(k+t-i)^2}\cdot \frac{q^{2k^2+3k+1}v^{2k+1}w^k(-q^2w^{-1};q^2)_k}{(q^2;q^2)_{2k+1}}\cdot q^{t^2-2kt}u^t\nonumber\\
    & \quad \times \qbinom{2k+1}{i}_{q^2}q^{i^2-2i-2ki}u^{-i}\nonumber\\
    &=\frac{(-w;q^2)_{\infty}}{(q^2;q^2)_{\infty}}\sum_{\substack{k\geq 0,  t\in\mathbb{Z}\\ 0\leq i\leq2k+1}}\frac{q^{3k^2+2i^2-4ki+2t^2-2ti+3k-2i+1}u^{t-i}v^{2k+1}w^k(-q^2w^{-1};q^2)_k}{(q^2;q^2)_i(q^2;q^2)_{2k+1-i}}. \qedhere
\end{align*}
\end{proof}

\begin{proof}[Proof of Theorem \ref{thm-CW-conj3.3}]
We may assume that $u=q^c$ with $c\in \mathbb{Q}$. We have
\begin{align}\label{completing1}
    \sum_{t\in\mathbb{Z}}q^{2t^2-2ti}u^t&=\sum_{t\in\mathbb{Z}}q^{2t^2-2ti+ct}=\sum_{t\in\mathbb{Z}}q^{2(t+\frac{c}{4}-\frac{i}{2})^2-\frac{1}{2}(\frac{c}{2}-i)^2}.
\end{align}
Substituting \eqref{completing1} into \eqref{add-N0-exp} and \eqref{add-N1-exp}, we have
\begin{align}
    &N_0(u,v,w;q)=\frac{(-qw;q^2)_{\infty}}{(q^2;q^2)_{\infty}}\sum_{\substack{k\geq 0\\ 0\leq i\leq 2k}}\frac{q^{3k^2+2i^2-4ki-ci-\frac{1}{2}(\frac{c}{2}-i)^2}(v^2w)^k(-qw^{-1};q^2)_k}{(q^2;q^2)_i(q^2;q^2)_{2k-i}}\nonumber\\
    &\quad \times\sum_{t\in\mathbb{Z}}q^{2(t+\frac{c}{4}-\frac{i}{2})^2},\\
    &N_1(u,v,w;q)=\frac{(-w;q^2)_{\infty}}{(q^2;q^2)_{\infty}}\sum_{\substack{k\geq 0\\ 0\leq i\leq 2k+1}}\frac{q^{3k^2+2i^2-4ki+3k-2i+1-ci-\frac{1}{2}(\frac{c}{2}-i)^2}v^{2k+1}w^k}{(q^2;q^2)_i(q^2;q^2)_{2k+1-i}}\nonumber\\
    &\quad \times (-q^2w^{-1};q^2)_k \sum_{t\in\mathbb{Z}}q^{2(t+\frac{c}{4}-\frac{i}{2})^2}.
\end{align}
It is clear that the summation over $t$ only depends on the parity of $i$. Then it is necessary to separate the series $N_0$ and $N_1$ into two parts respectively. We write
\begin{align}
    N_0(u,v,w;q)&=N_{0,0}(u,v,w;q)+N_{0,1}(u,v,w;q),\\
    N_1(u,v,w;q)&=N_{1,0}(u,v,w;q)+N_{1,1}(u,v,w;q).
\end{align}
Here
\begin{align}
    &N_{0,0}(u,v,w;q):=\frac{(-qw;q^2)_{\infty}}{(q^2;q^2)_{\infty}}\sum_{k\geq 0}\sum_{i=0}^k\frac{q^{3k^2+6i^2-8ki-ci-\frac{c^2}{8}}(v^2w)^k(-qw^{-1};q^2)_k}{(q^2;q^2)_{2i}(q^2;q^2)_{2k-2i}}\nonumber\\
    &\quad \times \sum_{t\in\mathbb{Z}}q^{2(t+\frac{c}{4})^2},\label{A00-0}\\
    &N_{0,1}(u,v,w;q):=\frac{(-qw;q^2)_{\infty}}{(q^2;q^2)_{\infty}}\sum_{k\geq 0}\sum_{i=0}^{k-1}\frac{q^{3k^2+6i^2-8ki-4k-ci+6i-\frac{c^2}{8}-\frac{c}{2}+\frac{3}{2}}(v^2w)^k}{(q^2;q^2)_{2i+1}(q^2;q^2)_{2k-2i-1}}\nonumber\\
    &\quad \times (-qw^{-1};q^2)_k \sum_{t\in\mathbb{Z}}q^{2(t+\frac{c}{4}+\frac{1}{2})^2},\label{A01-0}\\
    &N_{1,0}(u,v,w;q):=\frac{(-w;q^2)_{\infty}}{(q^2;q^2)_{\infty}}\sum_{k\geq 0}\sum_{i=0}^k\frac{q^{3k^2+6i^2-8ki+3k-4i-ci+1-\frac{c^2}{8}}v^{2k+1}w^k}{(q^2;q^2)_{2i}(q^2;q^2)_{2k+1-2i}}\nonumber\\
    &\quad \times (-q^2w^{-1};q^2)_k \sum_{t\in\mathbb{Z}}q^{2(t+\frac{c}{4})^2},\label{A10-0}\\
    &N_{1,1}(u,v,w;q):=\frac{(-w;q^2)_{\infty}}{(q^2;q^2)_{\infty}}\sum_{k\geq 0}\sum_{i=0}^{k}\frac{q^{3k^2+6i^2-8ki-k-ci+2i-\frac{c^2}{8}-\frac{c}{2}+\frac{1}{2}}v^{2k+1}w^k}{(q^2;q^2)_{2i+1}(q^2;q^2)_{2k-2i}}\nonumber\\
    &\quad \times (-q^2w^{-1};q^2)_k \sum_{t\in\mathbb{Z}}q^{2(t+\frac{c}{4}+\frac{1}{2})^2}\label{A11-0}.
\end{align}
Setting $k=i+j$ in \eqref{A00-0}, \eqref{A10-0}, \eqref{A11-0} and $k=i+j+1$ in \eqref{A01-0}, we have
\begin{align}
    N_{0,0}(u,v,w;q)&=\frac{(-qw;q^2)_{\infty}}{(q^2;q^2)_{\infty}}\sum_{t\in\mathbb{Z}}q^{2(t+\frac{c}{4})^2}\sum_{i,j\geq 0}\frac{q^{i^2+3j^2-2ij-ci-\frac{c^2}{8}}(v^2w)^{i+j}}{(q^2;q^2)_{2i}(q^2;q^2)_{2j}}\nonumber\\
    &\quad \times (-qw^{-1};q^2)_{i+j},\label{A00}\\
    N_{0,1}(u,v,w;q)&=\frac{(-qw;q^2)_{\infty}}{(q^2;q^2)_{\infty}}\sum_{t\in\mathbb{Z}}q^{2(t+\frac{c}{4}+\frac{1}{2})^2}\sum_{i,j\geq 0}\frac{q^{i^2+3j^2-2ij-ci+2j-\frac{c^2}{8}-\frac{c}{2}+\frac{1}{2}}(v^2w)^{i+j+1}}{(q^2;q^2)_{2i+1}(q^2;q^2)_{2j+1}}\nonumber\\
    &\quad \times (-qw^{-1};q^2)_{i+j+1},\\
    N_{1,0}(u,v,w;q)&=\frac{(-w;q^2)_{\infty}}{(q^2;q^2)_{\infty}}\sum_{t\in\mathbb{Z}}q^{2(t+\frac{c}{4})^2}\sum_{i,j\geq 0}\frac{q^{i^2+3j^2-2ij-i-ci+3j+1-\frac{c^2}{8}}v^{2i+2j+1}w^{i+j}}{(q^2;q^2)_{2i}(q^2;q^2)_{2j+1}}\nonumber\\
    &\quad \times (-q^2w^{-1};q^2)_{i+j},\\
    N_{1,1}(u,v,w;q)&=\frac{(-w;q^2)_{\infty}}{(q^2;q^2)_{\infty}}\sum_{t\in\mathbb{Z}}q^{2(t+\frac{c}{4}+\frac{1}{2})^2}\sum_{i,j\geq 0}\frac{q^{i^2+3j^2-2ij+i-ci-j-\frac{c^2}{8}-\frac{c}{2}+\frac{1}{2}}v^{2i+2j+1}w^{i+j}}{(q^2;q^2)_{2i+1}(q^2;q^2)_{2j}}\nonumber\\
    &\quad \times (-q^2w^{-1};q^2)_{i+j}.\label{A11}
\end{align}

Using \eqref{finite-product-1} and \eqref{finite-product-2} with $k=i+j$ and $k=i+j+1$,  we have
\begin{align}
    &N_{0,0}(u,v,w;q)=\frac{\sum_{t\in\mathbb{Z}}q^{2(t+\frac{c}{4})^2}}{(q^2;q^2)_{\infty}}\sum_{i,j\geq 0}\frac{q^{2i^2+4j^2-ci-\frac{c^2}{8}}v^{2i+2j}}{(q^2;q^2)_{2i}(q^2;q^2)_{2j}} (-q^{1-2(i+j)}w;q^2)_{\infty}\nonumber\\
    &=\frac{q^{-\frac{c^2}{8}}\sum_{t\in\mathbb{Z}}q^{2(t+\frac{c}{4})^2}}{(q^2;q^2)_{\infty}}\sum_{i,j,k\geq 0}\frac{q^{2i^2+4j^2+k^2-2ik-2jk}u^{-i}v^{2i+2j}w^k}{(q^2;q^2)_{2i}(q^2;q^2)_{2j}(q^2;q^2)_k} \nonumber \\
    &=\frac{q^{-\frac{c^2}{8}}\sum_{t\in\mathbb{Z}}q^{2(t+\frac{c}{4})^2}}{(q^2;q^2)_{\infty}}\Big(M_{0,0,0}(u^{-\frac{1}{2}}v,w,v;q^{\frac{1}{2}})+M_{0,1,0}(u^{-\frac{1}{2}}v,w,v;q^{\frac{1}{2}})\Big) ,\label{M-1}\\
    &N_{0,1}(u,v,w;q)=\frac{\sum_{t\in\mathbb{Z}}q^{2(t+\frac{c}{4}+\frac{1}{2})^2}}{(q^2;q^2)_{\infty}}\sum_{i,j\geq 0}\frac{q^{2i^2+4j^2-ci+2i+4j-\frac{c^2}{8}-\frac{c}{2}+\frac{3}{2}}v^{2i+2j+2}}{(q^2;q^2)_{2i+1}(q^2;q^2)_{2j+1}}\nonumber\\
    &\quad \times (-q^{1-2(i+j+1)}w;q^2)_{\infty}\nonumber\\
    &=\frac{q^{-\frac{c^2}{8}}\sum_{t\in\mathbb{Z}}q^{2(t+\frac{c}{4}+\frac{1}{2})^2}}{(q^2;q^2)_{\infty}}\sum_{i,j,k\geq 0}\frac{q^{2i^2+4j^2+k^2-2ik-2jk+2i+4j-2k+\frac{3}{2}}u^{-i-\frac{1}{2}}v^{2i+2j+2}w^k}{(q^2;q^2)_{2i+1}(q^2;q^2)_{2j+1}(q^2;q^2)_k} \nonumber \\
    &=\frac{q^{-\frac{c^2}{8}}\sum_{t\in\mathbb{Z}}q^{2(t+\frac{c}{4}+\frac{1}{2})^2}}{(q^2;q^2)_{\infty}}\Big(M_{1,0,1}(u^{-\frac{1}{2}}v,w,v;q^{\frac{1}{2}})+M_{1,1,1}(u^{-\frac{1}{2}}v,w,v;q^{\frac{1}{2}})\Big). \label{M-2}
\end{align}
Here for the second equality in \eqref{M-1} and \eqref{M-2}  we used \eqref{Euler} and the fact $u=q^c$. For the last equality we used \eqref{def-Mxyz} with the variables $j$ and $k$ interchanged.

Similarly, using \eqref{finite-product-2} with $k=i+j$, we have
\begin{align}
    &N_{1,0}(u,v,w;q)=\frac{\sum_{t\in\mathbb{Z}}q^{2(t+\frac{c}{4})^2}}{(q^2;q^2)_{\infty}}\sum_{i,j\geq 0}\frac{q^{2i^2+4j^2-ci+4j+1-\frac{c^2}{8}}v^{2i+2j+1}}{(q^2;q^2)_{2i}(q^2;q^2)_{2j+1}} (-q^{-2(i+j)}w;q^2)_{\infty}\nonumber\\
    &=\frac{q^{-\frac{c^2}{8}}\sum_{t\in\mathbb{Z}}q^{2(t+\frac{c}{4})^2}}{(q^2;q^2)_{\infty}}\sum_{i,j,k\geq 0}\frac{q^{2i^2+4j^2+k^2-2ik-2jk+4j-k+1}u^{-i}v^{2i+2j+1}w^k}{(q^2;q^2)_{2i}(q^2;q^2)_{2j+1}(q^2;q^2)_{k}} \nonumber \\
    &=\frac{q^{-\frac{c^2}{8}}\sum_{t\in\mathbb{Z}}q^{2(t+\frac{c}{4})^2}}{(q^2;q^2)_{\infty}}\Big(M_{0,0,1}(u^{-\frac{1}{2}}v,w,v;q^{\frac{1}{2}})+M_{0,1,1}(u^{-\frac{1}{2}}v,w,v;q^{\frac{1}{2}})\Big),  \label{M-3}\\
    &N_{1,1}(u,v,w;q)=\frac{\sum_{t\in\mathbb{Z}}q^{2(t+\frac{c}{4}+\frac{1}{2})^2}}{(q^2;q^2)_{\infty}}\sum_{i,j\geq 0}\frac{q^{2i^2+4j^2+2i-ci-\frac{c^2}{8}-\frac{c}{2}+\frac{1}{2}}v^{2i+2j+1}}{(q^2;q^2)_{2i+1}(q^2;q^2)_{2j}}\nonumber\\
    &\quad \times (-q^{-2(i+j)}w;q^2)_{\infty}\nonumber\\
    &=\frac{q^{-\frac{c^2}{8}}\sum_{t\in\mathbb{Z}}q^{2(t+\frac{c}{4}+\frac{1}{2})^2}}{(q^2;q^2)_{\infty}}\sum_{i,j,k\geq 0}\frac{q^{2i^2+4j^2+k^2-2ik-2jk+2i-k+\frac{1}{2}}u^{-i-\frac{1}{2}}v^{2i+2j+1}w^k}{(q^2;q^2)_{2i+1}(q^2;q^2)_{2j}(q^2;q^2)_k} \nonumber \\
    &= \frac{q^{-\frac{c^2}{8}}\sum_{t\in\mathbb{Z}}q^{2(t+\frac{c}{4}+\frac{1}{2})^2}}{(q^2;q^2)_{\infty}}\Big(M_{1,0,0}(u^{-\frac{1}{2}}v,w,v;q^{\frac{1}{2}})+M_{1,1,0}(u^{-\frac{1}{2}}v,w,v;q^{\frac{1}{2}})\Big). \label{M-4}
\end{align}

Adding \eqref{M-1}--\eqref{M-4} together, we obtain with $u=q^c$ that
\begin{align}
    &N(u,v,w;q)=N_{0,0}(u,v,w;q)+N_{0,1}(u,v,w;q)+N_{1,0}(u,v,w;q)+N_{1,1}(u,v,w;q)\nonumber\\
    &=\frac{q^{-\frac{c^2}{8}}}{(q^2;q^2)_{\infty}}\Big(\sum_{t\in\mathbb{Z}}q^{2(t+\frac{c}{4})^2}\big(M_{0,0,0}(u^{-\frac{1}{2}}v,w,v;q^{\frac{1}{2}})+M_{0,1,0}(u^{-\frac{1}{2}}v,w,v;q^{\frac{1}{2}})\nonumber\\
    &\quad +M_{0,0,1}(u^{-\frac{1}{2}}v,w,v;q^{\frac{1}{2}})+M_{0,1,1}(u^{-\frac{1}{2}}v,w,v;q^{\frac{1}{2}})\big) \nonumber \\
    &\quad +\sum_{t\in\mathbb{Z}}q^{2(t+\frac{c+2}{4})^2}\big( M_{1,0,1}(u^{-\frac{1}{2}}v,w,v;q^{\frac{1}{2}})+M_{1,1,1}(u^{-\frac{1}{2}}v,w,v;q^{\frac{1}{2}})\nonumber\\
    &\quad +M_{1,0,0}(u^{-\frac{1}{2}}v,w,v;q^{\frac{1}{2}})+M_{1,1,0}(u^{-\frac{1}{2}}v,w,v;q^{\frac{1}{2}}) \big)\Big).
\end{align}

Setting $(u,v,w)=(1,1,1)$, $(q^{-2},1,q^{-1})$, $(q^{-4},q^{-1},1)$, $(q^{-2},1,1)$ respectively, and then replacing $q$ by $q^2$, we deduce that
\begin{align}
  N^{(h)}(q^2)&=\frac{q^{c_h}}{J_4}\Big(a(q)\big(A^{(h)}(q)+3C^{(1)}(q)\big)+b(q)\big(3B^{(h)}(q)+D^{(h)}(q)\big)\Big). \label{Nh-ABCD}
\end{align}
Here the values of $c_h$ are given in Theorem \ref{thm-CW-conj3.3}. 

Note that 
\begin{align}\label{ab-phi}
    a(q)=\frac{1}{2}\big(\varphi(q)+\varphi(-q)\big), \quad b(q)=\frac{1}{2}\big(\varphi(q)-\varphi(-q)\big).
\end{align}
Replacing $q$ by $-q$ in the identity \eqref{M-ABCD}, we deduce that
\begin{align}\label{Mh-negative}
    M^{(h)}(-q)=(-1)^{h-1}\big(A^{(h)}(q)-3B^{(h)}(q)+3C^{(h)}(q)-D^{(h)}(q)\big).
\end{align}
Substituting \eqref{ab-phi} into \eqref{Nh-ABCD} and then using \eqref{M-ABCD} and \eqref{Mh-negative}, we obtain \eqref{eq-thm-Nh}.

Finally, substituting  \eqref{eq-thm-Mh} into \eqref{eq-thm-Nh} with $h=2$, we can express $N^{(2)}(q)$ in terms of $J_m$ and $J_{a,m}$. Using the Maple approach in \cite{Frye-Garvan}, it is easy to prove that \eqref{eq-CW-conj3.3} holds.
\end{proof}

We end this paper by giving a proof of Li's conjecture \cite[Conjecture 5.2]{Li-arXiv}.
\begin{corollary}\label{cor-Li-conj}
The conjectural identities \eqref{eq-Li-1} and \eqref{eq-Li-2} hold.
\end{corollary}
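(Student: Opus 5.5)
The plan is to identify the two partial sums in \eqref{eq-Li-1} and \eqref{eq-Li-2} with the even and odd parts of $M^{(2)}(q)$, then read off their product forms from the already established evaluation \eqref{M2-simple}, $M^{(2)}(q)=3J_2J_3J_6^2/(J_1J_4^3)$. \textbf{Step 1: match the sums to $M^{(2)}$.} By definition
\[
M^{(2)}(q)=\sum_{i,j,k\ge0}\frac{q^{i^2+2j^2+2k^2-2ij-2jk+2i-2j}}{(q^4;q^4)_i(q^4;q^4)_j(q^4;q^4)_k}.
\]
Summing over one index by \eqref{Euler} should turn a $(q^4;q^4)$ factor into a product. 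Replacing $q$ by $q^{1/2}$ and splitting $i$ into $2i$ and $2i+1$ should then produce the denominators $(q^2;q^2)_{2i}$ and $(q^2;q^2)_{2i+1}$, exactly as in \eqref{eq-Li-1} and \eqref{eq-Li-2}. The cleaner route is to use the formulas already derived in the proof of Theorem \ref{thm-CW-conj3.3}. There, \eqref{M-1}--\eqref{M-4} express sums of the shape $\sum q^{2i^2+4j^2+k^2-2ik-2jk}\cdots/((q^2;q^2)_{2i}(q^2;q^2)_{2j}(q^2;q^2)_k)$ through $M_{x,y,z}$ with $j,k$ interchanged. So I would undo that step: apply \eqref{Euler} in the reverse direction to Li's sums, collapsing $(q^2;q^2)_{2j}$ or the $j$-sum into products. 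The expected outcome is that the left sides of \eqref{eq-Li-1} and \eqref{eq-Li-2} become explicit multiples of $M^{(2)}_{x,y,z}(q^{1/2})$, grouped by the parity of the first index.

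\textbf{Step 2: use the parity classification.} By Lemma \ref{lem-M} and Table \ref{tab-ABCD}, the parts of $M^{(2)}$ with $i$ even are $B^{(2)}+3D^{(2)}$ type combinations lying in specific classes $q^r\mathbb{Z}[[q^4]]$, and similarly for $i$ odd. I would verify that the $i$-even part equals the even-power part $\tfrac12(M^{(2)}(q)+M^{(2)}(-q))$, and the $i$-odd part equals the odd-power part. This should follow from the exponent $i^2+2(\dots)+2i-2j$, which has parity $\equiv i \pmod 2$.

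\textbf{Step 3: extract the dissection.} Take the even and odd parts of $3J_2J_3J_6^2/(J_1J_4^3)$. The required $2$-dissection of $J_3/J_1$ is standard:
\[
\frac{J_3}{J_1}=\frac{J_4J_6J_{16}J_{24}^2}{J_2^2J_8J_{12}J_{48}}+q\frac{J_6J_8^2J_{48}}{J_2^2J_{16}J_{24}}.
\]
After this I would rescale $q\to q^{1/2}$ together with the prefactor products from Step 1 (the factors $(-q;q^2)_\infty$-type products produced by \eqref{Euler}). The final matching against $3J_3^3\overline J_{4,12}/(J_1J_2^2J_6)$ and $3J_3^3\overline J_{2,12}/(J_1J_2^2J_6)$ is a theta-function identity, which I would check with the method of \cite{Frye-Garvan}.

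The main obstacle is Step 1: pinning down the exact change of variables and the prefactor linking Li's sums (with $(q^2;q^2)_{2i}$ and linear terms $2i-j$) to $M^{(2)}_{x,y,z}$. The variable roles are permuted relative to \eqref{def-Mxyz}, so I would first test the correspondence numerically on initial coefficients before committing to it.
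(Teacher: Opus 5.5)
Your overall strategy is the paper's: with $L_1,L_2$ denoting the left sides of \eqref{eq-Li-1} and \eqref{eq-Li-2}, the series $L_1(q^2)$ and $qL_2(q^2)$ are the even and odd parts of $M^{(2)}(q)$, and \eqref{M2-simple} together with the $2$-dissection of $J_3/J_1$ finishes the job. The target you state in Step~1 is right, but the mechanism you propose for reaching it is a wrong turn, and the ``main obstacle'' you flag is not there. No summation by \eqref{Euler} is needed and no prefactors arise. Li's sums are already rank-three sums with free indices $j,k$ and denominators $(q^2;q^2)_j(q^2;q^2)_k$. So replace $q$ by $q^2$ in them and compare with the exponent of $M^{(2)}(q)$ after $i\mapsto 2i$ and $i\mapsto 2i+1$:
\[
(2i)^2+2j^2+2k^2-2(2i)j-2jk+2(2i)-2j=4i^2+2j^2+2k^2-4ij-2jk+4i-2j,
\]
\[
(2i+1)^2+2j^2+2k^2-2(2i+1)j-2jk+2(2i+1)-2j=4i^2+2j^2+2k^2-4ij-2jk+8i-4j+3.
\]
This gives $M^{(2)}(q)=L_1(q^2)+qL_2(q^2)$ term by term. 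Summing out an index by \eqref{Euler} would instead give rank-two sums carrying products such as $(-q^{2-2j};q^4)_\infty$. Inverting \eqref{M-1}--\eqref{M-4} would give sums split by parity in two indices. Neither is Li's series, and carrying the ``$(-q;q^2)_\infty$-type'' prefactors into Step~3 would spoil the final comparison. A minor slip: the $i$-even part is $3B^{(2)}+D^{(2)}$, not $B^{(2)}+3D^{(2)}$. Your observation that the exponent is $\equiv i\pmod 2$ makes that bookkeeping unnecessary anyway.

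With the correct identification, Step~3 is elementary. Since $J_2,J_4,J_6$ are even in $q$, the even and odd parts of $3J_2J_3J_6^2/(J_1J_4^3)$ are $3J_2J_6^2/J_4^3$ times the two components of the dissection of $J_3/J_1$. After $q^2\to q$ this gives
\[
L_1(q)=3\frac{J_3^3J_8J_{12}^2}{J_1J_2^2J_4J_6J_{24}},\qquad L_2(q)=3\frac{J_3^3J_4^2J_{24}}{J_1J_2^3J_8J_{12}}.
\]
These coincide with the stated right-hand sides via $\overline{J}_{4,12}=J_8J_{12}^2/(J_4J_{24})$ and $\overline{J}_{2,12}=J_4^2J_6J_{24}/(J_2J_8J_{12})$, so no appeal to \cite{Frye-Garvan} is needed at this stage.
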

\begin{proof}
We denote the left side of \eqref{eq-Li-1} and \eqref{eq-Li-2} as $L_1(q)$ and $L_2(q)$, respectively. 
From \eqref{M-defn-general} and \eqref{M-defn} it is easy to see that 
\begin{align}
M^{(2)}(q)=L_1(q^2)+qL_2(q^2).
\end{align}
Recall a 2-dissection formula from \cite[(2.18)]{Yao-Xia}:
\begin{align}
    \frac{J_3}{J_1}=\frac{J_4J_6J_{16}J_{24}^2}{J_2^2J_8J_{12}J_{48}}+q\frac{J_6J_8^2J_{48}}{J_2^2J_{16}J_{24}}.
\end{align}
Substituting it into \eqref{M2-simple} and then extracting the even and odd power terms, we obtain \eqref{eq-Li-1} and \eqref{eq-Li-2}, respectively.
\end{proof}

\subsection*{Acknowledgements}
This work was supported by the National Key R\&D Program of China (Grant No.\ 2024YFA1014500).

\end{document}